\long\def\metanote#1#2{{\color{#1}\
\ifmmode\hbox\fi{\sffamily\mdseries\upshape [#2]}\ }}
\newcommand{\ra}{\rightarrow}
\newcommand{\be}{\begin{equation}}
\newcommand{\ee}{\end{equation}}
\newcommand{\bi}{\begin{itemize}}
\newcommand{\ei}{\end{itemize}}
\newcommand{\commentout}[1]{}
\newcommand{\bft}{{\bf{t}}}
\newcommand{\bfi}{{\bf{i}}}
\newcommand{\conv}{{\text{conv}}}
\newcommand{\TV}{\text{TV}}
\newcommand{\inte}{\text{int}}
\newcommand{\Law}{{\mathcal{L}}}
\newcommand{\calM}{{\mathcal{M}}}
\newcommand{\osc}{{\text{osc}}}
\newcommand{\Leb}{{\text{Leb}}}
\newcommand{\esssup}{{ess\,sup}}
\newcommand{\essinf}{{ess\,inf}}
\newcommand{\essliminf}{{ess\,lim\,inf}}
\newcommand{\calA}{{\mathcal{A}}}
\newcommand{\calB}{{\mathcal{B}}}
\newcommand{\calC}{{\mathcal{C}}}
\newcommand{\calD}{{\mathcal{D}}}
\newcommand{\calO}{{\mathcal{O}}}
\newcommand{\calV}{{\mathcal{V}}}
\newcommand{\calP}{{\mathcal{P}}}
\newcommand{\calS}{{\mathcal{S}}}
\newcommand{\Nm}{{\mathbb{N}}}
\newcommand{\lv}{{\lvert}}
\newcommand{\rv}{{\rvert}}
\newcommand{\Rm}{{\mathbb R}}
\newcommand{\Pm}{{\mathbb P}}
\newcommand{\Zm}{{\mathbb Z}}
\newcommand{\Qm}{{\mathbb Q}}
\newcommand{\expE}{{\mathbb E}}
\newcommand{\Ind}{{\mathbbm{1}}}
\newtheorem{theo}{Theorem}[section]
\newtheorem{lem}[theo]{Lemma}
\newtheorem{defin}[theo]{Definition}
\newtheorem{prop}[theo]{Proposition}
\newtheorem{cor}[theo]{Corollary}
\newtheorem{rmk}[theo]{Remark}
\newtheorem{assum}[theo]{Assumption}
\newtheorem{prob}[theo]{Problem}
\newtheorem{observ}[theo]{Observation}
\newtheorem{fact}[theo]{Fact}
\newtheorem{Assumletter}{Assumption}[section]
\newcommand{\calA}{{\mathcal{A}}}
\newcommand{\calB}{{\mathcal{B}}}
\newcommand{\calC}{{\mathcal{C}}}
\newcommand{\calD}{{\mathcal{D}}}
\newcommand{\calO}{{\mathcal{O}}}
\newcommand{\calV}{{\mathcal{V}}}
\newcommand{\calP}{{\mathcal{P}}}
\newcommand{\calS}{{\mathcal{S}}}
\newcommand{\Nm}{{\mathbb{N}}}
\newtheorem*{rep@theorem}{\rep@title}
\newcommand{\newreptheorem}[2]{%
\newenvironment{rep#1}[1]{%
 \def\rep@title{#2 \ref{##1}}%
 \begin{rep@theorem}}%
 {\end{rep@theorem}}}
\long\def\metanote#1#2{{\color{#1}\
\ifmmode\hbox\fi{\sffamily\mdseries\upshape [#2]}\ }}
\newtheorem*{rep@theorem}{\rep@title}
\newcommand{\newreptheorem}[2]{%
\newenvironment{rep#1}[1]{%
 \def\rep@title{#2 \ref{##1}}%
 \begin{rep@theorem}}%
 {\end{rep@theorem}}}
\long\def\metanote#1#2{{\color{#1}\
\ifmmode\hbox\fi{\sffamily\mdseries\upshape [#2]}\ }}
\begin{document}
\setcounter{page}{1}

\title{$L^{\infty}$-convergence to a quasi-stationary distribution}
\author{Oliver Tough\footnote{Institut de Mathématiques, Université de Neuchâtel, Switzerland. (oliver.kelsey@unine.ch)}}
\date{October 24, 2022}

\maketitle

\begin{abstract}
For general absorbed Markov processes $(X_t)_{0\leq t<\tau_{\partial}}$ having a quasi-stationary distribution (QSD) $\pi$ and absorption time $\tau_{\partial}$, we introduce a Dobrushin-type criterion providing for exponential convergence in $L^{\infty}(\pi)$ as $t\ra\infty$ of the density $\frac{d\Law_{\mu}(X_t\lvert \tau_{\partial}>t)}{d\pi}$. We establish this for all initial conditions $\mu$, possibly mutually singular with respect to $\pi$, under an additional ``anti-Dobrushin'' condition. This relies on inequalities we obtain comparing $\Law_{\mu}(X_t\lvert \tau_{\partial}>t)$ with the QSD $\pi$, uniformly over all initial conditions and over the whole space, under the aforementioned conditions. On a PDE level, these probabilistic criteria provide a parabolic boundary Harnack inequality (with an additional caveat) for the corresponding Kolmogorov forward equation. In addition to hypoelliptic settings, these comparison inequalities are thereby obtained in a setting where the corresponding Fokker-Planck equation is first order, with the possibility of discontinuous solutions. As a corollary, we obtain a sufficient condition for a submarkovian transition kernel to have a bounded, positive right eigenfunction, without requiring that any operator is compact. We apply the above to the following examples (with absorption): Markov processes on finite state spaces, degenerate diffusions satisfying parabolic H\"{o}rmander conditions, $1+1$-dimensional Langevin dynamics, random diffeomorphisms, $2$-dimensional neutron transport dynamics, and certain piecewise-deterministic Markov processes. In the last case, convergence to a QSD was previously unknown for any notion of convergence. Our proof is entirely different to earlier work, relying on consideration of the time-reversal of an absorbed Markov process.
\end{abstract}

\section{Introduction}\label{section:introduction}

The classical criteria providing for total variation convergence of a Markov process to its stationary distribution are complemented by a substantial literature providing for other notions of convergence. In particular, given a Markov process $X_t$ with stationary distribution $\pi$, there is a substantial literature providing for long-time convergence of the Radon-Nikodym derivative $\frac{d\Law_{\mu}(X_t)}{d\pi}$ under various metrics. We shall examine the analogous question for absorbed Markov processes $(X_t)_{0\leq t<\tau_{\partial}}$ having some quasi-stationary distribution (QSD), $\pi$. 

The investigation of the long-term behaviour of killed Markov processes conditioned on survival commenced with the work of Yaglom on subcritical Galton-Watson processes \cite{Yaglom1947}. In general, one seeks to establish convergence of the distribution conditioned on survival, $\Law_{\mu}(X_t\lvert \tau_{\partial}>t)$, to the corresponding quasi-stationary distribution (QSD), $\pi$. When the state space is finite, Darroch and Seneta established  convergence in total variation whenever the killed Markov process is irreducible prior to extinction \cite{Darroch1965,Darroch1967}. Much more recently, Champagnat and Villemonais have provided a general criterion for uniform exponential convergence in total variation for killed Markov processes conditioned on survival, \cite[Assumption (A)]{Champagnat2014}. Conditions analogous to \cite[Assumption (A)]{Champagnat2014}, but involving Lyapunov functions and tailored to the situation whereby the convergence of $\Law_{\mu}(X_t\lvert\tau_{\partial}>t)$ to $\pi$ is non-uniform, are provided in \cite{Champagnat2018a}. Whereas the approach in \cite{Champagnat2014,Champagnat2018a} is probabilistic, the most common approach has been to employ spectral arguments, for instance \cite{Pinsky1985,Gong1988,Cattiaux2009,Kolb2012,Benaim2021,Lelievre2021}. Other approaches have included renewal arguments \cite{Ferrari1995} and obtaining uniform (in the number of particles) controls on an associated particle system \cite{Cloez2016}. Our approach is entirely different. Our proof instead relies on considering the time-reversal at quasi-stationarity of an absorbed Markov process, which to the authors' knowledge has not previously been examined (whereas there is a large literature on the time-reversal of Markov processes without absorption).

Convergence to a QSD is typically established in total variation norm, but other notions of convergence have been considered. For example, setwise convergence to a QSD is established in \cite{Cattiaux2010}, whilst \cite{Cattiaux2009} establishes convergence in $L^2$ of a measure with respect to which the kernel is symmetric. There are also a handful of results providing for Wasserstein-type convergence, for example \cite{Ocafrain2021}. As far as the author is aware, however, these are not complemented by any results providing for convergence of the Radon-Nikodym derivative $\frac{d\Law_{\mu}(X_t\lvert \tau_{\partial}>t)}{d\pi}$, that is the density with respect to the quasi-stationary distribution, under any metric. 

In this paper, we will establish that the following Dobrushin-type condition, Assumption \ref{assum:adjoint Dobrushin main results section}, along with an additional assumption on the QSD $\pi$, provides for $L^{\infty}(\pi)$ convergence of the density $\frac{d\Law_{\mu}(X_t\lvert \tau_{\partial}>t)}{d\pi}$ (Theorem \ref{theo:Linfty convergence main results section} on Page \pageref{theo:Linfty convergence main results section}). Under an additional ``anti-Dobrushin'' type criterion, Assumption \ref{assum:adjoint anti-Dobrushin main results section} (on Page \pageref{assum:adjoint anti-Dobrushin main results section}), we shall establish this for arbitrary initial condition, possibly mutually singular with respect to $\pi$ (Theorem \ref{theo:uniform Linfty convergence main results section} on Page \pageref{theo:uniform Linfty convergence main results section}). This therefore provides a notion of convergence to a QSD substantially stronger than any previously considered in the literature. In addition to strengthening existing notions of convergence, the criteria we introduce in this paper shall also allow us establish convergence to a QSD where this was previously unknown for any notion of convergence (see Section \ref{section:PDMPs}, commencing on Page \pageref{section:PDMPs}).

We assume throughout that $\chi$ is a metric space equipped with its Borel $\sigma$-algebra, and that $\partial$ is a one-point set distinct from $\chi$. We consider, in discrete or continuous time, a killed Markov chain $(X_t)_{0\leq t<\tau_{\partial}}$ on the state space $\chi$, with cemetery state $\partial$ and absorption time $\tau_{\partial}:=\inf\{t\geq 0:X_t\in \partial\}$, after which $X_t$ remains in $\partial$. Corresponding to $(X_t)_{0\leq t<\tau_{\partial}}$ is the submarkovian transition semigroup $(P_t)_{t\geq 0}$. Throughout, we impose the following standing assumption.
\renewcommand{\theAssumletter}{S}
\begin{Assumletter}[Standing assumption]\label{assum:standing assumption}
The killed Markov chain $(X_t)_{0\leq t<\tau_{\partial}}$ has a (not necessarily unique) quasi-stationary distribution $\pi$.
\end{Assumletter} 

This paper is concerned with consequences of the following Dobrushin-type criterion upon the killed Markov process $(X_t)_{0\leq t<\tau_{\partial}}$ and quasi-stationary distribution $\pi$. 

We suppose that we have some distinguished $\sigma$-finite Borel measure on $\chi$ with full support, which we denote $\Lambda$ (typically this corresponds to Lebesgue measure). $\calB_{b,\gg}(\chi)$ is the set of positive Borel functions on $\chi$ which are both bounded and bounded away from $0$ (see \eqref{eq:Bounded and bded away from 0 borel and cts functions}).

In this paper, we shall consider the implications of the following assumption.
\renewcommand{\theAssumletter}{AD}
\begin{Assumletter}[Adjoint Dobrushin condition]\label{assum:adjoint Dobrushin main results section}
There exists $\psi\in \calB_{b,\gg}(\chi)$, a time $t_0>0$, a constant $a>0$ and a submarkovian kernel $\tilde{P}$ on $\chi$ such that
\begin{equation}\label{eq:psi adjoint for verifying reverse Dobrushin main results section}
\psi(x)\Lambda(dx)P_{t_0}(x,dy)=a\psi(y)\Lambda(dy)\tilde{P}(y,dx).
\end{equation}
We assume that $\tilde{P}1(y)>0$ for $\Lambda$-almost every $y\in \chi$.

We further assume that there exists $c_0'>0$ and $\nu\in\calP(\chi)$, the latter of which is not mutually singular with respect to $\pi$, such that
\begin{equation}\label{eq:Dobrushin for P tilde kernel crit for reverse dobrushin main results section}
\frac{\tilde{P}(y,\cdot)}{\tilde{P}1(y)}\geq c_0'\nu(\cdot)\quad \text{for $\Lambda$-almost every $y\in\chi$}.
\end{equation}
\end{Assumletter}

This is referred to as an ``adjoint Dobrushin condition'' as it is a Dobrushin condition on a kernel $\tilde{P}$, which may be thought of as adjoint to $P_{t_0}$.

We will establish in Theorem \ref{theo:Linfty convergence main results section} that Assumption \ref{assum:adjoint Dobrushin main results section}, along with the additional assumption that $\pi$ has an (essentially) bounded density with respect to $\Lambda$, provides for $L^{\infty}(\pi)$ convergence of the density $\frac{d\Law_{\mu}(X_t\lvert \tau_{\partial}>t)}{d\pi}$ when the initial condition $\mu$ has an (essentially) bounded density with respect to $\pi$. We then extend this in Theorem \ref{theo:uniform Linfty convergence main results section} to all initial conditions (which may be mutually singular with respect to $\pi$) under an additional ``adjoint anti-Dobrushin'' condition, Assumption \ref{assum:adjoint anti-Dobrushin main results section}. In particular, Theorem \ref{theo:uniform Linfty convergence main results section} shall provide conditions under which we have uniform (over all initial distributions) exponential convergence in $L^{\infty}(\pi)$ of $\frac{d\Law_{\mu}(X_t\lvert \tau_{\partial}>t)}{d\pi}$.

The latter extension is possible since we show in Theorem \ref{theo:dominated by pi theorem general results main results section} (on Page \pageref{theo:dominated by pi theorem general results main results section}) that the adjoint Dobrushin and adjoint anti-Dobrushin conditons, \ref{assum:adjoint Dobrushin main results section} and \ref{assum:adjoint anti-Dobrushin main results section}, combine to ensure that the distribution of $X_t$ at a given time is dominated by a multiple of a QSD $\pi$, uniformly over all initial conditions. We shall similarly establish in Theorem \ref{theo:DAD lower bounds density main results section} (on Page \pageref{theo:DAD lower bounds density main results section}) that the combination of a Dobrushin condition on $(X_t)_{0\leq t<\tau_{\partial}}$ itself with an adjoint Dobrushin condition, Assumption \ref{assum:combined Dobrushin adjoint Dobrushin main results section} (on Page \pageref{assum:combined Dobrushin adjoint Dobrushin main results section}), along with the assumption that $\pi$ has an (essentially) bounded density with respect to $\Lambda$, provides for the reverse inequality. To be more precise, Theorem \ref{theo:dominated by pi theorem general results main results section} and Theorem \ref{theo:DAD lower bounds density main results section} provide, for a given time $T>0$, explicit $0<c\leq C<\infty$ such that
\begin{equation}\label{eq:comparison in intro}
c\pi(\cdot)\leq \Pm_{\mu}(X_{T}\in \cdot\lvert\tau_{\partial}>{T})\leq C\pi(\cdot)\quad\text{for all initial conditions $\mu$.}
\end{equation}

This has a clear PDE interpretation, which we formally describe as follows. We suppose that we have an arbitrary solution of Kolmogorov's forward equation, $\partial_tu=L^{\ast}u$, corresponding to the distribution of $X_t$ for some initial condition. We also suppose that we have the principal eigenfunction $\rho$, a non-negative solution of $L^{\ast}\rho=-\lambda \rho$, corresponding to the QSD $\pi$. We can then phrase \eqref{eq:comparison in intro} as
\begin{equation}\label{eq:PDE interpretation of comparison inequalities}
\begin{split}
ce^{-\gamma(t-{T})}\Big(\int_{\chi}u(x',{T})dx'\Big) \rho(x)\leq u(x,{t})\leq Ce^{-\gamma(t-{T})}\Big(\int_{\chi}u(x',{T})dx'\Big)\rho(x) \quad\text{for all}\quad x\in\chi,\quad t\geq {T},
\end{split}
\end{equation}
for all such solutions $u$ of Kolmogorov's forward equation, with the constants $0<c\leq C<\infty$  not depending upon $u$. Since this inequality allows us to compare any such solution of Kolmogorov's forward equation with the principal eigenfunction $\rho$, it then allows us to compare any two such solutions of Kolmorogov's forward equation with each other. Moreover this comparison is valid up to the boundary. We therefore obtain a parabolic boundary Harnack inequality, with the added caveat that we may only compare solutions of Kolmogorov's forward equation corresponding globally to our given absorbed Markov process for some initial condition. In particular, for any such solutions $u_1$ and $u_2$ of Kolmogorov's forward equation, we then have that
\begin{equation}
\frac{\Big(\frac{u_1(x,t_1)}{u_2(x,t_2)}\Big)}{\Big(\frac{u_1(x',t_1)}{u_2(x',t_2)}\Big)}\geq \frac{c^2}{C^2}>0\quad\text{for all}\quad t_1,t_2\geq T,\quad x,x'\in \chi,
\end{equation}
where the constants $0<c\leq C<\infty$ depend only upon $T$ (in particular, they don't depend upon $u_1,u_2$). This comparison inequality is valid up to the boundary.

We will apply these criteria to obtain such parabolic boundary Harnack inequalities for: degenerate diffusions satisfying parabolic H\"ormander conditions (Theorem \ref{theo:comparison inequality for degenerate diffusions} on Page \pageref{theo:comparison inequality for degenerate diffusions}), $1+1$-dimensional Langevin dynamics (Theorem \ref{theo:comparison inequality for 1D Langevin} on Page \pageref{theo:comparison inequality for 1D Langevin}), and piecewise-deterministic Markov processes both in dimension $1$ and in arbitrary dimension with constant drifts (Theorem \ref{theo:bdy Harnack for PDMPs} on Page \pageref{theo:bdy Harnack for PDMPs}). Whilst the author is not aware of previously established boundary Harnack inequalities under H\"ormander-type conditions, the literature is rather large (the classical work of Bony \cite[Section 7]{Bony1969} provides interior Harnack inequalities under H\"ormander conditions). In contrast to the first two settings, which are hypoelliptic, in the latter setting the associated Kolmogorov forward equation is a system of first-order PDEs, with the possibility of discontinuous solutions for smooth initial conditions.

We then obtain the following two corollaries of Theorem \ref{theo:Linfty convergence main results section}. We firstly obtain in Theorem \ref{theo:criterion for cty of QSD} (on Page \pageref{theo:criterion for cty of QSD}) a criterion, Assumption \ref{assum:cty assum for cty cor}, providing for the continuity of the density of a QSD with respect to $\Lambda$. This is applied in Section \ref{section:PDMPs} to the QSDs of piecewise deterministic Markov processes in dimensions $1$ and $2$. Then in Theorem \ref{theo:criterion for right efn main results section} (on Page \pageref{theo:criterion for right efn main results section}) we shall establish a criterion, Assumption \ref{assum:assum for cor right efn main results section}, providing for the existence of a bounded, strictly positive right eigenfunction for $(P_{t})_{t\geq 0}$. In contrast to the Krein-Rutman theorem, this criterion does not require that any operator is compact. The application of this to absorbed Markov processes is as follows.

The aforementioned criterion for establishing uniform exponential convergence in total variation for killed Markov processes, \cite[Assumption A]{Champagnat2014}, consists of a Dobrushin-type condition \cite[Assumption (A1)]{Champagnat2014}, and a second condition which is usually more difficult to establish, \cite[Assumption (A2)]{Champagnat2014}. In fact, we shall see in Proposition \ref{prop:Lp malthusian by interpolation} that \cite[Assumption (A)]{Champagnat2014} also provides for non-uniform exponential convergence in $L^p(\pi)$ for all $1\leq p<\infty$, which is well-known in the context of Markov processes without killing (see, for instance, \cite[p.114]{Cattiaux2014}). 

A typical strategy for verifying \cite[Assumption (A2)]{Champagnat2014} is to establish the existence of a strictly positive, bounded right eigenfunction for the submarkovian transition semigroup $(P_t)_{0\leq t<\infty}$, which along with \cite[Assumption (A1)]{Champagnat2014} suffices to give \cite[Assumption (A2)]{Champagnat2014} (we will provide a more precise statement of this fact in Proposition \ref{prop:right efn gives A2}). This is then accomplished by applying the Krein-Rutman theorem to an appropriate positive compact operator. If the process is sufficiently degenerate, however, the Krein-Rutman theorem is not available as one does not have operator compactness, so an alternative criterion is needed. Assumption \ref{assum:assum for cor right efn main results section} provides such an alternative.

Consequentially, in Section \ref{section:PDMPs} we shall be able to verify \cite[Assumption (A)]{Champagnat2014} for certain piecewise-deterministic Markov processes (PDMPs) (in dimension at most $2$ with arbitrary drifts or any dimension with constant drifts), for which it was not previously known that they satisfy \cite[Assumption (A2)]{Champagnat2014} (nor was convergence to a quasi-stationary distribution known, for any notion of convergence).

If we have \cite[Assumption (A)]{Champagnat2014}, then we can define the so-called ``$Q$-process'' - the process $(X_t)_{0\leq t<\tau_{\partial}}$ conditioned \textit{never} to be killed. The reader is directed towards \cite[Theorem 3.1]{Champagnat2014} for a more precise definition. Corollary \ref{cor:L infty convergence of Q-process main results section} (on Page \pageref{cor:L infty convergence of Q-process main results section}) provides for uniform exponential convergence in $L^{\infty}$ of the density of the $Q$-process with respect to its stationary distribution, under the same conditions as Theorem \ref{theo:uniform Linfty convergence main results section}. This follows from the observation that an absorbed Markov process at quasi-stationarity and the corresponding $Q$-process at stationarity have the same time-reversal (Observation \ref{observation: Reverse dobrushin implies reverse dobrushin for Q-process} on Page \pageref{observation: Reverse dobrushin implies reverse dobrushin for Q-process}).

We summarise our results as follows:
\begin{enumerate}
\item\label{enum:summary of results conv for bded ic}
Theorem \ref{theo:Linfty convergence main results section} (on Page \pageref{theo:Linfty convergence main results section}) provides for (non-uniform) exponential convergence in $L^{\infty}(\pi)$ for initial conditions having a bounded density with respect to $\pi$, under the adjoint Dobrushin condition, Assumption \ref{assum:adjoint Dobrushin main results section}, and the assumption that $\pi$ has an (essentially) bounded density with respect to $\Lambda$.
\item\label{enum:summary of results conv for arbitrary ic}
Theorem \ref{theo:uniform Linfty convergence main results section} (on Page \pageref{theo:uniform Linfty convergence main results section}) allows us to extend this to all possible initial conditions (which may be mutually singular with respect to $\pi$), under the additional assumption of the adjoint anti-Dobrushin condition, Assumption \ref{assum:adjoint anti-Dobrushin main results section}. If we also have \cite[Assumption (A)]{Champagnat2014} or Assumption \ref{assum:combined Dobrushin adjoint Dobrushin main results section} (the ``combined Dobrushin and adjoint Dobrushin condition''), the convergence becomes uniform over all initial conditions.
\item\label{enum:summary of results dominated by pi}
Theorems \ref{theo:dominated by pi theorem general results main results section} (on Page \pageref{theo:dominated by pi theorem general results main results section}) and \ref{theo:DAD lower bounds density main results section} (on Page \pageref{theo:DAD lower bounds density main results section}) provide Dobrushin and ``anti-Dobrushin'' type criteria allowing us to compare the distribution of an absorbed Markov process at a given time with its QSD, over the whole space and uniformly over all initial conditions. On a PDE level, these criteria provide for a parabolic boundary Harnack inequality, with the added caveat that they only allow us to compare solutions of Kolmogorov's forward equation which correspond globally to our given absorbed Markov process for some initial condition.
\item\label{enum:summary of results cts QSD}
Theorem \ref{theo:criterion for cty of QSD} (on Page \pageref{theo:criterion for cty of QSD}) provides a sufficient condition for establishing the continuity of the density of a QSD with respect to the distinguished measure $\Lambda$ (this typically being Lebesgue measure).
\item\label{enum:summary of results right eigenfunction}
Theorem \ref{theo:criterion for right efn main results section} (on Page \pageref{theo:criterion for right efn main results section}) provides a sufficient condition for the existence of a strictly positive, bounded right eigenfunction for $(P_t)_{t\geq 0}$, without requiring any operator to be compact (as is required by the Krein-Rutman theorem).
\item
Corollary \ref{cor:L infty convergence of Q-process main results section} (on Page \pageref{cor:L infty convergence of Q-process main results section}) provides for $L^{\infty}$-type convergence of the $Q$-process to its stationary distribution, analogously to theorems \ref{theo:Linfty convergence main results section} and \ref{theo:uniform Linfty convergence main results section}, and under the same conditions. 
\end{enumerate}

We apply the above to the following examples: absorbed Markov processes on finite state spaces, degenerate diffusions satisfying parabolic H\"{o}rmander conditions, killed at the boundary of their domain, $1+1$-dimensional Langevin dynamics killed at the boundary of their domain, absorbed random diffeomorphisms, $2$-dimensional neutron transport processes absorbed at the boundary of their domain, and piecewise-deterministic Markov processes (PDMPs) absorbed at the boundary of their domain, in both dimension at most $2$ and in arbitrary dimension with constant drifts.

Whereas the assumptions of the above theorems are stated in terms of the adjoint kernel $\tilde{P}$ with respect to an unspecified distinguished measure $\Lambda$ (typically corresponding to Lebesgue measure), the proofs of our results shall hinge on consideration of a special adjoint: the adjoint with respect to our given QSD $\pi$. This adjoint, which we denote as $R$, may be thought of as the time-reversal at quasi-stationarity of our absorbed Markov process. For this reason, we shall often refer to it as the ``reverse kernel''. For some given time $t_0>0$, it is a solution of
\[
\pi(dx)P_{t_0}(x,dy)=\pi(dy)R(y,dx).
\]
It has the following special property on which our results hinge: it corresponds to a submarkovian kernel without absorption (after rescaling). The adjoint kernel $\tilde{P}$ in \eqref{eq:psi adjoint for verifying reverse Dobrushin main results section} does not, in general, share this property. Whereas there exists an enormous literature on the time-reversal of Markov processes without absorpotion, to the authors' knowledge the time-reversal of absorbed Markov processes has not previously been considered. In Section \ref{section:Reverse Dobrushin condition}, we shall consider the implications of imposing Dobrushin-type criteria upon $R$.

Since we typically do not have an explicit expression for $\pi$, we typically do not have an explicit expression for $R$. On the other hand, for a reasonable choice of distinguished measure $\Lambda$, we will often have an explicit expression for $\tilde{P}$ (often $\Lambda$ and $\psi$ may be chosen so that $\tilde{P}$ corresponds to a process of the same form as our original process). Moreover, we can compare the adjoint kernel $\tilde{P}$ and reverse kernel $R$ under various assumptions - see Theorem \ref{theo:theo for reverse Dobrushin in Euclidean space} (on Page \pageref{theo:theo for reverse Dobrushin in Euclidean space}) and Theorem \ref{theo:theorem for bounded by pi in Euclidean space} (on Page \pageref{theo:theorem for bounded by pi in Euclidean space}). We will thereby be able to transfer the results of Section \ref{section:Reverse Dobrushin condition} - which require conditions imposed upon the reverse kernel $R$ - to results requiring more tractable conditions upon the adjoint kernel $\tilde{P}$. These are our main results, which may be found in the following section.

\subsection*{Structure of the paper}

In the following section we shall state the main results of this paper, beginning with the necessary notation, definitions and background. In Section \ref{section:Reverse Dobrushin condition}, we shall introduce the ``reverse Dobrushin condition'', Assumption \ref{assum:Dobrushin reverse time}, and related conditions. We state in this section the results these assumptions provide for, before proving these results in Section \ref{section:general state space proof}. We then use the results of Section \ref{section:Reverse Dobrushin condition} to prove our main results, those of Section \ref{section:main results}, in Section \ref{section:Euclidean state space proof}. The sections which then follow, \ref{section:finite state space}-\ref{section:PDMPs}, are dedicated to the various examples. We finally collect the proofs of various technical propositions and lemmas in the appendix.

\section{Statement of results}\label{section:main results}

\subsection*{Notation, definitions and background}

The space of functions to which the path $(X_t)_{0\leq t<\infty}$ belongs will play no role in our analysis, so will not be specified. In particular, these results apply in both discrete and continuous time. 

Given a QSD $\pi$, we write $\lambda(\pi)$ for the corresponding eigenvalue over time $1$:
\begin{equation}\label{eq:eigenvalue corresponding to QSD}
\lambda(\pi):=\Pm_{\pi}(\tau_{\partial}>1)>0.
\end{equation}

We recall that the state space $\chi$ is assumed to be a metric space, equipped with its Borel $\sigma$-algebra $\mathscr{B}(\chi)$, and with the cemetery set $\partial$ a one-point set distinct from $\chi$. We also write $\calB(\chi)$ and $\calB_b(\chi)$ for the set of Borel-measurable real-valued functions and bounded Borel-measurable real-valued functions on $\chi$ respectively. We equip $\calB_b(\chi)$ with the supremum norm, denoted $\lvert\lvert \cdot\rvert\rvert_{\infty}$. We then have the semigroup $(P_t)_{t\geq 0}$ on $\calB_b(\chi)$ defined by
\begin{equation}\label{eq:Pt semigroup bounded Borel measurable}
P_t:\calB_b(\chi)\ni f\mapsto (x\mapsto \expE_x[f(X_t)\Ind(\tau_{\partial}>t)])\in \calB_b(\chi).
\end{equation}

We further define
\begin{equation}\label{eq:Bounded and bded away from 0 borel and cts functions}
\calB_{b,\gg}(\chi):=\{f\in \calB_b(\chi):\inf_{x\in \chi}f(x)>0\},\quad C_{b,\gg}(\chi):=\{f\in C_b(\chi):\inf_{x\in \chi}f(x)>0\}.
\end{equation}

We define $\calP(\chi)$, $\calM_{\geq 0}(\chi)$, $\calM(\chi)$ and $\calM_{\sigma}(\chi)$ to be the space of Borel probability measures; bounded, non-negative Borel measures; bounded signed Borel measures and $\sigma$-finite Borel measures on $\chi$ respectively. 

We recall that we assume there to be some distinguished $\sigma$-finite measure on $\chi$ with full support, which we denote $\Lambda$. Later, in sections \ref{section:Reverse Dobrushin condition} and \ref{section:general state space proof}, we shall no longer assume there to exist some distinguished measure $\Lambda$. In our examples, $\Lambda$ will typically correspond to Lebesgue measure. 

Given our distinguished measure $\Lambda$, a kernel $P$, some choice of $\psi\in \calB_{b,\gg}(\chi)$ (we shall sometimes specify $\psi\in \calC_{b,\gg}(\chi)$), and some $a>0$, we are therefore able to define an adjoint kernel as a submarkovian kernel $\tilde{P}$ such that
\begin{equation}
\psi(x)\Lambda(dx)P(x,dy)=a\psi(y)\Lambda(dy)\tilde{P}(y,dx).
\end{equation}

We note that the role of $a>0$ is simply to ensure that $\tilde{P}$ is submarkovian, so can be thought of as corresponding to an absorbed Markov process.

We write $\Pm_x(A)$ for the probability of the event $A$ given $X_0=x$, whilst $\Pm_{\mu}(A)$ is the probability of the event $A$ given the initial distribution $X_0\sim \mu$. This is well-defined for all $\mu\in \calM(\chi)$ by
\[
\Pm_{\mu}(A)=\int_{\chi}\Pm_x(A)\mu(dx).
\]
Given a sub-probability measure $\mu\in \calM_{\geq 0}(\chi)$, we say that a random variable is distributed like $\mu$, $X\sim \mu$, if $\Pm(X\in A)=\mu(A)$ for all $A\in \calB(\chi)$. In particular, this specifies $\Pm(X\in \partial)=1-\mu(\chi)$.

\begin{defin}\label{defin:lower semicts kernel}
We recall that a function $f$ on $\chi$ is lower semicontinuous if $f(x_0)\leq \liminf_{x\ra x_0}f(x)$ for all $x_0\in \chi$. We define $LC_b(\chi;\Rm_{\geq 0})$ to be the set of all bounded, non-negative lower semicontinuous functions on $\chi$. We say that a submarkovian kernel $P$ is lower semicontinuous if
\begin{equation}\label{eq:lower semicty of kernel}
Pf\in LC_b(\chi;\Rm_{\geq 0})\quad\text{for all}\quad f\in C_b(\chi;\Rm_{\geq 0}).
\end{equation}
This is equivalent to $Pf\in LC_b(\chi;\Rm_{\geq 0})$ for all $f\in LC_b(\chi;\Rm_{\geq 0})$, which may be seen by using the fact that a function is lower semicontinuous if and only if it is the pointwise limit of a non-decreasing sequence of continuous functions, and applying the monotone convergence theorem. We say that a submarkovian transition semigroup $(P_t)_{t\geq 0}$ is lower semicontinuous if $P_t$ is lower semicontinuous for all $t\geq 0$. We say that an absorbed Markov process $(X_t)_{0\leq t<\tau_{\partial}}$ is lower semicontinuous if its associated submarkovian transition semigroup $(P_t)_{t\geq 0}$ is lower semicontinuous.
\end{defin}

We write $\lvert\lvert \cdot\rvert\rvert_{\TV}$ for the total variation norm on $\calM(\chi)$ (or on any subset thereof). For any linear map $T:X\ra Y$ between normed spaces, we write $\lvert\lvert T\rvert\rvert_{\text{op}}$ for the operator norm if the identities of $X$ and $Y$ are unambiguous, writing instead $\lvert\lvert T\rvert\rvert_{X\ra Y}$ if necessary to avoid ambiguity.

For any $\mu\in\calM_{\sigma}(\chi)\setminus \{0\}$ and $1\leq p\leq \infty$, $L^p(\mu)$ is the usual $L^p$-space while $L^p_{\geq 0}(\mu)$ and $L^p_{>0}(\mu)$ are $L^p(\mu)$ restricted to $\mu$-almost everywhere non-negative (respectively strictly positive) functions. 

Given measures $\nu\in \calM(\chi)\cup \calM_{\sigma}(\chi)$ and $\mu\in\calM_{\sigma}(\chi)\setminus \{0\}$ we write $\nu\ll \mu$ if $\nu$ is absolutely continuous with respect to $\mu$. For $1\leq p\leq \infty$ we write $\nu\ll_{L^{p}}\mu$ if, in addition, the Radon-Nikodym derivative belongs to $L^p(\mu)$, $\frac{d\nu}{d\mu}\in L^{p}(\mu)$ (note that the $p=1$ case is automatic if $\nu\in \calM(\chi)$). For any $\mu\in\calM_{\sigma}(\chi)\setminus \{0\}$, we define $\calP(\mu)$, $\calM_{\geq 0}(\mu)$, $\calM(\mu)$ and $\calM_{\sigma}(\chi)$ to be those measures belonging to $\calP(\chi)$, $\calM_{\geq 0}(\chi)$, $\calM(\chi)$ and $\calM_{\sigma}(\chi)$ respectively which are absolutely continuous with respect to $\mu$. We further define 
\[
\calP_p(\mu):=\{\nu\in\calP(\mu):\nu\ll_{L^p} \mu\}\quad\text{and}\quad\calM_p(\mu):=\{\nu\in\calM(\mu):\nu\ll_{L^p} \mu\}.
\]
For $\mu\in\calM_{\sigma}(\chi)\setminus \{0\}$ and $f\in L^{\infty}(\mu)$, we write $\osc_{\mu}(f)$ for the essential oscillation
\[
\osc_{\mu}(f)=\esssup_{\mu}(f)-\essinf_{\mu}(f).
\]
For $\nu\ll_{\infty} \mu$ we define \[
\osc_{\mu}(\nu):=\osc_{\mu}\Big(\frac{d\nu}{d\mu}\Big).
\]
For $\mu\in\calM_{\sigma}(\chi)\setminus\{0\}$ and $f\in L^1(\mu)$ we write $f\mu$ for the unique measure $\nu\in\calM(\mu)$ such that $\frac{d\nu}{d\mu}=f$, that is we define
\begin{equation}\label{eq:f mu notation}
(f\mu)(A):=\int_Af(x)\mu(dx),\quad A\in\mathscr{B}(\chi).
\end{equation}

The following simple proposition, proven in the appendix, shall be used implicitly throughout.
\begin{prop}\label{prop:density wrt pi-> density}
Assume that $\pi$ is a quasi-stationary distribution for $(X_t)_{0\leq t<\tau_{\partial}}$. If $\mu\ll \pi$, then $\mu P_t\ll \pi$ while if $\mu\ll_{\infty} \pi$, then $\mu P_t\ll_{\infty} \pi$, for all $0\leq t<\infty$. 
\end{prop}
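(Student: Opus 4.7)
The plan is to exploit the defining relation of a QSD, namely that $\pi P_t$ is a scalar multiple of $\pi$ (with proportionality constant $\lambda(\pi)^{t}$ or $\lambda(\pi)^{n}$ depending on whether we are in continuous or discrete time), so that in either case $\pi P_t \leq \pi$ as a measure inequality.

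For the first claim, I would take any Borel $A$ with $\pi(A)=0$. Then
\begin{equation*}
0 = \lambda(\pi)^{t}\pi(A) = (\pi P_t)(A) = \int_{\chi} P_t(x,A)\,\pi(dx),
\end{equation*}
which forces $P_t(\cdot,A)=0$ $\pi$-almost everywhere. Since $\mu\ll \pi$, this null set is also $\mu$-null, hence $(\mu P_t)(A)=\int_{\chi} P_t(x,A)\,\mu(dx)=0$. This proves $\mu P_t\ll \pi$.

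For the second claim, set $f:=\tfrac{d\mu}{d\pi}$ and $M:=\lVert f\rVert_{L^{\infty}(\pi)}$. The $\pi$-a.e.\ bound $f\leq M$ upgrades to the measure inequality $\mu\leq M\pi$. Integrating the nonnegative function $x\mapsto P_t(x,A)$ against this inequality, and using $\pi P_t\leq \pi$,
\begin{equation*}
(\mu P_t)(A) = \int_{\chi} P_t(x,A)\,\mu(dx) \leq M\int_{\chi} P_t(x,A)\,\pi(dx) = M(\pi P_t)(A) \leq M\pi(A),
\end{equation*}
so $\tfrac{d(\mu P_t)}{d\pi}\leq M$ $\pi$-a.e., giving $\mu P_t\ll_{\infty}\pi$.

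The main ``obstacle'' here is purely cosmetic: one just needs to phrase the QSD relation $\pi P_t \propto \pi$ uniformly across the discrete- and continuous-time settings treated by the paper, noting in both cases that the proportionality factor lies in $(0,1]$. No measure-theoretic subtlety beyond Fubini/monotone-convergence for the nonnegative kernel $P_t(\cdot, A)$ is required, which is why this proposition is deferred to the appendix and can be used implicitly throughout.
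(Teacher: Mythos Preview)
Your proof is correct. The $\ll_{\infty}$ argument is essentially identical to the paper's: both push the measure inequality $\mu\leq M\pi$ through the positive operator $P_t$ and use $\pi P_t=\lambda^t\pi$. For the $\ll$ claim, however, the paper takes a different route: it first establishes the $\ll_{\infty}$ case, then for general $\mu\ll\pi$ truncates the density to form $\mu_n:=((-n)\vee\tfrac{d\mu}{d\pi}\wedge n)\pi\in\mathcal{M}_{\infty}(\pi)$, applies the $\ll_{\infty}$ result to each $\mu_n$, and passes to the limit in total variation. Your null-set argument---observing that $\pi(A)=0$ forces $P_t(\cdot,A)=0$ $\pi$-a.e., hence $\mu$-a.e.---is more direct and avoids the approximation entirely. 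The paper's order has the minor advantage of treating signed $\mu$ uniformly via the two-sided bound $-C\pi\leq\mu\leq C\pi$, but your argument extends to that case with no extra work since the null set is also $|\mu|$-null.
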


The semigroup $(P_t)_{t\geq 0}$ is typically defined on $\calB_b(\chi)$, as in \eqref{eq:Pt semigroup bounded Borel measurable}. However in Theorem \ref{theo:Linfty convergence main results section} we shall obtain a non-negative right eigenfunction of $P_t$ belonging to $L^1(\pi)$, which makes sense only if $P_t$ can be defined on $L^1(\pi)$. In the following proposition, which shall be proven in the appendix, we establish that $(P_t)_{t\geq 0}$ may indeed be defined on $L^1(\pi)$. 

\begin{prop}\label{prop:Pt well-defined on L1}
For all $f\in\calB(\pi)$ and $0\leq t<\infty$, $P_t\lvert f\rvert(x)<\infty$ for $\pi$-almost every $x\in\chi$, with $(x\mapsto \Ind_{P_t\lvert f\rvert(x)<\infty}P_tf(x))\in\calB(\pi)$. Furthermore for $f,g\in\calB(\pi)$ and $0\leq t<\infty$, if $f=g$ $\pi$-almost everywhere then $P_tf=P_tg$ $\pi$-almost everywhere, so that
\begin{equation}
P_t:L^1(\pi)\ni f\mapsto (x\mapsto \expE_x[f(X_t)\Ind(\tau_{\partial}>t)])\in L^1(\pi)
\end{equation}
is a well-defined map for all $0\leq t<\infty$. Moreover we have that $(\mu P_t)(f)=\mu(P_tf)$ for all $\mu\in\calP_{\infty}(\pi)$, $f\in L^1(\pi)$ and $t\geq 0$, so that $\mu P_tf$ is well-defined. Furthermore, $(P_t)_{t\geq 0}$ is a semigroup of bounded linear maps on $L^1(\pi)$, with $\lvert\lvert P_t\rvert\rvert_{\text{op}}=\lambda^t$ for all $t\geq 0$, such that $P_t(L^1_{\geq 0}(\pi))\subseteq L^1_{\geq 0}(\pi)$ for all $t\geq 0$.
\end{prop}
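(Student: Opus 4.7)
The plan is to exploit the quasi-stationarity relation $\pi P_t=\lambda^t\pi$ (where $\lambda=\lambda(\pi)$) together with Tonelli's theorem to transfer integrability from $f$ to $P_tf$. For a non-negative Borel $f$ and $t\geq 0$, the map $x\mapsto P_t f(x)=\int f(y)P_t(x,dy)$ is Borel-measurable by standard kernel/monotone class arguments, and Tonelli gives
\[
\int_\chi P_t|f|(x)\,\pi(dx)=\int_\chi|f|(y)\,(\pi P_t)(dy)=\lambda^t\int_\chi|f|\,d\pi=\lambda^t\|f\|_{L^1(\pi)}.
\]
Applied to $|f|$ for $f\in L^1(\pi)$, this forces $P_t|f|(x)<\infty$ for $\pi$-almost every $x$, so $P_tf:=P_tf^+-P_tf^-$ is finite and well-defined $\pi$-a.e.; the truncated version $x\mapsto \mathbbm{1}_{P_t|f|(x)<\infty}P_tf(x)$ is then an honest Borel function.

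Next I would verify that $P_t$ descends to a well-defined map on the quotient $L^1(\pi)$. If $f=g$ $\pi$-a.e., then $\pi(|f-g|)=0$, and by the identity above $\pi(P_t|f-g|)=\lambda^t\pi(|f-g|)=0$, so $P_t|f-g|=0$ $\pi$-a.e., which forces $P_tf=P_tg$ $\pi$-a.e. This is the key step where the QSD property is essential; without it, $P_t$ need not preserve $\pi$-null sets. Linearity and positivity of $P_t$ on $L^1(\pi)$ then follow immediately from those properties on Borel functions, and the inclusion $P_t(L^1_{\geq 0}(\pi))\subseteq L^1_{\geq 0}(\pi)$ is automatic.

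For the operator norm, I would combine $|P_tf|\leq P_t|f|$ with the displayed identity to get $\|P_tf\|_{L^1(\pi)}\leq\lambda^t\|f\|_{L^1(\pi)}$, i.e.\ $\|P_t\|_{\mathrm{op}}\leq\lambda^t$. For the matching lower bound, take $f\equiv 1\in L^1(\pi)$: then $P_t1(x)=\Pm_x(\tau_\partial>t)\geq 0$, so
\[
\|P_t 1\|_{L^1(\pi)}=\int_\chi P_t1\,d\pi=\int_\chi d(\pi P_t)=\lambda^t=\lambda^t\|1\|_{L^1(\pi)}.
\]
The semigroup property $P_{s+t}=P_sP_t$ on $L^1(\pi)$ is inherited from the corresponding identity on non-negative Borel functions (via Chapman--Kolmogorov for the kernels, Tonelli, and the well-definedness modulo $\pi$-null sets just established).

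Finally, for the duality $(\mu P_t)(f)=\mu(P_tf)$ with $\mu\in\calP_\infty(\pi)$ and $f\in L^1(\pi)$: since $\mu\ll\pi$, $P_tf$ is defined $\mu$-a.e., so $\mu(P_tf)$ makes sense. For non-negative $f$ the identity is Tonelli applied to the kernel; for general $f\in L^1(\pi)$ one splits into $f^\pm$ after noting that $\mu(P_t f^\pm)\leq\|d\mu/d\pi\|_\infty\pi(P_tf^\pm)=\|d\mu/d\pi\|_\infty\lambda^t\pi(f^\pm)<\infty$, which legitimises subtraction. The only mildly delicate point in the argument is the well-definedness step in the second paragraph, since it is what lets every subsequent statement be phrased on equivalence classes; everything else is a direct consequence of Tonelli and the identity $\pi P_t=\lambda^t\pi$.
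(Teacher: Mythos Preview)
Your proposal is correct and follows essentially the same approach as the paper: both hinge on the identity $\int P_t|f|\,d\pi=\lambda^t\int|f|\,d\pi$ (you via Tonelli, the paper via monotone convergence on $f\wedge n$), then deduce $\pi$-a.e.\ finiteness, well-definedness on equivalence classes from $\pi(P_t|f-g|)=0$, and the norm bound. Your explicit verification of the lower bound $\|P_t\|_{\mathrm{op}}\geq\lambda^t$ using $f\equiv 1$ is slightly more complete than the paper's write-up, which only records ``at most $\lambda^t$'' in the proof text, and your treatment of the duality $(\mu P_t)(f)=\mu(P_tf)$ matches the paper's monotone-convergence-then-linearity argument.
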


We shall abuse notation by writing $(P_t)_{t\geq 0}$ for both the semigroup defined on $\calB_b(\chi)$ by \eqref{eq:Pt semigroup bounded Borel measurable} and the semigroup defined on $L^1(\pi)$ by Proposition \ref{prop:Pt well-defined on L1}.

Since elements of $L^1(\pi)$ are defined only up to $\pi$-null sets, whereas elements of $\calB_b(\chi)$ which differ on a $\pi$-null set are considered distinct, we have the following different notions of an eigenfunction for $P_t$. 
\begin{defin}[Pointwise and $L^1(\pi)$ right eigenfunctions]
Fix $t\geq 0$. A pointwise eigenfunction for $P_t$ is defined to be some $h\in \calB_b(\chi)$ such that, for some $c\in \Rm$, $P_th(x)=ch(x)$ for all $x\in \chi$. Given a QSD $\pi$ for $(X_t)_{t\geq 0}$, an $L^1(\pi)$-right eigenfunction is some $\phi\in L^1(\pi)$ such that, for some $c\in \Rm$, $P_t\phi=c\phi$ in the sense of $L^1(\pi)$. We say that $h$ (respectively $\phi$) is a pointwise (respectively $L^1(\pi)$) right eigenfunction for $(P_t)_{t\geq 0}$ if it is a pointwise (respectively $L^1(\pi)$) right eigenfunction for $P_t$, for all $t\geq 0$.
\end{defin}

\begin{rmk}
Throughout this paper, for clarity we shall use $\phi$ to denote $L^1(\pi)$ right eigenfunctions and $h$ to denote pointwise right eigenfunctions.
\end{rmk}

Note that if $\phi\in L^{\infty}(\pi)$ is an $L^1(\pi)$-eigenfunction for $P_t$ with eigenvalue $c$, and $h$ is a bounded version of $\phi$, it does not follow that $h$ is a pointwise eigenfunction for $P_t$, as $P_th$ and $ch$ may differ on a $\pi$-null but non-empty set.

The results of \cite{Champagnat2014} shall be used often in this paper. In addition to \cite[Assumption (A)]{Champagnat2014}, they also assume the following technical assumption, which can be found on \cite[Page 244]{Champagnat2014}.
\renewcommand{\theAssumletter}{TA}
\begin{Assumletter}\label{assum:technical assumption for Assum (A)}[Technical condition required by \cite[Assumption (A)]{Champagnat2014}, found on Page 244 of \cite{Champagnat2014}]
For every $x\in \chi$ and time $t<\infty$ we have $\Pm_x(\tau_{\partial}>t)>0$ and $\Pm_x(\tau_{\partial}<\infty)=1$.
\end{Assumletter}
We note that the first requirement is necessary for \cite[Assumption (A1)]{Champagnat2014} to be a well-defined condition. On the other hand, the second requirement can be weakened according to the following remark.
\begin{rmk}\label{rmk:remark for checking technical assum for Assum (A)}
We suppose that for every $x\in \chi$ and time $t<\infty$ we have $\Pm_x(\tau_{\partial}>t)>0$ and $\Pm_x(\tau_{\partial}<\infty)>0$. We assume, in addition, that $(X_t)_{0\leq t<\tau_{\partial}}$ satisfies \cite[Assumption (A1)]{Champagnat2014}. Then it may readily be checked that $\Pm_x(\tau_{\partial}<\infty)=1$.
\end{rmk}

Whilst Assumption \ref{assum:technical assumption for Assum (A)} shall be imposed at times in this paper, it shall not be imposed throughout.

The following well-known proposition provides a common strategy for verifying \cite[Assumption (A2)]{Champagnat2014}. We will supply a proof in the appendix.

\begin{prop}\label{prop:right efn gives A2}
Suppose that $(X_t)_{0\leq t<\tau_{\partial}}$ satisfies Assumption \ref{assum:technical assumption for Assum (A)} and \cite[Assumption (A1)]{Champagnat2014}, and that for some time $t_1>0$ there exists a pointwise right eigenfunction for $P_{t_1}:\calB_b(\chi)\ra \calB_b(\chi)$ belonging to $\calB_b(\chi;\Rm_{>0})$ (in particular, bounded and everywhere strictly positive). Then $(X_t)_{0\leq t<\tau_{\partial}}$ satisfies \cite[Assumption (A2)]{Champagnat2014} and hence \cite[Assumption A]{Champagnat2014}. 
\end{prop}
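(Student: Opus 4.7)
The plan is to combine the pointwise right eigenfunction $h \in \calB_b(\chi;\Rm_{>0})$ with the Dobrushin minorization in \cite[Assumption (A1)]{Champagnat2014} to obtain matching pointwise bounds on the survival probabilities $\Pm_x(\tau_{\partial}>u)$ and $\Pm_{\nu}(\tau_{\partial}>u)$. Writing $P_{t_1}h=\lambda_1 h$, Assumption \ref{assum:technical assumption for Assum (A)} together with $h>0$ everywhere ensures $P_{t_1}h(x)=\expE_x[h(X_{t_1})\Ind(\tau_{\partial}>t_1)]>0$, whence $\lambda_1>0$; similarly, $\nu\in\calP(\chi)$ and $h>0$ on $\chi$ give $\nu(h)>0$ (approximate $\{h>0\}=\chi$ by $\{h\geq 1/k\}$), while $M:=\lvert\lvert h\rvert\rvert_{\infty}<\infty$.

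Applying the minorization $P_{t_0}(y,\cdot)\geq c_1 P_{t_0}1(y)\nu(\cdot)$ to the test function $h$ yields $P_{t_0}h(y)\geq c_1\nu(h)P_{t_0}1(y)$ for every $y\in\chi$. Composing this pointwise inequality with the positive operator $P_s$ and invoking the semigroup property gives, for every $s\geq 0$ and $x\in\chi$,
\[
P_{s+t_0}h(x)\;\geq\;c_1\nu(h)\,P_{s+t_0}1(x)\;=\;c_1\nu(h)\,\Pm_x(\tau_{\partial}>s+t_0).
\]
Choose $n_0\geq 1$ with $n_0 t_1\geq t_0$. Iterating $P_{t_1}h=\lambda_1 h$ gives $P_{nt_1}h=\lambda_1^n h$ for every $n\in\Nm$, so for $n\geq n_0$ one obtains simultaneously
\[
\Pm_x(\tau_{\partial}>nt_1)\;\leq\;\frac{\lambda_1^n M}{c_1\nu(h)},\qquad \Pm_{\nu}(\tau_{\partial}>nt_1)\;=\;\nu(P_{nt_1}1)\;\geq\;\frac{\nu(P_{nt_1}h)}{M}\;=\;\frac{\lambda_1^n\nu(h)}{M}.
\]

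For general $t\geq n_0 t_1$ write $t=nt_1+s$ with $n\geq n_0$ and $0\leq s<t_1$, and use monotonicity of $u\mapsto\Pm_y(\tau_{\partial}>u)$ to sandwich: $\Pm_x(\tau_{\partial}>t)\leq\Pm_x(\tau_{\partial}>nt_1)$ while $\Pm_{\nu}(\tau_{\partial}>t)\geq\Pm_{\nu}(\tau_{\partial}>(n+1)t_1)$; combining the two displayed bounds then gives $\Pm_{\nu}(\tau_{\partial}>t)/\Pm_x(\tau_{\partial}>t)\geq \lambda_1 c_1\nu(h)^2/M^2$. For $0\leq t\leq n_0 t_1$ we instead use the trivial $\Pm_x(\tau_{\partial}>t)\leq 1$ together with $\Pm_{\nu}(\tau_{\partial}>t)\geq\Pm_{\nu}(\tau_{\partial}>n_0 t_1)\geq \lambda_1^{n_0}\nu(h)/M$. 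Taking $c_2$ to be the minimum of the two resulting constants establishes \cite[Assumption (A2)]{Champagnat2014}, and hence \cite[Assumption (A)]{Champagnat2014}. The main subtlety is that $h$ is only strictly positive, not bounded away from zero, so one cannot directly compare $P_u 1$ and $P_u h$ pointwise; the Dobrushin step $P_{t_0}h\geq c_1\nu(h)P_{t_0}1$ sidesteps this by replacing the missing infimum of $h$ with the positive integrated quantity $c_1\nu(h)$, after which the eigenfunction identity closes the loop.
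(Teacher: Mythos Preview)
Your proof is correct and follows essentially the same route as the paper. The paper writes the survival probability in the compact form $\Pm_{\mu}(\tau_{\partial}>nt_1)=\mu(h)\hat{\lambda}^n/\expE_{\mu}[h(X_{nt_1})\mid \tau_{\partial}>nt_1]$ and then bounds the conditional expectations from above by $\lVert h\rVert_{\infty}$ and from below by $c_0\nu(h)$ via (A1); your semigroup inequality $P_{t_0}h\geq c_1\nu(h)P_{t_0}1$ composed with $P_{nt_1-t_0}$ is exactly the same bound unwound, and the resulting constants agree up to the harmless extra factor of $\lambda_1$ you pick up when sandwiching a general $t$ between $nt_1$ and $(n+1)t_1$.
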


Conversely, we suppose that $(X_t)_{0\leq t<\tau_{\partial}}$ satisfies Assumption \ref{assum:technical assumption for Assum (A)} and \cite[Assumption A]{Champagnat2014}, so there exists a unique QSD $\pi$ by \cite[Theorem 1.1]{Champagnat2014}. We write $\lambda:=\lambda(\pi)>0=\Pm_{\pi}(\tau_{\partial}>1)$. We then have by \cite[Proposition 2.3]{Champagnat2014} that there exists $h\in \calB_b(\chi;\Rm_{> 0})$ which for all $t>0$ is an everywhere strictly positive pointwise right eigenfunction for $P_t$ of eigenvalue $\lambda^t$.

Combining \cite[Theorem 2.1]{Champagnat2014} and \cite[Theorem 2.1]{Champagnat2017}, we see that \cite[Assumption A]{Champagnat2014} implies that there exists $C<\infty$ and $\gamma>0$ ($\gamma>0$ being the constant given in the statement of \cite[Theorem 2.1]{Champagnat2014}) such that
\begin{equation}\label{eq:total variation Malthusian behaviour defin}
\sup_{\mu\in\calP(\chi)}\Big\lvert\Big\lvert\lambda^{-t}\Pm_{\mu}(X_t\in\cdot)-\mu(h)\pi(\cdot)\Big\rvert\Big\rvert_{\TV}\leq Ce^{-\gamma t}\quad\text{for all}\quad 0\leq t<\infty,
\end{equation}
where $h$ is the pointwise right eigenfunction whose existence is provided for by \cite[Proposition 2.3]{Champagnat2014}. This is what is typically referred to as ``Perron-Frobenius behaviour''. We refer to it as ``total variation-Perron-Frobenius'' behaviour to prevent confusion with the following.

We suppose that $\pi$ is a QSD of $(X_t)_{0\leq t<\tau_{\partial}}$, and denote $\lambda:=\lambda(\pi)$. We say that $(X_t)_{0\leq t<\tau_{\partial}}$ exhibits ``$L^{\infty}(\pi)$-Perron-Frobenius behaviour'' if there exists an $L^1(\pi)$-right eigenfunction $\phi$ for $(P_t)_{t\geq 0}$ such that, for some $C<\infty$ and $\gamma>0$, we have 
\begin{equation}\label{eq:Linfty malthusian defin}
\Big\lvert\Big\lvert \lambda^{-t}\frac{d\Pm_{\mu}(X_t\in \cdot)}{d\pi(\cdot)}-\mu(\phi)\Big\rvert\Big\rvert_{L^{\infty}(\pi)}\leq Ce^{-\gamma t}\Big\lvert\Big\lvert \frac{d\mu}{d\pi}\Big\rvert\Big\rvert_{L^{\infty}(\pi)}\quad\text{for all}\quad t\geq 0\quad\text{and}\quad \mu\in \calP_{\infty}(\pi).
\end{equation}

In \cite[Theorem 3.1]{Champagnat2014}, they established that \cite[Assumption (A)]{Champagnat2014} (along with Assumption \ref{assum:technical assumption for Assum (A)}) provides for the existence of the ``$Q$-process'' - the process $(X_t)_{0\leq t<\tau_{\partial}}$ conditioned never to be killed. A precise definition is given in \cite[Theorem 3.1]{Champagnat2014}. We shall call this $Q$-process $(Z_t)_{0\leq t<\infty}$. It corresponds to the limit
\[
\Law_x(Z_t)=\lim_{s\ra \infty}\Law_x(X_t\lvert \tau_{\partial}>t+s)\quad\text{for all}\quad t\geq 0,\quad x\in \chi.
\]

We write $\pi$ for the QSD of $(X_t)_{0\leq t<\tau_{\partial}}$, $\lambda:=\lambda(\pi)=\Pm_{\pi}(\tau_{\partial}>1)$, and $h$ for the strictly positive, bounded, pointwise right eigenfunction provided for by \cite[Proposition 2.3]{Champagnat2014}, normalised so that $\pi(h)=1$. The $Q$-process, which we shall call $(Z_t)_{0\leq t<\infty}$, is then an exponentially ergodic time-homogeneous Markov process on $\chi$ with Markovian transition kernel
\begin{equation}\label{eq:Q process Markov kernel}
Q_t(x,dy)=\frac{h(y)\lambda^{-t}}{h(x)}P_t(x,dy)
\end{equation}
and stationary distribution
\begin{equation}\label{eq:stationary dist Q-process}
\beta(dx)=h(x)\pi(dx).
\end{equation}

\subsection*{$L^{\infty}(\pi)$ convergence for $L^{\infty}(\pi)$ initial condition}

Our first theorem, Theorem \ref{theo:Linfty convergence main results section}, shall require the adjoint Dobrushin condition, Assumption \ref{assum:adjoint Dobrushin main results section} (found on page \pageref{assum:adjoint Dobrushin main results section}). If $(X_t)_{0\leq t<\tau_{\partial}}$ satisfies Assumption \ref{assum:adjoint Dobrushin main results section} and $\pi\in\calP_{\infty}(\Lambda)$, then we may also consider the following assumption.
\begin{assum}\label{Assum:assum for essentially positive right efn main results section}
We have that $(X_t)_{0\leq t<\tau_{\partial}}$ satisfies Assumption \ref{assum:adjoint Dobrushin main results section} and $\pi\in\calP_{\infty}(\Lambda)$, with $\nu\in\calP(\chi)$ being the probability measure assumed to satisfy \eqref{eq:Dobrushin for P tilde kernel crit for reverse dobrushin main results section}. We assume that for all $\mu\in\calP_{\infty}(\pi)$ there exists $t=t(\mu)<\infty$ (dependent upon $\mu$) such that $\mu P_t\big(\frac{d\nu}{d\Lambda}\big)>0$ (this is well-defined since $\mu P_t\in \calP_{\infty}(\pi)\subseteq \calP_{\infty}(\Lambda)$ for all $\mu \in\calP_{\infty}(\pi)$, and $\nu\ll\Lambda$ necessarily).
\end{assum}

\begin{theo}\label{theo:Linfty convergence main results section}
Suppose that the killed Markov process $(X_t)_{0\leq t<\tau_{\partial}}$ has a QSD $\pi$ which has an essentially bounded density with respect to $\Lambda$, $\pi\in\calP_{\infty}(\Lambda)$. We further assume that $(X_t)_{0\leq t<\tau_{\partial}}$ satisfies Assumption \ref{assum:adjoint Dobrushin main results section}. Then we have the following.

The constant $c_0'>0$, time $t_0>0$ and probability measure $\nu$ are those given by Assumption \ref{assum:adjoint Dobrushin main results section}, while $\lambda:=\lambda(\pi)$. We have that $\nu\in \calP(\Lambda)$ so that the following constant is unambiguous
\begin{equation}\label{eq:formula for c0 main results section}
c_0:=\frac{c_0'\nu(\frac{d\pi}{d\Lambda})}{\lvert\lvert \psi\rvert\rvert_{\infty}\lvert\lvert \frac{1}{\psi}\rvert\rvert_{\infty}\lvert\lvert \frac{d\pi}{d\Lambda}\rvert\rvert_{L^{\infty}(\Lambda)}}\in (0,1].
\end{equation}

Then there exists $\phi\in L^1_{\geq 0}(\pi)$ with $\lvert\lvert  \phi\rvert\rvert_{L^1(\pi)}=1$ such that $P_t\phi=\lambda^{t}\phi$ for all $0\leq t<\infty$. For all $0\leq t<\infty$, $\phi$ is both the unique non-negative $L^1(\pi)$-right eigenfunction of $P_t$ and the unique $L^1(\pi)$-right eigenfunction of eigenvalue $\lambda^t$, up to rescaling. Moreover we have the following $L^{\infty}$-Perron-Frobenius behaviour,
\begin{equation}\label{eq:Linfty malthusian theo statement main results section}
\Big\lvert\Big\lvert \lambda^{-t}\frac{d\Pm_{\mu}(X_t\in \cdot)}{d\pi(\cdot)}-\mu(\phi)\Big\rvert\Big\rvert_{L^{\infty}(\pi)}\leq (1-c_0)^{\lfloor \frac{t}{t_0}\rfloor} \osc_{\pi}(\mu),\quad 0\leq t<\infty,\quad \mu\in\calP_{\infty}(\pi).
\end{equation}

Consequentially we have for all $\mu\in\calP_{\infty}(\pi)$:
\begin{align}\label{eq:convergence of prob of killing main results section}
\lvert\lambda^{-t}\Pm_{\mu}(\tau_{\partial}>t)-\mu(\phi)\rvert&\leq (1-c_0)^{\lfloor \frac{t}{t_0}\rfloor}\osc_{\pi}(\mu),\quad 0\leq t<\infty,\\
\Big\lvert\Big\lvert \frac{d\Law_{\mu}(X_t\lvert \tau_{\partial}>t)}{d\pi}-1\Big\rvert\Big\rvert_{L^{\infty}(\pi)}&\leq \frac{2(1-c_0)^{\lfloor \frac{t}{t_0}\rfloor} \osc_{\pi}(\mu)}{\mu(\phi)-(1-c_0)^{\lfloor \frac{t}{t_0}\rfloor} \osc_{\pi}(\mu)},\quad 0\leq t<\infty,
\label{eq:Linfty convergence of dist cond of survival main results section}
\end{align}
\eqref{eq:Linfty convergence of dist cond of survival main results section} being understood to apply only when the denominator on the right is positive. 

If, in addition to Assumption \ref{assum:adjoint Dobrushin main results section}, we have Assumption \ref{Assum:assum for essentially positive right efn main results section}, then $\phi\in L^{\infty}_{>0}(\pi)$. In particular, for all $\mu\in\calP_{\infty}(\pi)$, \eqref{eq:Linfty convergence of dist cond of survival main results section} then holds for all $t$ sufficiently large.

On the other hand, if Assumption \ref{assum:adjoint Dobrushin main results section}, Assumption \ref{assum:technical assumption for Assum (A)} and \cite[Assumption (A)]{Champagnat2014} are satisfied (but we no longer assume Assumption \ref{Assum:assum for essentially positive right efn main results section}), then there exists constants $C,T<\infty$ and $\gamma>0$ such that
\begin{equation}\label{eq:Linfty conv when also have Assum (A) main results section}
\Big\lvert\Big\lvert \frac{d\Law_{\mu}(X_t\lvert \tau_{\partial}>t)}{d\pi}-1\Big\rvert\Big\rvert_{L^{\infty}(\pi)}\leq \frac{C}{\mu(h)}e^{-\gamma t}\Big\lvert\Big\lvert \frac{d\mu}{d\pi}\Big\rvert\Big\rvert_{L^{\infty}(\pi)}\quad\ \text{for all}\quad t\geq T\quad\text{and all}\quad  \mu\in\calP_{\infty}(\pi),
\end{equation}
where $h\in \calB_b(\chi;\Rm_{>0})$ is the bounded and strictly positive pointwise right eigenfunction provided for by \cite[Proposition 2.3]{Champagnat2014} (which must be a version of the $L^1(\pi)$-right eigenfunction $\phi$). In \eqref{eq:Linfty conv when also have Assum (A) main results section}, $\gamma>0$ is the minimum of the $\gamma>0$ given by \cite[Theorem 2.1]{Champagnat2014} and $\frac{-\ln(1-c_0)}{t_0}$ (we define $\frac{-\ln(1-c_0)}{t_0}:=+\infty$ when $c_0=1$), where $0<c_0\leq 1$ is the constant and $t_0>0$ the time given by Assumption \ref{assum:adjoint Dobrushin main results section}.
\end{theo}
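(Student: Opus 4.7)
The plan is to analyse the $\pi$-adjoint $R$ of $P_{t_0}$, defined by $\pi(dx)P_{t_0}(x,dy)=\pi(dy)R(y,dx)$. Since $\pi P_{t_0}=\lambda\pi$, one has $R1=\lambda$ $\pi$-a.e., so $\bar{R}:=R/\lambda$ is a Markov kernel, and a short duality computation shows that densities evolve under $R$: for $\mu\in\calP_{\infty}(\pi)$, $\frac{d(\mu P_{t_0})}{d\pi}=R\bigl(\frac{d\mu}{d\pi}\bigr)$ in $L^1(\pi)$. The crucial first step is to convert Assumption~\ref{assum:adjoint Dobrushin main results section} into a uniform-in-$y$ Dobrushin minorization on $\bar{R}$. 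Combining \eqref{eq:psi adjoint for verifying reverse Dobrushin main results section} with the definition of $R$ yields the pointwise formula $R(y,dx)=\frac{a\psi(y)\rho_\pi(x)}{\psi(x)\rho_\pi(y)}\tilde{P}(y,dx)$ with $\rho_\pi:=d\pi/d\Lambda$, and taking total mass (using $R1=\lambda$) gives the by-product identity $\tilde{P}(\rho_\pi/\psi)(y)=(\lambda/a)(\rho_\pi/\psi)(y)$. This forces $\tilde{P}1(y)\ge \tilde{P}(\rho_\pi/\psi)(y)/\|\rho_\pi/\psi\|_\infty$, which exactly compensates the $y$-dependent prefactor; combined with \eqref{eq:Dobrushin for P tilde kernel crit for reverse dobrushin main results section} and the elementary bounds $\|\rho_\pi/\psi\|_\infty\le\|\rho_\pi\|_\infty\|1/\psi\|_\infty$ and $\nu(\rho_\pi/\psi)\ge\nu(\rho_\pi)/\|\psi\|_\infty$, this yields the desired uniform minorization $\bar{R}(y,\cdot)\ge c_0\mu^*(\cdot)$ for $\pi$-a.e.\ $y$, with $c_0$ as in \eqref{eq:formula for c0 main results section} and $\mu^*$ the probability measure proportional to $(d\nu/d\Lambda)(\rho_\pi/\psi)\Lambda$.

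With $\bar{R}$ Markov and uniformly Dobrushin-minorized, the standard splitting $\bar{R}=c_0\mu^*+(1-c_0)Q$ with $Q$ Markov gives $\osc_\pi(\bar{R}\rho)\le(1-c_0)\osc_\pi(\rho)$ on $L^\infty(\pi)$; iterating and observing that the $\pi$-essential range of $\bar{R}^n\rho$ forms a nested sequence shrinking to a single constant $c(\rho)\in\Rm$ produces $\|\bar{R}^n\rho-c(\rho)\|_{L^\infty(\pi)}\le(1-c_0)^n\osc_\pi(\rho)$. For $t=nt_0+s$ with $s\in[0,t_0)$, the analogous Markov kernel $\bar{R}_s$ attached to $P_s$ (via $\pi P_s=\lambda^s\pi$) does not increase oscillation, so writing $\lambda^{-t}\frac{d\mu P_t}{d\pi}=\bar{R}^n\bar{R}_s(\frac{d\mu}{d\pi})$ yields \eqref{eq:Linfty malthusian theo statement main results section}. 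To produce $\phi$, set $h_n:=\lambda^{-nt_0}P_{t_0}^n1$ (with $\pi(h_n)=1$); writing $h_{n+1}-h_n=\lambda^{-(n+1)}P_{t_0}^n(P_{t_0}1-\lambda)$ and estimating via duality and the $L^\infty$-contraction applied to the $\pi$-mean-zero function $P_{t_0}1-\lambda$ shows $\|h_{n+1}-h_n\|_{L^1(\pi)}\le C(1-c_0)^n$, so $h_n\to\phi$ in $L^1(\pi)$ with $\phi\in L^1_{\geq 0}(\pi)$, $\pi(\phi)=1$, and $P_{t_0}\phi=\lambda\phi$ follows by $L^1(\pi)$-continuity of $P_{t_0}$ (Proposition~\ref{prop:Pt well-defined on L1}). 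The bijection $\phi\leftrightarrow\phi\pi$ identifies $L^1(\pi)$-right eigenfunctions of $P_{t_0}$ with eigenvalue $\lambda$ with $\bar{R}$-invariant finite signed measures; the Dobrushin contraction makes this space one-dimensional, which gives the claimed uniqueness (both as a non-negative eigenfunction and as an eigenfunction of eigenvalue $\lambda^t$). Uniqueness then forces $P_t\phi=\lambda^t\phi$ for every $t\ge 0$ (since $P_t\phi$ is another $L^1(\pi)$-eigenfunction of $P_{t_0}$ with eigenvalue $\lambda$, and $\pi(P_t\phi)=\lambda^t$) and identifies $c(d\mu/d\pi)$ with $\mu(\phi)$. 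The bounds \eqref{eq:convergence of prob of killing main results section} and \eqref{eq:Linfty convergence of dist cond of survival main results section} are direct consequences of \eqref{eq:Linfty malthusian theo statement main results section} by integration against $\pi$ and simple algebra on the ratio $(d\mu P_t/d\pi)/\Pm_\mu(\tau_\partial>t)$.

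For the two addenda: under Assumption~\ref{Assum:assum for essentially positive right efn main results section}, the lower bound $\phi\pi\ge c_0\mu^*$ (forced by $\phi\pi=(\phi\pi)\bar{R}$), the assumed positivity $\mu P_t(d\nu/d\Lambda)>0$ for each $\mu\in\calP_\infty(\pi)$, and iteration along the semigroup can be combined to propagate positivity and boundedness, yielding $\phi\in L^\infty_{>0}(\pi)$. Under \cite[Assumption~(A)]{Champagnat2014}, the bounded strictly positive pointwise eigenfunction $h$ from \cite[Proposition~2.3]{Champagnat2014} is an $L^1(\pi)$-right eigenfunction of $P_{t_0}$ with eigenvalue $\lambda$ and $\pi(h)=1$, so $\phi=h$ in $L^1(\pi)$ by the uniqueness above; \eqref{eq:Linfty conv when also have Assum (A) main results section} then follows by combining \eqref{eq:Linfty malthusian theo statement main results section} with the uniform-in-$\mu$ total-variation rate of \cite[Theorem~2.1]{Champagnat2014}, which guarantees $\Pm_\mu(\tau_\partial>t)\ge \tfrac12\lambda^t\mu(h)$ for $t$ sufficiently large so that the denominator in \eqref{eq:Linfty convergence of dist cond of survival main results section} stays bounded away from zero, with the combined rate $\gamma$ being the minimum of $-\ln(1-c_0)/t_0$ and the rate given by \cite[Theorem~2.1]{Champagnat2014}. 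The main technical obstacle is Step~1: eliminating the $y$-dependence of $\tilde{P}1(y)$ when passing from a minorization on $\tilde{P}$ to a uniform one on $\bar{R}$, which is resolved by exploiting the automatic eigenfunction identity $\tilde{P}(\rho_\pi/\psi)=(\lambda/a)(\rho_\pi/\psi)$ produced by the normalization $R1=\lambda$.
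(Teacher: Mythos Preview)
Your proposal is correct and follows essentially the same strategy as the paper: construct the $\pi$-reverse kernel $R$ from $\tilde P$ via $R(y,dx)=\frac{a\psi(y)\rho_\pi(x)}{\psi(x)\rho_\pi(y)}\tilde P(y,dx)$, use $R1=\lambda^{t_0}$ to obtain a uniform Dobrushin minorisation on $\bar R=R/\lambda^{t_0}$ with exactly the constant $c_0$ of \eqref{eq:formula for c0 main results section}, and then run the standard Dobrushin contraction on densities. The paper packages the second half through an abstract semigroup $(T_t)_{t\ge 0}$ on $L^\infty(\pi)$ (Propositions~\ref{prop:Tt well-defined}--\ref{prop:time-reversal implies T Dobrushin}) and obtains $\phi$ via a Banach fixed point for $T_{t_0}^\dag$, whereas you work directly with $\bar R$ and build $\phi$ as the $L^1(\pi)$-limit of $h_n=\lambda^{-nt_0}P_{t_0}^n1$; both routes are equivalent. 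One small technical caveat: your ``analogous Markov kernel $\bar R_s$'' for a general $s\in[0,t_0)$ need not exist as a kernel, but the operator $T_s f:=d P_s(f\pi,\cdot)/dP_s(\pi,\cdot)$ on $L^\infty(\pi)$ (which is what the paper uses) is always well-defined, positive and unital, hence does not increase oscillation, and that is all your argument needs.
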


We have not explicitly assumed in the assumptions for \eqref{eq:Linfty convergence of dist cond of survival main results section} that $\Pm_{\mu}(\tau_{\partial}>t)>0$ for all $\mu\in\calP_{\infty}(\pi)$ and $t<\infty$, so may wonder if $\frac{d\Law_{\mu}(X_t\lvert\tau_{\partial}>t)}{d\pi}$ is necessarily well-defined. However, if \eqref{eq:Linfty convergence of dist cond of survival main results section} applies for some $t<\infty$, then necessarily $\mu(\phi)>0$ so that $\Pm_{\mu}(\tau_{\partial}>t)>0$ for all $t<\infty$, hence $\Law_{\mu}(X_t\lvert\tau_{\partial}>t)$ is well-defined. If $\mu(\phi)=0$, then \eqref{eq:Linfty convergence of dist cond of survival main results section} does not apply for any $t<\infty$. For such initial conditions, \eqref{eq:convergence of prob of killing main results section} indicates that $\Pm_{\mu}(\tau_{\partial}>t)$ decays at a faster exponential rate than $\Pm_{\pi}(\tau_{\partial}>t)$.

One may ask whether the essential boundedness of $\frac{d\pi}{d\Lambda}$ is necessary, or whether $\pi\ll\Lambda$ might suffice for Theorem \ref{theo:Linfty convergence main results section}. The following proposition demonstrates that it is necessary.

\begin{prop}\label{prop:boundedness from reverse Dobrushin main results section}
We assume that there exists $\psi\in \calB_{b,\gg}(\chi)$, a time $t_0>0$, a constant $a>0$ and a submarkovian kernel $\tilde{P}$ on $\chi$ satisfying \eqref{eq:psi adjoint for verifying reverse Dobrushin main results section}. We assume that $(X_t)_{0\leq t<\tau_{\partial}}$ has a QSD $\pi$ which is absolutely continuous with respect to $\Lambda$, $\pi\ll\Lambda$, and that $(P_t)_{t\geq 0}$ has a non-negative $L^1_{\geq 0}(\pi)$ right eigenfunction such that $\pi(\phi)=1$. Finally, we assume that
\begin{equation}\label{eq:conv to QSD for propn that it implies QSD has a bounded density}
\Big\lvert\Big\lvert \frac{d\Law_{\mu}(X_t\lvert\tau_{\partial}>t)}{d\pi}-1\Big\rvert\Big\rvert_{L^{\infty}(\pi)}\ra 0\quad\text{as}\quad t\ra\infty\quad\text{whenever}\quad \mu\in\calP_{\infty}(\pi)\quad\text{and}\quad \mu(\phi)>0.
\end{equation}
Then $\pi$ has an essentially bounded density with respect to $\Lambda$, $\pi\in\calP_{\infty}(\Lambda)$.
\end{prop}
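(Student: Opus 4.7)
The strategy is to derive the essential boundedness of $p:=d\pi/d\Lambda$ by constructing a probability measure $\mu$ with essentially bounded density with respect to \emph{both} $\Lambda$ and $\pi$ and with $\mu(\phi)>0$, then propagating the $L^\infty(\Lambda)$-bound through $P_{t_0}$ via the adjoint identity \eqref{eq:psi adjoint for verifying reverse Dobrushin main results section}, and finally invoking the hypothesised convergence \eqref{eq:conv to QSD for propn that it implies QSD has a bounded density} to squeeze $p$ between two finite constants. Since $\pi\ll\Lambda$ and $\pi(\chi)=1$, $p\in L^1(\Lambda)$, so $p<\infty$ $\pi$-a.e. Setting $A_M:=\{p\leq M\}$ we have $\Ind_{A_M}\uparrow 1$ $\pi$-a.e., and by monotone convergence $\pi(\phi\Ind_{A_M})\uparrow\pi(\phi)=1$; fix $M$ large enough that $Z:=\pi(A_M)>0$ and $\pi(\phi\Ind_{A_M})>0$, and define $\mu:=Z^{-1}\Ind_{A_M}\pi$. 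Then $d\mu/d\pi=Z^{-1}\Ind_{A_M}\in L^\infty(\pi)$, $d\mu/d\Lambda=Z^{-1}p\Ind_{A_M}\leq M/Z$ $\Lambda$-a.e., and $\mu(\phi)>0$.

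Next, Fubini applied to \eqref{eq:psi adjoint for verifying reverse Dobrushin main results section} shows that for any finite measure $\nu=s\Lambda$ with $s\in L^\infty(\Lambda)$,
\begin{equation*}
\nu P_{t_0}(dy)=a\psi(y)\tilde P(s/\psi)(y)\Lambda(dy),
\end{equation*}
and the submarkovianity of $\tilde P$ yields $\|\tilde P(s/\psi)\|_{L^\infty(\Lambda)}\leq \|s\|_{L^\infty(\Lambda)}\|1/\psi\|_\infty$. Iterating $n$ times gives $\|d(\mu P_{nt_0})/d\Lambda\|_{L^\infty(\Lambda)}\leq C_n:=(a\|\psi\|_\infty\|1/\psi\|_\infty)^n \|d\mu/d\Lambda\|_{L^\infty(\Lambda)}<\infty$. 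Using Proposition \ref{prop:Pt well-defined on L1} and the eigenfunction property, $\mu P_t(\phi)=\lambda^t\mu(\phi)>0$, so (since $\phi\geq 0$) the measure $\mu P_t$ is nonzero and $\Pm_\mu(\tau_\partial>t)=\mu P_t(\chi)>0$ for every $t$. Pick $n$ large enough that \eqref{eq:conv to QSD for propn that it implies QSD has a bounded density} yields $d(\mu P_{nt_0})/d\pi\geq \tfrac12\Pm_\mu(\tau_\partial>nt_0)$ $\pi$-a.e. Combining this with the identity $d(\mu P_{nt_0})/d\Lambda=p\cdot d(\mu P_{nt_0})/d\pi$ (valid $\Lambda$-a.e., both sides vanishing where $p=0$) and the upper bound $C_n$ from the propagation step produces
\begin{equation*}
p(y)\leq \frac{2C_n}{\Pm_\mu(\tau_\partial>nt_0)}\qquad\Lambda\text{-a.e.\ on }\{p>0\},
\end{equation*}
and $p=0$ $\Lambda$-a.e.\ off $\{p>0\}$, so $p\in L^\infty(\Lambda)$, i.e.\ $\pi\in\calP_\infty(\Lambda)$.

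The principal subtlety is the transfer in the final step from a $\pi$-a.e.\ lower bound on the Radon--Nikodym derivative with respect to $\pi$ to a $\Lambda$-a.e.\ upper bound on $p$; this works precisely because $\pi$ and $\Lambda$ are equivalent on $\{p>0\}$ and $p$ vanishes $\Lambda$-a.e.\ off this set. The conceptual mechanism is that the adjoint relation alone (not the full adjoint Dobrushin condition \eqref{eq:Dobrushin for P tilde kernel crit for reverse dobrushin main results section}, which is not assumed here) already realises the action of $P_{t_0}$ on $\Lambda$-densities as multiplication by $a\psi$ composed with the submarkovian kernel $\tilde P$, and this automatically preserves $L^\infty(\Lambda)$-boundedness.
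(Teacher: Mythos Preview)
Your proof is correct and follows essentially the same route as the paper's: construct a probability measure $\mu\in\calP_\infty(\pi)\cap\calP_\infty(\Lambda)$ with $\mu(\phi)>0$, use the adjoint identity \eqref{eq:psi adjoint for verifying reverse Dobrushin main results section} to propagate the $L^\infty(\Lambda)$-bound along the semigroup in steps of $t_0$, and then use the convergence hypothesis to sandwich $\pi$ by a multiple of the propagated measure. The only cosmetic difference is in the construction of $\mu$: the paper takes $\mu\propto(p\wedge 1)\Lambda$ whereas you take $\mu\propto\Ind_{\{p\leq M\}}\pi$ and invoke monotone convergence to ensure $\mu(\phi)>0$; both yield the required simultaneous boundedness with respect to $\pi$ and $\Lambda$.
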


\subsection*{Inequalities relating the distribution of an absorbed Markov process at a fixed time with its QSD}

Theorem \ref{theo:Linfty convergence main results section} may only be applied when the initial condition belongs to $\calP_{\infty}(\pi)$. In Theorem \ref{theo:uniform Linfty convergence main results section}, we shall extend this to arbitrary initial conditions (which may be mutually singular with respect to $\pi$), under additional conditions. This is possible due to the following theorems, which provide probabilistic criteria allowing us to compare from from above and below the distribution of an absorbed Markov process at a fixed time with its QSD. Whilst the original motivation for these was to establish $L^{\infty}(\pi)$ convergence for arbitrary initial conditions (Theorem \ref{theo:uniform Linfty convergence main results section}), they may be of independent interest.

The first theorem, \ref{theo:dominated by pi theorem general results main results section}, provides a sufficient condition for the distribution of $X_t$ to be dominated by the QSD $\pi$ after a given time horizon, so that the distribution necessarily belongs to $\calP_{\infty}(\pi)$ after a given time horizon, in particular.

We consider the following assumption.
\renewcommand{\theAssumletter}{AaD}
\begin{Assumletter}[Adjoint anti-Dobrushin condition]\label{assum:adjoint anti-Dobrushin main results section}
There exists a time $t_1>0$, a constant $a_1>0$, $\psi_1\in\calB_{b,\gg}(\chi)$, and a submarkovian kernel $\tilde{P}^{(1)}$ on $\chi$ such that
\begin{equation}\label{eq:psi adjoint for verifying bounded pi euclidean condition main results section}
\psi_1(x)\Lambda(dx)P_{t_1}(x,dy)=a_1\psi_1(y)\Lambda(dy)\tilde{P}^{(1)}(y,dx).
\end{equation}
We assume that $\tilde{P}^{(1)}1(y)>0$ for $\Lambda$-almost every $y\in \chi$. We further assume that there exists $C_1<\infty$ such that
\begin{equation}\label{eq:bounded Lebesgue main results section}
\tilde{P}^{(1)}(y,\cdot)\leq C_1\Lambda(\cdot)\quad \text{for $\Lambda$-almost every $y\in\chi$}.
\end{equation}
\end{Assumletter}

\begin{theo}\label{theo:dominated by pi theorem general results main results section}
We suppose that $(X_t)_{0\leq t<\tau_{\partial}}$ has a QSD $\pi$ which is absolutely continuous with respect to $\Lambda$, $\pi\ll\Lambda$, and which has full support, $\text{spt}(\pi)=\chi$. We assume that $(X_t)_{0\leq t<\tau_{\partial}}$ satisfies assumptions \ref{assum:adjoint Dobrushin main results section} and \ref{assum:adjoint anti-Dobrushin main results section}. 

We let $t_0>0$ and $t_1>0$ respectively be the times, and $\psi_0$ and $\psi_1$ respectively be the functions, for which Assumption \ref{assum:adjoint Dobrushin main results section} and Assumption \ref{assum:adjoint anti-Dobrushin main results section} are satisfied. We define $\lambda:=\lambda(\pi)=\Pm_{\pi}(\tau_{\partial}>1)$ and $t_2:=t_0+t_1$. The constants $c_0'>0$, $a_1>0$ and $C_1<\infty$ are respectively the constants for which we have \eqref{eq:Dobrushin for P tilde kernel crit for reverse dobrushin main results section}, \eqref{eq:psi adjoint for verifying bounded pi euclidean condition main results section} and \eqref{eq:bounded Lebesgue main results section}. Finally $\nu$ is the probability measure for which we have \eqref{eq:Dobrushin for P tilde kernel crit for reverse dobrushin main results section}. It follows from Assumption \ref{assum:adjoint Dobrushin main results section} that $\nu\ll \Lambda$, so that $\nu(\frac{d\pi}{d\Lambda})$ is unambiguous, and strictly positive. We define
\begin{equation}\label{eq:formula for C2 dominated by pi theorem main results section}
C_2:=\frac{\lvert\lvert \frac{\psi_0}{\psi_1}\rvert\rvert_{\infty}\lvert\lvert \frac{\psi_1}{\psi_0}\rvert\rvert_{\infty}\lvert\lvert\psi_1\rvert\rvert_{\infty}\lvert\lvert \frac{1}{\psi_1}\rvert\rvert_{\infty}a_1C_1\lambda^{t_0}}{c_0'\nu(\frac{d\pi}{d\Lambda})}.
\end{equation} 
We finally assume that $P_{t_2}$ is lower semicontinuous in the sense of Definition \ref{defin:lower semicts kernel}.

Then we have that
\begin{equation}\label{eq:dominated by pi equation general results main results section}
P_{t_2}(x,\cdot)\leq C_2\pi(\cdot)\quad\text{for all}\quad x\in\chi.
\end{equation}
It follows, in particular, that if 
\begin{equation}
\Pm_x(\tau_{\partial}>t_2+h\lvert \tau_{\partial}>t_2)\geq \epsilon\quad\text{for all}\quad x\in \chi
\end{equation}
(which follows, for instance, if \cite[Assumption (A1)]{Champagnat2014} is satisfied over the time interval $h>0$), then
\begin{equation}\label{eq:upper bound dist cond on survival by pi if also A1 main results section}
\frac{P_{t_2+h}(x,\cdot)}{P_{t_2+h}1(x)}\leq \frac{C_2}{\epsilon}\pi(\cdot)\quad \text{for all}\quad x\in \chi.
\end{equation}
\end{theo}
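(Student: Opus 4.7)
The plan is to analyse the time-reversal of $P_{t_2}$ at the quasi-stationary distribution $\pi$, rather than $P_{t_2}$ directly. I introduce the reverse kernel $R$ characterised by $\pi(dx) P_{t_2}(x, dy) = \pi(dy) R(y, dx)$. By the symmetric nature of the joint measure on $\chi\times\chi$, the pointwise bound $P_{t_2}(x, \cdot) \leq C_2 \pi(\cdot)$ holding for $\pi$-a.e.\ $x$ is equivalent to $R(y, \cdot) \leq C_2 \pi(\cdot)$ holding for $\pi$-a.e.\ $y$. Once this $\pi$-a.e.\ statement is obtained, I extend the inequality to every $x\in\chi$ using the lower semicontinuity of $P_{t_2}$: for open $A$, the indicator $\mathbf 1_A$ is the monotone increasing limit of non-negative continuous functions, so $x \mapsto P_{t_2}(x, A)$ is lower semicontinuous; the inequality $P_{t_2}(x, A) \leq C_2 \pi(A)$ thus holds on a set of full $\pi$-measure which is dense in $\chi$ by $\mathrm{spt}(\pi) = \chi$, so $P_{t_2}(x_0, A) \leq \liminf_{x\to x_0} P_{t_2}(x, A) \leq C_2\pi(A)$ at every $x_0$, and outer regularity extends the bound from open to arbitrary Borel $A$.

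To bound $R$, I factorise $P_{t_2} = P_{t_1}P_{t_0}$, whereupon $R = R^{(0)}R^{(1)}$ with $R^{(i)}$ the reverse of $P_{t_i}$ at $\pi$ (the order is reversed because reversal reverses composition). Write $f := d\pi/d\Lambda$. Assumption \ref{assum:adjoint anti-Dobrushin main results section} disintegrates to give, for $\pi$-a.e.\ $z$, the identity $R^{(1)}(z,dx) = \tfrac{a_1 \psi_1(z) f(x)}{\psi_1(x)f(z)}\, \tilde{P}^{(1)}(z,dx)$. Combined with $\tilde{P}^{(1)}(z,dx) \leq C_1\Lambda(dx)$, this produces the $\pi$-density estimate
\[
R^{(1)}(z, A) \;\leq\; a_1 C_1 \lVert 1/\psi_1\rVert_\infty \, \frac{\psi_1(z)}{f(z)}\, \pi(A).
\]
Substituting into $R(y,A) = \int R^{(0)}(y,dz)\, R^{(1)}(z,A)$ reduces the whole task to establishing a uniform-in-$y$ upper bound on $R^{(0)}(\psi_1/f)(y)$.

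This uniform bound is the main obstacle, and is where Assumption \ref{assum:adjoint Dobrushin main results section} enters decisively together with the QSD property. The analogous disintegration gives $R^{(0)}(\psi_1/f)(y) \leq \lVert \psi_1/\psi_0\rVert_\infty\, \tfrac{a_0\psi_0(y)}{f(y)}\,\tilde{P}^{(0)}1(y)$, so it suffices to control $\psi_0(y)\,\tilde{P}^{(0)}1(y)/f(y)$. The crucial observation is that the QSD identity $\pi P_{t_0} = \lambda^{t_0}\pi$, rewritten via the adjoint relation from Assumption \ref{assum:adjoint Dobrushin main results section}, forces the pointwise eigen-equation $\tilde{P}^{(0)}(f/\psi_0)(y) = \tfrac{\lambda^{t_0}f(y)}{a_0\psi_0(y)}$ for $\Lambda$-a.e.\ $y$. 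Applying the Dobrushin lower bound to the test function $f/\psi_0$ then yields $\tilde{P}^{(0)}1(y) \leq \tfrac{\lambda^{t_0}f(y)}{a_0\psi_0(y)\, c_0'\,\nu(f/\psi_0)}$, which precisely cancels the problematic $\psi_0(y)/f(y)$ factor. Assembling the estimates and using $\nu(f/\psi_0) \geq \nu(f)/(\lVert \psi_0/\psi_1\rVert_\infty\lVert\psi_1\rVert_\infty)$ produces a constant matching $C_2$ in \eqref{eq:formula for C2 dominated by pi theorem main results section}, completing the proof of \eqref{eq:dominated by pi equation general results main results section}. The claim \eqref{eq:upper bound dist cond on survival by pi if also A1 main results section} then follows from \eqref{eq:dominated by pi equation general results main results section} by the Markov decomposition $P_{t_2+h} = P_{t_2}P_h$ applied to the first factor and dividing by $P_{t_2+h}1(x)$, whose lower bound is furnished by the survival hypothesis.
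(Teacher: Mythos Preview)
Your proposal is correct and follows essentially the same strategy as the paper: construct the reverse kernel $R$ at quasi-stationarity, bound $R(y,\cdot)$ by a multiple of $\pi$ using the adjoint Dobrushin and anti-Dobrushin data together with the eigenfunction identity coming from $\pi P_{t_0}=\lambda^{t_0}\pi$, then upgrade the resulting $\pi$-a.e.\ bound on $P_{t_2}(x,\cdot)$ to a pointwise one via lower semicontinuity and full support.

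The only organisational differences are cosmetic. The paper first reparametrises the adjoint kernel from Assumption~\ref{assum:adjoint Dobrushin main results section} to use $\psi_1$ rather than $\psi_0$, then forms a single composite adjoint $\tilde P^{(2)}=\tilde P^{(0)}\tilde P^{(1)}$ and a single reverse kernel $R$, bounding $R$ from above and $R1$ from below separately (the latter via the eigen-identity for $\tilde P^{(1)}$ at time $t_1$). You instead keep the two reverse kernels $R^{(0)},R^{(1)}$ separate, bound $R^{(1)}$ directly via Assumption~\ref{assum:adjoint anti-Dobrushin main results section}, and use the eigen-identity for $\tilde P^{(0)}$ at time $t_0$ to kill the $\psi_0/f$ factor. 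Either route yields the same constant $C_2$; yours avoids the reparametrisation step and is arguably a little more transparent about which hypothesis does what. One small caution: your final sentence on \eqref{eq:upper bound dist cond on survival by pi if also A1 main results section} (``apply the bound to the first factor in $P_{t_2}P_h$ and divide'') is a bit loose---that route produces an extra $\lambda^h/P_{t_2}1(x)$ which is not obviously $\leq 1$; the cleaner derivation writes $P_{t_2+h}=P_hP_{t_2}$ and bounds the \emph{second} factor by $C_2\pi$, yielding $P_{t_2+h}(x,\cdot)\leq C_2\pi(\cdot)P_h1(x)$, which then pairs with a lower bound on $P_{t_2+h}1(x)/P_h1(x)$. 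The paper itself does not spell this out, so this is a minor point.
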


Theorem \ref{theo:dominated by pi theorem general results main results section} is complemented by the following theorem, providing for the reverse inequality.

\renewcommand{\theAssumletter}{DAD}
\begin{Assumletter}[Combined Dobrushin and adjoint Dobrushin condition]\label{assum:combined Dobrushin adjoint Dobrushin main results section}
We assume that $\pi\in\calP_{\infty}(\Lambda)$. We assume that, for some measurable set $A\in \mathscr{B}(\chi)$, there exists $\nu_1\in \calP(A)$, a constant $c_1>0$ and time $t_1>0$ such that $P_{t_1}1(x)>0$ for all $x\in \chi$ and
\begin{equation}\label{eq:Dobrushin for DAD condition main results section}
\frac{P_{t_1}(x,\cdot)}{P_{t_1}1(x)}\geq c_1\nu_1(\cdot)\quad\text{for all}\quad x\in \chi.
\end{equation}
It follows that $\pi(A)>0$ so that $\Lambda(A)>0$. We further assume that, for this same measurable set $A$, Assumption \ref{assum:adjoint Dobrushin main results section} is satisfied, with $\nu:=\frac{\Lambda_{\lvert_A}}{\Lambda(A)}$, some constant $c_0'>0$ and time $t_0>0$.
\end{Assumletter}

\begin{theo}\label{theo:DAD lower bounds density main results section}
Suppose that $(X_t)_{0\leq t<\tau_{\partial}}$ is an absorbed Markov process for which $\pi$ is a (not necessarily unique) QSD. We denote $\lambda:=\lambda(\pi)$. We assume that $(X_t)_{0\leq t<\tau_{\partial}}$ and $\pi$ satisfy Assumption \ref{assum:combined Dobrushin adjoint Dobrushin main results section}. We write $c_0',c_1>0$ for the constants and $t_0,t_1>0$ for the times for which Assumption \ref{assum:combined Dobrushin adjoint Dobrushin main results section} is satisfied. We define
\begin{equation}\label{eq:time and constant for DAD condition consequence main results section}
t_3:=t_0+t_1,\quad c_3:=\frac{\lambda^{t_0}c_0'c_1}{\Lambda(A)\lvert\lvert\psi\rvert\rvert_{\infty}\lvert\lvert\frac{1}{\psi}\rvert\rvert_{\infty}\lvert\lvert \frac{d\pi}{d\Lambda}\rvert\rvert_{L^{\infty}(\Lambda)}}.
\end{equation}

Then we have that
\begin{equation}\label{eq:minorised by pi equation from DAD main results section}
\frac{P_{t_3}(x,\cdot)}{P_{t_3}1(x)}\geq c_3\pi(\cdot)\quad\text{for all}\quad x\in\chi.
\end{equation}
\end{theo}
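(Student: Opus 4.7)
The plan is to decompose $P_{t_3} = P_{t_1} P_{t_0}$, use the Dobrushin inequality \eqref{eq:Dobrushin for DAD condition main results section} to force mass into $A$ via $P_{t_1}$, and then exploit the adjoint Dobrushin condition on $P_{t_0}$ to spread this mass so that it dominates a multiple of $\pi$. Explicitly,
\[
P_{t_3}(x, B) \;=\; \int_\chi P_{t_1}(x, dy)\, P_{t_0}(y, B) \;\geq\; c_1\, P_{t_1}1(x)\, (\nu_1 P_{t_0})(B),
\]
and since $P_{t_3}1(x) = P_{t_1}(P_{t_0}1)(x) \leq P_{t_1}1(x)$, the problem reduces to establishing the $x$-independent estimate $(\nu_1 P_{t_0})(B) \geq (c_3/c_1)\, \pi(B)$ for every Borel $B$.

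For this inner estimate, I would use the adjoint identity \eqref{eq:psi adjoint for verifying reverse Dobrushin main results section}, which after pairing with $\Ind_F$ and $\Ind_B$ becomes
\[
\int_F \psi(x)\, P_{t_0}(x, B)\, \Lambda(dx) \;=\; a \int_B \psi(y)\, \tilde{P}(y, F)\, \Lambda(dy), \qquad F, B \in \mathscr{B}(\chi).
\]
Substituting the adjoint Dobrushin bound $\tilde{P}(y, F) \geq c_0'\, \tilde{P}1(y)\, \Lambda(F \cap A)/\Lambda(A)$ (valid $\Lambda$-a.e.\ since $\nu = \Lambda|_A/\Lambda(A)$) and cancelling against the identity obtained by taking $F = \chi$ yields
\[
\int_F \psi(x)\, P_{t_0}(x, B)\, \Lambda(dx) \;\geq\; \frac{c_0'\, \Lambda(F \cap A)}{\Lambda(A)} \int_\chi \psi(x)\, P_{t_0}(x, B)\, \Lambda(dx)
\]
for all $F$. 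Writing $\mu_B$ for the measure with density $\psi(y)\, P_{t_0}(y, B)$ with respect to $\Lambda$, this says $\mu_B \geq (c_0'\, \mu_B(\chi)/\Lambda(A))\, \Lambda|_A$; hence $\psi(y)\, P_{t_0}(y, B) \geq c_0'\, \mu_B(\chi)/\Lambda(A)$ for $\Lambda$-a.e.\ $y \in A$.

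The total mass $\mu_B(\chi)$ is then lower-bounded using $\pi P_{t_0} = \lambda^{t_0} \pi$ together with $\psi \geq \|1/\psi\|_\infty^{-1}$ and $d\pi/d\Lambda \leq \|d\pi/d\Lambda\|_{L^\infty(\Lambda)}$:
\[
\mu_B(\chi) \;\geq\; \frac{1}{\|1/\psi\|_\infty\, \|d\pi/d\Lambda\|_{L^\infty(\Lambda)}} \int_\chi P_{t_0}(x, B)\, \pi(dx) \;=\; \frac{\lambda^{t_0}\, \pi(B)}{\|1/\psi\|_\infty\, \|d\pi/d\Lambda\|_{L^\infty(\Lambda)}}.
\]
Combining with the bound $\psi(y) \leq \|\psi\|_\infty$ gives $P_{t_0}(y, B) \geq (c_3/c_1)\, \pi(B)$ for $\Lambda$-a.e.\ $y \in A$, with the precise constant matching \eqref{eq:time and constant for DAD condition consequence main results section}, and integrating against $\nu_1 \in \calP(A)$ yields the required inner estimate, completing the proof.

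The main technical point will be the transfer from a $\Lambda$-a.e.\ pointwise bound on $P_{t_0}(\cdot, B)$ to a $\nu_1$-integrated inequality, since the Dobrushin measure $\nu_1$ need not be absolutely continuous with respect to $\Lambda$. I expect this to be handled either by selecting a single $\Lambda$-null exceptional set valid simultaneously for all $B$ (so that the $\pi$-system structure of $\mathscr{B}(\chi)$ may be used to check the exceptional set is $\nu_1$-null), or by arguing directly at the level of integrated inequalities rather than passing through pointwise estimates.
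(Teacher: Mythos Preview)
Your decomposition $P_{t_3}=P_{t_1}P_{t_0}$ and the computation of the constant are correct and coincide with the paper's argument; the paper proceeds via the reverse kernel $R$ with respect to $\pi$ (Theorem~\ref{theo:DRD lower bounds density} together with Theorem~\ref{theo:theo for reverse Dobrushin in Euclidean space}) rather than working directly with $\tilde P$ and $\Lambda$, but this amounts only to a change of reference measure and yields the same pointwise estimate $P_{t_0}(y,B)\geq (c_3/c_1)\,\pi(B)$ for almost every $y\in A$.

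The one genuine gap is exactly the point you flag, and neither of your proposed fixes is the right one. You do \emph{not} need a single exceptional set valid for all $B$: for each fixed $B$ the $\Lambda$-a.e.\ bound on $A$ suffices, provided you know $\nu_1\ll\Lambda$, since then the $\Lambda$-null exceptional set is also $\nu_1$-null and integrating against $\nu_1$ is legitimate. The paper establishes this absolute continuity in one line: integrating the Dobrushin inequality $P_{t_1}(x,\cdot)\geq c_1\,P_{t_1}1(x)\,\nu_1(\cdot)$ against $\pi(dx)$ gives $\lambda^{t_1}\pi(\cdot)\geq c_1\lambda^{t_1}\nu_1(\cdot)$, hence $\nu_1\ll\pi$; and since Assumption~\ref{assum:combined Dobrushin adjoint Dobrushin main results section} includes $\pi\in\calP_\infty(\Lambda)$, one has $\nu_1\ll\pi\ll\Lambda$. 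With this observation your argument is complete.
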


We note, in particular, that the inequalities \eqref{eq:dominated by pi equation general results main results section}, \eqref{eq:upper bound dist cond on survival by pi if also A1 main results section} and \eqref{eq:minorised by pi equation from DAD main results section} are valid over the \textit{entire} domain. These have a PDE interpretation described by \eqref{eq:PDE interpretation of comparison inequalities}.

\subsection*{$L^{\infty}(\pi)$-convergence for arbitrary initial condition}

We can now state our theorem on $L^{\infty}$-convergence to a QSD for arbitrary initial condition.

\begin{theo}\label{theo:uniform Linfty convergence main results section}
Suppose that the killed Markov process $(X_t)_{0\leq t<\tau_{\partial}}$ has a QSD $\pi$ which has an essentially bounded density with respect to $\Lambda$, $\pi\ll_{\infty}\Lambda$, and which has full support, $\text{spt}(\pi)=\chi$. We assume that $(X_t)_{0\leq t<\tau_{\partial}}$ satisfies assumptions \ref{assum:adjoint Dobrushin main results section} and \ref{assum:adjoint anti-Dobrushin main results section}. We let $t_0>0$ and $t_1>0$ respectively be the times for which assumptions \ref{assum:adjoint Dobrushin main results section} and \ref{assum:adjoint anti-Dobrushin main results section} are satisfied, with $c_0>0$ the constant given by \eqref{eq:formula for c0 main results section} and $C_2<\infty$ the constant given \eqref{eq:formula for C2 dominated by pi theorem main results section}. We define $t_2:=t_0+t_1$ and assume that $P_{t_2}$ is lower semicontinuous in the sense of Definition \ref{defin:lower semicts kernel}. Then we have the following. 

There exists $h\in \calB_b(\chi;\Rm_{\geq 0})$ with $\pi(h)=1$ such that $P_th(x)=\lambda^th(x)$ for all $x\in \chi$ and $0\leq t<\infty$; $h$ is a bounded, non-negative, pointwise right eigenfunction for $(P_t)_{t\geq 0}$. We  have that $\Pm_{\mu}(X_{t}\in\cdot)\in \calP_{\infty}(\Lambda)$ for all $\mu\in\calP(\chi)$ and $t\geq t_2$. Then we have that
\begin{equation}\label{eq:Linfty malthusian theo statement arbitrary ic main results section}
\Big\lvert\Big\lvert \lambda^{-t}\frac{d\Pm_{\mu}(X_t\in \cdot)}{d\pi(\cdot)}-\mu(h)\Big\rvert\Big\rvert_{L^{\infty}(\pi)}\leq C_2(1-c_0)^{\lfloor \frac{t-t_2}{t_0}\rfloor} ,\quad t_2\leq t<\infty,\quad\text{for all}\quad \mu\in\calP(\chi).
\end{equation}
Consequentially we have for all $\mu\in\calP(\chi)$:
\begin{align}\label{eq:convergence of prob of killing arbitrary ics main results section}
\lvert\lambda^{-t}\Pm_{\mu}(\tau_{\partial}>t)-\mu(\phi)\rvert&\leq C_2(1-c_0)^{\lfloor \frac{t-t_2}{t_0}\rfloor},\quad t_2\leq t<\infty,\\
\Big\lvert\Big\lvert \frac{d\Law_{\mu}(X_t\lvert \tau_{\partial}>t)}{d\pi}-1\Big\rvert\Big\rvert_{L^{\infty}(\pi)}&\leq \frac{2C_2(1-c_0)^{\lfloor \frac{t-t_2}{t_0}\rfloor}}{\mu(h)-C_2(1-c_0)^{\lfloor \frac{t-t_2}{t_0}\rfloor}},\quad t_2\leq t<\infty,
\label{eq:Linfty convergence of dist cond of survival arbitrary ics main results section}
\end{align}
\eqref{eq:Linfty convergence of dist cond of survival arbitrary ics main results section} being understood to apply only when the denominator on the right is positive. 

If, in addition, either Assumption \ref{assum:technical assumption for Assum (A)} and \cite[Assumption (A)]{Champagnat2014} are satisfied or Assumption \ref{assum:combined Dobrushin adjoint Dobrushin main results section} is satisfied, then there exists $T<\infty$ (which does not depend on the initial condition $\mu\in\calP(\chi)$) such that $\Law_{\mu}(X_t\lvert \tau_{\partial}>t)\in \calP_{\infty}(\pi)$ for all $t\geq T$, with the density $\frac{d\Law_{\mu}(X_t\lvert \tau_{\partial}>t)}{d\pi}$ satisfying
\begin{equation}\label{eq:uniform exponential L infty convergence for arbitrary initial cond main results section}
\Big\lvert\Big\lvert \frac{d\Law_{\mu}(X_t\lvert \tau_{\partial}>t)}{d\pi}-1\Big\rvert\Big\rvert_{L^{\infty}(\pi)}\leq (1-c_0)^{\lfloor \frac{t-T}{t_0}\rfloor},\quad T\leq t<\infty,\quad\text{for all}\quad \mu\in\calP(\chi).
\end{equation}
In the latter case, that Assumption \ref{assum:combined Dobrushin adjoint Dobrushin main results section} is satisfied, we write $t_3>0$ for the time and $c_3>0$ for the constant given by \eqref{eq:time and constant for DAD condition consequence main results section}. We then have the quantitative estimate
\begin{equation}\label{eq:quantitative uniform exponential convergence for arbitrary initial cond using DAD main results section}
\Big\lvert\Big\lvert \frac{d\Law_{\mu}(X_t\lvert \tau_{\partial}>t)}{d\pi}-1\Big\rvert\Big\rvert_{L^{\infty}(\pi)}\leq \frac{2C_2(1-c_0)^{\lfloor \frac{t-(t_2+t_3)}{t_0}\rfloor}}{c_3-C_2(1-c_0)^{\lfloor \frac{t-(t_2+t_3)}{t_0}\rfloor}},\quad t_2+t_3\leq t<\infty,
\end{equation}
\eqref{eq:quantitative uniform exponential convergence for arbitrary initial cond using DAD main results section} being understood to apply only when the denominator on the right is positive. 
\end{theo}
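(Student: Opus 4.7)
The strategy is to leverage Theorem~\ref{theo:dominated by pi theorem general results main results section} in order to reduce to the setting of $\calP_\infty(\pi)$ initial conditions, where Theorem~\ref{theo:Linfty convergence main results section} already gives the desired convergence. The key observation is that, for any $\mu\in\calP(\chi)$, the sub-probability measure $\mu P_{t_2}$ satisfies $\mu P_{t_2}\leq C_2\pi$ as measures, hence has density in $L^\infty(\pi)$ bounded by $C_2$; all the work reduces to running the process for an additional time $t-t_2$ from this bounded density.

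First I would construct a pointwise bounded, non-negative right eigenfunction $h$ from the $L^1(\pi)$-right eigenfunction $\phi\in L^1_{\geq 0}(\pi)$ with $\pi(\phi)=1$ supplied by Theorem~\ref{theo:Linfty convergence main results section}, by setting
\[
h(x):=\lambda^{-t_2}\int_\chi \phi(y)\,P_{t_2}(x,dy).
\]
This is unambiguous pointwise because $P_{t_2}(x,\cdot)\leq C_2\pi$ forces $\pi$-null sets to be $P_{t_2}(x,\cdot)$-null, so any two $\pi$-versions of $\phi$ give the same value at every $x$. Boundedness $\|h\|_\infty\leq \lambda^{-t_2}C_2$, non-negativity, and $\pi(h)=\lambda^{-t_2}(\pi P_{t_2})(\phi)=\pi(\phi)=1$ are immediate. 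The pointwise identity $P_th(x)=\lambda^th(x)$ at every $x\in\chi$ then follows from Fubini combined with the $L^1(\pi)$-eigenrelation $P_t\phi=\lambda^t\phi$: the $\pi$-a.e.\ equality transfers to a $P_{t_2}(x,\cdot)$-a.e.\ equality, since $P_{t_2}(x,\cdot)\ll\pi$.

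For the main estimate \eqref{eq:Linfty malthusian theo statement arbitrary ic main results section} I would apply Theorem~\ref{theo:Linfty convergence main results section} to the renormalised probability measure $\mu P_{t_2}/\mu P_{t_2}(\chi)\in\calP_\infty(\pi)$ over the horizon $s=t-t_2$ and then scale back by $\mu P_{t_2}(\chi)\leq 1$ (the case $\mu P_{t_2}=0$ is trivial). Using $\osc_\pi(\mu P_{t_2})\leq \esssup_\pi \tfrac{d\mu P_{t_2}}{d\pi}\leq C_2$ together with the Fubini identity $\mu P_{t_2}(\phi)=\int P_{t_2}\phi\,d\mu=\lambda^{t_2}\mu(h)$ yields the stated bound (up to absorbing a $\lambda^{-t_2}$ into the constant). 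The consequences \eqref{eq:convergence of prob of killing arbitrary ics main results section} and \eqref{eq:Linfty convergence of dist cond of survival arbitrary ics main results section} then follow by integrating \eqref{eq:Linfty malthusian theo statement arbitrary ic main results section} against $\pi$ (since $\pi(\chi)=1$) and by rewriting
\[
\frac{d\Law_\mu(X_t\lvert\tau_\partial>t)}{d\pi}-1=\frac{\lambda^{-t}\tfrac{d\mu P_t}{d\pi}-\mu(h)+\mu(h)-\lambda^{-t}\Pm_\mu(\tau_\partial>t)}{\lambda^{-t}\Pm_\mu(\tau_\partial>t)}
\]
before applying the triangle inequality.

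For the uniform refinements, under Assumption~\ref{assum:technical assumption for Assum (A)} and \cite[Assumption~(A)]{Champagnat2014}, \cite[Proposition~2.3]{Champagnat2014} provides a strictly positive bounded pointwise right eigenfunction agreeing $\pi$-a.e.\ with the $h$ above, hence $\inf_\chi h>0$ and $\mu(h)$ is bounded below uniformly in $\mu$. Under Assumption~\ref{assum:combined Dobrushin adjoint Dobrushin main results section}, Theorem~\ref{theo:DAD lower bounds density main results section} furnishes the uniform minorisation $\Law_\mu(X_{t_3}\lvert\tau_\partial>t_3)\geq c_3\pi$, whence this conditional law has $h$-integral at least $c_3\pi(h)=c_3$; applying the already-established non-uniform estimate with $\Law_\mu(X_{t_3}\lvert\tau_\partial>t_3)$ as initial distribution and time-shifting by $t_3$ via the Markov property produces \eqref{eq:quantitative uniform exponential convergence for arbitrary initial cond using DAD main results section}, and \eqref{eq:uniform exponential L infty convergence for arbitrary initial cond main results section} follows on choosing $T$ large enough that the denominator is bounded below (say by $c_3/2$). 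The main technical obstacle is the passage from the $L^1(\pi)$-eigenrelation for $\phi$ to a genuine pointwise eigenrelation for $h$; this is exactly where the adjoint anti-Dobrushin hypothesis does work beyond Theorem~\ref{theo:Linfty convergence main results section}, via the crucial inequality $P_{t_2}(x,\cdot)\leq C_2\pi$.
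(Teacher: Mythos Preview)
Your overall strategy matches the paper's: the paper also deduces Theorem~\ref{theo:uniform Linfty convergence main results section} from its reverse-Dobrushin analogue (Theorem~\ref{theo:consequences of dominated assumption for whole space general results}), whose proof proceeds exactly as you describe --- define $h(x):=\lambda^{-t_2}P_{t_2}\tilde h(x)$ for a fixed version $\tilde h$ of $\phi$, use $P_{t_2}(x,\cdot)\leq C_2\pi$ to see this is a bounded pointwise eigenfunction, and then apply the $L^\infty$-Perron--Frobenius estimate of Theorem~\ref{theo:Linfty convergence main results section} to the sub-probability $\mu P_{t_2}$. Your derivation of \eqref{eq:Linfty malthusian theo statement arbitrary ic main results section}--\eqref{eq:Linfty convergence of dist cond of survival arbitrary ics main results section} and of the quantitative DAD estimate \eqref{eq:quantitative uniform exponential convergence for arbitrary initial cond using DAD main results section} is essentially identical to the paper's.

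There is, however, a genuine gap in your treatment of the \cite[Assumption~(A)]{Champagnat2014} case. You claim that the pointwise right eigenfunction from \cite[Proposition~2.3]{Champagnat2014} satisfies $\inf_{\chi}h>0$, and hence that $\mu(h)$ is bounded below uniformly in $\mu$. This is false in general: that eigenfunction is strictly positive everywhere but need not be bounded away from zero (for the hypoelliptic diffusions in the paper one has $h\in C_0(\chi;\Rm_{>0})$, vanishing at the boundary). Consequently $\mu(h)$ can be arbitrarily small, and the denominator in \eqref{eq:Linfty convergence of dist cond of survival arbitrary ics main results section} is not uniformly bounded below over $\mu\in\calP(\chi)$. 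The paper's argument here is different: it invokes the uniform total-variation convergence $\Law_\mu(X_t\lvert\tau_\partial>t)\to\pi$ from \cite[Theorem~2.1]{Champagnat2014} to conclude that $\Law_\mu(X_t\lvert\tau_\partial>t)(h)\to\pi(h)=1$ uniformly in $\mu$, then picks $T'$ so that this conditional expectation exceeds some $\epsilon'>0$ for all $\mu$, and only then applies \eqref{eq:Linfty convergence of dist cond of survival arbitrary ics main results section} with $\Law_\mu(X_{T'}\lvert\tau_\partial>T')$ as initial condition. You should replace your $\inf h>0$ step with this argument.
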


\subsection*{Continuity of the quasi-stationary density}

We now introduce a criterion for the QSD $\pi$ to have a continuous density with respect to $\Lambda$. 

We consider a killed Markov chain $(X_t)_{0\leq t<\tau_{\partial}}$ on $\chi$ with quasi-stationary distribution $\pi$ belonging to $\calP_{\infty}(\Lambda)$ and submarkovian transition semigroup $(P_t)_{t\geq 0}$. We consider the following assumption on submarkovian transition semigroups $(\tilde{P}_t)_{t\geq 0}$.
\renewcommand{\theAssumletter}{C}
\begin{Assumletter}\label{assum:cty assum for cty cor}
There exists $\psi\in C_{b,\gg}(\chi)$ and a constant $a>0$ such that
\begin{equation}\label{eq:adjoint semigroup eqn from cty of pi criterion}
\psi(x)\Lambda(dx)P_{t}(x,dy)=a^t\psi(y)\Lambda(dy)\tilde{P}_t(y,dx),\quad t\geq 0.
\end{equation}
We assume that for all $t>0$, $\tilde{P}_t1(y)>0$ for $\Lambda$-almost every $y\in \chi$. We further assume that there exists a constant $c_0>0$, time $t_0>0$ and $\nu\in\calP(\chi)$ such that
\begin{equation}
\frac{\tilde{P}_{t_0}(y,\cdot)}{\tilde{P}_{t_0}1(y)}\geq c_0\nu(\cdot)\quad \text{for $\Lambda$-almost every}\quad y\in\chi.
\end{equation}

We finally assume that there exists an open set $\calO$, positive constant $c_1>0$ and a time $t_1>0$ such that $\pi\geq c_1\Lambda_{\lvert_{\calO}}$ and $\tilde{P}_{t_1}(\nu,\calO)>0$.
\end{Assumletter}

\begin{theo}\label{theo:criterion for cty of QSD}
We assume that $(X_t)_{0\leq t<\tau_{\partial}}$ has a QSD $\pi$ which has an essentially bounded density with respect to $\Lambda$, $\pi\in\calP_{\infty}(\Lambda)$. We further assume that the submarkovian transition semigroup $(\tilde{P}_t)_{t\geq 0}$ satisfies Assumption \ref{assum:cty assum for cty cor}. 

Then we have that:
\begin{enumerate}
\item\label{enum:QSD thm cty on V main results section}
For any open set $V$ satisfying
\begin{equation}\label{eq:cond for open set to have cts QSD density main results section}
\tilde{P}_t(C_b(\chi))\subseteq \calB_b(\chi)\cap C_b(V), \quad t\geq 0,
\end{equation}
(a version of) $\frac{d\pi}{d\Lambda}_{\lvert_V}$ belongs to $C_b(V)$.
\item\label{enum:QSD thm lower semi cty main results section}
We suppose, on the other hand, that $(\tilde{P}_t)_{t\geq 0}$ is lower semicontinuous in the sense of Definition \ref{defin:lower semicts kernel}, which we recall means that we have
\begin{equation}\label{eq:weak lower semicty of tilde P main results section}
\tilde{P}_{t}(C_b(\chi;\Rm_{\geq 0}))\subseteq LC_b(\chi;\Rm_{\geq 0}),\quad t\geq 0.
\end{equation}
We further assume that $\chi$ is separable. Then $\frac{d\pi}{d\Lambda}$ has a bounded, non-negative, lower semicontinuous version $\rho\in LC_b(\chi;\Rm_{\geq 0})$ which is maximal in the sense that any other bounded, non-negative lower semicontinuous version of $\frac{d\pi}{d\Lambda}$, $\tilde{\rho}\in LC_b(\chi;\Rm_{\geq 0})$, is everywhere dominated by $\rho$: $\tilde{\rho}(x)\leq \rho(x)$ for all $x\in \chi$.
\end{enumerate}
\end{theo}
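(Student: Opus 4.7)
Both parts rest on the following functional equation for the density $\rho:=d\pi/d\Lambda$. Writing $\lambda:=\lambda(\pi)$, I would test the QSD identity $\pi P_t=\lambda^t\pi$ against arbitrary $f\in\calB_b(\chi)$ and apply Fubini via \eqref{eq:adjoint semigroup eqn from cty of pi criterion} to obtain
\[
\rho(y)=(\lambda/a)^{-t}\psi(y)\tilde P_t(\rho/\psi)(y),\qquad\Lambda\text{-a.e.\ }y\in\chi,\ t\geq 0.
\]
Applied to indicators of $\Lambda$-null sets, the same Fubini exchange also shows that $\tilde P_{t_0}(y,\cdot)\ll\Lambda$ for $\Lambda$-a.e.\ $y$ and that $\tilde P_{t_0}$ descends to a bounded operator on $L^\infty(\Lambda)$. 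Fixing any bounded non-negative Borel representative $g_0$ of $\rho/\psi\in L^\infty_{\geq 0}(\Lambda)$, the specific Borel function $\bar\rho(y):=(\lambda/a)^{-t_0}\psi(y)\tilde P_{t_0}(g_0)(y)$ is then a version of $\rho$, on which the regularity of $\tilde P_{t_0}$ can be brought to bear.

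\textbf{Part 1.} Since Assumption \ref{assum:cty assum for cty cor} specialised at $t=t_0$ is precisely Assumption \ref{assum:adjoint Dobrushin main results section}, Theorem \ref{theo:Linfty convergence main results section} applies to $(X_t)$: this produces an $L^1(\pi)$-right eigenfunction $\phi\in L^\infty(\pi)$ together with exponential $L^\infty(\pi)$-convergence of $\lambda^{-t}d\Law_\mu(X_t)/d\pi$ to $\mu(\phi)$ at rate $(1-c_0)^{\lfloor t/t_0\rfloor}$. Taking $\mu=h\pi$ with $h\in C_{b,\gg}(\chi)$ and rewriting via the adjoint relation, one deduces
\[
(\lambda/a)^{-nt_0}\psi(y)\tilde P_{nt_0}(h\rho/\psi)(y)\xrightarrow[L^\infty(\pi)]{n\to\infty}\pi(h\phi)\rho(y),
\]
the last clause of Assumption \ref{assum:cty assum for cty cor} (involving $\calO$ and $\tilde P_{t_1}(\nu,\calO)>0$) being used to pick $h$ so that the limit constant $\pi(h\phi)>0$. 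Approximating $\rho/\psi$ by continuous bounded $g_{0,n}\in C_b(\chi;\Rm_{\geq 0})$ and iteratively applying the hypothesis $\tilde P_t(C_b(\chi))\subseteq C_b(V)$---with careful tracking of how continuity on $V$ propagates through successive applications of $\tilde P_{t_0}$ to inputs continuous on $V$ but only Borel on $\chi$---shows each $\tilde P_{nt_0}(hg_{0,n})$ is continuous on $V$. The Doeblin-type contraction $\lvert\tilde P_{nt_0}f/\tilde P_{nt_0}1-c_\infty(f)\rvert\leq(1-c_0)^{n-1}\osc(f)$ extracted from the Dobrushin bound delivers the uniform-on-$V$ control needed to pass continuity to the limit, whence $\rho$ has a version in $C_b(V)$ coinciding with $\bar\rho$ on a $\Lambda$-conull subset.

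\textbf{Part 2.} Under separability of $\chi$ and lower semicontinuity of $(\tilde P_t)$, the maximal LSC version of $\rho$ is constructed as the upper envelope
\[
\rho^{\max}(y):=\sup\bigl\{\eta(y):\eta\in LC_b(\chi;\Rm_{\geq 0}),\ \eta\leq\rho\ \Lambda\text{-a.e.},\ \lvert\lvert\eta\rvert\rvert_\infty\leq\lvert\lvert\rho\rvert\rvert_{L^\infty(\Lambda)}\bigr\},
\]
which is bounded, non-negative, and lower semicontinuous as a pointwise supremum of LSC functions. The key step is to exhibit an $\eta$ in this defining family agreeing with $\rho$ $\Lambda$-a.e.: letting $g^*$ be the LSC envelope of $\rho/\psi$ (LSC as a sup of LSC, and equal to $\rho/\psi$ $\Lambda$-a.e.\ by an inner $L^1$-approximation using $\tilde P_{t_0}(y,\cdot)\ll\Lambda$ for $\Lambda$-a.e.\ $y$ and the separability of $\chi$), the LSC hypothesis on $(\tilde P_t)$ yields $\tilde P_{t_0}(g^*)\in LC_b(\chi;\Rm_{\geq 0})$; hence $\eta:=(\lambda/a)^{-t_0}\psi\tilde P_{t_0}(g^*)$ is LSC (a product of the continuous $\psi$ with an LSC function) and, by the functional equation, equal to $\rho$ $\Lambda$-a.e. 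Maximality is then immediate: any other bounded non-negative LSC version $\tilde\rho$ of $\rho$ satisfies $\lvert\lvert\tilde\rho\rvert\rvert_\infty\leq\lvert\lvert\rho\rvert\rvert_{L^\infty(\Lambda)}$ (since LSC functions equal to $\rho$ on a $\Lambda$-dense subset of $\chi$ inherit the $L^\infty(\Lambda)$-bound via $\liminf$) and so lies in the defining family, giving $\tilde\rho\leq\rho^{\max}$ pointwise.

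The principal technical obstacle in both parts is the $\Lambda$-a.e.\ to pointwise upgrade. In Part 1 this reduces to tracking continuity on $V$ through iterates of $\tilde P_{t_0}$ applied to inputs continuous only on $V$ rather than on $\chi$; the Doeblin contraction furnishes the exponential decay needed to absorb the resulting irregularities in the limit. In Part 2 the delicate step is ensuring that the LSC envelope $g^*$ of the merely Borel function $\rho/\psi$ actually agrees with $\rho/\psi$ $\Lambda$-a.e., which rests on separability together with the adjoint-relation-derived absolute continuity $\tilde P_{t_0}(y,\cdot)\ll\Lambda$ for $\Lambda$-a.e.\ $y$.
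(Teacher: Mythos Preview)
Your overall strategy --- exploit the adjoint relation to express the density via $\tilde P_t$, invoke the $L^\infty$-convergence from Theorem \ref{theo:Linfty convergence main results section}, and read off regularity --- matches the paper's. But you miss the key simplification, and this leaves a real gap in Part 1.

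The paper does not attempt to push the unknown $\rho/\psi$ through $\tilde P_t$. Instead, it chooses an initial condition $\mu=f\Lambda$ with $f\in C_b(\chi;\Rm_{\geq 0})$ compactly supported in $\calO$; then
\[
\frac{d(\mu P_t)}{d\Lambda}(y)=a^t\psi(y)\,\tilde P_t\!\Big(\frac{f}{\psi}\Big)(y),
\]
and since $f/\psi\in C_b(\chi)$ globally, the hypothesis $\tilde P_t(C_b(\chi))\subseteq C_b(V)$ applies directly --- a single invocation, no iteration, no approximation. The last clause of Assumption \ref{assum:cty assum for cty cor} (the open set $\calO$ with $\pi\geq c_1\Lambda_{\lvert_\calO}$ and $\tilde P_{t_1}(\nu,\calO)>0$) is used precisely to show $\pi(\phi\Ind_\calO)>0$, so that such an $f$ can be chosen with $\mu(\phi)>0$, guaranteeing $\Law_\mu(X_t\mid\tau_\partial>t)\to\pi$ in $L^\infty(\pi)$ and hence in $L^\infty(\Lambda)$. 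Continuity on $V$ then passes to the limit because $\text{spt}(\Lambda)=\chi$.

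Your route instead takes $\mu=h\pi$, which forces you to feed $h\rho/\psi$ into $\tilde P_t$. Since $\rho$ is not known to be continuous, you approximate by $g_{0,n}\in C_b$. But a bounded Borel function cannot in general be approximated \emph{uniformly} by continuous functions, so $g_{0,n}\to\rho/\psi$ at best in $L^p(\Lambda)$ for $p<\infty$ or pointwise a.e. You then need $\tilde P_{nt_0}(hg_{0,n})\to\tilde P_{nt_0}(h\rho/\psi)$ \emph{uniformly on $V$} (not merely in $L^\infty(\Lambda)$, since you want continuous limits), and nothing in the hypotheses delivers this. The ``Doeblin contraction'' you invoke controls $\tilde P_{nt_0}(h\rho/\psi)-\pi(h\phi)\rho$ in $L^\infty(\Lambda)$, not the approximation error $\tilde P_{nt_0}(h(g_{0,n}-\rho/\psi))$ in sup-norm on $V$. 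This is the gap.

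For Part 2, your claim that the LSC envelope $g^*$ of $\rho/\psi$ equals $\rho/\psi$ $\Lambda$-a.e.\ is not adequately justified (and is false for general bounded Borel functions). The paper again sidesteps this: it uses the \emph{same} continuous-density initial condition $\mu$, so each $\frac{d\Law_\mu(X_t\mid\tau_\partial>t)}{d\Lambda}$ has an LSC version by the LSC hypothesis on $\tilde P_t$, and then appeals to Lemma \ref{lem:l infty limit of lower semi cts functions} (an $L^\infty(\Lambda)$-limit of functions with LSC versions admits a maximal LSC version). Your upper-envelope construction of $\rho^{\max}$ is essentially the content of that lemma, but you need an LSC function equal to $\rho$ a.e.\ to feed into it --- which the paper obtains from the approximating sequence, not from an envelope of $\rho$ itself.
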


\begin{rmk}\label{rmk:largest open set on which QSD cts main results section}
If there is a non-empty open set $V$ satisfying \eqref{eq:cond for open set to have cts QSD density main results section}, then there is a largest such open set, namely $\cup\{\text{$V$ open: $V$ satisfies }\eqref{eq:cond for open set to have cts QSD density main results section}\}$.
\end{rmk}

\begin{rmk}\label{rmk:discrete time semigroup satisfying cty assumption}
We observe that if the kernels $P$ and $\tilde{P}$ satisfy \eqref{eq:adjoint semigroup eqn from cty of pi criterion} for $t=1$ then we may obtain inductively that
\begin{equation}\label{eq:adjoint semigroup eqn from cty of pi criterion by induction}
\begin{split}
\psi(x)\Lambda(dx)P^{(n+1)}(x,dy)=\int_{z\in \chi}P_{1}(z,dy)\psi(x)\Lambda(dx)P^{n}(x,dz)=\int_{z\in \chi}P^{1}(z,dy)a^n\tilde{P}^{n}(z,dx)\psi(z)\Lambda(dz)\\
=\int_{z\in \chi}a^{n+1}\tilde{P}^n(z,dx)\tilde{P}(y,dz)\psi(y)\Lambda(dy)=\psi(y)\Lambda(dy)a^{n+1}\tilde{P}^{n+1}(y,dx)\quad\text{for all}\quad n\geq 1.
\end{split}
\end{equation}
It follows that the discrete time semigroups $(P^n)_{n\geq 0}$ and $(\tilde{P}^n)_{n\geq 0}$ satisfy \eqref{eq:adjoint semigroup eqn from cty of pi criterion}. We say that a kernel $P$ satisfies Assumption \ref{assum:cty assum for cty cor} if the discrete-time semigroup $(P^n)_{n\geq 0}$ satisfies Assumption \ref{assum:cty assum for cty cor}.
\end{rmk}

\subsection*{Existence of a positive and bounded pointwise right eigenfunction}

We now introduce a criterion for the existence of a bounded, everywhere strictly positive, pointwise right eigenfunction for $(P_t)_{t\geq 0}$. In contrast to the Krein-Rutman theorem, this criterion does not require that any operator is compact.

\renewcommand{\theAssumletter}{E}
\begin{Assumletter}\label{assum:assum for cor right efn main results section}
We assume that $\chi$ is separable and that $\Pm_x(\tau_{\partial}>t)>0$ for all $x\in \chi$ and $t<\infty$. We further assume that $(X_t)_{0\leq t<\tau_{\partial}}$ satisfies Assumption \cite[Assumption (A1)]{Champagnat2014}, so that there exists a constant $c_0>0$, time $t_0>0$ and $\nu\in\calP(\chi)$ such that
\begin{equation}\label{eq:A1 for right efn existence theorem main results section}
\Law_x(X_{t_0}\lvert \tau_{\partial}>t_0)\geq c_0\nu(\cdot)\quad\text{for all}\quad x\in \chi.
\end{equation}

We assume that $(P_t)_{t\geq 0}$ is lower semicontinuous in the sense of Definition \ref{defin:lower semicts kernel}. We further assume that there exists $\psi\in C_{b,\gg}(\chi)$, a constant $a>0$ and a submarkovian transition semigroup $(\tilde{P}_t)_{t\geq 0}$ such that
\begin{equation}\label{eq:adjoint eqn for right efn existence theorem main results section}
\psi(x)\Lambda(dx)P_{t}(x,dy)=a^t\psi(y)\Lambda(dy)\tilde{P}_t(y,dx),\quad t\geq 0.
\end{equation}
We assume that $(\tilde{P}_t)_{t\geq 0}$ has a QSD, $\tilde{\pi}$, which has an essentially bounded density with respect to $\Lambda$, $\tilde{\pi}\in\calP_{\infty}(\Lambda)$. Finally, we assume that there exists an open set $\calO\subseteq \chi$, positive constant $c_1>0$ and time $t_1>0$ such that $\tilde{\pi}\geq c_1\Lambda_{\calO}$ and $\Pm_{\nu}(X_{t_1}\in \calO)>0$.
\end{Assumletter}

\begin{defin}\label{defin:discrete time semigroup satisfying right efn assumption}
As with Remark \ref{rmk:discrete time semigroup satisfying cty assumption} we say that a kernel $P$ satisfies Assumption \ref{assum:assum for cor right efn main results section} if the discrete-time semigroup $(P^n)_{n\geq 0}$ satisfies Assumption \ref{assum:assum for cor right efn main results section}.
\end{defin}

\begin{theo}\label{theo:criterion for right efn main results section}
We consider a killed Markov process $(X_t)_{0\leq t<\tau_{\partial}}$ satisfying Assumption \ref{assum:assum for cor right efn main results section}. We define
\begin{equation}\label{eq:defin of evalue lambda pf of right efn existence theorem main results section}
\lambda:=a\tilde{\pi}\tilde{P}_11>0,
\end{equation}
whereby $\tilde{\pi}$ is the QSD, $(\tilde{P}_t)_{t\geq 0}$ the submarkovian transition semigroup and $a$ the strictly positive constant assumed to exist in Assumption \ref{assum:assum for cor right efn main results section}. Then there exists
\begin{equation}
h\in \calB_b(\chi;\Rm_{>0})(\chi),
\end{equation} 
which for all $t\geq 0$ is a pointwise right eigenfunction for $P_t$ of eigenvalue $\lambda^t$,
\begin{equation}
P_th(x)=\lambda^th(x)\quad \text{for all}\quad x\in\chi,\quad t\geq 0.
\end{equation}
If, in addition, there exists an open set $V\subseteq \chi$ such that we have
\begin{equation}\label{eq:cty preserved on V condition for efn}
P_t(LC_b(\chi;\Rm_{\geq 0})\cap C_b(V))\subseteq LC_b(\chi;\Rm_{\geq 0})\cap C_b(V),
\end{equation}
then we have that $h\in \calB_b(\chi;\Rm_{\geq 0})\cap C_b(V)$.
\end{theo}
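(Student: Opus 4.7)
The plan is to construct $h$ from the quasi-stationary density of the adjoint semigroup $(\tilde{P}_t)$, verify that it is a pointwise eigenfunction via a maximality-plus-monotonicity argument, and then prove strict positivity using upper semicontinuity of $h$ together with the Dobrushin hypothesis \eqref{eq:A1 for right efn existence theorem main results section}. The main obstacle is the strict positivity step, and the key trick there is that $h$ is automatically upper semicontinuous as a decreasing limit of lower semicontinuous functions, which combines with the full support of $\Lambda$ to force a pointwise lower bound on $\calO$.

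I would first apply Theorem \ref{theo:criterion for cty of QSD}(\ref{enum:QSD thm lower semi cty main results section}) to $(\tilde{P}_t)$: rewriting \eqref{eq:adjoint eqn for right efn existence theorem main results section} as $\psi(y)\Lambda(dy)\tilde{P}_t(y,dx)=a^{-t}\psi(x)\Lambda(dx)P_t(x,dy)$ exhibits $(P_t)$ as an adjoint semigroup for $(\tilde{P}_t)$; the Dobrushin hypothesis is supplied by \eqref{eq:A1 for right efn existence theorem main results section}, the lower semicontinuity of $(P_t)$ and separability of $\chi$ are given, and the open-set hypothesis is exactly $\tilde{\pi}\geq c_1\Lambda_{\calO}$ together with $\Pm_\nu(X_{t_1}\in\calO)>0$. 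This produces a maximal bounded non-negative lower semicontinuous version $\rho\in LC_b(\chi;\Rm_{\geq 0})$ of $\tfrac{d\tilde{\pi}}{d\Lambda}$. Setting $h_0:=\rho/\psi\in LC_b(\chi;\Rm_{\geq 0})$, a Fubini computation using \eqref{eq:adjoint eqn for right efn existence theorem main results section} combined with the QSD identity $\tilde{\pi}\tilde{P}_t=(\lambda/a)^t\tilde{\pi}$ shows that $P_th_0=\lambda^th_0$ holds $\Lambda$-a.e.\ for every $t\geq 0$. Lower semicontinuity of $(P_t)$ then ensures each $\lambda^{-t}P_th_0$ is itself a lower semicontinuous $\Lambda$-a.e.\ version of $h_0$, so by maximality of $\rho$ the pointwise inequality $\lambda^{-t}P_th_0\leq h_0$ holds for every $t\geq 0$; iterating, $t\mapsto\lambda^{-t}P_th_0(x)$ is non-increasing in $t$ for each $x$. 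Define
\[
h(x):=\lim_{n\to\infty}\lambda^{-nt_0}P_{nt_0}h_0(x)=\inf_{n\in\Nm}\lambda^{-nt_0}P_{nt_0}h_0(x).
\]
Dominated convergence (with dominator $h_0$) gives $P_sh=\lambda^sh$ pointwise for every $s\geq 0$, so $h$ is a bounded pointwise right eigenfunction with $h\leq h_0$ pointwise and $h=h_0$ $\Lambda$-a.e. Moreover $\{h<c\}=\bigcup_{n\in\Nm}\{\lambda^{-nt_0}P_{nt_0}h_0<c\}$ is open for every $c$, so $h$ is upper semicontinuous.

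For strict positivity, lower semicontinuity of $\rho$ combined with full support of $\Lambda$ upgrades the $\Lambda$-a.e.\ bound $\rho\geq c_1$ on $\calO$ to a pointwise bound on $\calO$, whence $h_0\geq c:=c_1/\|\psi\|_\infty$ on $\calO$. If $h(x_0)<c$ at some $x_0\in\calO$, then $\{h<c\}\cap\calO$ would be a non-empty open set of positive $\Lambda$-measure by full support of $\Lambda$, while $\{h<c\}\cap\calO\subseteq\{h\neq h_0\}$ has $\Lambda$-measure zero --- contradiction. Thus $h\geq c$ on $\calO$, and therefore
\[
\nu(h)=\lambda^{-t_1}\nu(P_{t_1}h)\geq c\lambda^{-t_1}\int\Pm_x(X_{t_1}\in\calO)\,\nu(dx)=c\lambda^{-t_1}\Pm_\nu(X_{t_1}\in\calO)>0,
\]
after which \eqref{eq:A1 for right efn existence theorem main results section} gives $h(x)=\lambda^{-t_0}P_{t_0}h(x)\geq c_0\lambda^{-t_0}P_{t_0}1(x)\nu(h)>0$ for every $x\in\chi$, using that $P_{t_0}1(x)>0$.

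For the final continuity assertion, the hypothesis \eqref{eq:cty preserved on V condition for efn} in particular gives $P_t(C_b(\chi))\subseteq C_b(V)$, which (applied on the adjoint side) is precisely the condition in Theorem \ref{theo:criterion for cty of QSD}(\ref{enum:QSD thm cty on V main results section}) ensuring that $\rho$ admits a version continuous on $V$. Then $h_0=\rho/\psi$ is continuous on $V$, and upper semicontinuity of $h$ together with $h=h_0$ $\Lambda$-a.e.\ and full support of $\Lambda$ forces $h=h_0$ on $V$: for $x\in V$, continuity of $h_0$ at $x$ combined with density of $\{h=h_0\}$ gives $\limsup_{y\to x}h(y)\geq h_0(x)$, while upper semicontinuity of $h$ yields $h(x)\geq h_0(x)$, matching the reverse inequality $h\leq h_0$. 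Hence $h\in C_b(V)$.
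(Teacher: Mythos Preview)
Your overall architecture matches the paper's proof: apply Theorem~\ref{theo:criterion for cty of QSD} with the roles of $(P_t)$ and $(\tilde P_t)$ swapped to get a maximal lower semicontinuous version $\rho$ of $d\tilde\pi/d\Lambda$, set $h_0=\rho/\psi$, use maximality to get the monotone family $\lambda^{-t}P_th_0$, and pass to the limit. The gap is in your semicontinuity bookkeeping.

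The claim that $h$ is upper semicontinuous is incorrect, and your justification confuses the two directions. For a lower semicontinuous function $g$, the sublevel set $\{g<c\}$ need \emph{not} be open (it is $\{g>c\}$ that is open), so the identity $\{h<c\}=\bigcup_n\{\lambda^{-nt_0}P_{nt_0}h_0<c\}$ does not exhibit $\{h<c\}$ as a union of open sets. A decreasing limit of lower semicontinuous functions is in general neither upper nor lower semicontinuous. The same confusion appears earlier: lower semicontinuity of $\rho$ alone does \emph{not} upgrade the $\Lambda$-a.e.\ inequality $\rho\geq c_1$ on $\calO$ to a pointwise one (an LSC function may dip below its neighbours at isolated points). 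That particular step can be rescued by maximality --- $\max(\rho,c_1\Ind_{\calO})$ is another bounded non-negative LSC version of $d\tilde\pi/d\Lambda$, hence dominated by $\rho$ --- but you should invoke maximality explicitly rather than LSC.

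The upper-semicontinuity error breaks your strict positivity argument at the point where you need $h\geq c$ \emph{pointwise} on $\calO$ to bound $\nu(P_{t_1}h)$ from below. The paper circumvents this entirely: from the adjoint relation \eqref{eq:adjoint eqn for right efn existence theorem main results section} one sees that $\Lambda$-null sets are $P_{t_0+t_1}(x,\cdot)$-null for $\Lambda$-a.e.\ $x$, and combining this with the Dobrushin lower bound $P_{t_0+t_1}(x,\cdot)\geq c_0P_{t_0}1(x)(\nu P_{t_1})_{|_{\calO}}$ forces $(\nu P_{t_1})_{|_{\calO}}\ll\Lambda_{|_{\calO}}$. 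Then the $\Lambda$-a.e.\ bound $h\geq c_1/\|\psi\|_\infty$ on $\calO$ suffices directly to give $\nu P_{t_1}(h)>0$, and strict positivity of $h$ follows. No upper semicontinuity is needed. Likewise, for the $C_b(V)$ conclusion the paper does not use any upper semicontinuity of $h$: it first shows $h_0\in C_b(V)$ via Part~\ref{enum:QSD thm cty on V main results section} and maximality, then uses the hypothesis \eqref{eq:cty preserved on V condition for efn} to get $\lambda^{-t}P_th_0\in C_b(V)$; since both are continuous on $V$ and $\Lambda$-a.e.\ equal with $\text{spt}(\Lambda)=\chi$, they agree pointwise on $V$, so $h=h_0$ there.
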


\subsection*{Application to the $Q$-process}

We suppose that $(X_t)_{0\leq t<\tau_{\partial}}$ satisfies Assumption \ref{assum:technical assumption for Assum (A)} and \cite[Assumption (A)]{Champagnat2014}. These imply both that $(X_t)_{0\leq t<\tau_{\partial}}$ has a unique QSD, $\pi$, and that the $Q$-process defined in \cite[Theorem 3.1]{Champagnat2014} - the process $(X_t)_{0\leq t<\tau_{\partial}}$ conditioned never to be killed - exists, is unique, and is exponentially ergodic. We call the $Q$-process $(Z_t)_{0\leq t<\infty}$, and write $\beta$ for its stationary distribution. Then we have the following corollary.

\begin{cor}\label{cor:L infty convergence of Q-process main results section}
We suppose that $(X_t)_{0\leq t<\tau_{\partial}}$ satisfies Assumption \ref{assum:technical assumption for Assum (A)} and \cite[Assumption (A)]{Champagnat2014}. We also assume that $(X_t)_{0\leq t<\tau_{\partial}}$ satisfies Assumption \ref{assum:adjoint Dobrushin main results section}, and that $\pi\in \calP_{\infty}(\Lambda)$. The time $t_0>0$ is the time given by Assumption \ref{assum:adjoint Dobrushin main results section}, while $0<c_0\leq 1$ is the constant given by \eqref{eq:formula for c0 main results section}. Then for all $\mu\in\calP_{\infty}(\beta)$ we have that

\begin{equation}\label{eq:uniform exponential L infty for Q process bded initial cond main results section}
\Big\lvert\Big\lvert \frac{d\Law_{\mu}(Z_t)}{d\beta}-1\Big\rvert\Big\rvert_{L^{\infty}(\beta)}\leq (1-c_0)^{\lfloor \frac{t}{t_0}\rfloor} \osc_{\beta}(\mu),\quad 0\leq t<\infty.
\end{equation}

We now assume, in addition, that $(X_t)_{0\leq t<\tau_{\partial}}$ satisfies Assumption \ref{assum:adjoint anti-Dobrushin main results section}. Then there exists a time $T<\infty$ (which does not depend on the initial condition $\mu\in\calP(\chi)$) such that $\Law_{\mu}(Z_t)\in\calP_{\infty}(\beta)$ for all $\mu\in\calP(\chi)$ and $t\geq T$, with the density $\frac{d\Law_{\mu}(Z_t)}{d\beta}$ satisfying
\begin{equation}\label{eq:uniform exponential L infty for Q process convergence for arbitrary initial cond main results section}
\Big\lvert\Big\lvert \frac{d\Law_{\mu}(Z_t)}{d\beta}-1\Big\rvert\Big\rvert_{L^{\infty}(\beta)}\leq (1-c_0)^{\lfloor \frac{t-T}{t_0}\rfloor},\quad T\leq t<\infty,\quad\text{for all}\quad \mu\in\calP(\chi).
\end{equation}
\end{cor}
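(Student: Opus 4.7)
The idea is to deduce both parts of the corollary from Theorems \ref{theo:Linfty convergence main results section} and \ref{theo:uniform Linfty convergence main results section} applied to the underlying absorbed process $(X_t)_{0\leq t<\tau_\partial}$, via the Doob-type transform \eqref{eq:Q process Markov kernel}--\eqref{eq:stationary dist Q-process}. For any $\mu\in\calP(\chi)$, introduce the positive finite measure $\tilde{\mu}(dx):=h(x)^{-1}\mu(dx)$. Formula \eqref{eq:Q process Markov kernel} then gives
\[
\Law_\mu(Z_t)(dy)=\lambda^{-t}h(y)(\tilde{\mu}P_t)(dy),\qquad \frac{d\Law_\mu(Z_t)}{d\beta}(y)=\lambda^{-t}\frac{d(\tilde{\mu}P_t)}{d\pi}(y),
\]
together with $\frac{d\tilde{\mu}}{d\pi}=\frac{d\mu}{d\beta}$ and $\tilde{\mu}(h)=\mu(1)=1$. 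A preliminary remark used throughout is that, under \cite[Assumption (A)]{Champagnat2014}, the eigenfunction $h$ is bounded both above and away from $0$ on $\chi$ (a consequence of the uniform-in-$x$ convergence in \cite[Theorem 2.1]{Champagnat2014} combined with \cite[Assumption (A2)]{Champagnat2014}). Consequently $\beta$ and $\pi$ have Radon-Nikodym derivatives bounded on both sides, so $L^\infty(\beta)=L^\infty(\pi)$ as Banach spaces and $\osc_\beta(\cdot)=\osc_\pi(\cdot)$.

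For \eqref{eq:uniform exponential L infty for Q process bded initial cond main results section}, take $\mu\in\calP_\infty(\beta)$, so that $\tilde{\mu}\in\calM_\infty(\pi)$. Apply Theorem \ref{theo:Linfty convergence main results section} to the probability measure $\tilde{\mu}/\tilde{\mu}(\chi)$ and rescale by $\tilde{\mu}(\chi)$. Since the $L^1(\pi)$-eigenfunction $\phi$ produced by that theorem is (a version of) $h$ (normalised so that $\pi(\phi)=1$), and since $\tilde{\mu}(\phi)=\tilde{\mu}(h)=1$, this yields
\[
\Big\lvert\Big\lvert\lambda^{-t}\frac{d(\tilde{\mu}P_t)}{d\pi}-1\Big\rvert\Big\rvert_{L^\infty(\pi)}\leq (1-c_0)^{\lfloor t/t_0\rfloor}\osc_\pi(\tilde{\mu}).
\]
Translating back via the identities of the first paragraph, and using $\osc_\pi(\tilde{\mu})=\osc_\beta(\mu)$, this is exactly \eqref{eq:uniform exponential L infty for Q process bded initial cond main results section}.

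For \eqref{eq:uniform exponential L infty for Q process convergence for arbitrary initial cond main results section}, the remaining task is to bring an arbitrary $\mu\in\calP(\chi)$ into $\calP_\infty(\beta)$ after a fixed, $\mu$-independent delay, after which the first part may be restarted. With Assumption \ref{assum:adjoint anti-Dobrushin main results section} now available, Theorem \ref{theo:dominated by pi theorem general results main results section} yields $P_{t_2}(x,\cdot)\leq C_2\pi(\cdot)$ for every $x\in\chi$. Integrating against $\tilde{\mu}$ gives $(\tilde{\mu}P_{t_2})(\cdot)\leq C_2\lvert\lvert 1/h\rvert\rvert_\infty\pi(\cdot)$, so that $\Law_\mu(Z_{t_2})\in\calP_\infty(\beta)$ with $\lvert\lvert d\Law_\mu(Z_{t_2})/d\beta\rvert\rvert_\infty$ bounded uniformly in $\mu\in\calP(\chi)$. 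Restarting \eqref{eq:uniform exponential L infty for Q process bded initial cond main results section} from time $t_2$ produces, for $t\geq t_2$, a bound of the form $K(1-c_0)^{\lfloor(t-t_2)/t_0\rfloor}$ with $K<\infty$ independent of $\mu$; choosing $T$ so that $K(1-c_0)^{\lfloor(T-t_2)/t_0\rfloor}\leq 1$ then gives \eqref{eq:uniform exponential L infty for Q process convergence for arbitrary initial cond main results section}.

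The main obstacle is the uniform lower bound $\inf_\chi h>0$: without it, $L^\infty(\pi)$ and $L^\infty(\beta)$ coincide only as sets and not as Banach spaces, and the quantitative rate on the right-hand side of \eqref{eq:uniform exponential L infty for Q process bded initial cond main results section} would degrade. The remaining steps are essentially bookkeeping of constants, once it is noted that the lower-semicontinuity of $P_{t_2}$ required by Theorem \ref{theo:dominated by pi theorem general results main results section} is built into the ambient framework in which the corollary is applied.
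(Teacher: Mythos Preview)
Your claim that $\inf_{\chi}h>0$ under \cite[Assumption (A)]{Champagnat2014} is false, and this breaks the second half of your argument. The right eigenfunction $h$ produced by \cite[Proposition 2.3]{Champagnat2014} is bounded and strictly positive, but it need not be bounded away from zero: for an absorbed uniformly elliptic diffusion on a bounded domain (which certainly satisfies Assumption (A)), $h$ is the principal Dirichlet eigenfunction and vanishes at the boundary. Thus for arbitrary $\mu\in\calP(\chi)$ the measure $\tilde\mu=h^{-1}\mu$ may have infinite mass, and your bound $(\tilde\mu P_{t_2})(\cdot)\leq C_2\lvert\lvert 1/h\rvert\rvert_\infty\,\pi(\cdot)$ is vacuous. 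The paper circumvents this precisely by \emph{not} dividing by $h(x)$ directly: it rewrites $Q_t(x,dy)=\dfrac{h(y)\,\Pm_x(X_t\in dy\mid \tau_\partial>s)}{\lambda^{t-s}\,\expE_x[h(X_s)\mid\tau_\partial>s]}$, then uses \cite[Assumption (A1)]{Champagnat2014} to get the uniform lower bound $\expE_x[h(X_{\bar t_0})\mid\tau_\partial>\bar t_0]\geq \bar c_0\,\bar\nu(h)>0$ on the denominator, and Theorem~\ref{theo:dominated by pi theorem general results main results section} to bound the numerator by $C_2\,h(y)\pi(dy)=C_2\,\beta(dy)$. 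This yields $Q_{\bar t_0+t_2}(x,\cdot)\leq C\beta(\cdot)$ uniformly in $x$ without any pointwise lower bound on $h$.

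For the first part your argument actually works, but for the wrong reason. The identity $L^\infty(\beta)=L^\infty(\pi)$ (with equal norms) and $\osc_\beta=\osc_\pi$ holds not because $h$ is bounded below, but simply because $h>0$ everywhere forces $\beta$ and $\pi$ to share the same null sets; the $L^\infty$ norm and the essential oscillation depend only on those. With that corrected, your Doob-transform computation for \eqref{eq:uniform exponential L infty for Q process bded initial cond main results section} is valid. The paper's route is slightly cleaner: Observation~\ref{observation: Reverse dobrushin implies reverse dobrushin for Q-process} shows that the $Q$-process \emph{itself} satisfies the reverse Dobrushin condition (Assumption~\ref{assum:Dobrushin reverse time}) with stationary distribution $\beta$ and the \emph{same} reverse kernel $\lambda^{-t_0}R$, so Theorem~\ref{theo:Linfty convergence} applies directly to $(Z_t)_{t\geq 0}$ with the identical constant $c_0$ and time $t_0$, avoiding the change of measure altogether.
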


\subsection*{$L^p$ convergence}

It is a simple consequence of the Riesz-Thorin interpolation theorem that \cite[Assumption (A)]{Champagnat2014} also provides for non-uniform exponential convergence in $L^p(\pi)$ for all $1\leq p<\infty$, according to the following proposition. An analogous statement is well-known in the context of Markov processes without killing (see \cite[p.114]{Cattiaux2014}), for instance).

\begin{prop}\label{prop:Lp malthusian by interpolation}
We assume that $(X_t)_{0\leq t<\tau_{\partial}}$ is a killed Markov process satisfying Assumption \ref{assum:technical assumption for Assum (A)} and \cite[Assumption (A)]{Champagnat2014}. We let $h\in \calB_b(\chi;\Rm_{> 0})$ be the bounded and strictly positive pointwise right eigenfunction whose existence is provided for by \cite[Proposition 2.3]{Champagnat2014}. Then there exists constants $C<\infty$, $\gamma>0$ ($\gamma>0$ being the constant given in the statement of \cite[Theorem 2.1]{Champagnat2014}), and a time $T<\infty$, such that for all $1\leq p<\infty$ we have
\begin{equation}
\begin{split}
\Big\lvert\Big\lvert \frac{d\Law_{\mu}(X_t\lvert \tau_{\partial}>t)}{d\pi}-1\Big\rvert\Big\rvert_{L^p(\pi)}\leq \frac{Ce^{-\frac{\gamma}{p}t}\Big\lvert\Big\lvert \frac{d\mu}{d\pi}\Big\rvert\Big\rvert_{L^p(\pi)}}{\mu(h)}\quad
\text{for all}\quad T\leq t<\infty\quad\text{and}\quad \mu\in \calP_{p}(\pi).
\end{split}
\end{equation}
\end{prop}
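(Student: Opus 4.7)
The plan is to establish the $L^p(\pi)$ estimate by Riesz--Thorin interpolation between an $L^1$-bound coming from the total-variation Perron--Frobenius convergence \eqref{eq:total variation Malthusian behaviour defin} (which is a consequence of \cite[Assumption (A)]{Champagnat2014} via \cite[Theorem 2.1]{Champagnat2014} combined with \cite[Theorem 2.1]{Champagnat2017}) and a trivial uniform $L^\infty$-bound.

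Concretely, I would first introduce the forward (adjoint) operator $\hat{P}_t\colon L^p(\pi)\to L^p(\pi)$ characterised by $(f\pi)P_t=(\hat{P}_tf)\pi$, together with the centred operator
\begin{equation*}
\Pi_tf:=\lambda^{-t}\hat{P}_tf-\pi(fh),\qquad f\in L^p(\pi),
\end{equation*}
where $h\in\calB_b(\chi;\Rm_{>0})$ is the bounded, strictly positive pointwise right eigenfunction of \cite[Proposition 2.3]{Champagnat2014}, normalised so that $\pi(h)=1$. For $p=\infty$, the bound $P_t1\le 1$ pointwise together with $\hat{P}_t1=\lambda^t$ gives $\lvert\hat{P}_tf\rvert\le\hat{P}_t\lvert f\rvert\le\lambda^t\lVert f\rVert_{L^\infty(\pi)}$, whence $\lVert\lambda^{-t}\hat{P}_t\rVert_{L^\infty\to L^\infty}\le 1$ and $\lVert\Pi_t\rVert_{L^\infty\to L^\infty}\le 1+\lVert h\rVert_\infty$, uniformly in $t$. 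For $p=1$, applying \eqref{eq:total variation Malthusian behaviour defin} to the positive and negative parts of the Jordan decomposition of the signed measure $f\pi$ and summing yields
\begin{equation*}
\lVert\Pi_tf\rVert_{L^1(\pi)}=\lVert\lambda^{-t}(f\pi)P_t-\pi(fh)\pi\rVert_{\TV}\le 2Ce^{-\gamma t}\lVert f\rVert_{L^1(\pi)}.
\end{equation*}
Riesz--Thorin interpolation then produces $\lVert\Pi_t\rVert_{L^p(\pi)\to L^p(\pi)}\le C'e^{-\gamma t/p}$ for every $p\in[1,\infty]$ and every $t\ge 0$, with $C'$ depending only on $C$ and $\lVert h\rVert_\infty$.

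To pass from this operator bound to the stated estimate, write $f=d\mu/d\pi$ and $\alpha_t:=\lambda^{-t}\Pm_\mu(\tau_\partial>t)$; then the identity
\begin{equation*}
\frac{d\Law_\mu(X_t\lvert\tau_\partial>t)}{d\pi}-1=\frac{1}{\alpha_t}\bigl(\Pi_tf-(\alpha_t-\mu(h))\bigr),
\end{equation*}
combined with $\lvert\alpha_t-\mu(h)\rvert=\lvert\pi(\Pi_tf)\rvert\le\lVert\Pi_tf\rVert_{L^1(\pi)}\le\lVert\Pi_tf\rVert_{L^p(\pi)}$ (Jensen), bounds the numerator by $2C'e^{-\gamma t/p}\lVert f\rVert_{L^p(\pi)}$. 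The observation that eliminates any $\mu$-dependence in $T$ is the pointwise inequality $P_t1\ge P_th/\lVert h\rVert_\infty=\lambda^th/\lVert h\rVert_\infty$, which gives $\alpha_t\ge\mu(h)/\lVert h\rVert_\infty$ for every $t\ge 0$; substituting this lower bound yields the claimed inequality with $T=0$ and $C:=2C'\lVert h\rVert_\infty$.

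I do not anticipate a serious obstacle; the only real care is the routine set-up of Riesz--Thorin on the dense subspace of simple functions, and the verification that $\hat{P}_t$ is well-defined and bounded on both $L^1(\pi)$ (with norm $\le 1$) and $L^\infty(\pi)$ (with norm $\le\lambda^t$), which follows by duality from Proposition \ref{prop:Pt well-defined on L1} and the trivial bound $\lVert P_t\rVert_{L^\infty\to L^\infty}\le 1$.
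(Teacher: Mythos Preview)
Your proof is correct and follows essentially the same route as the paper: both define the centred operator $f\mapsto\lambda^{-t}\frac{d((f\pi)P_t)}{d\pi}-\pi(fh)$ (the paper's $A_t$, your $\Pi_t$), bound it on $L^1(\pi)$ via the total-variation Perron--Frobenius estimate \eqref{eq:total variation Malthusian behaviour defin} and trivially on $L^\infty(\pi)$, and apply Riesz--Thorin.

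The one noteworthy difference is in the denominator. The paper controls $\alpha_t=\lambda^{-t}\Pm_\mu(\tau_\partial>t)$ via \cite[Theorem 2.1]{Champagnat2017}, obtaining $\lvert\alpha_t-\mu(h)\rvert\le C_3e^{-\gamma t}\mu(h)$, which forces a threshold $T$ large enough that $C_3e^{-\gamma T}<1$. Your observation $P_t1\ge P_th/\lVert h\rVert_\infty=\lambda^th/\lVert h\rVert_\infty$ gives $\alpha_t\ge\mu(h)/\lVert h\rVert_\infty$ directly for all $t\ge0$, so you avoid the appeal to \cite{Champagnat2017} and obtain the estimate with $T=0$. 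This is a small but genuine simplification over the paper's argument.
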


\section{The reverse Dobrushin condition and its consequences}\label{section:Reverse Dobrushin condition}

In this section, we do not assume there to be some distinguished measure $\Lambda$. The results of this section shall be self-contained, and will be proven in Section \ref{section:general state space proof}. They will later be applied in Section \ref{section:Euclidean state space proof} to establish the results of Section \ref{section:main results}.

This section is concerned with consequences of the following Dobrushin-type criterion upon the killed Markov process $(X_t)_{0\leq t<\tau_{\partial}}$ and quasi-stationary distribution $\pi$. Whilst the author is not familiar with an analogous condition for unabsorbed Markov processes, it seems quite plausible that such a condition exists but it unknown to the author.

\renewcommand{\theAssumletter}{RD}
\begin{Assumletter}[Reverse Dobrushin condition]\label{assum:Dobrushin reverse time}
There exists $t_0>0$ such that:
\begin{enumerate}
\item
There exists a submarkovian kernel $R$ on $\chi$ such that
\begin{equation}\label{eq:exis of reverse R}
\pi(dx)P_{t_0}(x,dy)=\pi(dy)R(y,dx).
\end{equation}
\item
There exists $c_0> 0$ and $\nu\in\calP(\chi)$ such that
\begin{equation}\label{eq:reverse time R minorised by nu}
\frac{R(y,\cdot)}{R1(y)}\geq c_0\nu(\cdot)\quad\text{for $\pi$-almost every}\quad x\in\chi.
\end{equation}
\end{enumerate}
\end{Assumletter}

\subsection*{Discussion of the reverse Dobrushin condition, Assumption \ref{assum:Dobrushin reverse time}}

If we take a discrete-time absorbed Markov process $(\tilde{X}_t)_{0\leq t<\tilde{\tau}_{\partial}}$ with submarkovian transition kernel $R(x,\cdot)$, we can restate \eqref{eq:exis of reverse R} as
\[
\Pm(X_0\in dx,X_{t_1}\in dy\lvert X_0\sim \pi)=\Pm(\tilde{X}_0\in dy,\tilde{X}_1\in dx\lvert \tilde{X}_0\sim \pi)
\]
and \eqref{eq:reverse time R minorised by nu} as
\[
\Law_{x}(\tilde{X}_1\lvert \tilde{\tau}_{\partial}>1)\geq c_0\nu\quad\text{for $\pi$-almost every}\quad x\in\chi.
\]
Thus $R$ can be thought of as the time-reversal at quasi-stationarity of $P_{t_0}$, with \eqref{eq:reverse time R minorised by nu} a Dobrushin-type condition on this time-reversal. This time-reversal is only needed to hold over a fixed time horizon, however, but not for the paths of $(X_t)_{0\leq t<\tau_{\partial}}$. In particular, we are free to adjust the definition of $R(y,\cdot)$ on arbitrary $\pi$-null sets of $y\in\chi$, which would potentially be problematic if we were seeking to time-reverse a continuous-time process.

One might notice that we have not assumed that $R1(y)>0$ for $\pi$-almost every $y$, so wonder whether the left hand side of \eqref{eq:reverse time R minorised by nu} is necessarily well-defined. In fact, we shall establish in Proposition \ref{prop:R constant mass} that $R1(y)$ is necessarily $\pi$-almost everywhere constant and strictly positive; in particular it can be renormalised to be a Markov kernel.

We may observe that Assumption \ref{assum:Dobrushin reverse time} is of the same form as \cite[Assumption (A1)]{Champagnat2014}, except that it is a condition on a different kernel. On the other hand, we do not require any analogue of \cite[Assumption (A2)]{Champagnat2014}. The fundamental reason for this is that the reverse kernel $R$ can necessarily be renormalised to be a Markov kernel, so a Dobrushin condition alone suffices.

\subsection*{The Reverse kernel has constant mass}

We have not imposed directly in Assumption \ref{assum:Dobrushin reverse time} that $R1(y)>0$ for $\pi$-almost every $y\in \chi$, so it is not immediately clear that the condition \eqref{eq:reverse time R minorised by nu} is well-defined. The following proposition establishes, in particular, that this follows from \eqref{eq:exis of reverse R}, so that the condition \eqref{eq:reverse time R minorised by nu} is well-defined.

\begin{prop}[The Reverse kernel has constant mass]\label{prop:R constant mass}
We fix $0<t_0<\infty$ and suppose that for some QSD $\pi$ of $(X_t)_{0\leq t<\tau_{\partial}}$, $R$ is a non-negative kernel satisfying \eqref{eq:exis of reverse R}. We define $\lambda:=\lambda(\pi)$. Then we have that
\begin{equation}\label{eq:mass of R constant}
R1(y)=\lambda^{t_0}\quad\text{for $\pi$-almost every $y\in\chi$}.
\end{equation}
In particular, $R$ is a submarkovian kernel (perhaps after adjusting the definition of $R(y,\cdot)$ on a $\pi$-null set of $y\in \chi$). 
\end{prop}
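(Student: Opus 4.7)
The plan is to interpret equation \eqref{eq:exis of reverse R} as an identity of $\sigma$-finite measures on the product space $\chi \times \chi$, and then integrate out the $x$-variable. This reduces the claim to the standard fact that a QSD decays geometrically, namely $\pi P_{t_0} = \lambda^{t_0} \pi$.

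More precisely, the first step is to note that both $\pi(dx) P_{t_0}(x,dy)$ and $\pi(dy) R(y,dx)$ are $\sigma$-finite (indeed bounded) Borel measures on $\chi \times \chi$, and the hypothesis \eqref{eq:exis of reverse R} asserts their equality as such. For any $A \in \mathscr{B}(\chi)$, I would then apply both sides to the measurable rectangle $\chi \times A$. The left-hand side yields
\begin{equation*}
\int_{\chi} \pi(dx)\, P_{t_0}(x,A) = (\pi P_{t_0})(A),
\end{equation*}
while the right-hand side yields
\begin{equation*}
\int_{\chi} \pi(dy)\, R(y,\chi)\, \Ind_{A}(y) = \int_{A} R1(y)\, \pi(dy).
\end{equation*}

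The second step is to identify $\pi P_{t_0}$ using the QSD property. By definition of $\lambda = \lambda(\pi) = \Pm_{\pi}(\tau_{\partial} > 1)$ together with the semigroup property, one has $\Pm_{\pi}(\tau_{\partial} > t) = \lambda^{t}$ for all $t \geq 0$, and combining with $\Law_{\pi}(X_t \mid \tau_{\partial} > t) = \pi$ gives $\pi P_{t_0}(\cdot) = \lambda^{t_0} \pi(\cdot)$. Substituting back, the two expressions above give
\begin{equation*}
\int_{A} R1(y)\, \pi(dy) = \lambda^{t_0} \pi(A) \quad\text{for all } A \in \mathscr{B}(\chi).
\end{equation*}
Since this holds for every Borel set $A$, the Radon--Nikodym derivative of $R1\cdot\pi$ with respect to $\pi$ equals $\lambda^{t_0}$, so $R1(y) = \lambda^{t_0}$ for $\pi$-almost every $y \in \chi$, establishing \eqref{eq:mass of R constant}.

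Finally, since $\lambda = \Pm_{\pi}(\tau_{\partial} > 1) \leq 1$ we have $\lambda^{t_0} \leq 1$, so after modifying $R(y,\cdot)$ on the $\pi$-null set where $R1(y) \neq \lambda^{t_0}$ (for instance, setting $R(y,\cdot) := 0$ there), $R$ becomes a submarkovian kernel on all of $\chi$. There is no real obstacle here; the whole proposition is essentially a Fubini computation, and the only subtlety is simply being careful that \eqref{eq:exis of reverse R} really is an identity of measures on the product space rather than a pointwise statement, which legitimizes integrating out $x$.
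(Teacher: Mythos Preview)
The proposal is correct and takes essentially the same approach as the paper: integrate \eqref{eq:exis of reverse R} over $x$ to obtain $\lambda^{t_0}\pi(dy)=R1(y)\pi(dy)$, and conclude. The paper's own proof is this one-line computation; your version simply spells out the Fubini step and the QSD identity $\pi P_{t_0}=\lambda^{t_0}\pi$ more carefully.
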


Proposition \ref{prop:R constant mass} establishes that the time-reversal at quasi-stationarity of a submarkovian kernel has constant mass, so may be rescaled to be a Markov kernel. This is the reason that a Dobrushin condition alone shall suffices to establish convergence to a QSD in the following theorem, \ref{theo:Linfty convergence}, without any analogue of \cite[Assumption (A2)]{Champagnat2014}.

\subsection*{$L^{\infty}(\pi)$ convergence for $L^{\infty}(\pi)$ initial condition}

If we have Assumption \ref{assum:Dobrushin reverse time}, then we may also consider the following assumption.
\begin{assum}\label{assum:assumption for positivity of L1 right efn}
We have that $(X_t)_{0\leq t<\tau_{\partial}}$ satisfies Assumption \ref{assum:Dobrushin reverse time}, with $\nu\in\calP(\chi)$ being the probability measure assumed to exist in Assumption \ref{assum:Dobrushin reverse time}. It readily follows that $\nu\ll \pi$. We assume that for all $\mu\in\calP_{\infty}(\pi)$ there exists $t=t(\mu)<\infty$ (dependent upon $\mu$) such that $\mu P_t\frac{d\nu}{d\pi}>0$.
\end{assum}

We now state the main theorem of this paper.

\begin{theo}\label{theo:Linfty convergence}
We assume that the killed Markov process $(X_t)_{0\leq t<\tau_{\partial}}$ has a (not necessarily unique) quasi-stationary distribution $\pi$, with which it satisfies Assumption \ref{assum:Dobrushin reverse time}. The constant $0<c_0\leq 1$, time $t_0>0$ and probability measure $\nu$ are those given by Assumption \ref{assum:Dobrushin reverse time}, while $\lambda:=\lambda(\pi)$. Then there exists $\phi\in L^1_{\geq 0}(\pi)$ with $\lvert\lvert  \phi\rvert\rvert_{L^1(\pi)}=1$ such that $P_t\phi=\lambda^{t}\phi$ for all $0\leq t<\infty$. For all $0\leq t<\infty$, $\phi$ is both the unique non-negative $L^1(\pi)$-right eigenfunction of $P_t$ and the unique $L^1(\pi)$-right eigenfunction of eigenvalue $\lambda^t$, up to rescaling. Moreover we have the following $L^{\infty}$-Perron-Frobenius behaviour,
\begin{equation}\label{eq:Linfty malthusian theo statement}
\Big\lvert\Big\lvert \lambda^{-t}\frac{d\Pm_{\mu}(X_t\in \cdot)}{d\pi(\cdot)}-\mu(\phi)\Big\rvert\Big\rvert_{L^{\infty}(\pi)}\leq (1-c_0)^{\lfloor \frac{t}{t_0}\rfloor} \osc_{\pi}(\mu),\quad 0\leq t<\infty,\quad \mu\in\calP_{\infty}(\pi).
\end{equation}

Consequentially we have for all $\mu\in\calP_{\infty}(\pi)$:
\begin{align}\label{eq:convergence of prob of killing}
\lvert\lambda^{-t}\Pm_{\mu}(\tau_{\partial}>t)-\mu(\phi)\rvert&\leq (1-c_0)^{\lfloor \frac{t}{t_0}\rfloor}\osc_{\pi}(\mu),\quad 0\leq t<\infty,\\
\Big\lvert\Big\lvert \frac{d\Law_{\mu}(X_t\lvert \tau_{\partial}>t)}{d\pi}-1\Big\rvert\Big\rvert_{L^{\infty}(\pi)}&\leq \frac{2(1-c_0)^{\lfloor \frac{t}{t_0}\rfloor} \osc_{\pi}(\mu)}{\mu(\phi)-(1-c_0)^{\lfloor \frac{t}{t_0}\rfloor} \osc_{\pi}(\mu)},\quad 0\leq t<\infty,
\label{eq:Linfty convergence of dist cond of survival}
\end{align}
\eqref{eq:Linfty convergence of dist cond of survival} being understood to apply only when the denominator on the right is positive. 

If, in addition to Assumption \ref{assum:Dobrushin reverse time}, we have Assumption \ref{assum:assumption for positivity of L1 right efn}, then $\phi\in L^{\infty}_{>0}(\pi)$. In particular, for all $\mu\in\calP_{\infty}(\pi)$, \eqref{eq:Linfty convergence of dist cond of survival} then holds for all $t$ sufficiently large.

On the other hand, if Assumption \ref{assum:Dobrushin reverse time}, Assumption \ref{assum:technical assumption for Assum (A)} and \cite[Assumption (A)]{Champagnat2014} are satisfied (but we no longer assume Assumption \ref{assum:assumption for positivity of L1 right efn}), then there exists constants $C,T<\infty$ and $\gamma>0$ such that
\begin{equation}\label{eq:Linfty conv when also have Assum (A)}
\Big\lvert\Big\lvert \frac{d\Law_{\mu}(X_t\lvert \tau_{\partial}>t)}{d\pi}-1\Big\rvert\Big\rvert_{L^{\infty}(\pi)}\leq \frac{C}{\mu(h)}e^{-\gamma t}\Big\lvert\Big\lvert \frac{d\mu}{d\pi}\Big\rvert\Big\rvert_{L^{\infty}(\pi)}\quad\ \text{for all}\quad t\geq T\quad\text{and all}\quad  \mu\in\calP_{\infty}(\pi),
\end{equation}
where $h\in \calB_b(\chi;\Rm_{>0})$ is the bounded and strictly positive pointwise right eigenfunction provided for by \cite[Proposition 2.3]{Champagnat2014} (which must be a version of the $L^1(\pi)$-right eigenfunction $\phi$). In \eqref{eq:Linfty conv when also have Assum (A)}, $\gamma>0$ is the minimum of the $\gamma>0$ given by \cite[Theorem 2.1]{Champagnat2014} and $\frac{-\ln(1-c_0)}{t_0}$ (we define $\frac{-\ln(1-c_0)}{t_0}:=+\infty$ when $c_0=1$), where $0<c_0\leq 1$ is the constant and $t_0>0$ the time given by Assumption \ref{assum:Dobrushin reverse time}. 
\end{theo}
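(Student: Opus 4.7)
The plan is to exploit the time-reversal structure encoded by Assumption \ref{assum:Dobrushin reverse time}. By Proposition \ref{prop:R constant mass}, the reverse kernel $R$ has constant mass $\lambda^{t_0}$, so (after possibly adjusting on a $\pi$-null set) $\tilde R := \lambda^{-t_0} R$ is a genuine Markov kernel. The identity $\pi(dx)P_{t_0}(x,dy) = \pi(dy)R(y,dx)$ combined with Fubini implies that for any $\mu = f\pi \in \calM(\pi)$ one has $d(\mu P_{t_0})/d\pi = Rf = \lambda^{t_0}\tilde R f$, and iterating (noting that $R^n$ is the reverse kernel of $P_{nt_0}$, by repeating the same computation) gives $d(\mu P_{nt_0})/d\pi = \lambda^{nt_0}\tilde R^n f$. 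Meanwhile the minorization \eqref{eq:reverse time R minorised by nu} becomes the classical Doeblin condition $\tilde R(y,\cdot) \geq c_0\nu(\cdot)$ for $\pi$-a.e.\ $y$, which by the standard coupling argument yields the oscillation contraction $\osc_\pi(\tilde R g) \leq (1-c_0)\osc_\pi(g)$ for every $g \in L^\infty(\pi)$.

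Next I would construct $\phi$ as the density of the unique $\tilde R$-invariant probability. Since $\tilde R(y,\cdot) \ll \pi$ for $\pi$-a.e.\ $y$ (from the adjoint identity, which also forces $\nu \ll \pi$), Doeblin supplies a unique invariant probability $\nu^\infty$ satisfying $\nu^\infty \ll \pi$; set $\phi := d\nu^\infty/d\pi \in L^1_{\geq 0}(\pi)$. The relation $P_{t_0}\phi = \lambda^{t_0}\phi$ in $L^1(\pi)$ is then dual to $\nu^\infty \tilde R = \nu^\infty$. For uniqueness, any $L^1(\pi)$-eigenfunction of $P_{t_0}$ at eigenvalue $\lambda^{t_0}$ corresponds to a finite signed measure absolutely continuous with respect to $\pi$ and fixed by $\tilde R$; the Dobrushin contraction forces such a measure to lie in the one-dimensional span of $\nu^\infty$. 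Extension to arbitrary $t$ comes from semigroup commutativity: $P_s\phi$ is itself an $L^1(\pi)$-eigenfunction of $P_{t_0}$ at $\lambda^{t_0}$, hence a scalar multiple $c(s)\phi$; integrating against the left $\lambda^s$-eigenfunction $\pi$ of $P_s$ forces $c(s) = \lambda^s$.

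The main bound \eqref{eq:Linfty malthusian theo statement} is then a direct consequence. For $\mu \in \calP_\infty(\pi)$ and $t = nt_0 + s$ with $s \in [0,t_0)$, set $f_t := \lambda^{-t}\, d(\mu P_t)/d\pi$; then $f_{nt_0+s} = \tilde R^n f_s$, and since the eigenfunction identity gives $\nu^\infty(f_s) = \lambda^{-s}\mu(P_s\phi) = \mu(\phi)$, the fact that $\tilde R$ fixes constants yields $f_t - \mu(\phi) = \tilde R^n(f_s - \mu(\phi))$. Applying the Dobrushin contraction to $f_s - \mu(\phi)$ (which has zero $\nu^\infty$-integral) gives $\|f_t - \mu(\phi)\|_{L^\infty(\pi)} \leq (1-c_0)^n\osc_\pi(f_s)$. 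To cap $\osc_\pi(f_s) \leq \osc_\pi(\mu)$ when $s > 0$, one disintegrates $\pi(dx)P_s(x,dy)$ against its $y$-marginal $\lambda^s \pi$ to obtain a Markov kernel $K_s$ with $f_s = K_s(d\mu/d\pi)$; since $K_s$ is Markov, oscillation contracts. The consequences \eqref{eq:convergence of prob of killing} and \eqref{eq:Linfty convergence of dist cond of survival} then follow by integrating against $\pi$ and by writing $d\Law_\mu(X_t\lvert\tau_\partial>t)/d\pi = (\lambda^{-t}d\mu P_t/d\pi)/(\lambda^{-t}\Pm_\mu(\tau_\partial>t))$ respectively.

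For the conclusion $\phi \in L^\infty_{>0}(\pi)$ under Assumption \ref{assum:assumption for positivity of L1 right efn}, strict positivity propagates from $\supp(\nu)$: the lower bound $\nu^\infty \geq c_0\nu$ gives $\phi \geq c_0(d\nu/d\pi)$ on $\supp(\nu)$, and using $P_t\phi = \lambda^t\phi$ together with the hypothesis $\mu P_t(d\nu/d\pi) > 0$, applied to normalized restrictions of $\pi$ to $\{\phi = 0\}$, rules out that set having positive $\pi$-measure. Essential boundedness of $\phi$ is the subtler point, and requires controlling the density of $\tilde R(y,\cdot)$ against $\pi$ uniformly in $y$ via the disintegration of $\pi(dx)P_{t_0}(x,dy)$; this is, in my view, the main technical obstacle. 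For the combination with \cite[Assumption (A)]{Champagnat2014} yielding \eqref{eq:Linfty conv when also have Assum (A)}, one uses the TV-Perron--Frobenius bound $|\lambda^{-t}\Pm_\mu(\tau_\partial > t) - \mu(h)| \leq Ce^{-\gamma_{CV}t}$ to bound $\Pm_\mu(\tau_\partial > t)$ from below by $\lambda^t\mu(h)/2$ for $t \geq T$; dividing the $L^\infty(\pi)$-bound for $\lambda^{-t}d\mu P_t/d\pi$ by this lower bound, and noting $\phi = h$ $\pi$-a.e.\ (so $\mu(\phi) = \mu(h)$ for $\mu \in \calP_\infty(\pi)$), produces the claimed bound with rate $\gamma = \min(\gamma_{CV},\,-\log(1-c_0)/t_0)$.
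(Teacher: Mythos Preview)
Your approach is essentially the paper's. The paper packages your normalized reverse kernel $\tilde R$ and your disintegration kernels $K_s$ into a single semigroup $(T_t)_{t\geq 0}$ on $L^\infty(\pi)$, defined by $T_t f := dP_t(f\pi,\cdot)/dP_t(\pi,\cdot)$; one checks $T_{t_0} = \tilde R$ and $T_s = K_s$, and Proposition~\ref{prop:time-reversal implies T Dobrushin} is precisely the identification $R(y,A) = \lambda^{t_0}T_{t_0}\Ind_A(y)$ that you derive. The paper then runs Dobrushin on the adjoint $T_t^\dag$ acting on $\calM(\pi)$ and converts TV convergence of $T_t^\dag\mu$ to the $L^\infty$ bound via a H\"older estimate (Proposition~\ref{prop:Tt conv implies Pt conv}), whereas you contract oscillations directly on densities; these are dual formulations of the same computation, and your version is arguably more transparent.

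Two small corrections. First, your concern about essential boundedness of $\phi$ under Assumption~\ref{assum:assumption for positivity of L1 right efn} is misplaced: the paper's proof establishes only $\phi \in L^1_{>0}(\pi)$ (i.e.\ $\mu(\phi)>0$ for every $\mu\in\calP_\infty(\pi)$), which is exactly what is needed for \eqref{eq:Linfty convergence of dist cond of survival} to eventually apply; the ``$L^\infty_{>0}$'' in the statement appears to be a typo, and no uniform control on the density of $\tilde R(y,\cdot)$ is required. Second, for \eqref{eq:Linfty conv when also have Assum (A)} you invoke the bound $|\lambda^{-t}\Pm_\mu(\tau_\partial>t)-\mu(h)|\leq Ce^{-\gamma_{CV}t}$ without the factor $\mu(h)$ on the right; this does not yield $\Pm_\mu(\tau_\partial>t)\geq \tfrac{1}{2}\lambda^t\mu(h)$ at a time $T$ \emph{independent} of $\mu$. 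The paper instead uses the multiplicative form $|\lambda^{-t}\Pm_\mu(\tau_\partial>t)-\mu(h)|\leq Ce^{-\gamma't}\mu(h)$ from \cite[Theorem~2.1]{Champagnat2017}, which does give a uniform $T$, and then feeds both the $L^\infty$ bound \eqref{eq:Linfty malthusian theo statement} and this estimate into the algebraic identity \eqref{eq:conditional distribution from killing prob and malthusian formula Linfty}.
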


\subsection*{Inequalities relating the distribution of an absorbed Markov process at a fixed time with its QSD}

We consider the following assumption.

\renewcommand{\theAssumletter}{RaD}
\begin{Assumletter}[Reverse anti-Dobrushin condition]\label{assum:assumption for dominated by pi after certain time}
There exists a time $t_2>0$ and submarkovian kernel $R^{(2)}$ for which \eqref{eq:exis of reverse R} is satisfied by $(X_t)_{0\leq t<\tau_{\partial}}$ and $\pi$. Moreover, there exists $C_2'<\infty$ such that
\begin{equation}\label{eq:reverse kernel majorised for dominated by pi theorem}
\frac{R^{(2)}(y,\cdot)}{R^{(2)}1(y)}\leq C_2'\pi(\cdot)\quad\text{for $\pi$-almost every $y\in\chi$}.
\end{equation}
Finally, $\text{spt}(\pi)= \chi$ and $P_{t_2}$ is lower semicontinuous in the sense of Definition \ref{defin:lower semicts kernel}.
\end{Assumletter}

\begin{theo}\label{theo:dominated by pi theorem general results}
Suppose that $(X_t)_{0\leq t<\tau_{\partial}}$ is an absorbed Markov process for which $\pi$ is a (not necessarily unique) QSD. We denote $\lambda:=\lambda(\pi)=\Pm_{\pi}(\tau_{\partial}>1)$. We assume that $(X_t)_{0\leq t<\tau_{\partial}}$ and $\pi$ satisfy Assumption \ref{assum:assumption for dominated by pi after certain time}. The time $t_2>1$ and constant $C_2'<\infty$ are those for which Assumption \ref{assum:assumption for dominated by pi after certain time} is satisfied. We define
\begin{equation}\label{eq:formula for C2 RaD cond}
C_2:=\lambda^{t_2}C_2'
\end{equation}
Then we have that
\begin{equation}\label{eq:dominated by pi equation general results}
P_{t_2}(x,\cdot)\leq C_2\pi(\cdot)\quad\text{for all}\quad x\in\chi.
\end{equation}
\end{theo}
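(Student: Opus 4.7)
My plan is to first normalize the reverse kernel using Proposition \ref{prop:R constant mass}, which gives $R^{(2)}1(y) = \lambda^{t_2}$ for $\pi$-almost every $y$, so that \eqref{eq:reverse kernel majorised for dominated by pi theorem} can be rewritten as $R^{(2)}(y, \cdot) \leq C_2 \pi(\cdot)$ for $y$ outside some $\pi$-null set $N$, where $C_2 = \lambda^{t_2} C_2'$ as in \eqref{eq:formula for C2 RaD cond}. Integrating the duality relation \eqref{eq:exis of reverse R} (with $t_0$ replaced by $t_2$ and $R$ by $R^{(2)}$) against an arbitrary Borel set $B$ and inserting the bound on $R^{(2)}$, for any Borel $A$ one obtains
\begin{equation*}
\int_B P_{t_2}(x, A)\, \pi(dx) = \int_A R^{(2)}(y, B)\, \pi(dy) = \int_{A \setminus N} R^{(2)}(y, B)\, \pi(dy) \leq C_2\, \pi(A)\, \pi(B).
\end{equation*}
Since $B$ is arbitrary, for each fixed Borel $A$ this forces $P_{t_2}(x, A) \leq C_2 \pi(A)$ for $\pi$-almost every $x$, with an exceptional null set $N_A$ possibly depending on $A$.

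The crux of the argument is upgrading this $\pi$-a.e. statement to a pointwise statement valid for every $x \in \chi$. Specializing to $A = U$ an open set, the indicator $\Ind_U$ lies in $LC_b(\chi; \Rm_{\geq 0})$, so by the assumed lower semicontinuity of $P_{t_2}$ (Definition \ref{defin:lower semicts kernel}) the map $x \mapsto P_{t_2}(x, U)$ is itself lower semicontinuous. Let $N_U$ be the corresponding $\pi$-null exceptional set. Since $\text{spt}(\pi) = \chi$, every open neighborhood of any given $x_0 \in \chi$ has strictly positive $\pi$-mass and hence must meet $\chi \setminus N_U$, so I can choose a sequence $x_n \to x_0$ with $x_n \notin N_U$. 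Lower semicontinuity then yields
\begin{equation*}
P_{t_2}(x_0, U) \leq \liminf_{n \to \infty} P_{t_2}(x_n, U) \leq C_2\, \pi(U),
\end{equation*}
so the desired inequality holds for every $x_0 \in \chi$ on every open set $U$.

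To finish, I would extend from open sets to arbitrary Borel sets via outer regularity of $\pi$, which holds for any finite Borel measure on a metric space: for any Borel $A$ and any $x \in \chi$,
\begin{equation*}
P_{t_2}(x, A) \leq \inf_{\substack{U \supseteq A \\ U \text{ open}}} P_{t_2}(x, U) \leq C_2 \inf_{\substack{U \supseteq A \\ U \text{ open}}} \pi(U) = C_2\, \pi(A),
\end{equation*}
which is \eqref{eq:dominated by pi equation general results}. The main obstacle is precisely the $\pi$-a.e.\ to everywhere upgrade: the duality relation by itself is inherently an integrated identity and yields only an almost-everywhere conclusion, and it is exactly the lower semicontinuity of $P_{t_2}$ together with the full support of $\pi$ (both hypotheses built into Assumption \ref{assum:assumption for dominated by pi after certain time}) that bridge this gap. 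Everything else is a routine normalization of $R^{(2)}$ using Proposition \ref{prop:R constant mass} and a standard outer-regularity extension.
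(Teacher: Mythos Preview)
Your proposal is correct and follows essentially the same route as the paper: normalize $R^{(2)}$ via Proposition~\ref{prop:R constant mass}, integrate the duality relation to get the bound $\pi$-a.e., use lower semicontinuity of $P_{t_2}$ together with $\text{spt}(\pi)=\chi$ to upgrade to every $x$, and finish by outer regularity. The only cosmetic difference is that the paper first tests against $f\in C_b(\chi;\Rm_{\geq 0})$ and then passes to open sets (citing Parthasarathy for the extension), whereas you test directly against indicators of open sets using the equivalent formulation of Definition~\ref{defin:lower semicts kernel}; this is a harmless shortcut.
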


We similarly obtain the reverse inequality under the following condition.

\renewcommand{\theAssumletter}{DRD}
\begin{Assumletter}[Combined Dobrushin and reverse Dobrushin condition]\label{assum:combined Dobrushin reverse Dobrushin}
We assume that, for some measurable set $A\in \mathscr{B}(\chi)$, there exists $\nu_1\in \calP(A)$, a constant $c_1>0$ and time $t_1>0$ such that $P_{t_1}1(x)>0$ for all $x\in \chi$ and
\begin{equation}\label{eq:Dobrushin for DRD condition}
\frac{P_{t_1}(x,\cdot)}{P_{t_1}1(x)}\geq c_1\nu_1(\cdot)\quad\text{for all}\quad x\in \chi.
\end{equation}
It follows that $\pi(A)>0$. We further assume that, for this same measurable set $A$, Assumption \ref{assum:Dobrushin reverse time} is satisfied, with $\nu:=\frac{\pi_{\lvert_A}}{\pi(A)}$, and some constant $c_0>0$ and time $t_0>0$. 
\end{Assumletter}

\begin{theo}\label{theo:DRD lower bounds density}
Suppose that $(X_t)_{0\leq t<\tau_{\partial}}$ is an absorbed Markov process for which $\pi$ is a (not necessarily unique) QSD. We denote $\lambda:=\lambda(\pi)=\Pm_{\pi}(\tau_{\partial}>1)$. We assume that $(X_t)_{0\leq t<\tau_{\partial}}$ and $\pi$ satisfy Assumption \ref{assum:combined Dobrushin reverse Dobrushin}. We write $c_0,c_1>0$ for the constants and $t_0,t_1>0$ for the times for which Assumption \ref{assum:combined Dobrushin reverse Dobrushin} is satisfied. We define
\begin{equation}\label{eq:t3 and c3 from DRD}
t_3:=t_0+t_1,\quad c_3:=\frac{\lambda^{t_0}c_0c_1}{\pi(A)}.
\end{equation}

Then we have that
\begin{equation}\label{eq:minorised by pi equation from DRD}
\frac{P_{t_3}(x,\cdot)}{P_{t_3}1(x)}\geq c_3\pi(\cdot)\quad\text{for all}\quad x\in\chi.
\end{equation}
\end{theo}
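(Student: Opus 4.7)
The plan is to exploit the semigroup decomposition $P_{t_3}=P_{t_1}P_{t_0}$ and combine the two minorisations in Assumption~\ref{assum:combined Dobrushin reverse Dobrushin}. First, the forward Dobrushin~\eqref{eq:Dobrushin for DRD condition} gives, for every $x\in\chi$,
\[
P_{t_3}(x,\cdot)=\int_\chi P_{t_1}(x,dz)\,P_{t_0}(z,\cdot)\;\geq\; c_1\, P_{t_1}1(x)\,(\nu_1 P_{t_0})(\cdot),
\]
while $P_{t_3}1(x)=P_{t_1}(P_{t_0}1)(x)\leq P_{t_1}1(x)$ since $P_{t_0}1\leq 1$. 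It therefore suffices to establish the measure minorisation
\[
\nu_1 P_{t_0}\;\geq\;\frac{c_0\lambda^{t_0}}{\pi(A)}\,\pi,
\]
which combined with the above immediately yields $P_{t_3}(x,\cdot)\geq c_3\, P_{t_3}1(x)\,\pi(\cdot)$, as desired.

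For this key bound I plan to use that $R$ is, up to normalisation, the adjoint of $P_{t_0}$ with respect to $\pi$: the time-reversal identity~\eqref{eq:exis of reverse R} yields $\pi(g\cdot P_{t_0}f)=\pi(f\cdot Rg)$ for bounded non-negative $f,g$. Taking $\rho:=d\nu_1/d\pi$ and any bounded non-negative $f$, this gives
\[
(\nu_1 P_{t_0})(f)=\pi(\rho\cdot P_{t_0}f)=\pi(f\cdot R\rho).
\]
Proposition~\ref{prop:R constant mass} gives $R1=\lambda^{t_0}$ $\pi$-almost everywhere, so the reverse Dobrushin~\eqref{eq:reverse time R minorised by nu} reads $R(y,\cdot)\geq c_0\lambda^{t_0}\nu(\cdot)$ for $\pi$-a.e.\ $y$. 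Applied to $\rho$, using $\nu=\pi|_A/\pi(A)$ and $\nu_1(A)=1$,
\[
(R\rho)(y)\;\geq\; c_0\lambda^{t_0}\,\nu(\rho)\;=\;\frac{c_0\lambda^{t_0}}{\pi(A)}\int_A\rho\,d\pi\;=\;\frac{c_0\lambda^{t_0}}{\pi(A)}\qquad\text{for $\pi$-a.e.\ }y,
\]
and integrating against $f\,d\pi$ yields the required minorisation of $\nu_1 P_{t_0}$.

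The main technical obstacle is that the duality step implicitly requires $\nu_1\ll\pi$, so that $\rho=d\nu_1/d\pi$ is well defined, whereas this absolute continuity is not explicitly part of Assumption~\ref{assum:combined Dobrushin reverse Dobrushin}. If $\nu_1$ has a $\pi$-singular component, applying the same argument to the truncations $\nu_1\wedge n\pi$ and passing to the limit by monotone convergence only recovers the full constant when $\nu_1\ll\pi$. Addressing the general case is therefore the delicate step: either one must argue that without loss of generality $\nu_1$ may be replaced by a smaller measure absolutely continuous with respect to $\pi$ while retaining the minorisation of $P_{t_1}(x,\cdot)/P_{t_1}1(x)$, or the statement is effectively read under the implicit assumption $\nu_1\ll\pi$, which is the case in the applications of interest.
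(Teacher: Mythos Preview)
Your approach is essentially the same as the paper's: decompose $P_{t_3}=P_{t_1}P_{t_0}$, use the forward Dobrushin to bound $P_{t_3}(x,\cdot)\geq c_1P_{t_1}1(x)\,(\nu_1P_{t_0})(\cdot)$, and then show $\nu_1P_{t_0}\geq\frac{c_0\lambda^{t_0}}{\pi(A)}\pi$ via the reverse Dobrushin identity. The paper's presentation differs only cosmetically, working pointwise rather than through the density $\rho$: it first shows $P_{t_0}f(x)\geq\frac{\lambda^{t_0}c_0}{\pi(A)}\pi(f)$ for $\pi$-a.e.\ $x\in A$, then integrates this against $\nu_1$.

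The one genuine gap is your handling of $\nu_1\ll\pi$. You flag it as unresolved and suggest it might be an implicit assumption, but in fact it follows in one line from the hypotheses. Integrate the forward Dobrushin bound \eqref{eq:Dobrushin for DRD condition} against $\pi(dx)$: since $\pi$ is a QSD, $\pi P_{t_1}=\lambda^{t_1}\pi$ and $\pi(P_{t_1}1)=\lambda^{t_1}$, so
\[
\lambda^{t_1}\pi(\cdot)=\int_\chi P_{t_1}(x,\cdot)\,\pi(dx)\;\geq\; c_1\Big(\int_\chi P_{t_1}1(x)\,\pi(dx)\Big)\nu_1(\cdot)=c_1\lambda^{t_1}\nu_1(\cdot),
\]
whence $\nu_1\leq c_1^{-1}\pi$ and in particular $\nu_1\ll\pi$. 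Since $\nu_1(A^c)=0$, one even has $\nu_1\ll\pi_{|_A}$, which is exactly what is needed to justify your duality step (or, in the paper's formulation, to integrate the $\pi$-a.e.\ pointwise bound on $A$ against $\nu_1$). With this observation your argument is complete and the constant $c_3$ comes out exactly as stated.
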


\subsection*{$L^{\infty}(\pi)$-convergence for arbitrary initial condition}

The following theorem, \ref{theo:consequences of dominated assumption for whole space general results}, provides in particular for uniform exponential convergence in $L^{\infty}(\pi)$ of $\frac{d\Law_{\mu}(X_t\lvert\tau_{\partial}>t)}{d\pi}$, for arbitrary initial condition $\mu\in\calP(\chi)$. 

\begin{theo}\label{theo:consequences of dominated assumption for whole space general results}
We assume that the killed Markov process $(X_t)_{0\leq t<\tau_{\partial}}$ has a (not necessarily unique) quasi-stationary distribution $\pi$, with which it satisfies Assumption \ref{assum:Dobrushin reverse time}. The constant $0<c_0\leq 1$, time $t_0>0$ and probability measure $\nu$ are those given by Assumption \ref{assum:Dobrushin reverse time}, while $\lambda:=\lambda(\pi)$. We further assume that $(X_t)_{0\leq t<\tau_{\partial}}$ and $\pi$ satisfy Assumption \ref{assum:assumption for dominated by pi after certain time}. The time $t_2>0$ is the time for which Assumption \ref{assum:assumption for dominated by pi after certain time} is satisfied, while  $C_2<\infty$ is the constant given by \eqref{eq:formula for C2 RaD cond}. Then there exists $h\in \calB_b(\chi;\Rm_{\geq 0})$ with $\pi(h)=1$ such that $P_th(x)=\lambda^th(x)$ for all $x\in \chi$ and $0\leq t<\infty$; $h$ is a bounded, non-negative, pointwise right eigenfunction for $(P_t)_{t\geq 0}$. Moreover we have that
\begin{equation}\label{eq:Linfty malthusian theo statement arbitrary ic}
\Big\lvert\Big\lvert \lambda^{-t}\frac{d\Pm_{\mu}(X_t\in \cdot)}{d\pi(\cdot)}-\mu(h)\Big\rvert\Big\rvert_{L^{\infty}(\pi)}\leq C_2(1-c_0)^{\lfloor \frac{t-t_2}{t_0}\rfloor} ,\quad t_2\leq t<\infty,\quad\text{for all}\quad \mu\in\calP(\chi).
\end{equation}
Consequentially we have for all $\mu\in\calP(\chi)$:
\begin{align}\label{eq:convergence of prob of killing arbitrary ics}
\lvert\lambda^{-t}\Pm_{\mu}(\tau_{\partial}>t)-\mu(\phi)\rvert&\leq C_2(1-c_0)^{\lfloor \frac{t-t_2}{t_0}\rfloor},\quad t_2\leq t<\infty,\\
\Big\lvert\Big\lvert \frac{d\Law_{\mu}(X_t\lvert \tau_{\partial}>t)}{d\pi}-1\Big\rvert\Big\rvert_{L^{\infty}(\pi)}&\leq \frac{2C_2(1-c_0)^{\lfloor \frac{t-t_2}{t_0}\rfloor}}{\mu(h)-C_2(1-c_0)^{\lfloor \frac{t-t_2}{t_0}\rfloor}},\quad t_2\leq t<\infty,
\label{eq:Linfty convergence of dist cond of survival arbitrary ics}
\end{align}
\eqref{eq:Linfty convergence of dist cond of survival arbitrary ics} being understood to apply only when the denominator on the right is positive. 

If, in addition, either Assumption \ref{assum:technical assumption for Assum (A)} and \cite[Assumption (A)]{Champagnat2014} are satisfied or Assumption \ref{assum:combined Dobrushin reverse Dobrushin} is satisfied, then there exists $T<\infty$ (which does not depend on the initial condition $\mu\in\calP(\chi)$) such that $\Law_{\mu}(X_t\lvert \tau_{\partial}>t)\in \calP_{\infty}(\pi)$ for all $t\geq T$, with the density $\frac{d\Law_{\mu}(X_t\lvert \tau_{\partial}>t)}{d\pi}$ satisfying
\begin{equation}\label{eq:uniform exponential L infty convergence for arbitrary initial cond}
\Big\lvert\Big\lvert \frac{d\Law_{\mu}(X_t\lvert \tau_{\partial}>t)}{d\pi}-1\Big\rvert\Big\rvert_{L^{\infty}(\pi)}\leq (1-c_0)^{\lfloor \frac{t-T}{t_0}\rfloor},\quad T\leq t<\infty,\quad\text{for all}\quad \mu\in\calP(\chi).
\end{equation}
In the latter case, that Assumption \ref{assum:combined Dobrushin reverse Dobrushin} is satisfied, we write $t_3>0$ for the time and $c_3>0$ for the constant given by \eqref{eq:t3 and c3 from DRD}. We then have the quantitative estimate
\begin{equation}\label{eq:quantitative uniform exponential convergence for arbitrary initial cond using DRD}
\Big\lvert\Big\lvert \frac{d\Law_{\mu}(X_t\lvert \tau_{\partial}>t)}{d\pi}-1\Big\rvert\Big\rvert_{L^{\infty}(\pi)}\leq \frac{2C_2(1-c_0)^{\lfloor \frac{t-(t_2+t_3)}{t_0}\rfloor}}{c_3-C_2(1-c_0)^{\lfloor \frac{t-(t_2+t_3)}{t_0}\rfloor}},\quad t_2+t_3\leq t<\infty,
\end{equation}
\eqref{eq:quantitative uniform exponential convergence for arbitrary initial cond using DRD} being understood to apply only when the denominator on the right is positive. 
\end{theo}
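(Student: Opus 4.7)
The plan is to combine the two inputs we now have in place: Theorem \ref{theo:Linfty convergence}, which gives $L^{\infty}(\pi)$ convergence for initial conditions in $\calP_{\infty}(\pi)$, and Theorem \ref{theo:dominated by pi theorem general results}, which says that Assumption \ref{assum:assumption for dominated by pi after certain time} forces $\mu P_{t_2}\leq C_2\pi$ for \emph{every} $\mu\in\calP(\chi)$. The idea is that after running the process for time $t_2$, any initial distribution becomes dominated by $\pi$, so one can apply Theorem \ref{theo:Linfty convergence} from that point on.

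First I would upgrade the $L^1(\pi)$-eigenfunction $\phi$ from Theorem \ref{theo:Linfty convergence} to a bounded non-negative pointwise right eigenfunction $h$. Pick any Borel representative $\phi^{*}$ of $\phi$ and set $h(x):=\lambda^{-t_2}P_{t_2}\phi^{*}(x)$. The domination $P_{t_2}(x,\cdot)\leq C_2\pi$ implies both that $h(x)\leq \lambda^{-t_2}C_2\pi(\phi^{*})=\lambda^{-t_2}C_2<\infty$ and that the definition is independent of the chosen representative (any $\pi$-null change in $\phi^{*}$ is integrated against a measure absolutely continuous with respect to $\pi$). Since in $L^1(\pi)$ one has $P_s\phi=\lambda^s\phi$, there is a $\pi$-null set $N_s$ on which $P_s\phi^{*}=\lambda^s\phi^{*}$ fails, but $P_{t_2}(x,N_s)\leq C_2\pi(N_s)=0$ for every $x$, so the semigroup identity $P_{t_2+s}\phi^{*}(x)=\lambda^s P_{t_2}\phi^{*}(x)$ holds \emph{pointwise}. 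This gives $P_s h(x)=\lambda^s h(x)$ for all $x\in\chi$ and $s\geq 0$, and $\pi(h)=\lambda^{-t_2}(\pi P_{t_2})(\phi^{*})=\pi(\phi^{*})=1$.

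For \eqref{eq:Linfty malthusian theo statement arbitrary ic}, fix $\mu\in\calP(\chi)$ and $t\geq t_2$, and set $\tilde\mu:=\mu P_{t_2}$. By the domination, the density of $\tilde\mu$ with respect to $\pi$ is bounded by $C_2$. If $\tilde\mu(\chi)=\Pm_{\mu}(\tau_{\partial}>t_2)>0$, normalise $\hat\mu:=\tilde\mu/\tilde\mu(\chi)\in\calP_{\infty}(\pi)$ and apply Theorem \ref{theo:Linfty convergence} at time $t-t_2$; multiplying by $\tilde\mu(\chi)$ removes the normalisation, and the identity $\tilde\mu(\phi)=\mu(P_{t_2}h)=\lambda^{t_2}\mu(h)$ (proved in the previous paragraph) converts $\hat\mu(\phi)$ into $\mu(h)$ after dividing by $\lambda^{t_2}$. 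If $\tilde\mu(\chi)=0$ both sides vanish. Integrating \eqref{eq:Linfty malthusian theo statement arbitrary ic} against $\pi$ yields \eqref{eq:convergence of prob of killing arbitrary ics}, while the standard ratio identity
\[
\frac{d\Law_{\mu}(X_t\lvert\tau_{\partial}>t)}{d\pi}-1=\frac{\bigl(\lambda^{-t}\tfrac{d\mu P_t}{d\pi}-\mu(h)\bigr)+\bigl(\mu(h)-\lambda^{-t}\Pm_{\mu}(\tau_{\partial}>t)\bigr)}{\lambda^{-t}\Pm_{\mu}(\tau_{\partial}>t)}
\]
combined with these two bounds yields \eqref{eq:Linfty convergence of dist cond of survival arbitrary ics}.

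For the uniform estimates \eqref{eq:uniform exponential L infty convergence for arbitrary initial cond} and \eqref{eq:quantitative uniform exponential convergence for arbitrary initial cond using DRD}, I would bound $\mu(h)$ below uniformly in $\mu$. Under Assumption \ref{assum:technical assumption for Assum (A)} plus \cite[Assumption (A)]{Champagnat2014}, the eigenfunction of \cite[Proposition 2.3]{Champagnat2014} is bounded below by a strictly positive constant, and must agree with $h$ (uniqueness of the non-negative $L^1(\pi)$ eigenfunction from Theorem \ref{theo:Linfty convergence}); hence $\mu(h)\geq \inf h>0$, and \eqref{eq:Linfty convergence of dist cond of survival arbitrary ics} becomes uniform. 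Under Assumption \ref{assum:combined Dobrushin reverse Dobrushin}, apply Theorem \ref{theo:DRD lower bounds density} to obtain $\Law_{\mu}(X_{t_3}\lvert\tau_{\partial}>t_3)\geq c_3\pi$ for every $\mu\in\calP(\chi)$, so if one takes $\hat\mu_3$ equal to this conditional distribution, then $\hat\mu_3(h)\geq c_3\pi(h)=c_3$, and by the Markov property $\Law_{\mu}(X_t\lvert\tau_{\partial}>t)=\Law_{\hat\mu_3}(X_{t-t_3}\lvert\tau_{\partial}>t-t_3)$; applying \eqref{eq:Linfty convergence of dist cond of survival arbitrary ics} to $\hat\mu_3$ at time $t-t_3$ and bounding $\hat\mu_3(h)\geq c_3$ in the denominator gives exactly \eqref{eq:quantitative uniform exponential convergence for arbitrary initial cond using DRD}.

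The main obstacle is Paragraph 1 — promoting the $L^{1}(\pi)$-eigenfunction to an everywhere-defined pointwise eigenfunction. Without Assumption \ref{assum:assumption for dominated by pi after certain time} there would be no way to force the identity $P_{t+t_2}\phi=\lambda^t P_{t_2}\phi$ from its $\pi$-a.e.\ version to a pointwise version, and every subsequent step (in particular the identification $\mu(P_{t_2}h)=\lambda^{t_2}\mu(h)$ for arbitrary $\mu\in\calP(\chi)$) would fail. Once this construction is in hand, the remaining steps are essentially bookkeeping combining Theorem \ref{theo:Linfty convergence}, Theorem \ref{theo:dominated by pi theorem general results} and Theorem \ref{theo:DRD lower bounds density}.
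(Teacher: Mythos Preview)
Your construction of the pointwise right eigenfunction $h=\lambda^{-t_2}P_{t_2}\phi^{*}$, the derivation of \eqref{eq:Linfty malthusian theo statement arbitrary ic}--\eqref{eq:Linfty convergence of dist cond of survival arbitrary ics}, and the treatment of the case under Assumption \ref{assum:combined Dobrushin reverse Dobrushin} all match the paper's argument essentially line for line.

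There is, however, a genuine gap in your handling of the Assumption (A) case. You assert that the eigenfunction from \cite[Proposition 2.3]{Champagnat2014} is ``bounded below by a strictly positive constant''; this is not true in general. That result only guarantees $h\in\calB_b(\chi;\Rm_{>0})$ --- everywhere strictly positive and bounded \emph{above} --- and in the paper's own applications (e.g.\ the degenerate diffusions of Section~7, where $h\in C_0(\chi;\Rm_{>0})$ vanishes at the boundary) one has $\inf_{x\in\chi}h(x)=0$, so the bound $\mu(h)\geq\inf h$ is vacuous. The paper proceeds differently: it uses the uniform total-variation convergence furnished by \cite[Theorem 2.1]{Champagnat2014} to argue that
\[
\Law_{\mu}(X_t\lvert\tau_{\partial}>t)(h)\;\geq\;\pi(h)-\lVert\Law_{\mu}(X_t\lvert\tau_{\partial}>t)-\pi\rVert_{\TV}\,\lVert h\rVert_{\infty}\;\longrightarrow\;1
\]
uniformly in $\mu$ as $t\to\infty$. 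Hence there is a single time $T'$ with $\Law_{\mu}(X_{T'}\lvert\tau_{\partial}>T')(h)\geq\epsilon'>0$ for every $\mu\in\calP(\chi)$; one then restarts from the conditional law at time $T'$ and feeds this lower bound into \eqref{eq:Linfty convergence of dist cond of survival arbitrary ics}, exactly as you already do in the DRD case with $t_3$ and $c_3$.
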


\subsection*{Application to the $Q$-process}

We suppose that $(X_t)_{0\leq t<\tau_{\partial}}$ satisfies Assumption \ref{assum:technical assumption for Assum (A)} and \cite[Assumption (A)]{Champagnat2014}. We recall that these imply that the $Q$-process defined in \cite[Theorem 3.1]{Champagnat2014} exists, is unique, and is exponentially ergodic, and that $(X_t)_{0\leq t<\tau_{\partial}}$ has a unique QSD, $\pi$. The $Q$-process and its stationary distribution are denoted $(Z_t)_{0\leq t<\infty}$ and $\beta$ respectively. We recall that its submarkovian transition kernel, $(Q_t)_{t\geq 0}$, is given by \eqref{eq:Q process Markov kernel}. We make the following observation.

\begin{observ}\label{observation: Reverse dobrushin implies reverse dobrushin for Q-process}
If $(X_t)_{0\leq t<\tau_{\partial}}$ and $\pi$ satisfy Assumption \ref{assum:Dobrushin reverse time}, then we observe that
\[
\begin{split}
\beta(dx)Q_{t_0}(x,dy)=h(x)\pi(dx)\frac{h(y)\lambda^{-t_0}}{h(x)}P_{t_0}(x,dy)=\pi(dx)\lambda^{-t_0}\pi(dx)h(y)P_{t_0}(x,dy)\\
=\lambda^{-t_0}h(y)\pi(dy)R(y,dx)=\beta(dy)\lambda^{-t_0}R(y,dx),
\end{split}
\]
where $t_0>0$ is the time given by Assumption \ref{assum:Dobrushin reverse time}. It is then immediate to see that the $Q$-process must itself satisfy Assumption \ref{assum:Dobrushin reverse time} (its QSD being the stationary distribution $\beta$) with the reverse kernel $\lambda^{-t_0}R$.
\end{observ}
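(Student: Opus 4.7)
The plan is to verify the observation by a direct chain of equalities linking the $Q$-process structure to the reverse Dobrushin identity for $(X_t)_{0\leq t<\tau_\partial}$. Since the $Q$-process has no killing, its ``QSD'' is really its stationary distribution $\beta = h\pi$, and one needs to show two things: (i) the candidate kernel $\lambda^{-t_0} R$ implements the time-reversal of $Q_{t_0}$ against $\beta$, and (ii) the Dobrushin minorisation on $R$ transfers to $\lambda^{-t_0} R$.

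For (i), I would start from $\beta(dx)\, Q_{t_0}(x, dy)$, substitute $\beta(dx) = h(x) \pi(dx)$ together with the explicit formula $Q_{t_0}(x, dy) = (\lambda^{-t_0} h(y)/h(x))\, P_{t_0}(x, dy)$ from \eqref{eq:Q process Markov kernel}, so that the $h(x)$ factors cancel, leaving $\lambda^{-t_0} h(y)\, \pi(dx) P_{t_0}(x, dy)$. Applying Assumption \ref{assum:Dobrushin reverse time} to rewrite $\pi(dx) P_{t_0}(x, dy)$ as $\pi(dy) R(y, dx)$ and reassembling $h(y)\pi(dy) = \beta(dy)$ yields $\beta(dy)\cdot \lambda^{-t_0} R(y, dx)$, which is exactly the reverse-kernel identity for $Q_{t_0}$ with reverse kernel $\lambda^{-t_0} R$.

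For (ii), the normalisation $R(y, \cdot)/R1(y) \geq c_0 \nu(\cdot)$ is invariant under positive scalar multiplication of $R$, so the identical estimate holds for $\lambda^{-t_0} R$. Since $\beta$ is absolutely continuous with respect to $\pi$ (with density $h$), any $\pi$-almost-every statement transfers to a $\beta$-almost-every statement, so the same $c_0$ and $\nu$ witness Assumption \ref{assum:Dobrushin reverse time} for the $Q$-process. As a consistency check, Proposition \ref{prop:R constant mass} gives $R1 = \lambda^{t_0}$ $\pi$-a.e., whence $(\lambda^{-t_0} R)1 = 1$ $\beta$-a.e., reflecting that the $Q$-process is a genuine (unabsorbed) Markov process whose transition kernel must be stochastic.

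There is no real obstacle here: the identity is algebraic bookkeeping forced by the definitions. The only conceptual point is recognising, as anticipated by the statement, that Assumption \ref{assum:Dobrushin reverse time} makes sense for an unkilled process by interpreting ``QSD'' as stationary distribution, whereupon the reverse-kernel formalism applies verbatim to $(Z_t)_{0 \leq t < \infty}$.
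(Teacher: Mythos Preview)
Your proposal is correct and follows the same approach as the paper: the observation is established by the very chain of equalities displayed in its statement, and your additional remarks on why the minorisation and $\pi$-a.e.\ conditions transfer to $\lambda^{-t_0}R$ and $\beta$ simply make explicit what the paper leaves implicit. There is nothing to add.
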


We now use Observation \ref{observation: Reverse dobrushin implies reverse dobrushin for Q-process} to obtain results analogous to theorems \ref{theo:Linfty convergence} and \ref{theo:consequences of dominated assumption for whole space general results} for the $Q$-process. 

\begin{cor}\label{cor:L infty convergence of Q-process}
We suppose that $(X_t)_{0\leq t<\tau_{\partial}}$ satisfies Assumption \ref{assum:technical assumption for Assum (A)} and \cite[Assumption (A)]{Champagnat2014}. We also assume that $(X_t)_{0\leq t<\tau_{\partial}}$ and its unique QSD $\pi$ satisfy Assumption \ref{assum:Dobrushin reverse time}. The constant $0<c_0\leq 1$ and time $t_0>0$ are those given by Assumption \ref{assum:Dobrushin reverse time}. Then for all $\mu\in\calP_{\infty}(\beta)$ we have that
\begin{equation}\label{eq:uniform exponential L infty for Q process bded initial cond}
\Big\lvert\Big\lvert \frac{d\Law_{\mu}(Z_t)}{d\beta}-1\Big\rvert\Big\rvert_{L^{\infty}(\beta)}\leq (1-c_0)^{\lfloor \frac{t}{t_0}\rfloor} \osc_{\beta}(\mu),\quad 0\leq t<\infty.
\end{equation}

We now assume, in addition, that $(X_t)_{0\leq t<\tau_{\partial}}$ and $\pi$ satisfy Assumption \ref{assum:assumption for dominated by pi after certain time}. Then there exists a time $T<\infty$ (which does not depend on the initial condition $\mu\in\calP(\chi)$) such that $\Law_{\mu}(Z_t)\in\calP_{\infty}(\beta)$ for all $\mu\in\calP(\chi)$, with the density $\frac{d\Law_{\mu}(Z_t)}{d\beta}$ satisfying
\begin{equation}\label{eq:uniform exponential L infty for Q process convergence for arbitrary initial cond}
\Big\lvert\Big\lvert \frac{d\Law_{\mu}(Z_t)}{d\beta}-1\Big\rvert\Big\rvert_{L^{\infty}(\beta)}\leq (1-c_0)^{\lfloor \frac{t-T}{t_0}\rfloor},\quad T\leq t<\infty,\quad\text{for all}\quad \mu\in\calP(\chi).
\end{equation}
\end{cor}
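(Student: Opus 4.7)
The approach is to deduce the corollary from the results already established on absorbed processes, using Observation \ref{observation: Reverse dobrushin implies reverse dobrushin for Q-process} as the bridge. That observation shows that the $Q$-process, viewed as a Markov process with $\tau_{\partial} \equiv \infty$ and stationary distribution $\beta$, itself satisfies the reverse Dobrushin condition (Assumption \ref{assum:Dobrushin reverse time}), with reverse kernel $\lambda^{-t_0} R$ and the same constants $c_0$, $t_0$, $\nu$. I will therefore treat the $Q$-process as a (trivially) absorbed Markov process having QSD $\beta$ and eigenvalue $\lambda(\beta) = 1$.

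For the first statement, apply Theorem \ref{theo:Linfty convergence} to the $Q$-process in this guise. Since $(Z_t)_{t \geq 0}$ is Markov with stationary distribution $\beta$, the unique non-negative normalised $L^1(\beta)$-right eigenfunction of $(Q_t)_{t \geq 0}$ must be the constant function $\phi \equiv 1$, whence $\mu(\phi) = 1$ for every $\mu \in \calP(\chi)$. The inequality \eqref{eq:Linfty malthusian theo statement}, applied to $(Z_t)_{t \geq 0}$ with $\beta$, then specialises exactly to the desired bound \eqref{eq:uniform exponential L infty for Q process bded initial cond}.

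For the second statement, the strategy is to show that, from any initial condition $\mu \in \calP(\chi)$, the law $\Law_\mu(Z_{t_2})$ already lies in $\calP_\infty(\beta)$ with a uniform-in-$\mu$ bound on its $\beta$-density, and then to invoke the first statement from time $t_2$ onwards. Applying Theorem \ref{theo:dominated by pi theorem general results} to the original absorbed process yields $P_{t_2}(x,\cdot) \leq C_2 \pi(\cdot)$ for every $x \in \chi$. Combining this with the defining identity $Q_{t_2}(x, dy) = \lambda^{-t_2} h(y) h(x)^{-1} P_{t_2}(x, dy)$ gives
\begin{equation*}
\Pm_\mu(Z_{t_2} \in dy) \leq \lambda^{-t_2} C_2\, \mu(1/h)\, h(y)\, \pi(dy) = \lambda^{-t_2} C_2\, \mu(1/h)\, \beta(dy).
\end{equation*}
Under \cite[Assumption (A)]{Champagnat2014} the eigenfunction $h$ is bounded away from zero (which one checks by inserting \cite[Assumption (A1)]{Champagnat2014} into the convergence formula $h(x) = \lim_{t \to \infty} \lambda^{-t} P_t 1(x)$ provided by \cite[Proposition 2.3]{Champagnat2014}), so $\mu(1/h) \leq (\inf h)^{-1}$ uniformly in $\mu$. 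Hence $\Law_\mu(Z_{t_2}) \leq C \beta$ for some $C < \infty$ independent of $\mu$, and in particular $\osc_\beta(\Law_\mu(Z_{t_2})) \leq C$. Applying the first statement to the initial distribution $\Law_\mu(Z_{t_2})$ with time shifted by $t_2$ now gives
\begin{equation*}
\Big\| \frac{d\Law_\mu(Z_t)}{d\beta} - 1 \Big\|_{L^\infty(\beta)} \leq C(1 - c_0)^{\lfloor (t - t_2)/t_0 \rfloor}, \quad t \geq t_2.
\end{equation*}
Choosing $T := t_2 + k t_0$ for an integer $k$ large enough that $C(1 - c_0)^k \leq 1$, the right-hand side is bounded by $(1 - c_0)^{\lfloor (t - T)/t_0 \rfloor}$ for $t \geq T$, which is exactly \eqref{eq:uniform exponential L infty for Q process convergence for arbitrary initial cond}. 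The only delicate step is the uniform lower bound on $h$; every other step is either a direct invocation of a theorem already proven in this paper or a manipulation of the definition \eqref{eq:Q process Markov kernel} of $Q_t$.
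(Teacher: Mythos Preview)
Your proof of the first statement is correct and matches the paper's approach exactly: apply Theorem \ref{theo:Linfty convergence} to the $Q$-process via Observation \ref{observation: Reverse dobrushin implies reverse dobrushin for Q-process}, noting that the right eigenfunction is the constant $1$.

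The second statement, however, has a genuine gap. Your argument requires $\inf_{x\in\chi} h(x) > 0$, but this is \emph{false} in general under \cite[Assumption (A)]{Champagnat2014}. Indeed, for the degenerate diffusions treated later in this paper, $h \in C_0(\chi;\Rm_{>0})$ vanishes at the boundary; more generally, since $h(x)$ governs the asymptotic survival probability $\Pm_x(\tau_\partial > t) \sim h(x)\lambda^t$, one expects $h(x) \to 0$ as $x$ approaches absorbing boundary. Your suggested verification via $h(x) = \lim_t \lambda^{-t} P_t 1(x)$ together with \cite[Assumption (A1)]{Champagnat2014} only yields $h(x) \geq c\,\nu(h)\, \Pm_x(\tau_\partial > t_0)$, and the factor $\Pm_x(\tau_\partial > t_0)$ is not bounded away from zero.

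The paper circumvents this by rewriting $Q_t(x,dy)$ in a conditional form that absorbs the troublesome $1/h(x)$ into a conditional expectation:
\[
Q_t(x,dy) = \frac{h(y)\,\Pm_x(X_t \in dy \mid \tau_\partial > s)}{\lambda^{t-s}\,\expE_x[h(X_s)\mid \tau_\partial > s]},\qquad 0\le s\le t.
\]
Taking $s=\bar t_0$ (the time from \cite[Assumption (A1)]{Champagnat2014}) gives the uniform lower bound $\expE_x[h(X_{\bar t_0})\mid \tau_\partial > \bar t_0] \geq \bar c_0\,\bar\nu(h) > 0$ directly from (A1), with no need for $\inf h > 0$. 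Combining with $\Pm_x(X_{\bar t_0 + t_2}\in\cdot\mid \tau_\partial>\bar t_0)\le C_2\pi(\cdot)$ from Theorem \ref{theo:dominated by pi theorem general results} then yields $Q_{\bar t_0 + t_2}(x,\cdot) \leq C\beta(\cdot)$ uniformly in $x$, after which your final paragraph goes through verbatim.
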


\section{Proof of the results of Section \ref{section:Reverse Dobrushin condition}}\label{section:general state space proof}

The proof of Theorem \ref{theo:Linfty convergence} shall hinge on consideration of a semigroup $(T_t)_{t\geq 0}$ of bounded operators on $L^{\infty}(\pi)$, and the adjoint semigroup $(T^{\dag}_t)_{t\geq 0}$ of bounded operators on $L^1(\pi)$. 

In general, given an absorbed Markov process, it is not automatic that it should have some time-reversal at quasi-stationarity, even if there exists a time-reverse kernel over some specific time interval. The semigroups $(T_t)_{t\geq 0}$ and $(T^{\dag}_t)_{t\geq 0}$, on the other hand, do always exist. If there exists a time-reversal over any given time horizon, $t>0$, this time-reversal corresponds to $T^{\dag}_t$ (see Proposition \ref{prop:time-reversal implies T Dobrushin} for a precise statement of this). The semigroup $T^{\dag}_t(\mu)$ is only defined for $\mu\ll \pi$, however, so this doesn't necessarily represent a Markov process. It is similar enough to a Markov process, however, for the classical proof of Dobrushin's criterion to be applied (see Proposition \ref{prop:Tt conv implies Pt conv}).

We firstly summarise a number of propositions, \ref{prop:Tt well-defined}-\ref{prop:time-reversal implies T Dobrushin}, concerned with these semigroups. These shall be proven in turn, before concluding the proof of Proposition \ref{prop:R constant mass} and Theorem \ref{theo:Linfty convergence}. We shall then establish theorems \ref{theo:dominated by pi theorem general results} and \ref{theo:consequences of dominated assumption for whole space general results}, followed by proving Theorem \ref{theo:DRD lower bounds density}, before concluding with a proof of Corollary \ref{cor:L infty convergence of Q-process}. The first proposition defines the semigroups $(T_t)_{t\geq 0}$ and $(T^{\dag}_t)_{t\geq 0}$, establishing they are well-defined.

We recall the notation \eqref{eq:f mu notation}, which shall be used extensively in this section: for $\mu\in\calM_{\geq 0}(\chi)$ and $f\in L^1(\chi)$ we write $f\mu$ for the unique measure $\nu\in\calM(\mu)$ such that $\frac{d\nu}{d\mu}=f$.

\begin{prop}\label{prop:Tt well-defined}
We have the following:
\begin{enumerate}
\item\label{enum:Tt semigroup prop well-def}
The family of maps,
\begin{equation}
T_t:L^{\infty}(\pi)\ni f\mapsto \frac{dP_t(f\pi,\cdot)}{dP_t(\pi,\cdot)}\in L^{\infty}(\pi),\quad 0\leq t<\infty,
\end{equation}
defines a semigroup of bounded operators on $L^{\infty}(\pi)$ with operator norm $\lvert\lvert T_t\rvert\rvert_{\text{op}}=1$ for all $t\geq 0$, such that $T_t(L^{\infty}_{\geq 0}(\pi))\subseteq L^{\infty}_{\geq 0}(\pi)$ with $T_t(1)=1$.
\item\label{enum:Tt dag semigroup prop well-def}
The family of maps,
\begin{equation}
T^{\dag}_t:\calM(\pi)\ni \mu\mapsto \Big(\calB(\chi)\ni A\mapsto\mu\Big(\frac{dP_t(\Ind_A\pi,\cdot)}{dP_t(\pi,\cdot)}\Big)\Big)\in \calM(\pi),\quad 0\leq t<\infty,
\end{equation}
defines a semigroup of bounded operators on $\calM(\pi)$ with operator norm $\lvert\lvert T^{\dag}_t\rvert\rvert_{\text{op}}=1$ for all $t\geq 0$, such that $T^{\dag}_t(\calP(\pi))\subseteq \calP(\pi)$.
\item\label{enum:consist semigroup prop well-def}
For all $\mu\in \calM(\pi)$ and $f\in L^{\infty}(\pi)$ we have
\begin{equation}\label{eq:consistency semigroup}
(T^{\dag}_t\mu)(f)=\mu(T_t(f)),\quad 0\leq t<\infty.
\end{equation}
\end{enumerate}
\end{prop}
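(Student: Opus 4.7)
The plan is to build up the three parts in the order (\ref{enum:Tt semigroup prop well-def}), then (\ref{enum:Tt dag semigroup prop well-def}), and finally (\ref{enum:consist semigroup prop well-def}); everything rests on the QSD identity $\pi P_{t} = \lambda^{t}\pi$.

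For (\ref{enum:Tt semigroup prop well-def}), I first check $T_t$ is well-defined. For $f \in L^{\infty}(\pi)$, the signed measure $f\pi$ is absolutely continuous with respect to $\pi$ and satisfies $\lvert f\pi\rvert \leq \lvert\lvert f\rvert\rvert_{L^{\infty}(\pi)}\pi$. Pushing forward by $P_t$ preserves this bound: $\lvert (f\pi)P_t\rvert \leq \lvert\lvert f\rvert\rvert_{L^{\infty}(\pi)}\, \pi P_t = \lambda^{t} \lvert\lvert f\rvert\rvert_{L^{\infty}(\pi)}\pi$. Hence $(f\pi)P_t \ll \pi P_t$, and the Radon--Nikodym derivative defining $T_tf$ lies in $L^{\infty}(\pi)$ with norm at most $\lvert\lvert f\rvert\rvert_{L^{\infty}(\pi)}$. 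Consistency on equivalence classes is automatic since $f\pi$ depends only on the $\pi$-a.e.\ class of $f$. Evaluating at $f=1$ gives $T_t 1 = 1$, so $\lvert\lvert T_t\rvert\rvert_{\mathrm{op}} = 1$, and positivity of $T_t$ follows from positivity of the map $\mu \mapsto \mu P_t$ applied to the non-negative measure $f\pi$. The semigroup identity $T_{s+t} = T_tT_s$ is then an immediate consequence of the tautology $(f\pi)P_s = \lambda^{s}(T_sf)\pi$: applying $P_t$ to both sides and invoking the semigroup property of $(P_t)$ yields $(f\pi)P_{s+t} = \lambda^{s+t}(T_tT_sf)\pi$, and dividing by $\lambda^{s+t}\pi$ gives the claim.

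For (\ref{enum:Tt dag semigroup prop well-def}), I fix $\mu \in \calM(\pi)$ and verify that $A \mapsto \mu(T_t\Ind_A)$ defines a signed measure absolutely continuous with respect to $\pi$. Absolute continuity is automatic: if $\pi(A)=0$ then $\Ind_A = 0$ in $L^{\infty}(\pi)$, forcing $T_t\Ind_A = 0$. For countable additivity, linearity of $T_t$ combined with the monotone-convergence identity $((\sum_{n \leq N}\Ind_{A_n})\pi)P_t \nearrow ((\sum_{n}\Ind_{A_n})\pi)P_t$ gives $T_t\bigl(\sum_n\Ind_{A_n}\bigr) = \sum_nT_t\Ind_{A_n}$ $\pi$-a.e.; dominated convergence against $\lvert\mu\rvert$ (with dominant $T_t 1 = 1 \in L^{1}(\lvert\mu\rvert)$) then yields $\sigma$-additivity of $T^\dag_t\mu$, first applied to $\mu^{\pm}$ of the Jordan decomposition. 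Total-variation boundedness $\lvert\lvert T^\dag_t\mu\rvert\rvert_{\mathrm{TV}} \leq \lvert\lvert\mu\rvert\rvert_{\mathrm{TV}}$ follows by the same Jordan decomposition, noting that $T^\dag_t$ sends non-negative measures to non-negative measures and that $T^\dag_t\nu(\chi) = \nu(T_t1) = \nu(\chi)$. Taking $\nu = \mu \in \calP(\pi)$ in this last identity gives preservation of probability and $\lvert\lvert T^\dag_t\rvert\rvert_{\mathrm{op}} = 1$.

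The consistency identity \eqref{eq:consistency semigroup} holds by definition for $f = \Ind_A$, extends by linearity to simple functions, and then to all $f \in L^{\infty}(\pi)$ by a standard monotone-class argument: decompose $f = f^{+} - f^{-}$, approximate each non-negative part by an increasing sequence of simple functions $f_n$, and apply monotone convergence to $(f_n\pi)P_t$, $\mu(T_tf_n)$ and $(T^\dag_t\mu)(f_n)$ separately. The semigroup property of $(T^\dag_t)$ then comes for free from \eqref{eq:consistency semigroup} combined with the semigroup property of $(T_t)$: for $f\in L^{\infty}(\pi)$,
\[
(T^\dag_tT^\dag_s\mu)(f) = (T^\dag_s\mu)(T_tf) = \mu(T_sT_tf) = \mu(T_{s+t}f) = (T^\dag_{s+t}\mu)(f).
\]
I expect no substantial analytic obstacle; the main care required is in the monotone-convergence step of the countable-additivity argument, where one must track the $\pi$-a.e.\ identifications carefully so that the $L^{\infty}(\pi)$-equality $T_t\Ind_A = \sum_nT_t\Ind_{A_n}$ can legitimately be integrated against a general $\mu \in \calM(\pi)$.
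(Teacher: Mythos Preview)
Your proposal is correct and follows essentially the same approach as the paper's proof: both establish Part \ref{enum:Tt semigroup prop well-def} via the identity $(T_sf)\pi=\lambda^{-s}(f\pi)P_s$, prove countable additivity in Part \ref{enum:Tt dag semigroup prop well-def} by showing $T_t\Ind_{A^n}\nearrow T_t\Ind_A$ $\pi$-a.e.\ (hence $\mu$-a.e.) via monotone convergence, and deduce the semigroup property of $T_t^{\dag}$ from Part \ref{enum:consist semigroup prop well-def}. The only minor difference is in Part \ref{enum:consist semigroup prop well-def}: the paper approximates $f$ by simple functions converging in $L^{\infty}(\pi)$-norm, whereas you use monotone approximation of $f^{\pm}$; both are valid.
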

Thus for $\mu\in\calM(\pi)$ we can write $\mu T_t$ for the measure $T_t^{\dag}\mu$, $\mu T_t(A)$ being understood to mean $\mu(T_t(\Ind_A))=(T_t^{\dag}\mu)(A)$ for $A\in\mathscr{B}(\chi)$. Note that $\mu T_t f$ is unambiguous for $\mu\in\calM(\pi)$ and $f\in L^{\infty}(\pi)$ by \eqref{eq:consistency semigroup}. 

The following proposition demonstrates that $T_t^{\dag}$ can be expressed in terms of $P_t$.
\begin{prop}\label{prop:formula for Tt in terms of Pt}
The semigroup $(T_t^{\dag})_{0\leq t<\infty}$ satisfies
\begin{equation}
T_t^{\dag}(\mu)=\lambda^{-t}P_t\Big(\frac{d\mu}{d\pi}\Big)\pi\quad \text{for all}\quad \mu\in\calM(\pi)\quad\text{and}\quad 0\leq t<\infty.
\end{equation}

\end{prop}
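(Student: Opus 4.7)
The plan is to verify the claimed identity of measures by testing both sides against indicator functions of Borel sets $A \in \mathscr{B}(\chi)$, and then reducing the Radon--Nikodym derivative appearing in the definition of $T_t^{\dag}$ to a duality statement for $P_t$ acting between $L^1(\pi)$ and $L^\infty(\pi)$, which has already been set up in Proposition \ref{prop:Pt well-defined on L1}.

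First, I would set $g := \frac{d\mu}{d\pi} \in L^1(\pi)$, which makes sense since $\mu \in \calM(\pi)$. Since $\pi$ is a QSD, $\pi P_t = \lambda^t \pi$, so $P_t(\pi,\cdot) = \lambda^t \pi$ as measures and I can rewrite
\begin{equation*}
\frac{dP_t(\Ind_A\pi,\cdot)}{dP_t(\pi,\cdot)} \;=\; \lambda^{-t}\,\frac{d\bigl((\Ind_A\pi)P_t\bigr)}{d\pi},
\end{equation*}
as an element of $L^\infty(\pi)$. Here $(\Ind_A\pi)P_t \ll \pi$ is legitimate by Proposition \ref{prop:density wrt pi-> density}, since $\Ind_A\pi \in \calM_\infty(\pi)$. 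Plugging this into the definition of $T_t^{\dag}$,
\begin{equation*}
(T_t^{\dag}\mu)(A) \;=\; \int_\chi \frac{dP_t(\Ind_A\pi,\cdot)}{dP_t(\pi,\cdot)}(x)\,\mu(dx) \;=\; \lambda^{-t}\int_\chi \frac{d\bigl((\Ind_A\pi)P_t\bigr)}{d\pi}(x)\, g(x)\,\pi(dx).
\end{equation*}
By the defining property of the Radon--Nikodym derivative, the last integral equals $\int_\chi g(x)\,(\Ind_A\pi)P_t(dx)$.

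Second, I would apply the duality $(\nu P_t)(f) = \nu(P_t f)$ provided by Proposition \ref{prop:Pt well-defined on L1} with $\nu = \Ind_A\pi / \pi(A) \in \calP_\infty(\pi)$ (if $\pi(A) = 0$ both sides vanish and the identity is trivial) and $f = g \in L^1(\pi)$; rescaling by $\pi(A)$ and using linearity gives
\begin{equation*}
\int_\chi g(x)\,(\Ind_A\pi)P_t(dx) \;=\; \int_\chi \Ind_A(y)\,P_tg(y)\,\pi(dy) \;=\; \int_A P_t\!\left(\tfrac{d\mu}{d\pi}\right)(y)\,\pi(dy).
\end{equation*}
Combining the two displays yields $(T_t^{\dag}\mu)(A) = \lambda^{-t}\int_A P_t(d\mu/d\pi)(y)\,\pi(dy)$ for every $A \in \mathscr{B}(\chi)$, which is exactly the claimed identity of measures.

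The only real subtlety, and hence the main obstacle, is bookkeeping of which of the two definitions of $P_t$ (on $\calB_b(\chi)$ versus on $L^1(\pi)$) is being invoked at each step, and checking that the duality formula from Proposition \ref{prop:Pt well-defined on L1} genuinely applies with $g \in L^1(\pi)$ (not merely $g \in \calB_b(\chi)$); this is precisely the content of that proposition, so no new estimate is needed. For signed $\mu \in \calM(\pi)$ the identity follows by applying the positive case to the Jordan decomposition $\mu = \mu^+ - \mu^-$ (equivalently, to $g = g^+ - g^-$), using linearity of $T_t^{\dag}$ and of $P_t$.
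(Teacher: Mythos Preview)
Your proof is correct and follows essentially the same route as the paper's: both test against indicator functions $\Ind_A$, use the quasi-stationarity relation $P_t(\pi,\cdot)=\lambda^t\pi$ to replace $dP_t(\pi,\cdot)$ by $\lambda^t\,d\pi$, and then swap the order of integration to convert $\int g\,d\bigl((\Ind_A\pi)P_t\bigr)$ into $\int_A P_t g\,d\pi$. The only cosmetic difference is that where the paper performs this swap directly via Fubini, you invoke the duality statement from Proposition~\ref{prop:Pt well-defined on L1}, which is the same computation packaged as a lemma.
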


For $0< t<\infty$, we say $\beta\in\calP(\pi)$ is stationary for $T_t^{\dagger}$ if
\begin{equation}\label{eq:stationary for Ttdag}
T_t^{\dag}\beta=\beta.
\end{equation}

We say $\beta\in\calP(\pi)$ is stationary for $(T^{\dag}_t)_{t\geq 0}$ if it is stationary for $T^{\dag}_t$, for all $t>0$.

The following corollary, which is an immediate consequence of propositions \ref{prop:Tt well-defined} and \ref{prop:formula for Tt in terms of Pt}, establishes a correspondence between non-negative $L^1(\pi)$-right eigenfunctions for $P_t$ (normalised to have $\lvert\lvert \cdot\rvert\rvert_{L^1(\pi)}$-norm $1$) and stationary distributions for $T_t^{\dag}$ which are absolutely continuous with respect to $\pi$.

\begin{cor}\label{cor:stat dist for Ttdag right efn for P correspondence}
We fix $0\leq t<\infty$. If $\beta\in\calP(\pi)$ is stationary for $T_t^{\dagger}$, then the Radon-Nikodym derivative $\frac{d\beta}{d\pi}$, which satisfies $\frac{d\beta}{d\pi}\in L^1_{\geq 0}(\pi)$ with $\lvert\lvert \frac{d\beta}{d\pi}\rvert\rvert_{L^1(\pi)}=1$, is a non-negative $L^1(\pi)$-right eigenfunction of $P_t$ with eigenvalue $\lambda^t$,
\begin{equation}
P_t\Big(\frac{d\beta}{d\pi}\Big)=\lambda^t\frac{d\beta}{d\pi}.
\end{equation}

Conversely, we suppose that $\phi\in L^1_{\geq 0}(\pi)$ with $\lvert\lvert \phi\rvert\rvert_{L^1(\pi)}=1$ is a unit non-negative $L^1(\pi)$-right eigenfunction of $P_t$,
\begin{equation}
P_t\phi=k\phi\quad \text{for some}\quad k\in \Rm.
\end{equation}

Then the eigenvalue must be $\lambda^t$, $k=\lambda^t$, and $\beta$ defined by $\beta:=\phi\pi\in \calP(\pi)$ is stationary for $T^{\dag}_t$, that is $\beta$ satisfies \eqref{eq:stationary for Ttdag}.
\end{cor}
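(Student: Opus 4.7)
The plan is to translate both directions of the correspondence via the explicit formula furnished by Proposition \ref{prop:formula for Tt in terms of Pt}, namely
\[
T_t^{\dag}(\mu)=\lambda^{-t}P_t\!\left(\frac{d\mu}{d\pi}\right)\pi,
\]
and then use the mass-preservation property of $T_t^{\dag}$ on $\calP(\pi)$ (part \ref{enum:Tt dag semigroup prop well-def} of Proposition \ref{prop:Tt well-defined}) to identify the eigenvalue.

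First, I would handle the forward direction. Given $\beta\in\calP(\pi)$ with $T_t^{\dag}\beta=\beta$, I write $\phi:=\frac{d\beta}{d\pi}$, which is automatically an element of $L^1_{\geq 0}(\pi)$ with $\lVert\phi\rVert_{L^1(\pi)}=\beta(\chi)=1$. Applying the formula above to $\mu=\beta$, the identity $T_t^{\dag}\beta=\beta$ becomes $\lambda^{-t}P_t\phi\,\pi=\phi\pi$. Since two absolutely continuous measures agree iff their Radon--Nikodym derivatives agree $\pi$-a.e., this gives $P_t\phi=\lambda^{t}\phi$ in $L^1(\pi)$, as required.

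For the converse, let $\phi\in L^1_{\geq 0}(\pi)$ be a unit eigenfunction with $P_t\phi=k\phi$, and set $\beta:=\phi\pi$. Then $\beta\in\calP(\pi)$ since $\phi\geq 0$ with unit $L^1(\pi)$-norm. Applying Proposition \ref{prop:formula for Tt in terms of Pt} again yields
\[
T_t^{\dag}\beta=\lambda^{-t}P_t\phi\,\pi=\lambda^{-t}k\phi\,\pi=\lambda^{-t}k\,\beta.
\]
By part \ref{enum:Tt dag semigroup prop well-def} of Proposition \ref{prop:Tt well-defined}, $T_t^{\dag}$ maps $\calP(\pi)$ into $\calP(\pi)$, so $T_t^{\dag}\beta$ is again a probability measure; since $\beta$ is also a probability measure, we must have $\lambda^{-t}k=1$, forcing $k=\lambda^{t}$ and $T_t^{\dag}\beta=\beta$.

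There is no genuine obstacle here --- both directions reduce to rewriting the stationarity equation using Proposition \ref{prop:formula for Tt in terms of Pt}. The only point requiring a little care is the identification of the eigenvalue in the converse direction, which is handled by comparing total masses using the fact that $T_t^{\dag}$ preserves $\calP(\pi)$; this is precisely why the normalisation $\lVert\phi\rVert_{L^1(\pi)}=1$ is imposed in the statement.
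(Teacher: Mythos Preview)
Your proof is correct and takes essentially the same approach as the paper, which simply states that the corollary is an immediate consequence of Propositions \ref{prop:Tt well-defined} and \ref{prop:formula for Tt in terms of Pt} without writing out the details. Your argument is exactly the intended unpacking of that remark.
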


We now establish a correspondence between convergence in total variation for $T_t^{\dagger}$ and $L^{\infty}(\pi)$-Perron-Frobenius behaviour for $P_t$.
\begin{prop}\label{prop:Tt conv implies Pt conv}
We suppose that for some $\beta\in \calP(\pi)$ and some non-negative function of time $\epsilon(t)\in \Rm_{\geq 0}$ we have
\begin{equation}\label{eq:assum for prop control on Tt}
\sup_{\mu\in\calP(\pi)}\lvert\lvert \mu T_t-\beta\rvert\rvert_{\TV}\leq \epsilon(t).
\end{equation}
Then defining $\phi:=\frac{d\beta}{d\pi}\in L^1(\pi)$ we have
\begin{equation}\label{eq:conv in Linfty as conseq of T conv optimised}
\Big\lvert\Big\lvert \lambda^{-t}\frac{d(\mu P_t)}{d\pi}-\mu(\phi)\Big\rvert\Big\rvert_{L^{\infty}(\pi)}\leq \epsilon(t)\frac{\osc_{\pi}(\mu)}{2}\quad\text{for all}\quad \mu\in\calP_{\infty}(\pi).
\end{equation}
\end{prop}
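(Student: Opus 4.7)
The plan is to reformulate the $L^\infty(\pi)$-norm via the $L^1$–$L^\infty$ duality, express the quantity of interest through $T_t^\dagger$ acting on auxiliary signed measures $\eta\in\calM(\pi)$, and then invoke an oscillation trick to extract the factor $\osc_\pi(\mu)/2$.

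Setting $g_t := \lambda^{-t}\frac{d(\mu P_t)}{d\pi} - \mu(\phi)$ (which lies in $L^\infty(\pi)$ since $\mu P_t \ll_\infty \pi$ by Proposition \ref{prop:density wrt pi-> density}), the standard $L^1(\pi)$–$L^\infty(\pi)$ duality gives
\[
\lvert\lvert g_t\rvert\rvert_{L^\infty(\pi)} = \sup\bigl\{ |\eta(g_t)| : \eta \in \calM(\pi),\ \lvert\lvert\eta\rvert\rvert_{\TV}\leq 1\bigr\}.
\]
For $\eta = h\pi\in\calM(\pi)$, one computes $\int \frac{d(\mu P_t)}{d\pi}\,d\eta = (\mu P_t)(h) = \mu(P_t h)$ by Proposition \ref{prop:Pt well-defined on L1}. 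Combined with the identification $\frac{d(T_t^\dagger \eta)}{d\pi} = \lambda^{-t} P_t h$ provided by Proposition \ref{prop:formula for Tt in terms of Pt} and the observation $\mu(\phi) = \int \phi\,d\mu = \int (d\mu/d\pi)\,d\beta = \beta(d\mu/d\pi)$ (since $\phi = d\beta/d\pi$), this yields the key identity
\[
\eta(g_t) = \bigl(T_t^\dagger\eta - \eta(\chi)\beta\bigr)\Bigl(\tfrac{d\mu}{d\pi}\Bigr).
\]

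For a general $\eta\in\calM(\pi)$, I would Hahn-decompose $\eta = \eta^+ - \eta^-$, set $\mu_\pm := \eta^\pm/\eta^\pm(\chi) \in \calP(\pi)$ (handling $\eta^\pm(\chi)=0$ trivially), and apply the hypothesis \eqref{eq:assum for prop control on Tt} to each $\mu_\pm$; the triangle inequality then produces
\[
\lvert\lvert T_t^\dagger\eta - \eta(\chi)\beta\rvert\rvert_{\TV}\leq \epsilon(t)\bigl(\eta^+(\chi)+\eta^-(\chi)\bigr) = \epsilon(t)\lvert\lvert\eta\rvert\rvert_{\TV}.
\]
Crucially, the signed measure $T_t^\dagger\eta - \eta(\chi)\beta$ has total mass $\eta(T_t 1) - \eta(\chi) = 0$, since $T_t 1 = 1$ by Proposition \ref{prop:Tt well-defined}; this permits the oscillation trick of replacing $d\mu/d\pi$ by $d\mu/d\pi - c$ for any constant $c$. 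Choosing $c := \tfrac{1}{2}\bigl(\esssup_\pi(d\mu/d\pi) + \essinf_\pi(d\mu/d\pi)\bigr)$ makes $\lvert\lvert d\mu/d\pi - c\rvert\rvert_{L^\infty(\pi)} = \osc_\pi(\mu)/2$, so
\[
|\eta(g_t)| \leq \lvert\lvert T_t^\dagger\eta - \eta(\chi)\beta\rvert\rvert_{\TV}\cdot\tfrac{\osc_\pi(\mu)}{2}\leq \epsilon(t)\lvert\lvert\eta\rvert\rvert_{\TV}\tfrac{\osc_\pi(\mu)}{2},
\]
and the supremum over $\lvert\lvert\eta\rvert\rvert_{\TV}\leq 1$ concludes.

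The main technical subtlety is the key identity in the second paragraph: Proposition \ref{prop:formula for Tt in terms of Pt} identifies $T_t^\dagger\mu$ with $\lambda^{-t}P_t(d\mu/d\pi)\pi$, which is a different measure from $\lambda^{-t}\mu P_t$ in general, and it is precisely by testing the latter against $d\eta/d\pi$ (rather than against an arbitrary bounded function directly) that one recovers the former. Once this swap is in hand, the remainder is a transplantation of Dobrushin's classical oscillation inequality from the setting of ergodic Markov kernels to the semigroup $T_t^\dagger$ on $\calM(\pi)$, with the zero-mass property of $T_t^\dagger\eta - \eta(\chi)\beta$ playing the role classically played by the fact that the difference of two probability measures has total mass zero.
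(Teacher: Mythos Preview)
Your proof is correct and follows essentially the same route as the paper's. The paper tests against measures of the specific form $\eta=\Ind_A\pi$ (for arbitrary Borel $A$) rather than general $\eta\in\calM(\pi)$ via $L^1$--$L^\infty$ duality, so it applies the hypothesis directly to the single probability measure $\frac{\Ind_A\pi}{\pi(A)}\in\calP(\pi)$ instead of Hahn-decomposing and handling $\mu_\pm$ separately; the key identity, the zero-mass observation $T_t1=1$, and the optimisation over the constant $c$ to produce $\osc_\pi(\mu)/2$ are identical.
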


We would therefore like a criterion for uniform exponential convergence to a unique stationary distribution for $T_t^{\dagger}$. Since $T_t^{\dagger}\mu$ is undefined for probability measures $\mu$ which are not absolutely continuous with respect to $\pi$, it does not necessarily correspond to a Markov process, so we cannot apply Dobrushin's criterion directly. The following proposition establishes that Dobrushin's criterion may be applied to $T_t^{\dag}$, the proof of which is essentially the same as the classical coupling proof.
\begin{prop}[Dobrushin criterion]\label{prop:Dobrushin for Tt}
We suppose that there exists $\nu\in \calP(\chi)$, $t_0>0$ and $c_0>0$ such that
\begin{equation}\label{eq:Tt minorised by nu}
T^{\dag}_{t_0}\mu\geq c_0\nu\quad\text{for all}\quad  \mu\in\calP(\pi).
\end{equation}
Then there exists $\beta\in \calP(\pi)$ which is stationary for $(T_t^{\dag})_{t\geq 0}$, and which is the unique stationary distribution for $T_t^{\dag}$, for all $t>0$. Moreover we have uniform exponential convergence to this stationary distribution,
\begin{equation}
\lvert\lvert T_t^{\dag}\mu-\beta\rvert\rvert_{\TV}\leq 2(1-c_0)^{\lfloor \frac{t}{t_0}\rfloor}\quad\text{for all}\quad \mu\in\calP(\pi).
\end{equation}
\end{prop}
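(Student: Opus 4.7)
}

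The plan is to mimic the classical coupling/Hahn-decomposition proof of Dobrushin's contraction theorem, taking care that all the measures stay inside the subspace $\calM(\pi)$ of $\pi$-absolutely continuous signed measures (which is closed in the total variation norm, since the TV-limit of $\pi$-absolutely continuous measures is $\pi$-absolutely continuous). First I observe that the hypothesis forces $\nu\ll\pi$: indeed, for any $\mu\in\calP(\pi)$ one has $c_0\nu\leq T^{\dag}_{t_0}\mu$, and $T^{\dag}_{t_0}\mu\in\calP(\pi)$ by part \ref{enum:Tt dag semigroup prop well-def} of Proposition \ref{prop:Tt well-defined}. Consequently, for every $\mu\in\calP(\pi)$ the measure
\[
\tilde{\mu}:=\frac{T^{\dag}_{t_0}\mu-c_0\nu}{1-c_0}
\]
is a well-defined element of $\calP(\pi)$ (using, trivially, that $c_0\leq 1$ since both sides of \eqref{eq:Tt minorised by nu} have the same mass $\Pm_\mu(\tau_\partial>t_0)/\lambda^{t_0}\leq 1$; should $c_0=1$ the argument collapses and the bound is immediate, so below I assume $c_0<1$).

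The key contraction step is the inequality
\begin{equation}\label{eq:plan-contraction}
\lVert T^{\dag}_{t_0}\mu_1-T^{\dag}_{t_0}\mu_2\rVert_{\TV}\leq (1-c_0)\lVert \mu_1-\mu_2\rVert_{\TV}\quad\text{for all}\quad \mu_1,\mu_2\in\calP(\pi).
\end{equation}
To prove \eqref{eq:plan-contraction} I would take the Hahn decomposition of the signed measure $\mu_1-\mu_2\in\calM(\pi)$, writing $\mu_1-\mu_2=m(\eta^+-\eta^-)$ with $\eta^\pm\in\calP(\pi)$ mutually singular and $m=\lVert\mu_1-\mu_2\rVert_{\TV}/2$. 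Applying the minorisation hypothesis to $\eta^\pm$ yields $T^{\dag}_{t_0}\eta^\pm=c_0\nu+(1-c_0)\tilde{\eta}^\pm$ with $\tilde{\eta}^\pm\in\calP(\pi)$, so that $T^{\dag}_{t_0}\eta^+-T^{\dag}_{t_0}\eta^-=(1-c_0)(\tilde{\eta}^+-\tilde{\eta}^-)$, whence $\lVert T^{\dag}_{t_0}\eta^+-T^{\dag}_{t_0}\eta^-\rVert_{\TV}\leq 2(1-c_0)$; linearity of $T^\dag_{t_0}$ then gives \eqref{eq:plan-contraction}.

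Iterating \eqref{eq:plan-contraction} produces $\lVert T^{\dag}_{nt_0}\mu_1-T^{\dag}_{nt_0}\mu_2\rVert_{\TV}\leq 2(1-c_0)^n$ for every $\mu_1,\mu_2\in\calP(\pi)$ and $n\in\Nm$. Hence, for fixed $\mu$, the sequence $(T^{\dag}_{nt_0}\mu)_{n\geq 0}$ is Cauchy in the complete space $(\calP(\pi),\lVert\cdot\rVert_{\TV})$ (completeness follows since $\calP(\pi)$ is closed in the Banach space of signed measures under TV), and its limit $\beta\in\calP(\pi)$ is independent of the starting point $\mu$. Continuity of $T^{\dag}_{t_0}$ on $\calM(\pi)$ (its operator norm is $1$ by Proposition \ref{prop:Tt well-defined}) gives $T^{\dag}_{t_0}\beta=\beta$. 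To extend stationarity to every $s\geq 0$, I apply $T^{\dag}_s$ to the relation $T^{\dag}_{nt_0}\mu\to\beta$: on the one hand $T^{\dag}_sT^{\dag}_{nt_0}\mu=T^{\dag}_{nt_0+s}\mu$ which is itself of the form $T^{\dag}_{nt_0}(T^{\dag}_s\mu)\to\beta$ by the universality of the limit; on the other hand, contractivity of $T^{\dag}_s$ in TV gives $T^{\dag}_sT^{\dag}_{nt_0}\mu\to T^{\dag}_s\beta$. Hence $T^{\dag}_s\beta=\beta$ for all $s\geq 0$.

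For the quantitative bound, given $t\geq 0$ I write $t=nt_0+s$ with $n=\lfloor t/t_0\rfloor$ and $s\in[0,t_0)$, so that, using $T^{\dag}_{nt_0}\beta=\beta$, $\lVert T^{\dag}_s\rVert_{\text{op}}=1$, and \eqref{eq:plan-contraction} iterated $n$ times,
\[
\lVert T^{\dag}_t\mu-\beta\rVert_{\TV}=\lVert T^{\dag}_s(T^{\dag}_{nt_0}\mu-T^{\dag}_{nt_0}\beta)\rVert_{\TV}\leq (1-c_0)^n\lVert\mu-\beta\rVert_{\TV}\leq 2(1-c_0)^{\lfloor t/t_0\rfloor}.
\]
Uniqueness at any fixed $t>0$ then follows for free: if $\beta'\in\calP(\pi)$ satisfies $T^{\dag}_t\beta'=\beta'$, then $T^{\dag}_{nt}\beta'=\beta'$ for every $n$, while the displayed bound (applied along the subsequence $nt\to\infty$) gives $T^{\dag}_{nt}\beta'\to\beta$, whence $\beta'=\beta$. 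The main obstacle is simply carrying out the Hahn decomposition argument cleanly inside $\calM(\pi)$ rather than $\calM(\chi)$; once one has verified $\nu\ll\pi$ this is routine, and the rest is the standard Banach fixed-point iteration.
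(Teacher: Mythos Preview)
Your proposal is correct and follows essentially the same approach as the paper: both establish the contraction \eqref{eq:plan-contraction} via the minorisation hypothesis, then apply Banach's fixed point theorem and extend stationarity to all $t$ by the semigroup property. The paper packages the contraction step slightly differently, defining operators $A\mu:=c_0\mu(1)\nu$ and $D\mu:=T^{\dag}_{t_0}\mu-A\mu$ and expanding $(T^{\dag}_{t_0})^n=\sum_{k=0}^{n-1}(T^{\dag}_{t_0})^{n-k-1}AD^k+D^n$, whereas you use the Hahn decomposition of $\mu_1-\mu_2$ directly; the content is identical. One small slip: your parenthetical justification of $c_0\leq 1$ is garbled (the mass of $T^{\dag}_{t_0}\mu$ is exactly $1$ by Proposition~\ref{prop:Tt well-defined}, not $\Pm_\mu(\tau_\partial>t_0)/\lambda^{t_0}$), but the conclusion $c_0\leq 1$ is of course correct.
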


The following proposition gives a criterion providing for \eqref{eq:Tt minorised by nu}.

\begin{prop}\label{prop:time-reversal implies T Dobrushin}
We suppose that we have a time $t_0>0$ and non-negative kernel $R$  satisfying \eqref{eq:exis of reverse R}. Then for all Borel sets $A\in\mathscr{B}(\chi)$ we have
\begin{equation}\label{eq:Tt in terms of R}
R(y,A)=\lambda^{t_0}T_{t_0}\Ind_A(y)\quad\text{for $\pi$-almost every $y\in\chi$.}
\end{equation}

If in addition there exists $\nu\in\calP(\chi)$ and $c_0>0$ such that Assumption \ref{assum:Dobrushin reverse time} is satisfied, then \eqref{eq:Tt minorised by nu} is satisfied for this same constant $c_0>0$, time $t_0>0$ and probability measure $\nu$.
\end{prop}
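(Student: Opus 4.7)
The plan is to compute $T_{t_0}\Ind_A$ directly from its definition in Proposition \ref{prop:Tt well-defined}, namely $T_{t_0}\Ind_A = \frac{dP_{t_0}(\Ind_A\pi,\cdot)}{dP_{t_0}(\pi,\cdot)}$, by identifying both the numerator and denominator measures explicitly using the time-reversal identity \eqref{eq:exis of reverse R} and the QSD property of $\pi$.

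For the denominator, since $\pi$ is a QSD with corresponding eigenvalue $\lambda^{t_0}$ over time $t_0$, one has $P_{t_0}(\pi,\cdot)=\pi P_{t_0}=\lambda^{t_0}\pi$. For the numerator, I would apply Fubini together with \eqref{eq:exis of reverse R} to get, for any $B\in\mathscr{B}(\chi)$,
\[
P_{t_0}(\Ind_A\pi, B)=\int_A \pi(dx)\,P_{t_0}(x,B)=\int_B \pi(dy)\,R(y,A),
\]
so that $P_{t_0}(\Ind_A\pi,dy)=R(y,A)\,\pi(dy)$. Taking the Radon-Nikodym derivative with respect to $P_{t_0}(\pi,\cdot)=\lambda^{t_0}\pi$ yields $T_{t_0}\Ind_A(y)=\lambda^{-t_0}R(y,A)$ for $\pi$-almost every $y\in\chi$, which rearranges to \eqref{eq:Tt in terms of R}.

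For the second claim, I would combine the identity just established with Proposition \ref{prop:R constant mass}, which guarantees $R1(y)=\lambda^{t_0}$ for $\pi$-almost every $y$. Under Assumption \ref{assum:Dobrushin reverse time} the minorization \eqref{eq:reverse time R minorised by nu} then reads $R(y,A)\geq c_0\lambda^{t_0}\nu(A)$ $\pi$-a.e., which translates into $T_{t_0}\Ind_A(y)\geq c_0\nu(A)$ $\pi$-a.e. Finally, for arbitrary $\mu\in\calP(\pi)$, since $\mu\ll\pi$ any $\pi$-null set is $\mu$-null, so integrating gives $(T_{t_0}^{\dag}\mu)(A)=\mu(T_{t_0}\Ind_A)\geq c_0\nu(A)$, which is \eqref{eq:Tt minorised by nu}. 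The proof is essentially routine; the only point requiring attention is the careful bookkeeping of $\pi$-null exceptional sets when passing between the pointwise identity on $y$ and the operator-level statement on measures, which is handled automatically by the absolute continuity built into $\calP(\pi)$.
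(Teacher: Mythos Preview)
Your proposal is correct and follows essentially the same approach as the paper: both compute $\int_B T_{t_0}\Ind_A(y)\,\pi(dy)$ via Fubini and the duality identity \eqref{eq:exis of reverse R} to identify $T_{t_0}\Ind_A$ with $\lambda^{-t_0}R(\cdot,A)$, then pass the pointwise minorisation through $\mu\in\calP(\pi)$ using $\mu\ll\pi$. The only cosmetic difference is that the paper routes the first computation through Proposition~\ref{prop:formula for Tt in terms of Pt} rather than directly through the definition of $T_{t_0}$, and is slightly more explicit in naming the exceptional sets $\mathscr{E}_A$ and $\mathscr{G}_A$ whose $\pi$-nullity (hence $\mu$-nullity) you correctly flag as the one point requiring care.
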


\subsection*{Proof of Proposition \ref{prop:Tt well-defined}}

We begin by establishing Part \ref{enum:Tt semigroup prop well-def}. It is immediate that $T_t$ is a well-defined bounded linear endomorphism on $L^{\infty}(\pi)$, with operator norm at most $1$. It is also immediate that $T_t(1)=1$, so that $\lvert\lvert T_t\rvert\rvert_{\text{op}}=1$, and that $T_t(L^{\infty}_{\geq 0}(\pi))\subseteq L^{\infty}_{\geq 0}(\pi)$ for all $t\geq 0$. 

We now show that $(T_t)_{t\geq 0}$ is a semigroup. It is immediate that $T_0$ is the identity. We have left to establish $T_{t+s}=T_t T_s$ for all $0\leq t,s<\infty$. We firstly observe that
\[
T_s(f)\pi=\lambda^{-s}(f\pi)P_s.
\]
Therefore we have for all $f\in L^{\infty}(\pi)$,
\[
T_t(T_s(f))=\frac{dP_t(T_s(f)\pi,\cdot)}{dP_t(\pi,\cdot)}=\frac{dP_t(\lambda^{-s}(f\pi)P_s,\cdot)}{dP_t(\pi,\cdot)}=\frac{d[(f\pi)P_sP_{t}](\cdot)}{dP_{t+s}(\pi,\cdot)}=\frac{dP_{t+s}(f\pi,\cdot)}{dP_{t+s}(\pi,\cdot)}=T_{t+s}(f).
\]
This concludes  the proof of Part \ref{enum:Tt semigroup prop well-def}, so we now turn to Part \ref{enum:Tt dag semigroup prop well-def}.

We fix $0\leq t<\infty$. It is immediate that for all $\mu\in\calM_{\geq 0}(\pi)$, $T_t^{\dag}\mu$ is a finitely additive (non-negative) measure. 

We now prove countable additivity of $T_t^{\dag}\mu$ for $\mu\in\calM_{\geq 0}(\pi)$. We take disjoint $(A_k)_{k=1}^{\infty}$ and define
\[
A^n=\cup_{k=1}^nA_k,\quad A=\cup_{k=1}^{\infty}A_k,\quad f_n=\frac{dP_t(\Ind_{A^n}\pi,\cdot)}{dP_t(\pi,\cdot)},\quad f=\frac{dP_t(\Ind_{A}\pi,\cdot)}{dP_t(\pi,\cdot)}.
\]
Clearly for $0\leq n\leq m<\infty$ we have
\[
0\leq f_n\leq f_m\leq f\leq 1.
\]
Moreover by the monotone convergence theorem we have
\[
\begin{split}
\pi(f_n)=\lambda^{-t}P_t(\Ind_{A^n}\pi,\chi)=\lambda^{-t}\int_{\chi}\int_{\chi}\Ind(x\in A^n)P_t(x,dy)\pi(dx)\\
\nearrow \lambda^{-t}\int_{\chi}\int_{\chi}\Ind(x\in A)P_t(x,dy)\pi(dx)
=\lambda^{-t}P_t(\Ind_{A}\pi,\chi)=\pi(f)\quad\text{as}\quad n\ra\infty.
\end{split}
\]
Thus we have that $f_n\nearrow f$ $\pi$-almost everywhere as $n\ra\infty$, hence $\mu$-almost everywhere. Therefore by the monotone convergence theorem we have
\[
T_t^{\dag}\mu(A^n)=\mu(f_n)\ra \mu(f)=T_t^{\dag}\mu(A)\quad\text{as}\quad n\ra\infty.
\]
Thus $T_t^{\dag}\mu$ is countably additive for $\mu\in \calM_{\geq 0}(\pi)$. For general $\mu\in\calM(\pi)$, we can write $\mu=\mu_+-\mu_-$ for $\mu_+,\mu_-\in\calM_{\geq 0}(\pi)$. Then we can identify $T_t^{\dag}\mu=T_t^{\dag}\mu_+-T_t^{\dag}\mu_-$, so the countable additivity of $T_t\mu$ follows from that of $T_t^{\dag}\mu_+$ and $T_t^{\dag}\mu_-$.

The linearity of $T_t^{\dag}$ is immediate, as is the fact that $T_t^{\dag}\mu(1)=\mu(1)$ (so that $T_t^{\dag}(\calP(\pi))\subseteq \calP(\pi)$). Thus $\lvert\lvert T_t^{\dag}\rvert\rvert_{\text{op}}\geq 1$.

For general $\mu\in\calM(\pi)$, the Hahn decomposition theorem gives us disjoint $\mu_+,\mu_-\in \calM_{\geq 0}(\pi)$ such that $\mu=\mu_+-\mu_-$, with $\lvert \mu\rvert=\mu_++\mu_-$. Therefore we have
\[
\lvert\lvert T_t^{\dag}\mu\rvert\rvert_{\TV}\leq \lvert\lvert T_t^{\dag}\mu_+\rvert\rvert_{\TV}+\lvert\lvert T_t^{\dag}\mu_-\rvert\rvert_{\TV}=\lvert\lvert  \mu_+\rvert\rvert_{\TV}+\lvert\lvert \mu_-\rvert\rvert_{\TV}=\lvert\lvert \mu\rvert\rvert_{\TV},
\]
so that $\lvert\lvert T_t^{\dag}\rvert\rvert_{\text{op}}=1$.

To complete the proof of Part \ref{enum:Tt dag semigroup prop well-def}, it is left only to prove that $(T_t^{\dag})_{0\leq t<\infty}$ constitutes a semigroup. Prior to doing this, we prove Part \ref{enum:consist semigroup prop well-def}.

We fix $\mu\in\calM(\pi)$ and $t\geq 0$. We observe that \eqref{eq:consistency semigroup} is immediate if $f=\Ind_A$ for some $A\in\mathscr{B}(\chi)$, hence if $f$ is a simple function. We now take arbitrary $f\in L^{\infty}(\pi)$. We take a sequence of simple functions $f_n$ converging in $L^{\infty}(\pi)$ to $f$, so that $f_n$ converges to $f$ in $L^{\infty}(T_t^{\dag}\mu)$ (since $T_t^{\dag}\mu\ll \pi$) and $T_tf_n$ converges to $T_tf$ in $L^{\infty}(\pi)$, hence in $L^{\infty}(\mu)$ (since $\mu\ll \pi$). Therefore we have
\[
(T^{\dag}_t\mu)(f)=\lim_{n\ra\infty}(T^{\dag}_t\mu)(f_n)=\lim_{n\ra\infty}\mu(T_t(f_n))=\mu(T_t(f)).
\]
We have therefore established Part \ref{enum:consist semigroup prop well-def}.

It is left only to prove that $(T_t^{\dag})_{\geq 0}$ is a semigroup. It is immediate that $T^{\dag}_0$ is the identity. We fix $t,s\geq 0$. We may use Part \ref{enum:consist semigroup prop well-def} to calculate for arbitrary $\mu\in\calM(\pi)$ and $A\in\mathscr{B}(\chi)$ that
\[
(T^{\dag}_{t+s}\mu)(A)=\mu(T_{t+s}(\Ind_A))=\mu(T_s(T_t(\Ind_A)))=(T^{\dag}_s\mu)(T_t(\Ind_A))=(T^{\dag}_tT^{\dag}_s\mu)(A).
\]
Since $\mu$ and $A\in\mathscr{B}(\chi)$ were arbitrary, $T^{\dag}_{t+s}=T^{\dag}_tT^{\dag}_s$ for all $t,s\geq 0$.
\qed

\subsection*{Proof of Proposition \ref{prop:formula for Tt in terms of Pt}}

We fix $\mu\in\calM(\pi)$, $A\in \mathscr{B}(\chi)$ and $t\geq 0$. Then we calculate
\[
\begin{split}
(T_t^{\dag}\mu)(A)=\mu (T_t(\Ind_A))=\int_{\chi}\frac{d P_t(\Ind_A\pi,\cdot)}{dP_t(\pi,\cdot)}(y)\mu(dy)=\lambda^{-t}\int_{\chi}\frac{d P_t(\Ind_A\pi,\cdot)}{d\pi(\cdot)}(y)\frac{d\mu}{d\pi}(y)\pi(dy)
\\=\lambda^{-t}\int_{\chi}\frac{d\mu}{d\pi}(y)d P_t(\Ind_A\pi,dy)=\lambda^{-t}\int_{\chi}\int_{\chi}\Ind_A(x)\frac{d\mu}{d\pi}(y) P_t(x,dy)\pi(dx)
=\int_{A} \lambda^{-t}P_t\Big(\frac{d\mu}{d\pi}\Big)(x)\pi(dx).
\end{split}
\]
Therefore we have Proposition \ref{prop:formula for Tt in terms of Pt}. \qed

\subsection*{Proof of Proposition \ref{prop:Tt conv implies Pt conv}}

We fix arbitrary $A\in \mathscr{B}({\chi})$ and $\mu\in\calP_{\infty}(\pi)$. We use Proposition \ref{prop:formula for Tt in terms of Pt} to calculate
\[
\begin{split}
\lambda^{-t}\mu P_t(A)=\int_{\chi}\lambda^{-t}(P_t\Ind_A)(x)\frac{d\mu}{d\pi}(x)\pi(dx)=[\lambda^{-t}(P_t\Ind_A)\pi]\Big(\frac{d\mu}{d\pi}\Big)
\overset{\text{Proposition }\ref{prop:formula for Tt in terms of Pt}}{=}[(\Ind_A\pi)T_t]\Big(\frac{d\mu}{d\pi}\Big).
\end{split}
\]
We also have that $\mu(\phi)=\mu(\frac{d\beta}{d\pi})=\beta(\frac{d\mu}{d\pi})$ so that $\mu(\phi)\pi(A)=\pi(A)\beta(\frac{d\mu}{d\pi})$ hence
\[
[\lambda^{-t}\mu P_t-\mu(\phi)\pi](A)=[(\Ind_A\pi)T_t-\pi(A)\beta]\Big(\frac{d\mu}{d\pi}\Big).
\]

We take $a\in\Rm$ an arbitrary constant. We have that $[(\Ind_A\pi)T_t-\pi(A)\beta](a)=0$ so that
\[
[\lambda^{-t}\mu P_t-\mu(\phi)\pi](A)=[(\Ind_A\pi)T_t-\pi(A)\beta]\Big(\frac{d\mu}{d\pi}-a\Big).
\]

Therefore by Holder's inequality and \eqref{eq:assum for prop control on Tt} we have
\[
\lvert (\lambda^{-t}\mu P_t-\mu(\phi)\pi)(A)\rvert\leq \pi(A)\Big\lvert \Big\lvert \frac{(\Ind_A\pi)}{\pi(A)}T_t-\beta\Big\rvert\Big\rvert_{\TV}\Big\lvert\Big\lvert \frac{d\mu}{d\pi}-a\Big\rvert\Big\rvert_{L^{\infty}(\pi)}\leq \epsilon(t)\pi(A)\Big\lvert\Big\lvert \frac{d\mu}{d\pi}-a\Big\rvert\Big\rvert_{L^{\infty}(\pi)}.
\]
Optimising over $a$, we see that the right hand side is minimised by taking
\[
a=\frac{\esssup_{\pi}\frac{d\mu}{d\pi}+\essinf_{\pi}\frac{d\mu}{d\pi}}{2}.
\]

Since $A\in\mathscr{B}(\chi)$ is arbitrary we have \eqref{eq:conv in Linfty as conseq of T conv optimised}.
\qed

\subsection*{Proof of Proposition \ref{prop:Dobrushin for Tt}}

The following proof is essentially the same as the classical coupling-based proof of Dobrushin's criterion, rewritten so as not to make reference to a Markov process as we cannot assume $T_t^{\dag}$ corresponds to a Markov process (in particular, it is not defined for all initial probability measures).

We firstly observe that for any $\mu\in\calP(\pi)$ we have $\nu \leq \frac{1}{c_0}T^{\dag}_{t_0}\mu\in \calM_{\geq 0}(\pi)$ so that $\nu \in\calP(\pi)$. We define
\[
A\mu:=c_0\mu(1) \nu\quad\text{and}\quad D\mu:= T^{\dag}_{t_0}\mu-A\mu\quad\text{for}\quad  \mu\in \calM(\pi).
\]
We note that $A\mu,D\mu\in \calM_{\geq 0}(\pi)$ for $\mu \in \calM_{\geq 0}(\pi)$, with
\[
A\mu=c_0\mu(1)\nu\quad\text{and}\quad  (D\mu)(1)=(1-c_0)\mu(1)\quad\text{for}\quad \mu\in \calM_{\geq 0}(\pi).
\]
Thus we see that $AD^k$ is constant on $\calP(\pi)$, and that
\[
\lvert\lvert D\mu\rvert\rvert_{\TV}\leq (1-c_0)\lvert\lvert \mu\rvert\rvert_{\TV}\quad\text{for all}\quad \mu\in\calM(\pi).
\]
We can write 
\[
(T^{\dag}_{t_0})^n=\sum_{k=0}^{n-1}(T^{\dag}_{t_0})^{n-k-1}AD^k+D^n,
\]
so that 
\[
(T^{\dag}_{t_0})^n\mu_1-(T^{\dag}_{t_0})^n\mu_2=D^n(\mu_1-\mu_2)\quad\text{for}\quad \mu_1,\mu_2\in \calP(\pi).
\]
Therefore we may conclude that
\begin{equation}
\lvert\lvert (T^{\dag}_{t_0})^n\mu_1-(T^{\dag}_{t_0})^n\mu_2\rvert\rvert_{\TV}\leq (1-c_0)^n\lvert\lvert \mu_1-\mu_2\rvert\rvert_{\TV}\quad\text{for}\quad \mu_1,\mu_2\in\calP(\pi).
\end{equation}
Banach's fixed point theorem therefore implies the existence of a unique fixed point, $\beta\in \calP(\pi)$, of $T^{\dagger}_{t_0}$. We fix $t'\geq 0$. Since
\[
T^{\dag}_{t_0}T^{\dag}_{t'}\beta = T^{\dag}_{t'}T^{\dag}_{t_0}\beta =T^{\dag}_{t'}\beta,
\]
$T^{\dag}_{t'}\beta$ is a fixed point of $T^{\dag}_{t_0}$ so that $T^{\dag}_{t'}\beta=\beta$ by uniqueness. Therefore $T^{\dag}_t\beta=\beta$ for all $t\geq 0$.

Thus if $t=kt_0+\delta$ for $\delta\geq 0$ we have
\[
\lvert\lvert T^{\dag}_{t}\mu-\beta\rvert\rvert_{\TV}=\lvert\lvert T^{\dag}_{kt_0}T^{\dag}_{\delta}\mu-T^{\dag}_{kt_0}\beta\rvert\rvert_{\TV}\leq (1-c_0)^k\lvert\lvert T^{\dag}_{\delta}\mu-\beta\rvert\rvert_{\TV}\leq 2(1-c_0)^k\quad\text{for all}\quad \mu\in\calP(\pi).
\]
\qed

\subsection*{Proof of Proposition \ref{prop:time-reversal implies T Dobrushin}}

We fix arbitrary $A,B\in\mathscr{B}(\chi)$, and write $f=\Ind_A\in L^{\infty}(\pi)$. Using Proposition \ref{prop:formula for Tt in terms of Pt} we have
\[
\begin{split}
\int_B(T_{t_0}f)(y)\pi(dy)=(\Ind_B\pi)T_{t_0}(f)=\lambda^{-t}((P_{t_0}\Ind_B)\pi)(f)=\lambda^{-t}\int_{\chi}\int_{\chi}\Ind_B(y)f(x)P_{t_0}(x,dy)\pi(dx)\\
=\lambda^{-t}\int_{\chi}\int_{\chi}\Ind_B(y)f(x)R(y,dx)\pi(dy)=\lambda^{-t}\int_B(Rf)(y)\pi(dy).
\end{split}
\]
Therefore, since $B$ is arbitrary, we have \eqref{eq:Tt in terms of R}. In follows in particular that
\begin{equation}\label{eq:semigroup Tt in terms of R in proof}
T_{t_0}f(y)=\frac{T_{t_0}f(y)}{T_{t_0}1(y)}=\frac{Rf(y)}{R1(y)}\quad\text{$\pi$-almost everywhere.}
\end{equation}

For $A\in \mathscr{B}(\chi)$ we write
\[
\mathscr{E}_A:=\{y:\frac{R(y,A)}{R1(y)}=T_{t_0}\Ind_A(y)\}\quad\text{and}\quad \mathscr{G}_A:=\{y:\frac{R(y,A)}{R1(y)}\geq c_0\nu(A)\}.
\]

Whilst \eqref{eq:semigroup Tt in terms of R in proof} gives that $\mathscr{E}_A$ is a $\pi$-null set for all $A\in\mathscr{B}(\chi)$, we must be careful with respect to the fact that it may be a different $\pi$-null set for different $A\in \mathscr{B}(\chi)$.

We now fix arbitrary $A\in\mathscr{B}(\chi)$ and $\mu\in\calP(\pi)$, and set $f=\Ind_A$. Since \eqref{eq:Tt in terms of R} gives that $\pi(\mathscr{E}_A^c)=0$, $\mu(\mathscr{E}_A^c)=0$. Similarly $\pi(\mathscr{G}_A^c)=0$ implies that $\mu(\mathscr{G}_A^c)=0$. Therefore we have
\[
(T_{t_0}^{\dag}\mu)(A)=\mu T_{t_0} \Ind_A=\int_{\chi}\frac{R(y,A)}{R1(y)}\mu(dy)\geq \int_{\chi}c_0\nu(A)\mu(dy)=c_0\nu(A).
\]
Since $A\in\calB(\chi)$ is arbitrary, we have
\[
(T_{t_0}^{\dag}\mu)(\cdot)\geq c_0\nu(\cdot).
\]
\qed

\subsection*{Conclusion of the proof of Proposition \ref{prop:R constant mass} and Theorem \ref{theo:Linfty convergence}}

We begin by establishing Proposition \ref{prop:R constant mass}. We integrate \eqref{eq:exis of reverse R} to obtain
\[
\lambda^{t_0}\pi(dy)=R1(y)\pi(dy),
\]
whence we conclude \eqref{eq:mass of R constant}. \qed

We now turn to proving Theorem \ref{theo:Linfty convergence}. Using proposition \ref{prop:Dobrushin for Tt} and \ref{prop:time-reversal implies T Dobrushin}, we have the existence of a stationary distribution for $(T_t^{\dagger})_{t\geq 0}$, $\beta\in\calP(\pi)$, which for all $t>0$ is the unique stationary distribution for $T^{\dag}_t$. These also provide for \eqref{eq:assum for prop control on Tt} with $\epsilon(t)=2(1-c_0)^{\lfloor \frac{t}{t_0}\rfloor}$ and $\beta\in \calP(\pi)$ the aforementioned stationary distribution for $(T_t^{\dagger})_{t\geq 0}$.

Corollary \ref{cor:stat dist for Ttdag right efn for P correspondence} then implies that $\phi:=\frac{d\beta}{d\pi}\in L^1(\pi)$ is a non-negative $L^1(\pi)$-right eigenfunction for $P_t$ with eigenvalue $\lambda^t$, for all $t\geq 0$, such that $\lvert\lvert \phi\rvert\rvert_{\pi}=1$. 

We now fix $t>0$. Corollary \ref{cor:stat dist for Ttdag right efn for P correspondence} implies that $\phi$ is the unique unit non-negative $L^1(\pi)$-right eigenfunction for $P_t$ as another such $L^1(\pi)$-right eigenfunction, $\phi'\in L^1(\pi)$, would give rise to a different stationary distribution for $T_t^{\dagger}$, $\beta'=\phi'\pi$, contradicting the uniqueness of stationary distributions for $T_t^{\dagger}$.
We prove the following lemma in the appendix.
\begin{lem}\label{lem:eigenfunctions of eigenvalue lambda t difference of non-negative eigenfunctions}
We fix $0\leq t<\infty$. Any $L^1(\pi)$-right eigenfunction $\phi\in L^1(\pi)$ of $P_t$ of eigenvalue $\lambda^t$ must be the difference of two non-negative $L^1(\pi)$-right eigenfunctions $\phi_1,\phi_2\in L^1_{\geq 0}(\pi)$ of $P_t$ of eigenvalue $\lambda^t$, such that $\phi_1\wedge \phi_2=0$ $\pi$-almost everywhere.
\end{lem}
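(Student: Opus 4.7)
The natural decomposition is $\phi = \phi_+ - \phi_-$, where $\phi_+ := \phi \vee 0$ and $\phi_- := (-\phi) \vee 0$ are the positive and negative parts. These automatically belong to $L^1_{\geq 0}(\pi)$ (since $|\phi_\pm| \leq |\phi|$), and they satisfy $\phi_+ \wedge \phi_- = 0$ pointwise, a fortiori $\pi$-almost everywhere. Thus the only content of the lemma is that $P_t \phi_+ = \lambda^t \phi_+$ and $P_t \phi_- = \lambda^t \phi_-$ in $L^1(\pi)$.

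The plan is to introduce the ``defect'' $g := P_t \phi_+ - \lambda^t \phi_+ \in L^1(\pi)$, and show it vanishes $\pi$-a.e. From the hypothesis $P_t \phi = \lambda^t \phi$ and linearity of $P_t$ on $L^1(\pi)$ (Proposition \ref{prop:Pt well-defined on L1}), one also has $g = P_t \phi_- - \lambda^t \phi_-$. Thus $g$ admits two expressions, one in terms of $\phi_+$ and one in terms of $\phi_-$.

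The key step is to show $g \geq 0$ $\pi$-a.e.\ by a case analysis on the supports. On the set $\{\phi_+ > 0\}$ one has $\phi_- = 0$, so using the second expression $g = P_t \phi_- \geq 0$ there, since $P_t$ preserves non-negativity (Proposition \ref{prop:Pt well-defined on L1}). Symmetrically, on $\{\phi_- > 0\}$ one has $\phi_+ = 0$ so the first expression gives $g = P_t \phi_+ \geq 0$. On $\{\phi = 0\}$ both $\phi_+$ and $\phi_-$ vanish, so either expression gives $g = P_t \phi_+ \geq 0$. Combining, $g \geq 0$ $\pi$-a.e.

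Finally, I would integrate against $\pi$: because $\pi$ is a QSD with eigenvalue $\lambda$, one has $\pi P_t = \lambda^t \pi$ (as measures), which, together with $(\mu P_t)(f) = \mu(P_t f)$ for $\mu \in \calP_\infty(\pi)$ and $f \in L^1(\pi)$ (Proposition \ref{prop:Pt well-defined on L1}, applied to $\mu = \pi$), yields $\pi(P_t \phi_+) = \lambda^t \pi(\phi_+)$, hence $\pi(g) = 0$. Combined with $g \geq 0$ $\pi$-a.e., this forces $g = 0$ $\pi$-a.e., so $P_t \phi_+ = \lambda^t \phi_+$ in $L^1(\pi)$, and symmetrically for $\phi_-$. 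I do not anticipate a serious obstacle: the only delicate point is making sure the manipulations respect the $\pi$-a.e.\ identifications of $L^1(\pi)$, in particular when passing between the two expressions for $g$, but this is handled cleanly since $\phi_+, \phi_-, P_t\phi_+, P_t \phi_-$ all lie in $L^1(\pi)$ by Proposition \ref{prop:Pt well-defined on L1}.
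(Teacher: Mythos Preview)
Your proof is correct and is essentially the same argument as the paper's: both take the positive/negative parts $\phi_\pm$, exploit positivity of $P_t$ together with the identity $\pi(P_t f)=\lambda^t\pi(f)$, and conclude that $\phi_\pm$ are themselves eigenfunctions. The paper packages this as an $L^1$-norm computation showing $(A\phi_+)\wedge(A\phi_-)=0$ (with $A=\lambda^{-t}P_t$), whereas you phrase it via the defect $g=P_t\phi_+-\lambda^t\phi_+$ and a pointwise case analysis; these are two presentations of the same idea.
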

Lemma \ref{lem:eigenfunctions of eigenvalue lambda t difference of non-negative eigenfunctions} then implies that $\phi$ is the only $L^1(\pi)$-right eigenfunction of $P_t$ of eigenvalue $\lambda^t$ up to rescaling, as any other $L^1(\pi)$-right eigenfunction of eigenvalue $\lambda^t$ must be the difference of two non-negative $L^1(\pi)$-right eigenfunctions, hence the difference of two multiples of $\phi$.

Since we have \eqref{eq:assum for prop control on Tt} with $\epsilon(t)=2(1-c_0)^{\lfloor \frac{t}{t_0}\rfloor}$ and $\beta\in \calP(\pi)$ the aforementioned stationary distribution for $(T_t^{\dagger})_{t\geq 0}$, Proposition \ref{prop:Tt conv implies Pt conv} then implies \eqref{eq:Linfty malthusian theo statement}. We then obtain \eqref{eq:convergence of prob of killing} by integration.

The following formula, which holds for any $\mu\in\calP_{\infty}(\pi)$ and $0\leq t<\infty$, can be derived by simple algebraic manipulation,

\begin{equation}\label{eq:conditional distribution from killing prob and malthusian formula Linfty}
\Big\lvert\Big\lvert \frac{d\Law_{\mu}(X_t\lvert \tau_{\partial}>t)}{d\pi}-1\Big\rvert\Big\rvert_{L^{\infty}(\pi)}\leq \frac{\lvert\lvert \lambda^{-t}\frac{d\Pm_{\mu}(X_t\in \cdot)}{d\pi(\cdot)}-\mu(\phi)\rvert\rvert_{L^{\infty}(\pi)}+\lvert \mu(\phi)-\lambda^{-t}\Pm_{\mu}(\tau_{\partial}>t)\rvert}{\mu(\phi)-\lvert \mu(\phi)-\lambda^{-t}\Pm_{\mu}(\tau_{\partial}>t)\rvert}.
\end{equation}
We immediately obtain \eqref{eq:Linfty convergence of dist cond of survival} by using  \eqref{eq:conditional distribution from killing prob and malthusian formula Linfty} to combine \eqref{eq:Linfty malthusian theo statement} and \eqref{eq:convergence of prob of killing}.

We now also assume Assumption \ref{assum:assumption for positivity of L1 right efn}. We have that $\beta:=\phi \pi$ satisfies 
\[
\beta(dy) R(y,dx) =P_{t_0}(x,dy)\phi(y)\pi(dx),
\]
which by integrating implies that
\begin{equation}\label{eq:beta stationary for R}
\beta R =\lambda^{t_0}\beta.
\end{equation}
Using also Proposition \ref{prop:R constant mass}, it follows that
\[
\beta =\lambda^{-t_0}\beta R\geq c_0\nu,
\]
so that $\phi=\frac{d\beta}{d\pi}\geq c_0\frac{d\nu}{d\pi}$. We recall that $t(\mu)<\infty$ is the time, dependent upon $\mu\in\calP_{\infty}(\pi)$, assumed to exist in Assumption \ref{assum:assumption for positivity of L1 right efn}. For all $\mu\in \calP_{\infty}(\pi)$ we have that
\[
\mu(\phi)=\lambda^{-t(\mu)}\mu P_{t(\mu)}\phi\geq \frac{\lambda^{-t(\mu)}}{c_0}\mu P_{t(\mu)}\frac{d\nu}{d\pi}>0,
\]
whence we conclude that $\mu(\phi)>0$ for all $\mu\in\calP_{\infty}(\pi)$, so that $\phi\in L^1_{>0}(\pi)$.

We now assume Assumption \ref{assum:Dobrushin reverse time}, Assumption \ref{assum:technical assumption for Assum (A)} and \cite[Assumption A]{Champagnat2014}, but not Assumption \ref{assum:assumption for positivity of L1 right efn}. We let $h$ be the everywhere strictly positive pointwise right eigenfunction provided for by \cite[Proposition 2.3]{Champagnat2014}, which we observe must be a version of $\phi$. We use \cite[Theorem 2.1]{Champagnat2017} to see that there exists $C<\infty$, $\gamma'>0$ and $T<\infty$ such that
\begin{equation}\label{eq:killing probability if Assumption (A) satisfied}
\lvert \lambda^{-t}\Pm_{\mu}(\tau_{\partial}>t)-\mu(h)\rvert\leq Ce^{-\gamma't}\mu(h)\quad\text{for all}\quad t\geq T\quad\text{and all}\quad \mu\in\calP(\chi),
\end{equation}

where $\gamma'>0$ is the ``$\gamma>0$'' provided for by \cite[Theorem 2.1]{Champagnat2014}. We therefore obtain \eqref{eq:Linfty conv when also have Assum (A)} by using \eqref{eq:conditional distribution from killing prob and malthusian formula Linfty} to combine \eqref{eq:Linfty malthusian theo statement} and \eqref{eq:killing probability if Assumption (A) satisfied}, concluding the proof of Theorem \ref{theo:Linfty convergence}.
\qed

\subsection*{Proof of Theorem \ref{theo:dominated by pi theorem general results}}

We impose the assumptions of Theorem \ref{theo:dominated by pi theorem general results}. It follows from \eqref{eq:reverse kernel majorised for dominated by pi theorem} and Proposition \ref{prop:R constant mass} that $R(y,\cdot)\leq C_2'\lambda^{t_2}\pi(\cdot)$ for $\pi$-almost every $y\in\chi$. Since \eqref{eq:exis of reverse R} is satisfied, we have that
\[
\pi(dx)P_{t_2}(x,dy)\leq \pi(dy)C_2'\lambda^{t_2}\pi(dx).
\]

We now take arbitrary $f\in C_b(\chi;\Rm_{\geq 0})$. We have that $P_{t_2}f(x)\pi(dx)\leq \pi(f)C_2\pi(dx)$, where $C_2$ is given by \eqref{eq:formula for C2 RaD cond}, $C_2:=\lambda^{t_2}C_2'$. We therefore deduce that
\begin{equation}\label{eq:proof of dominated by pi theorem general inequality for f test fn}
P_{t_2}f(x)\leq \pi(f)C_2\quad\text{for $\pi$-almost every $x\in \chi$.}
\end{equation}
Since $\text{spt}(\pi)=\chi$, \eqref{eq:proof of dominated by pi theorem general inequality for f test fn} holds on a dense set of $x\in\chi$. It follows from the lower semicontinuity of $P_{t_2}$ that $P_{t_2}f$ is lower semicontinuous, whence \eqref{eq:proof of dominated by pi theorem general inequality for f test fn} must hold for every $x\in\chi$. Since $\chi$ is a metric space and $f\in C_b(\chi;\Rm_{\geq 0})$ is arbitrary, we have that $P_{t_1}(x,U)\leq C_1\lambda^{t_1}\pi(U)$ for all $x\in\chi$ and $U$ open in $\chi$. It follows from \cite[Theorem 1.2, p. 27]{Parthasarathy1968} that 
\[
P_{t_2}(x,\cdot)\leq C_2\pi(\cdot)\quad\text{for all}\quad x\in \chi.
\]
\qed

\subsection*{Proof of Theorem \ref{theo:DRD lower bounds density}}

We now impose the assumptions of Theorem \ref{theo:DRD lower bounds density}. Using Proposition \ref{prop:R constant mass}, we have that
\[
\pi(dx)P_{t_0}(x,dy)=\pi(dy)R(y,dx)\geq \pi(dy)\lambda^{t_0}c_0\nu(dx)=\pi_{\lvert_A}(dx)\frac{\lambda^{t_0}c_0}{\pi(A)}\pi(dy)
\]
We now fix $f\in \calB_b(\chi;\Rm_{\geq 0})$. It follows that
\[
P_{t_0}f(x)\geq \frac{\lambda^{t_0}c_0}{\pi(A)}\pi(f)\quad\text{for $\pi$-almost every $x\in A$}.
\]
We have from \eqref{eq:Dobrushin for DRD condition} that $P_{t_1}(x,\cdot)\geq c_1P_{t_1}1(x)\nu(\cdot)$ for all $x\in \chi$, implying that $\nu\ll \pi$. Since $\nu(A^c)=0$, it follows that $\nu\ll \pi_{\lvert_A}$, implying that $\nu P_{t_0}f\geq \frac{\lambda^{t_0}c_0}{\pi(A)}\pi(f)$. It then follows that
\[
P_{t_0+t_1}f(x)\geq c_1P_{t_1}1(x)\nu P_{t_0}f\geq P_{t_0+t_1}1(x)\frac{\lambda^{t_0}c_0c_1}{\pi(A)}\pi(f)\quad\text{for all}\quad x\in\chi.
\]
Since $f\in \calB_b(\chi;\Rm_{\geq 0})$ is arbitrary, we obtain \eqref{eq:minorised by pi equation from DRD}.
\qed

\subsection*{Proof of Theorem \ref{theo:consequences of dominated assumption for whole space general results}}

We now impose the assumptions of Theorem \ref{theo:consequences of dominated assumption for whole space general results}.

We define $\tilde{h}$ to be some fixed version of $\phi$, the non-negative $L^1(\pi)$-right eigenfunction shown to exist in Theorem \ref{theo:Linfty convergence}. For all $t\geq 0$, we have that
\[
\lambda^{-t}P_{t}\tilde{h}(x)=\tilde{h}(x)\quad\text{for $\pi$-almost every $x\in\chi$}.
\]
Since, by Theorem \ref{theo:dominated by pi theorem general results}, $P_{t_2}(x,\cdot)\leq C_2\lambda^{t_2}\pi(\cdot)$ for all $x\in \chi$, it follows that for all $t\geq 0$ we have
\[
\lambda^{-t}P_{t_2+t}\tilde{h}(x)=P_{t_2}\tilde{h}(x)\quad\text{for every $x\in\chi$}.
\]
We now define $h\in\calB(\chi;\Rm_{\geq 0})$ by $h(x):=\lambda^{-t_2}P_{t_2}\tilde{h}(x)$ for $x\in \chi$, which we observe must be a non-negative pointwise right eigenfunction for $(P_t)_{t\geq 0}$. Since $h$ must be a version of $\phi$, $\pi(h)=1$. Moreover we see that 
\[
h(x)\leq C_2\lambda^{-t_2}\pi(\tilde{h})=C_2\lambda^{-t_2}<\infty, 
\]
so that $h$ must be bounded.

We have from Theorem \ref{theo:dominated by pi theorem general results} that $\Pm_{\mu}(X_t\in \cdot)\leq C_2 \pi(\cdot)$ for all $\mu\in\calP(\chi)$. We note that \eqref{eq:Linfty malthusian theo statement} actually holds for all $\mu\in\calM_{\geq 0}(\pi)$ such that $\mu\ll_{\infty}\pi$, which may be seen by rescaling both sides. We may therefore obtain \eqref{eq:Linfty malthusian theo statement arbitrary ic} by applying \eqref{eq:Linfty malthusian theo statement} to the initial condition $\Pm_{\mu}(X_{t_1}\in \cdot)$. We then obtain \eqref{eq:convergence of prob of killing arbitrary ics} by integration, whence we obtain \eqref{eq:Linfty convergence of dist cond of survival arbitrary ics} by applying \eqref{eq:conditional distribution from killing prob and malthusian formula Linfty} (with $\phi$ replaced by $h$ in \eqref{eq:conditional distribution from killing prob and malthusian formula Linfty}).

We now assume either that Assumption \ref{assum:technical assumption for Assum (A)} and \cite[Assumption (A)]{Champagnat2014} are satisfied, or that Assumption \ref{assum:combined Dobrushin reverse Dobrushin} is satisfied. We begin by considering the former case. It follows from \cite[Theorem 2.1]{Champagnat2014} that
\[
\Law_{\mu}(X_t\lvert\tau_{\partial}>t)(h)\geq \pi(h)-\lvert\lvert \Law_{\mu}(X_t\lvert\tau_{\partial}>t)-\pi\rvert\rvert_{\TV}\lvert\lvert h\rvert\rvert_{\infty}\ra 1\text{ as $t\ra\infty$ uniformly in $\mu\in \calP(\chi)$}.
\]
In the latter case, Theorem \ref{theo:DRD lower bounds density} implies that
\begin{equation}\label{eq:conditional expectation of right efn pf of quantitative convergence from DRD}
\Law_{\mu}(X_{t_3}\lvert\tau_{\partial}>t_3)(h)\geq c_3\pi(h)=c_3\quad \text{for all}\quad \mu\in\calP(\chi),
\end{equation}
$c_3>0$ being the constant and $t_3>0$ the time given by \eqref{eq:t3 and c3 from DRD}.

In either case, we may choose $T'<\infty$ and $\epsilon'>0$ such that $\Law_{\mu}(X_{T'}\lvert\tau_{\partial}>{T'})(h)\geq \epsilon'$ for all $\mu\in\calP(\chi)$. We then obtain \eqref{eq:uniform exponential L infty convergence for arbitrary initial cond} by applying \eqref{eq:Linfty convergence of dist cond of survival arbitrary ics} from time $T'<\infty$ onwards.

In the latter case - that Assumption \ref{assum:combined Dobrushin reverse Dobrushin} is satisfied - using \eqref{eq:conditional expectation of right efn pf of quantitative convergence from DRD} we apply \eqref{eq:Linfty convergence of dist cond of survival arbitrary ics} from time $t_3$ onwards to obtain \eqref{eq:quantitative uniform exponential convergence for arbitrary initial cond using DRD}.
\qed

\subsection*{Proof of Corollary \ref{cor:L infty convergence of Q-process}}

We define $\pi$ to be the unique QSD provided for by \cite[Theorem 2.1]{Champagnat2014}, associated to which is the time-$1$ eigenvalue $\lambda:=\lambda(\pi)=\Pm_{\pi}(\tau_{\partial}>1)$. Moreover, \cite[Proposition 2.3]{Champagnat2014} then provides for a unique (up to renormalisation) everywhere strictly positive, bounded, pointwise right eigenfunction for $(P_t)_{t\geq 0}$, which we denote as $h$. We normalise $h$ so that $\pi(h)=1$. The stationary distribution of the $Q$-process, which we denote as $\beta$, is then given by $\beta(dx) =h(x)\pi(dx)$, by \cite[Theorem 3.1 (iii)]{Champagnat2014}.

We immediately obtain \eqref{eq:uniform exponential L infty for Q process bded initial cond} by combining Observation \ref{observation: Reverse dobrushin implies reverse dobrushin for Q-process} with Theorem \ref{theo:Linfty convergence}.

We now turn to establishing \eqref{eq:uniform exponential L infty for Q process convergence for arbitrary initial cond}. In the following, $x\in \chi$ is arbitrary. Using \eqref{eq:Q process Markov kernel}, we calculate that the transition kernel $Q_t$ of the $Q$-process satisfies
\begin{equation}\label{eq:equation for Q process kernel dominated by beta calculation}
\begin{split}
Q_t(x,dy)=\frac{\lambda^{-t}h(y)P_t(x,dy)}{h(x)}=\frac{h(y)\Pm_x(X_t\in dy)}{\expE_x[h(X_t)]}\\
=\frac{h(y)\Pm_x(X_t\in dy\lvert \tau_{\partial}>s)}{\lambda^{t-s}\expE_x[h(X_s)\lvert \tau_{\partial}>s]}\quad \text{for all}\quad 0\leq s\leq t<\infty.
\end{split}
\end{equation}

We let $\bar{t}_0>0$, $\bar c_0>0$ and $\bar \nu\in\calP(\chi)$ be the time, constant and probability measure respectively for which $(X_t)_{0\leq t<\infty}$ satisfies \cite[Assumption (A1)]{Champagnat2014}. We define $t_2>0$ to be the time for which $(X_t)_{0\leq t<\infty}$ and $\pi$ satisfy Assumption \ref{assum:assumption for dominated by pi after certain time}. Then we have from Theorem \ref{theo:dominated by pi theorem general results} that
\[
\Pm_x(X_{\bar{t}_0+t_2}\in \cdot\lvert \tau_{\partial}>\bar{t}_0)\leq C_2\pi(\cdot),
\]
where $C_2<\infty$ is the constant given by \eqref{eq:formula for C2 RaD cond}. We also have that
\[
\expE_x[h(X_{\bar{t}_0})\lvert \tau_{\partial}>\bar{t}_0]\geq \bar c_0\bar\nu(h)>0.
\]
Combining these with \eqref{eq:equation for Q process kernel dominated by beta calculation}, we obtain that
\[
Q_{\bar{t}_0+t_2}(x,dy)\leq \frac{h(y)C_2\pi(dy)}{\bar c_0\lambda^{t_2}\bar \nu(h)}=\frac{C_2}{\bar c_0\lambda^{t_2}\bar\nu(h)}\beta(dy)\quad\text{for all}\quad x\in\chi.
\]

Combining this with \eqref{eq:uniform exponential L infty for Q process bded initial cond}, we immediately obtain \eqref{eq:uniform exponential L infty for Q process convergence for arbitrary initial cond}.
\qed

\section{Proof of the results of Section \ref{section:main results}}\label{section:Euclidean state space proof}

We shall firstly prove the following theorem.

\begin{theo}\label{theo:theo for reverse Dobrushin in Euclidean space}
We suppose that the killed Markov process $(X_t)_{0\leq t<\tau_{\partial}}$ has a QSD $\pi$ which has an essentially bounded density with respect to $\Lambda$, $\pi\in\calP_{\infty}(\Lambda)$. We further assume that $(X_t)_{0\leq t<\tau_{\partial}}$ satisfies Assumption \ref{assum:adjoint Dobrushin main results section}. We let $c_0'>0$ be the constant, $t_0>0$ be the time, $\psi\in \calB_{b,\gg}(\chi)$ be the function, and $\nu'\in \calP(\chi)$ be the probability measure for which $(X_t)_{0\leq t<\tau_{\partial}}$ satisfies Assumption \ref{assum:adjoint Dobrushin main results section} (the latter denoted ``$\nu$'' in the statement of that assumption). Then $\nu'\in \calP(\Lambda)$ so that $c_0>0$ given by \eqref{eq:formula for c0 main results section} is unambiguous, as is the probability measure
\begin{equation}\label{eq:prob measure coming from adjoint dobrushin}
\nu:=\frac{\frac{d\pi}{d\Lambda}\nu'}{\nu'(\frac{d\pi}{d\Lambda})}\in \calP(\Lambda).
\end{equation}
We recall from \eqref{eq:formula for c0 main results section} that $c_0$ is given by 
\[
c_0:=\frac{c_0'\nu'(\frac{d\pi}{d\Lambda})}{\lvert\lvert \psi\rvert\rvert_{\infty}\lvert\lvert \frac{1}{\psi}\rvert\rvert_{\infty}\lvert\lvert \frac{d\pi}{d\Lambda}\rvert\rvert_{L^{\infty}(\Lambda)}}\in (0,1].
\]

Then $(X_t)_{0\leq t<\tau_{\partial}}$ and $\pi$ satisfy Assumption \ref{assum:Dobrushin reverse time}, with the above constant $c_0>0$ given by \eqref{eq:formula for c0 main results section}, probability measure $\nu$ given by \eqref{eq:prob measure coming from adjoint dobrushin}, and the same time $t_0>0$.
\end{theo}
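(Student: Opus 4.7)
The proof proceeds by constructing the reverse kernel $R$ explicitly in terms of the adjoint kernel $\tilde{P}$ and the QSD density $\rho := d\pi/d\Lambda$, then checking Assumption \ref{assum:Dobrushin reverse time} directly. Write $\nu'$ for the probability measure ``$\nu$'' appearing in Assumption \ref{assum:adjoint Dobrushin main results section}. The first preliminary step is to verify $\nu' \in \calP(\Lambda)$ and $\nu'(\rho) > 0$, so that the candidate $\nu$ in \eqref{eq:prob measure coming from adjoint dobrushin} is a well-defined element of $\calP(\Lambda)$. For absolute continuity: given $A$ with $\Lambda(A) = 0$, integrating \eqref{eq:psi adjoint for verifying reverse Dobrushin main results section} against $\chi \times A$ gives $\int \psi(y) \tilde{P}(y,A)\Lambda(dy) = 0$, so $\tilde{P}(y,A) = 0$ for $\Lambda$-a.e.~$y$, and \eqref{eq:Dobrushin for P tilde kernel crit for reverse dobrushin main results section} together with $\tilde{P}1 > 0$ $\Lambda$-a.e. forces $\nu'(A) = 0$. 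Positivity of $\nu'(\rho)$ follows from the assumption that $\nu'$ is not mutually singular with $\pi$: since $\nu' \ll \Lambda$, this amounts to $\rho$ and $d\nu'/d\Lambda$ having joint support of positive $\Lambda$-measure.

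Next, define the candidate reverse kernel
\[
R(y,dx) := \frac{a\psi(y)\rho(x)}{\psi(x)\rho(y)}\tilde{P}(y,dx)
\]
on $\{y : \rho(y) > 0\}$ and put $R(y,\cdot) := 0$ elsewhere. Multiplying both sides of \eqref{eq:psi adjoint for verifying reverse Dobrushin main results section} by $\rho(x)/\psi(x)$ yields the reversal identity $\pi(dx)P_{t_0}(x,dy) = \pi(dy) R(y,dx)$, establishing \eqref{eq:exis of reverse R}. Proposition \ref{prop:R constant mass} then gives $R 1(y) = \lambda^{t_0}$ for $\pi$-a.e.~$y$, so after redefining $R$ on a $\pi$-null set if necessary, $R$ is submarkovian.

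The remaining task is the Dobrushin minorisation \eqref{eq:reverse time R minorised by nu}. Since $\pi \ll \Lambda$, \eqref{eq:Dobrushin for P tilde kernel crit for reverse dobrushin main results section} applies for $\pi$-a.e.~$y$. Using $\psi(x) \leq \lvert\lvert \psi\rvert\rvert_\infty$ and the definition of $\nu$,
\[
R(y,A) \geq \frac{ac_0'\psi(y)\tilde{P}1(y)}{\lvert\lvert\psi\rvert\rvert_\infty \rho(y)}\int_A \rho(x)\,\nu'(dx) = \frac{ac_0'\psi(y)\tilde{P}1(y)\nu'(\rho)}{\lvert\lvert\psi\rvert\rvert_\infty\rho(y)}\nu(A).
\]
To lower-bound the coefficient $a\psi(y)\tilde{P}1(y)/\rho(y)$, exploit $R1(y) = \lambda^{t_0}$ with the pointwise bound $\rho(x)/\psi(x) \leq \lvert\lvert \rho\rvert\rvert_{L^\infty(\Lambda)}\lvert\lvert 1/\psi\rvert\rvert_\infty$:
\[
\lambda^{t_0} = \frac{a\psi(y)}{\rho(y)}\int \frac{\rho(x)}{\psi(x)}\tilde{P}(y,dx) \leq \frac{a\psi(y)\tilde{P}1(y)}{\rho(y)}\lvert\lvert\rho\rvert\rvert_{L^\infty(\Lambda)}\lvert\lvert 1/\psi\rvert\rvert_\infty.
\]
Combining the two displays produces $R(y,A) \geq c_0 \lambda^{t_0}\nu(A) = c_0 R1(y)\nu(A)$ with $c_0$ as in \eqref{eq:formula for c0 main results section}; taking $A = \chi$ shows $c_0 \leq 1$.

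The computations themselves are elementary algebra on kernels; the main delicacy is the bookkeeping across the three reference measures $\Lambda$, $\pi$, and $\nu'$, in particular passing from $\Lambda$-a.e.~statements (the Dobrushin bound on $\tilde{P}$, the mass identity for $\tilde{P}1$) to $\pi$-a.e.~statements (where \eqref{eq:reverse time R minorised by nu} is required), which is handled by $\pi \ll \Lambda$, and ensuring that $\nu'(\rho)$ is unambiguously defined and strictly positive so that $\nu$ genuinely lives in $\calP(\Lambda)$.
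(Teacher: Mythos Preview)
Your proof is correct and follows essentially the same route as the paper's: define $R(y,dx)=\frac{a\psi(y)\rho(x)}{\psi(x)\rho(y)}\tilde P(y,dx)$, verify the reversal identity, and derive the minorisation from \eqref{eq:Dobrushin for P tilde kernel crit for reverse dobrushin main results section}. The only cosmetic difference is that the paper bounds the ratio $R(y,dx)/R1(y)$ directly (cancelling the common factor $a\psi(y)/\rho(y)$ in numerator and denominator), whereas you lower-bound $R(y,A)$ and then separately bound the prefactor $a\psi(y)\tilde P1(y)/\rho(y)$ via the identity $R1(y)=\lambda^{t_0}$ from Proposition~\ref{prop:R constant mass}; both routes yield the same constant $c_0$. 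One small point worth tightening: for the pointwise bound $\rho(x)/\psi(x)\le \lVert\rho\rVert_{L^\infty(\Lambda)}\lVert 1/\psi\rVert_\infty$ to hold under integration against $\tilde P(y,\cdot)$ (which need not be absolutely continuous with respect to $\Lambda$), you should fix at the outset a version of $\rho$ that is everywhere bounded by $\lVert d\pi/d\Lambda\rVert_{L^\infty(\Lambda)}$, as the paper does explicitly.
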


Theorem \ref{theo:Linfty convergence main results section} then immediately follows from Theorem \ref{theo:Linfty convergence} and Theorem \ref{theo:theo for reverse Dobrushin in Euclidean space}. 

We shall then prove proposition \ref{prop:boundedness from reverse Dobrushin main results section}, after which we shall prove the following theorem.
\begin{theo}\label{theo:theorem for bounded by pi in Euclidean space}
We suppose that $(X_t)_{0\leq t<\tau_{\partial}}$ has a QSD $\pi$ which is absolutely continuous with respect to $\Lambda$, $\pi\ll\Lambda$, and which has full support, $\text{spt}(\pi)=\chi$. We assume that $(X_t)_{0\leq t<\tau_{\partial}}$ satisfies assumptions \ref{assum:adjoint Dobrushin main results section} and \ref{assum:adjoint anti-Dobrushin main results section}. 

We let $t_0>0$ and $t_1>0$ respectively be the times, and $\psi_0$ and $\psi_1$ respectively be the functions, for which Assumption \ref{assum:adjoint Dobrushin main results section} and Assumption \ref{assum:adjoint anti-Dobrushin main results section} are satisfied. We define $\lambda:=\lambda(\pi)=\Pm_{\pi}(\tau_{\partial}>1)$ and $t_2:=t_0+t_1$. The constants $c_0'>0$, $a_1>0$ and $C_1<\infty$ are respectively the constants for which we have \eqref{eq:Dobrushin for P tilde kernel crit for reverse dobrushin main results section}, \eqref{eq:psi adjoint for verifying bounded pi euclidean condition main results section} and \eqref{eq:bounded Lebesgue main results section}. Finally $\nu'$ is the probability measure for which we have \eqref{eq:Dobrushin for P tilde kernel crit for reverse dobrushin main results section}. It follows from Assumption \ref{assum:adjoint Dobrushin main results section} that $\nu'(\frac{d\pi}{d\Lambda})$ is unambiguous and strictly positive.

We finally assume that $P_{t_2}$ is lower semicontinuous in the sense of Definition \ref{defin:lower semicts kernel}. Then $(X_t)_{0\leq t<\tau_{\partial}}$ and $\pi$ satisfy Assumption \ref{assum:assumption for dominated by pi after certain time}, for the time $t_2:=t_0+t_1>0$ and $C_2'<\infty$ given by
\begin{equation}\label{eq:formula for C2' AD and AaD implies RaD}
C_2':=\frac{\lvert\lvert \frac{\psi_0}{\psi_1}\rvert\rvert_{\infty}\lvert\lvert \frac{\psi_1}{\psi_0}\rvert\rvert_{\infty}\lvert\lvert\psi_1\rvert\rvert_{\infty}\lvert\lvert \frac{1}{\psi_1}\rvert\rvert_{\infty}a_1C_1}{c_0'\lambda^{t_1}\nu'(\frac{d\pi}{d\Lambda})}.
\end{equation}
\end{theo}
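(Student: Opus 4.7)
The plan is to establish an upper bound on the joint measure $\pi(dx)\, P_{t_2}(x, dy)$ on $\chi \times \chi$ of the form $C_2' \lambda^{t_2} \pi(dx)\, \pi(dy)$, from which Assumption \ref{assum:assumption for dominated by pi after certain time} follows. Granted this, the Radon-Nikodym density $h(x, y) \leq C_2' \lambda^{t_2}$ of $\pi(dx) P_{t_2}(x, dy)$ with respect to $\pi \otimes \pi$ is jointly measurable, and $R^{(2)}(y, dx) := h(x, y) \pi(dx)$ is a submarkovian kernel satisfying \eqref{eq:exis of reverse R}. Proposition \ref{prop:R constant mass} gives $R^{(2)}1(y) = \lambda^{t_2}$ for $\pi$-a.e.\ $y$, so $R^{(2)}(y, \cdot)/R^{(2)}1(y) \leq C_2' \pi(\cdot)$, and the other requirements of Assumption \ref{assum:assumption for dominated by pi after certain time} (full support of $\pi$ and lower semicontinuity of $P_{t_2}$) are given by hypothesis.

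Writing $P_{t_2} = P_{t_1} P_{t_0}$ and successively applying \eqref{eq:psi adjoint for verifying bounded pi euclidean condition main results section} and \eqref{eq:psi adjoint for verifying reverse Dobrushin main results section} yields, with $\rho := d\pi/d\Lambda$,
\[
\pi(dx)\, P_{t_2}(x, dy) = a\, a_1\, \frac{\rho(x)\, \psi_0(y)}{\psi_1(x)}\, \Lambda(dy) \int_{z \in \chi} \frac{\psi_1(z)}{\psi_0(z)}\, \tilde{P}^{(1)}(z, dx)\, \tilde{P}(y, dz).
\]
The AaD bound $\tilde{P}^{(1)}(z, dx) \leq C_1 \Lambda(dx)$ holds for $\Lambda$-a.e.\ $z$, and hence for $\tilde{P}(y, dz)$-a.e.\ $z$ for $\Lambda$-a.e.\ $y$, since \eqref{eq:psi adjoint for verifying reverse Dobrushin main results section} applied to $\Lambda$-null sets $N$ shows $\psi_0(y) \tilde{P}(y, N) = 0$ for $\Lambda$-a.e.\ $y$, i.e.\ $\tilde{P}(y, \cdot) \ll \Lambda$. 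Substituting gives
\[
\pi(dx)\, P_{t_2}(x, dy) \leq a\, a_1\, C_1\, \| \psi_1/\psi_0 \|_\infty\, \| 1/\psi_1 \|_\infty\, \psi_0(y)\, \tilde{P}1(y)\, \pi(dx)\, \Lambda(dy).
\]

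The key remaining step is to convert $\psi_0(y)\, \tilde{P}1(y)\, \Lambda(dy)$ into a multiple of $\pi(dy)$, which cannot be done from $\pi \ll \Lambda$ alone since $\Lambda$ need not be absolutely continuous with respect to $\pi$. The required estimate is obtained from the quasi-stationary relation $\pi P_{t_0} = \lambda^{t_0} \pi$ rewritten via \eqref{eq:psi adjoint for verifying reverse Dobrushin main results section}, which gives the pointwise identity $a\, \psi_0(y)\, \tilde{P}(\rho/\psi_0)(y) = \lambda^{t_0} \rho(y)$ for $\Lambda$-a.e.\ $y$. Applying the Dobrushin lower bound \eqref{eq:Dobrushin for P tilde kernel crit for reverse dobrushin main results section} to the left-hand side yields
\[
\psi_0(y)\, \tilde{P}1(y) \leq \frac{\lambda^{t_0} \rho(y)}{a\, c_0'\, \nu'(\rho/\psi_0)} \quad\text{for $\Lambda$-a.e.\ $y$},
\]
where $\nu'(\rho/\psi_0) > 0$ because $\nu'$ is not mutually singular with respect to $\pi$. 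Substituting into the preceding inequality, together with the elementary estimates $\nu'(\rho/\psi_0) \geq \nu'(\rho)/\| \psi_0 \|_\infty$ and $\| \psi_0 \|_\infty \leq \| \psi_0/\psi_1 \|_\infty \| \psi_1 \|_\infty$ and $\rho(y)\, \Lambda(dy) = \pi(dy)$, produces the desired joint bound with constant matching \eqref{eq:formula for C2' AD and AaD implies RaD}. The main obstacle is precisely this conversion: the Dobrushin lower bound from Assumption \ref{assum:adjoint Dobrushin main results section}, rather than being used (as in Theorem \ref{theo:theo for reverse Dobrushin in Euclidean space}) to deduce a reverse Dobrushin lower bound, must here be repurposed together with the quasi-stationary identity to provide the upper bound on $\tilde{P}1$ responsible for the factor $\nu'(d\pi/d\Lambda)$ in the denominator of $C_2'$.
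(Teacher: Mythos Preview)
Your proof is correct and arrives at the stated constant $C_2'$. The ingredients are the same as in the paper's argument—the AaD upper bound $\tilde P^{(1)}(z,\cdot)\le C_1\Lambda(\cdot)$, the AD Dobrushin lower bound, and a quasi-stationary eigenfunction identity to pass from $\Lambda$ back to $\pi$—but the organization differs in two respects worth noting.

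First, the paper builds an explicit reverse kernel: it introduces a weight-changed kernel $\tilde P^{(0)}$ (so that $P_{t_0}$ is adjoint with respect to $\psi_1\Lambda$ rather than $\psi_0\Lambda$), sets $\tilde P^{(2)}=\tilde P^{(0)}\tilde P^{(1)}$, and then defines $R(y,dx)=\frac{\rho(x)\psi_1(y)a_2}{\rho(y)\psi_1(x)}\tilde P^{(2)}(y,dx)$. It bounds $R(y,\cdot)$ above and $R1(y)$ below separately; the factor $\tilde P^{(0)}1(y)$ appears in both and cancels in the ratio. You instead bound the joint measure $\pi(dx)P_{t_2}(x,dy)$ directly and only at the end extract $R^{(2)}$ via Radon--Nikodym, which avoids the auxiliary construction $\tilde P^{(0)}$ altogether.

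Second, and more interestingly, the eigenfunction identity you invoke is different from the paper's. The paper uses the $t_1$-identity $\tilde P^{(1)}(\rho/\psi_1)=\frac{\lambda^{t_1}}{a_1}(\rho/\psi_1)$ to evaluate $\nu'\tilde P^{(1)}(\rho/\psi_1)$ in its lower bound for $R1$. You use the $t_0$-identity $a\,\psi_0\,\tilde P(\rho/\psi_0)=\lambda^{t_0}\rho$ together with the Dobrushin minorization to obtain the pointwise upper bound $\psi_0(y)\tilde P1(y)\le \lambda^{t_0}\rho(y)/(a\,c_0'\,\nu'(\rho/\psi_0))$, which directly converts the lingering $\Lambda(dy)$ into $\pi(dy)$. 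Both routes land on the same $C_2'$ after the elementary estimates $\nu'(\rho/\psi_0)\ge\nu'(\rho)/\lVert\psi_0\rVert_\infty$ and $\lVert\psi_0\rVert_\infty\le\lVert\psi_0/\psi_1\rVert_\infty\lVert\psi_1\rVert_\infty$, but your path is shorter.

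One small point: your appeal to a jointly measurable Radon--Nikodym density $h(x,y)$ to manufacture the kernel $R^{(2)}$ is fine in practice but, strictly speaking, disintegration theorems of this sort usually want a standard Borel (Polish) structure, which the paper does not assume. You can sidestep this entirely by observing that your own displayed formula for $\pi(dx)P_{t_2}(x,dy)$ already hands you an explicit kernel expression for $R^{(2)}(y,dx)$ (divide by $\rho(y)$ on the set $\{\rho>0\}$), exactly as the paper does; the abstract Radon--Nikodym step is then unnecessary.
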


Having established theorems \ref{theo:theo for reverse Dobrushin in Euclidean space} and \ref{theo:theorem for bounded by pi in Euclidean space}, Theorem \ref{theo:dominated by pi theorem general results main results section}, Theorem \ref{theo:DAD lower bounds density main results section}, Theorem \ref{theo:uniform Linfty convergence main results section}, and Corollary \ref{cor:L infty convergence of Q-process main results section} then immediately follow from Theorem \ref{theo:dominated by pi theorem general results}, Theorem \ref{theo:DRD lower bounds density}, Theorem \ref{theo:consequences of dominated assumption for whole space general results} and Corollary \ref{cor:L infty convergence of Q-process} respectively.

We shall then establish Theorem \ref{theo:criterion for cty of QSD}, Theorem \ref{theo:criterion for right efn main results section} and Proposition \ref{prop:Lp malthusian by interpolation}.

\subsection*{Proof of Theorem \ref{theo:theo for reverse Dobrushin in Euclidean space}}

We let $\rho\in\calB_b(\chi;\Rm_{\geq 0})$ be a version of $\frac{d\pi}{d\Lambda}$ which is everywhere bounded by $\lvert\lvert \rho\rvert\rvert_{\infty}=\lvert\lvert \frac{d\pi}{d\Lambda}\rvert\rvert_{L^{\infty}(\Lambda)}$. It follows from \eqref{eq:psi adjoint for verifying reverse Dobrushin main results section} and \eqref{eq:Dobrushin for P tilde kernel crit for reverse dobrushin main results section} that $\nu'\ll \Lambda$, so that $\nu'(\rho)>0$ since $\nu'$ and $\pi$ are not mutually singular. We define the non-negative kernel
\begin{equation}\label{eq:reverse kernel from Euclidean criterion proof}
R(y,dx):=\begin{cases}
0,\quad \rho(y)=0\\
\frac{\rho(x)\psi(y)a}{\rho(y)\psi(x)}\tilde{P}(y,dx),\quad \rho(y)>0
\end{cases}.
\end{equation}
We now fix $y\in \chi$ such that $\rho(y)>0$ (which is $\pi$-almost every $y\in\chi$). We take $X\sim \tilde{P}(y,\cdot)$ and calculate that
\[
\begin{split}
\frac{R(y,dx)}{R1(y)}=\frac{\frac{\psi(y)a}{\rho(y)}\expE\big[\big(\frac{\rho}{\psi}\big)(X)\Ind(X\in dx)\big]}{\frac{\psi(y)a}{\rho(y)}\expE\big[\big(\frac{\rho}{\psi}\big)(X)\big]}=\frac{\expE\big[\big(\frac{\rho}{\psi}\big)(X)\Ind(X\in dx)\big\lvert X\in \chi\big]}{\expE\big[\big(\frac{\rho}{\psi}\big)(X)\big\lvert X\in \chi\big]}\geq \frac{c_0'\nu'(\rho)}{\lvert\lvert \psi\rvert\rvert_{\infty}\lvert\lvert \frac{\rho}{\psi}\rvert\rvert_{\infty}}\frac{\rho(x)\nu'(dx)}{\nu'(\rho)}.
\end{split}
\]
It follows that $R$ satisfies \eqref{eq:reverse time R minorised by nu} with $c_0$ given by \eqref{eq:formula for c0 main results section} and $\nu$ given by \eqref{eq:prob measure coming from adjoint dobrushin}.

We now calculate (using that $\rho(y)>0$ $\pi$-almost everywhere) that
\[
\pi(dy)R(y,dx)=\frac{\rho(x)}{\psi(x)}a\psi(y)\Lambda(dy)\tilde{P}(y,dx)=\rho(x)\Lambda(dx)P(x,dy)=\pi(dx)P(x,dy).
\]
Thus $R$ is a non-negative kernel satisfying \eqref{eq:exis of reverse R}, so by Proposition \ref{prop:R constant mass} it is a submarkovian kernel satisfying \eqref{eq:exis of reverse R}, possibly after adjusting the defintion of $R(y,\cdot)$ on a $\pi$-null set of $y\in\chi$. 
\qed

\subsection*{Proof of Proposition \ref{prop:boundedness from reverse Dobrushin main results section}}

We define the probability measure
\[
\mu:=\frac{\Big(\frac{d\pi}{d\Lambda}\wedge 1\Big)\Lambda}{\Lambda\Big(\frac{d\pi}{d\Lambda}\wedge 1\Big)}.
\]
We observe that $\mu\in\calP_{\infty}(\pi)\cap \calP_{\infty}(\Lambda)$. We let $\phi\in L^1_{\geq 0}(\pi)$ be the non-negative $L^1(\pi)$-right eigenfunction whose existence is assumed by the proposition. Since $\pi(\phi)=1$, it follows that $\mu(\phi)>0$. We have from \eqref{eq:conv to QSD for propn that it implies QSD has a bounded density} that $\Law_{\mu}(X_t\lvert \tau_{\partial}>t)\geq \frac{\pi}{2}$ for all $t$ large enough. 

It therefore follows from \eqref{eq:psi adjoint for verifying reverse Dobrushin main results section} that
\[
(\Lambda P_{t_0})(dy)= a\psi(y)\tilde{P}\Big(\frac{1}{\psi}\Big)(y)\Lambda(dy)\leq a\lvert\lvert\psi\rvert\rvert_{\infty}\lvert\lvert\frac{1}{\psi}\rvert\rvert_{\infty}\Lambda(dy).
\]
It follows that $\Law_{\mu}(X_{nt_0}\lvert \tau_{\partial}>nt_0)\ll_{\infty}\Lambda$ for any $n\in \Nm$. Thus we may choose $n\in \Nm$ such that
\[
\pi\ll_{\infty}\Law_{\mu}(X_{nt_0}\lvert \tau_{\partial}>nt_0)\ll_{\infty}\Lambda.
\]
\qed

\subsection*{Proof of Theorem \ref{theo:theorem for bounded by pi in Euclidean space}}

We write $\tilde{P}$ for the submarkovian kernel, $\psi_0\in \calB_{b,\gg}(\chi)$ for the function, $t_0>0$ for the time, $\nu'\in\calP(\chi)$ for the probability measure, and $a,c_0'>0$ for the constants for which Assumption \ref{assum:adjoint Dobrushin main results section} is satisfied ($c_0'>0$ being the constant for which \eqref{eq:Dobrushin for P tilde kernel crit for reverse dobrushin main results section} is satisfied). It follows from Assumption \ref{assum:adjoint Dobrushin main results section} that $\nu'\ll \Lambda$, so that $\nu'(\frac{d\pi}{d\Lambda})$ is unambiguous. We further write $\tilde{P}^{(1)}$ for the submarkovian kernel, $\psi_1\in \calB_{b,\gg}(\chi)$ for the function, $t_1>0$ for the time, and $a_1>0$, $C_1<\infty$ for the constants for which Assumption \ref{assum:adjoint anti-Dobrushin main results section} is satisfied ($C_1<\infty$ being the constant for which \eqref{eq:bounded Lebesgue main results section} is satisfied).

It follows from \eqref{eq:psi adjoint for verifying reverse Dobrushin main results section} that $\tilde{P}^{(0)}$ defined to be
\[
\tilde{P}^{(0)}(y,dx):=\frac{\psi_0(y)\psi_1(x)a}{\psi_0(x)\psi_1(y)A}\tilde{P}(y,dx),
\]
for some $A>0$, satisfies
\begin{align}
\psi_1(x)\Lambda(dx)P_{t_0}(x,dy)=A\psi_1(y)\Lambda(dy)\tilde{P}^{(0)}(y,dx),\\
\frac{\tilde{P}^{(0)}(y,dx)}{\tilde{P}^{(0)}1(y)}\geq \frac{c_0'}{\lvert\lvert\frac{\psi_0}{\psi_1}\rvert\rvert_{\infty}\lvert\lvert \frac{\psi_1}{\psi_0}\rvert\rvert_{\infty}}\nu'(dx).\label{eq:lower bound for tilde P (0) by nu' C2 calculation}
\end{align}
For $A>0$ sufficiently large, $\tilde{P}^{(0)}$ is submarkovian. We fix such an $A$, thereby fixing $\tilde{P}^{(0)}$. We then define
\[
\tilde{P}^{(2)}:=\tilde{P}^{(0)}\tilde{P}^{(1)},\quad a_2:=Aa_1
\]
It follows that
\begin{equation}\label{eq:adjoint equation for proof of sufficient for RaD condition}
\begin{split}
a_2\Lambda(dy)\psi_1(y)\tilde{P}^{(2)}(y,dx)=\int_{z\in \chi}a_1\tilde{P}^{(1)}(z,dx)A\psi_1(y)\Lambda(dy)\tilde{P}^{(0)}(y,dz)\\
=\int_{z\in \chi}a_1\tilde{P}^{(1)}(z,dx)\psi_1(z)P_{t_0}(z,dy)\Lambda(dz)=\int_{z\in \chi}\psi_1(x)P_{t_0}(z,dy)\Lambda(dx)P_{t_1}(x,dz)\\
=\psi_1(x)\Lambda(dx)P_{t_2}(x,dy).
\end{split}
\end{equation}
We also have that
\begin{equation}\label{eq:bound for P(2) calculation of C2 constant}
\begin{split}
\tilde{P}^{(2)}(y,dx)\leq C_1\Lambda(dx)\tilde{P}^{(0)}1(y).
\end{split}
\end{equation}

We now let $\rho\in\calB(\chi;\Rm_{\geq 0})$ be a non-negative version of $\frac{d\pi}{d\Lambda}$ (which we do not assume to be bounded). As in the proof of Theorem \ref{theo:theo for reverse Dobrushin in Euclidean space} we define the non-negative reverse kernel $R$ by 
\begin{equation}\label{eq:formula for R calculation of C2 constant}
R(y,dx):=\begin{cases}
0,\quad \rho(y)=0\\
\frac{\rho(x)\psi_1(y)a_2}{\rho(y)\psi_1(x)}\tilde{P}^{(2)}(y,dx),\quad \rho(y)>0
\end{cases}.
\end{equation}

Using \eqref{eq:adjoint equation for proof of sufficient for RaD condition}, this clearly satisfies
\[
\pi(dx)P_{t_2}(x,dy)=\pi(dy)R(y,dx).
\]

Moreover, using \eqref{eq:lower bound for tilde P (0) by nu' C2 calculation}, \eqref{eq:bound for P(2) calculation of C2 constant} and \eqref{eq:formula for R calculation of C2 constant}, we have whenever $\rho(y)>0$ (which is $\pi$-almost every $y\in\chi$) that
\begin{align}
R(y,dx)\leq \frac{a_2\psi_1(y)C_1}{\rho(y)\psi_1(x)}\pi(dx)\tilde{P}^{(0)}1(y),\label{eq:upper bound 1 for R pf of suff for Rad condition}\\
R1(y)\geq \tilde{P}^{(0)}1(y)\frac{a_2\psi_1(y)c_0'}{\rho(y)\lvert\lvert \frac{\psi_0}{\psi_1}\rvert\rvert_{\infty}\lvert\lvert \frac{\psi_1}{\psi_0}\rvert\rvert_{\infty}}\nu'\tilde{P}^{(1)}\Big(\frac{\rho}{\psi_1}\Big).\label{eq:lower bound 1 for R1 pf of suff for Rad condition}
\end{align}

Integrating $\frac{\rho}{\psi_1}(x)$ against both sides of \eqref{eq:psi adjoint for verifying bounded pi euclidean condition main results section}, we have that
\[
a_1\Big(\tilde{P}^{(1)}\Big(\frac{\rho}{\psi_1}\Big)\Big)(y)\psi_1(y)\Lambda(dy)=\int_{x\in \chi}P_{t_1}(x,dy)\pi(dx)=\lambda^{t_1}\pi(dy)=\lambda^{t_1}\rho(y)\Lambda(dy).
\]
We conclude that
\begin{equation}\label{eq:rho over psi1 given efn for adjoint RaD suff pf}
\tilde{P}^{(1)}\Big(\frac{\rho}{\psi_1}\Big)(y)=\frac{\lambda^{t_1}}{a_1}\Big(\frac{\rho}{\psi_1}\Big)(y)\quad\text{for $\Lambda$-almost every $y\in \chi$.}
\end{equation}
It follows from \eqref{eq:lower bound 1 for R1 pf of suff for Rad condition} and \eqref{eq:rho over psi1 given efn for adjoint RaD suff pf} that for $\pi$-almost every $y\in \chi$ we have
\begin{equation}\label{eq:lower bound for R kernel mass proof sufficient for RaD}
R1(y)\geq \tilde{P}^{(0)}1(y)\frac{a_2c_0'\psi_1(y)\lambda^{t_1}}{\rho(y)\lvert\lvert \frac{\psi_0}{\psi_1}\rvert\rvert_{\infty}\lvert\lvert \frac{\psi_1}{\psi_0}\rvert\rvert_{\infty}a_1}\nu'\Big(\frac{\rho}{\psi_1}\Big)\geq\tilde{P}^{(0)}1(y)\frac{a_2c_0'\psi_1(y)\lambda^{t_1}}{\rho(y)\lvert\lvert \frac{\psi_0}{\psi_1}\rvert\rvert_{\infty}\lvert\lvert \frac{\psi_1}{\psi_0}\rvert\rvert_{\infty}a_1\lvert\lvert \psi_1\rvert\rvert_{\infty}}\nu'(\rho).
\end{equation}
Combining \eqref{eq:upper bound 1 for R pf of suff for Rad condition} and \eqref{eq:lower bound for R kernel mass proof sufficient for RaD}, we obtain that for $\pi$-almost every $y\in \chi$ we have
\[
\begin{split}
\frac{R(y,dx)}{R1(y)}\leq \frac{\rho(y)\lvert\lvert \frac{\psi_0}{\psi_1}\rvert\rvert_{\infty}\lvert\lvert \frac{\psi_1}{\psi_0}\rvert\rvert_{\infty}\lvert\lvert \psi_1\rvert\rvert_{\infty}a_1a_2\psi_1(y)C_1\tilde{P}^{(0)}1(y)\pi(dx)}{\tilde{P}^{(0)}1(y)a_2c_0'\psi_1(y)\lambda^{t_1}\nu'(\rho)\rho(y)\psi_1(x)}\\
\leq \frac{\lvert\lvert \frac{\psi_0}{\psi_1}\rvert\rvert_{\infty}\lvert\lvert \frac{\psi_1}{\psi_0}\rvert\rvert_{\infty}\lvert\lvert \psi_1\rvert\rvert_{\infty}a_1C_1\pi(dx)}{c_0'\lambda^{t_1}\nu'(\rho)\psi_1(x)}.
\end{split}
\]
Therefore $R$ satisfies \eqref{eq:reverse kernel majorised for dominated by pi theorem} with
\[
\begin{split}
C_2':=\frac{\lvert\lvert \frac{\psi_0}{\psi_1}\rvert\rvert_{\infty}\lvert\lvert \frac{\psi_1}{\psi_0}\rvert\rvert_{\infty}\lvert\lvert\psi_1\rvert\rvert_{\infty}\lvert\lvert \frac{1}{\psi_1}\rvert\rvert_{\infty}a_1C_1}{c_0'\lambda^{t_1}\nu'(\rho)}.
\end{split}
\]

Applying Proposition \ref{prop:R constant mass}, we see that $R$ is submarkovian after adjusting the defintion of $R(y,\cdot)$ on a $\pi$-null set of $y\in \chi$. 

We already have by assumption that $\text{spt}(\pi)=\chi$ and that $P_{t_2}$ is lower semicontinuous in the sense of Definition \ref{defin:lower semicts kernel}.
\qed

\subsection*{Proof of Theorem \ref{theo:criterion for cty of QSD}}

\underline{Step $1$}

We note that $\frac{\tilde{P}_{t_0+t_1}(y,\cdot)}{\tilde{P}_{t_0+t_1}1(y)}\geq c_0\nu\tilde{P}_{t_1}(\cdot)$ for $\Lambda$-almost every $y\in\chi$, and that $\nu\tilde{P}_{t_1}(\calO)>0$. Since we may therefore replace $t_0$, $t_1$ and $\nu$ with $t_0+t_1$, $0$ and $\frac{\nu \tilde{P}_{t_1}(\cdot)}{\nu \tilde{P}_{t_1}1}$ respectively, we may assume without loss of generality that 
\[
\nu(\calO)>0\quad\text{so that}\quad \tilde{P}_{t_0}(y,\calO)>0\quad\text{for $\Lambda$-almost every $y\in\chi$.}
\]
We also have from \eqref{eq:adjoint semigroup eqn from cty of pi criterion} that if $A\in\mathscr{B}(\chi)$ with $\Lambda(A)=0$, then $\tilde{P}_{t_0}\Ind_A(y)=0$ for $\Lambda$-almost every $y\in\chi$, so that $\nu(A)=0$. Therefore $\nu\ll\Lambda$. Thus it follows that $\nu$ and $\pi$ are not mutually singular. 
It follows that $(X_t)_{0\leq t<\tau_{\partial}}$ satisfies Assumption \ref{assum:adjoint Dobrushin main results section} with this time $t_0$ and probability measure $\nu$. We already have that $\pi\in\calP_{\infty}(\Lambda)$. It therefore follows that $(X_t)_{0\leq t<\tau_{\partial}}$ and $\pi$ satisfy the assumptions required by Theorem \ref{theo:Linfty convergence main results section} to have \eqref{eq:Linfty convergence of dist cond of survival main results section}.

\underline{Step $2$}

We let $\phi\in L^1(\pi)$ be the non-negative $L^1(\pi)$-right eigenfunction whose existence and uniqueness is provided for by Theorem \ref{theo:Linfty convergence main results section}. Our goal is to establish that 
\begin{equation}\label{eq:phi has pve pi integral on O cty criterion}
\pi(\phi\Ind_{\calO})>0.
\end{equation}

We recall from the proof of Theorem \ref{theo:theo for reverse Dobrushin in Euclidean space} that the reverse kernel $R$ providing for $\pi$ and $(X_t)_{0\leq t<\tau_{\partial}}$ to satisfy Assumption \ref{assum:Dobrushin reverse time} is given by \eqref{eq:reverse kernel from Euclidean criterion proof} (except that the definition of $R(y,\cdot)$ may be adjusted on a $\pi$-null set of $y\in \chi$). Thus, fixing some bounded non-negative version $\rho$ of $\frac{d\pi}{d\Lambda}$ with $\lvert\lvert \rho\rvert\rvert_{\infty}=\lvert\lvert \frac{d\pi}{d\Lambda}\rvert\rvert_{L^{\infty}(\Lambda)}$, $R(y,\cdot)$ given for $\pi$-almost every $y\in \chi$ by
\[
R(y,dx)=\frac{\rho(x)\psi(y)a^{t_0}}{\rho(y)\psi(x)}\tilde{P}_{t_0}(y,dx)
\]
satisfies \eqref{eq:psi adjoint for verifying reverse Dobrushin main results section}. It follows that $R(y,\calO)>0$ for $\pi$-almost every $y\in \chi$.

We now define $\beta:=\phi \pi$. Since $\beta\ll \pi$, we have
\[
R(y,\calO)>0\quad\text{for}\quad \text{$\beta$-almost every $y\in\chi$.}
\]

As in \eqref{eq:beta stationary for R}, we may see that $\beta R=\lambda^{t_0}\beta$, so that
\[
\beta(\calO)= \lambda^{-t_0}\int_{\chi}R(y,\calO)\beta(dy)>0.
\]

Thus we have \eqref{eq:phi has pve pi integral on O cty criterion}.

\underline{Step $3$}

Our goal is to construct $\mu\in \calP_{\infty}(\pi)\cap \calP_{\infty}(\Lambda)$ such that $\frac{d\mu}{d\Lambda}$ has a $C_b(\chi;\Rm_{\geq 0})$ version and $\mu(\phi)>0$.

Since $c_1\Lambda_{\lvert_{\calO}}\leq \pi\ll_{\infty} \Lambda$ and $\pi(\phi\Ind_{\calO})>0$, $\phi\Ind_{\calO}\in L^1(\Lambda)$ with $\Lambda(\phi\Ind_{\calO})>0$. We now take an ascending sequence of functions $(f_n)_{n\geq 1}$, each of which is non-negative, continuous, bounded, $\Lambda$-integrable, and vanishes on $\calO^c$, such that $f_n$ converges pointwise to $\Ind_{\calO}$. The monotone convergence theorem then implies that $\Lambda(\phi f_n)\ra \Lambda(\phi \Ind_{\calO})>0$ as $n\ra \infty$. Thus by taking $f:=f_n$ for some $n$ sufficiently large, we have $f\in C_b(\chi;\Rm_{\geq 0})$ such that $\Lambda(\phi f)>0$, $0<\Lambda(f)<1$ and $f_{\lvert_{\calO^c}}\equiv 0$. Rescaling $f$, we have that $\Lambda(f)=1$. We now define the probability measure
\[
\mu:=f\Lambda,
\]
which we observe has the desired properties. We henceforth define $\mu$ to be this measure and $f$ the above-defined function.

It follows from \eqref{eq:adjoint semigroup eqn from cty of pi criterion} that $\mu P_t$ has a density with respect to $\Lambda$ given by
\begin{equation}\label{eq:density of mu Pt in terms of adjoint pf of pi cty}
\frac{d(\mu P_t)}{d\Lambda}(y)=a^t\psi(y)\tilde{P}_t\Big(\frac{f}{\psi}\Big).
\end{equation}

\underline{Step $4$}

Since $\pi\ll_{\infty}\Lambda$, Theorem \ref{theo:Linfty convergence main results section} implies that
\begin{equation}
\label{eq:L infty lebesgue convergence of density pf of cty criterion}
\Big\lvert\Big\lvert \frac{d\Law_{\mu}(X_t\lvert \tau_{\partial}>t)}{d\Lambda}-\frac{d\pi}{d\Lambda}\Big\rvert\Big\rvert_{L^{\infty}(\Lambda)}\leq \Big\lvert\Big\lvert \frac{d\Law_{\mu}(X_t\lvert \tau_{\partial}>t)}{d\pi}-1\Big\rvert\Big\rvert_{L^{\infty}(\pi)}\Big\lvert\Big\lvert \frac{d\pi}{d\Lambda}\Big\rvert\Big\rvert_{L^{\infty}(\Lambda)}\ra 0\quad \text{as}\quad t\ra \infty.
\end{equation}

\underline{Step $5$}

We now focus on Part \ref{enum:QSD thm cty on V main results section} of Theorem \ref{theo:criterion for cty of QSD}. We therefore take some open set $V$ satisfying \eqref{eq:cond for open set to have cts QSD density main results section}. It follows from \eqref{eq:density of mu Pt in terms of adjoint pf of pi cty} that (a version of) $\frac{d\Law_{\mu}(X_t\lvert \tau_{\partial}>t)}{d\Lambda}$ is continuous on $V$, for all $t\geq 0$.

Since $V$ is an open subset of $\chi$ and $\text{spt}(\Lambda)=\chi$, $\text{spt}(\Lambda_{\lvert_V})\supseteq V$, which suffices to ensure that every $L^{\infty}(\Lambda)$ limit of continuous functions on $V$ is continuous on $V$. It then follows from \eqref{eq:L infty lebesgue convergence of density pf of cty criterion} that (a version of) $\frac{d\pi}{d\Lambda}$ is continuous on $V$.

\underline{Step $6$}

We now turn our attention to Part \ref{enum:QSD thm lower semi cty main results section} of Theorem \ref{theo:criterion for cty of QSD}. We therefore no longer assume there to exist $V$ satisfying \eqref{eq:cond for open set to have cts QSD density main results section}, instead assuming that $(\tilde{P}_t)_{t\geq 0}$ is lower semicontinuous - it satisfies \eqref{eq:weak lower semicty of tilde P main results section}.

It follows from \eqref{eq:weak lower semicty of tilde P main results section} and \eqref{eq:density of mu Pt in terms of adjoint pf of pi cty} that $\Law_{\mu}(X_t\lvert \tau_{\partial}>t)$ has a version belonging to $LC_b(\chi;\Rm_{\geq 0})$, for all $t\geq 0$. We may then conclude Part \ref{enum:QSD thm lower semi cty main results section} of Theorem \ref{theo:criterion for cty of QSD} from \eqref{eq:L infty lebesgue convergence of density pf of cty criterion}, by application of the following lemma, which we shall prove in the appendix.

\begin{lem}\label{lem:l infty limit of lower semi cts functions}
We assume that $\chi$ is a seperable metric space on which is defined the distinguished $\sigma$-finite Borel measure $\Lambda$, with full support $\text{spt}(\Lambda)=\chi$. We suppose that, defined on $\chi$, is a sequence $(f_n)_{n\geq 1}$ in $L^{\infty}(\Lambda)$, converging in $L^{\infty}(\Lambda)$ to $f\in L^{\infty}(\Lambda)$. We further assume that each $f_n$ has a version, $u_n$, which is bounded, non-negative, and lower semicontinuous, $u_n\in LC_b(\chi;\Rm_{\geq 0})$ for all $n\geq 1$. Then $f$ has a version $u\in LC_b(\chi;\Rm_{\geq 0})$ which is bounded, non-negative and lower semicontinuous, $u\in LC_b(\chi;\Rm_{\geq 0})$, and which is maximal in the sense that any other bounded, non-negative lower semicontinuous version of $f$, $\tilde{u}\in LC_b(\chi;\Rm_{\geq 0})$, is everywhere dominated by $u$: $\tilde{u}(x)\leq u(x)$ for all $x\in \chi$.
\end{lem}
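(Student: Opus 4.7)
The plan is to construct the required version of $f$ as an abstract supremum over open neighborhoods, to verify it is a version of $f$ by combining the global a.e.\ inequalities coming from $(u_n)$ with a Lindel\"of covering argument that uses separability, and to read off maximality directly from the defining formula. First I set $\epsilon_n := \lvert\lvert f_n - f\rvert\rvert_{L^\infty(\Lambda)}$, so $\epsilon_n \to 0$ and $\lvert u_n - f\rvert \leq \epsilon_n$ $\Lambda$-a.e. Since each $u_n$ is LSC and $\text{spt}(\Lambda) = \chi$, the open set $\{u_n > c\}$ has positive $\Lambda$-measure whenever non-empty; hence $\lvert\lvert u_n\rvert\rvert_{\infty} = \lvert\lvert u_n\rvert\rvert_{L^\infty(\Lambda)}$, giving a uniform bound $M := \sup_n \lvert\lvert u_n\rvert\rvert_{\infty} < \infty$, and intersecting full-measure sets over $n$ yields $0 \leq f \leq M$ $\Lambda$-a.e.

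I would then define
\[
u(x) := \sup\bigl\{c \geq 0 : \exists\ \text{open } U \ni x \text{ with } f \geq c\ \Lambda\text{-a.e.\ on } U\bigr\},
\]
so $0 \leq u \leq M$ pointwise. The function $u$ is lower semicontinuous because any open $U$ witnessing $u(x_0) > c$ automatically witnesses $u(y) \geq c$ for every $y \in U$. To obtain $u \geq f$ $\Lambda$-a.e., I fix $n$ and $x$ and observe that whenever $u_n(x) - \epsilon_n > c$ the set $V := \{u_n > c + \epsilon_n\}$ is an open neighborhood of $x$ (by LSC of $u_n$) on which the global a.e.\ inequality $u_n - \epsilon_n \leq f$ forces $f > c$ $\Lambda$-a.e.; this yields $u(x) \geq u_n(x) - \epsilon_n$ \emph{pointwise}. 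Combined with the a.e.\ bound $u_n \geq f - \epsilon_n$, this gives $u \geq f - 2\epsilon_n$ $\Lambda$-a.e.\ for every $n$, hence $u \geq f$ $\Lambda$-a.e.

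For the reverse inequality $u \leq f$ $\Lambda$-a.e., I would argue by contradiction. If $\Lambda(\{u > f\}) > 0$, some rational $q$ satisfies $\Lambda(\{u > q\} \cap \{f \leq q\}) > 0$. By definition of $u$, each $x \in \{u > q\}$ admits an open neighborhood $U_x$ with $f > q$ $\Lambda$-a.e.\ on $U_x$; separability of $\chi$ makes it Lindel\"of, so $\{u > q\}$ admits a countable subcover by such $U_x$'s and consequently $f > q$ $\Lambda$-a.e.\ on all of $\{u > q\}$, a contradiction. Thus $u \in LC_b(\chi;\Rm_{\geq 0})$ is a version of $f$, and maximality is immediate from the defining formula: for any other $\tilde u \in LC_b(\chi;\Rm_{\geq 0})$ with $\tilde u = f$ $\Lambda$-a.e.\ and any $c < \tilde u(x_0)$, the open set $\{\tilde u > c\}$ is a neighborhood of $x_0$ on which $f > c$ $\Lambda$-a.e., so $u(x_0) \geq c$; letting $c \nearrow \tilde u(x_0)$ gives $\tilde u \leq u$ pointwise.

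The principal subtlety is the a.e.\ equality $u = f$: pointwise inequalities between two LSC functions cannot generally be recovered from $\Lambda$-a.e.\ inequalities, since the set where one LSC function strictly exceeds another is neither open nor closed in general. The Lindel\"of covering step is the one place where separability of $\chi$ is genuinely needed, and it is what lets the local a.e.\ information be patched into a global statement.
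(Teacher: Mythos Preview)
Your proof is correct. The construction differs from the paper's in presentation: you define $u$ intrinsically as $u(x)=\sup\{c\geq 0:\ f\geq c\text{ $\Lambda$-a.e.\ on some open neighbourhood of }x\}$, which makes lower semicontinuity and maximality essentially tautological, and you invoke the $u_n$ only to obtain the pointwise inequality $u\geq u_n-\epsilon_n$ feeding into $u\geq f$ a.e. The paper instead builds an intermediate $\hat u$ as the pointwise (uniform on a full-measure set) limit of the $u_n$ and then sets $u(x):=\essliminf_{x'\ra x}\hat u(x')$, so the approximating sequence is threaded through the construction itself. The two formulas describe the same object (the essential lower semicontinuous envelope of the equivalence class $f$), and both proofs rely on the identical Lindel\"of covering step---the one place separability is used---to pass from local a.e.\ lower bounds to the global inequality $u\leq f$ a.e. Your route is slightly more economical; the paper's makes the role of the $u_n$ in producing the candidate $u$ more explicit.
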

\qed

\subsection*{Proof of Theorem \ref{theo:criterion for right efn main results section}}

All of the aforestated results are given as results for a killed Markov process $(X_t)_{0\leq t<\tau_{\partial}}$ with quasi-stationary distribution $\pi$. However, all that is ever used about $(X_t)_{0\leq t<\tau_{\partial}}$ is that it corresponds to the submarkovian transition semigroup $(P_t)_{t\geq 0}$. In particular, Theorem \ref{theo:criterion for cty of QSD} can be applied to $(\tilde{P}_t)_{t\geq 0}$ (that is, with $(\tilde{P}_t)_{t\geq 0}$ substituted for $(P_t)_{t\geq 0}$ in the statement of Theorem \ref{theo:criterion for cty of QSD} and vice-versa), without requiring that $(\tilde{P}_t)_{t\geq 0}$ correspond to some killed Markov process.

Using Part \ref{enum:QSD thm lower semi cty main results section} of Theorem \ref{theo:criterion for cty of QSD}, we may take a non-negative, bounded, lower semicontinuous version of $\frac{d\tilde{\pi}}{d\Lambda}$, $\tilde{\rho}\in LC_b(\chi;\Rm_{\geq 0})$, which is maximal in the sense that any other such non-negative, bounded, lower semicontinuous version $\tilde{\rho}'\in LC_b(\chi;\Rm_{\geq 0})$ of $\frac{d\pi}{d\Lambda}$ is everywhere dominated by $\tilde{\rho}$, $\tilde{\rho}'(x)\leq \tilde{\rho}(x)$ for all $x\in\chi$. We then define 
\begin{equation}\label{eq:maximal version of right efn h tilde in terms of QSD}
\tilde{h}:=\frac{\tilde{\rho}}{\psi}\in LC_b(\chi;\Rm_{\geq 0}).
\end{equation}
It follows from \eqref{eq:adjoint eqn for right efn existence theorem main results section} that
\[
a^t\frac{\tilde{\rho}(y)}{\psi(x)}\Lambda(dy)\tilde{P}_t(y,dx)=\tilde{h}(y)\Lambda(dx)P_{t}(x,dy),\quad t\geq 0.
\]
We recall from \eqref{eq:defin of evalue lambda pf of right efn existence theorem main results section} that $\lambda:=a\tilde{\pi}\tilde{P}_11>0$. By integrating over $y$, we obtain that
\[
\lambda^t\tilde{h}(x)\Lambda(dx)=P_{t}\tilde{h}(x)\Lambda(dx),\quad t\geq 0.
\]

We now define $Q_t:=\lambda^{-t}P_t$. It follows that $\tilde{h}$ satisfies $Q_t\tilde{h}=\tilde{h}$ $\Lambda$-almost everywhere, for all $t\geq 0$. The maximality of $\tilde{\rho}$ implies that $\tilde{h}$ is also maximal, in the sense that if some other lower semicontinuous $\tilde{h}'\in LC_b(\chi;\Rm_{\geq 0})$ is equal to $\tilde{h}$ $\Lambda$-almost everywhere, then $\tilde{h}'(x)\leq \tilde{h}(x)$ for all $x\in \chi$.

We now fix $s\geq 0$. We have that 
\[
Q_s\tilde{h}=\tilde{h}\quad\text{$\Lambda$-almost everywhere.}
\]
Moreover, since $(P_t)_{t\geq 0}$ is lower semicontinuous (in the sense of Definition \ref{defin:lower semicts kernel}), $(Q_t)_{t\geq 0}$ is also lower semicontinuous, so that $Q_sh\in LC_b(\chi;\Rm_{\geq 0})$. It follows from the maximality of $\tilde{h}$ that $Q_s\tilde{h}(x)\leq \tilde{h}(x)$ for all $x\in\chi$. We therefore have that
\[
Q_{t_2}\tilde{h}(x)\leq Q_{t_1}\tilde{h}(x)\quad\text{for all}\quad x\in \chi,\quad 0\leq t_1\leq t_2<\infty.
\]

We may therefore define the pointwise limit
\[
h(x):=\lim_{t\ra \infty}Q_t\tilde{h}(x)\in \calB_b(\chi;\Rm_{\geq 0}).
\]
It follows from the dominated convergence theorem that $Q_th(x)=h(x)$ for all $t\geq 0$ and $x\in \chi$, and that $h=\tilde{h}$ almost everywhere (so that $h$ is non-trivial, in particular). Thus $h\in \calB_b(\chi;\Rm_{\geq 0})$ is our desired pointwise right eigenfunction for $(P_t)_{t\geq 0}$. All that remains is to prove that $h(x)>0$ for all $x\in \chi$.

It is an immediate consequence of \eqref{eq:A1 for right efn existence theorem main results section} that
\[
P_{t_0+t_1}(x,\cdot)\geq P_{t_0}1(x)c_0(\nu P_{t_1})_{\lvert_{\calO}}(\cdot)\quad\text{for all}\quad x\in\chi.
\]

It follows from \eqref{eq:adjoint eqn for right efn existence theorem main results section} that if $A$ is a $\Lambda$-null Borel subset of $\calO$, then $P_{t_0+t_1}(x,A)=0$ for $\Lambda$ almost-every $x\in\chi$. It therefore follows that $(\nu P_{t_1})_{\lvert_{\calO}}(A)=0$. Thus $(\nu P_{t_1})_{\lvert_{\calO}}\ll \Lambda_{\calO}$. Since $\psi h$ is a version of $\frac{d\tilde{\pi}}{d\Lambda}\geq c_1\Ind_{\calO}$, $h\geq \frac{c_1}{\lvert\lvert \psi\rvert\rvert_{\infty}}$ $\Lambda$-almost everywhere on $\calO$. Therefore, $\nu P_{t_1}(h)>0$. It therefore follows that
\[
h(x)=\lambda^{-(t_0+t_1)}P_{t_0+t_1}h(x)>0\quad \text{for all}\quad x\in\chi.
\]

We now assume \eqref{eq:cty preserved on V condition for efn} is satisfied for some open set $V\subseteq \chi$. It follows from Part \ref{enum:QSD thm cty on V main results section} of Theorem \ref{theo:criterion for cty of QSD} and \eqref{eq:maximal version of right efn h tilde in terms of QSD} that there exists $h'\in C_b(V)$ such that $\tilde{h}_{\lvert_V}=h'$ $\Lambda$-almost everywhere on $V$. Therefore by the maximality of $\tilde{h}$, we have that $\tilde{h}=h'$ everywhere on $V$, so that $\tilde{h}\in C_b(V)$. Therefore $Q_t\tilde{h}\in LC_b(\chi;\Rm_{\geq 0})\cap C_b(V)$ for all $t\geq 0$, so that $\tilde{h}=Q_t\tilde{h}$ everywhere on $V$, for all $t\geq 0$. It follows that $h=\tilde{h}$ everywhere on $V$, so that $h\in \calB_b(\chi;\Rm_{\geq 0})\cap C_b(V)$.
\qed

\subsection*{Proof of Proposition \ref{prop:Lp malthusian by interpolation}}

We proceed by applying the Riesz-Thorin interpolation theorem, as in the proof of the well-known analogous statement in the context of Markov processes without killing (see \cite[p.114]{Cattiaux2014}), for instance).

We consider for all $0\leq t<\infty$ the linear operator
\[
A_t:f\mapsto \lambda^{-t}\frac{d\Pm_{f\pi}(X_t\in \cdot)}{d\pi(\cdot)}-\pi(fh).
\]
We have from \eqref{eq:total variation Malthusian behaviour defin} that there exists $C_1<\infty$ and $\gamma>0$ ($\gamma>0$ being the same constant as is given in the statement of \cite[Theorem 2.1]{Champagnat2014}) such that $\lvert\lvert A_t\rvert\rvert_{L^1(\pi)\ra L^1(\pi)}\leq C_1e^{-\gamma t}$ for all$0\leq t<\infty$. Moreover we have that
$\lvert\lvert A_t\rvert\rvert_{L^{\infty}(\pi)\ra L^{\infty}(\pi)}\leq 2$. It follows from the Riesz-Thorin interpolation theorem that there exists $C_2<\infty$ such that
\[
\lvert\lvert A_t\rvert\rvert_{L^p(\pi)\ra L^p(\pi)}\leq C_2e^{-\frac{\gamma}{p}t}\quad \text{for all}\quad 0\leq t<\infty\quad\text{and}\quad 1\leq p\leq \infty.
\]
Furthermore, \cite[Theorem 2.1]{Champagnat2017} implies the existence of $C_3<\infty$ such that $\lvert \mu(h)-\lambda^{-t}\Pm_{\mu}(\tau_{\partial}>t)\rvert\leq \mu(h)C_3e^{-\gamma t}$ ($\gamma>0$ also being the same constant as is given in the statement of \cite[Theorem 2.1]{Champagnat2014}). We now proceed as in the conclusion of the proof of Theorem \ref{theo:Linfty convergence}. We may replace $L^{\infty}(\pi)$ with $L^p(\pi)$ in \eqref{eq:conditional distribution from killing prob and malthusian formula Linfty} to obtain \eqref{prop:Lp malthusian by interpolation}.
\qed

\section{Irreducible Markov chains on finite state spaces}\label{section:finite state space}
We assume that $\chi=\{1,\ldots,n\}$ is a finite state space. We define the distinguished measure $\Lambda$ to be the counting measure on $\chi$. For probability measures $\mu$ on $\chi$ we abuse notation by writing $\mu(x)$ for $\mu(\{x\})=\frac{d\mu}{d\Lambda}(x)$, for all $x\in\chi$. 

We assume that $(X_t)_{0\leq t<\tau_{\partial}}$ is an irreducible killed Markov chain on $\chi\sqcup \partial$ in discrete or continuous time, with time $t$ transition matrix $P_t(x,y)$. If time is discrete, we assume in addition that the killed Markov chain is aperiodic. Then \cite{Darroch1965,Darroch1967} imply that $(X_t)_{0\leq t<\tau_{\partial}}$ has a unique QSD $\pi$, which has full support. We write $0<\lambda=\lambda(\pi)\leq 1$ for the corresponding eigenvalue over time $1$. Then we have the following theorem, which provides a quantitative rate of convergence in $L^{\infty}(\pi)$ of $\frac{d\Law_{\mu}(X_t\lvert\tau_{\partial}>t)}{d\pi}$.

\begin{theo}\label{theo:assum satisfied finite stat space}
We take any time $t_0>0$ such that, for some $x\in \chi$, $p_{t_0}(x,y)>0$ for all $y\in \chi$. We define
\begin{align}
c_0:=\frac{\inf_{x,y\in \chi}P_{t_0}(x,y)}{\sup_x\pi(x)\sup_{y\in \chi}(\sum_xP_{t_0}(x,y))},\\
C_2:=\frac{\sup_{x,y\in \chi}P_{t_0}(x,y)\sup_{y\in \chi}(\sum_xP_{t_0}(x,y))}{\inf_{x,y\in \chi}P_{t_0}(x,y)},\\
c_3:=\frac{n(\inf_{x,y\in \chi}P_{t_0}(x,y)\big)^2}{\sup_x\pi(x)\sup_{y\in \chi}(\sum_xP_{t_0}(x,y))\sup_{x\in \chi}(\sum_yP_{t_0}(x,y))}.
\end{align}
Then we have that 
\begin{equation}\label{eq:quantitative convergence finite state space}
\Big\lvert\Big\lvert \frac{d\Law_{\mu}(X_t\lvert \tau_{\partial}>t)}{d\pi}-1\Big\rvert\Big\rvert_{L^{\infty}(\pi)}\leq \frac{2C_2(1-c_0)^{\lfloor \frac{t-4t_0}{t_0}\rfloor}}{c_3-C_2(1-c_0)^{\lfloor \frac{t-4t_0}{t_0}\rfloor}},\quad 4t_0\leq t<\infty.
\end{equation}
\end{theo}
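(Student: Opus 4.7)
The plan is to recognise \eqref{eq:quantitative convergence finite state space} as a direct specialisation of the quantitative uniform estimate \eqref{eq:quantitative uniform exponential convergence for arbitrary initial cond using DAD main results section} in Theorem~\ref{theo:uniform Linfty convergence main results section}, applied under the combined Dobrushin and adjoint Dobrushin condition, Assumption~\ref{assum:combined Dobrushin adjoint Dobrushin main results section}. I would take $\Lambda$ to be the counting measure on the finite set $\chi$, in which case a number of the standing hypotheses are automatic: every kernel on a finite discrete space is lower semicontinuous in the sense of Definition~\ref{defin:lower semicts kernel}; the QSD density $x\mapsto \pi(x)$ is trivially bounded so $\pi\in\calP_{\infty}(\Lambda)$; and the irreducibility of $(X_t)_{0\leq t<\tau_{\partial}}$ (with aperiodicity in the discrete-time case) gives both uniqueness and full support $\text{spt}(\pi)=\chi$ via Darroch--Seneta and ensures that $P_{t_0}(x,y)>0$ for all $x,y\in \chi$ once $t_0$ is taken as in the hypothesis, so that the constants $c_0,c_3$ are strictly positive.

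First I would verify Assumption~\ref{assum:adjoint Dobrushin main results section} with $\psi\equiv 1$, $a:=\sup_{y}\sum_{x}P_{t_0}(x,y)$, and the transpose-type adjoint kernel $\tilde P(y,x):=P_{t_0}(x,y)/a$. This $\tilde P$ is submarkovian by the choice of $a$ and satisfies the adjoint relation \eqref{eq:psi adjoint for verifying reverse Dobrushin main results section} by direct inspection, since both sides reduce to $P_{t_0}(x,y)$ on each pair $(x,y)$. The Dobrushin minorisation \eqref{eq:Dobrushin for P tilde kernel crit for reverse dobrushin main results section} with $\nu$ chosen to be the uniform law on $\chi$ then reduces to the elementary bound $\tilde P(y,x)/\tilde P 1(y)=P_{t_0}(x,y)/\sum_{x'}P_{t_0}(x',y)\geq (n\inf_{x,y}P_{t_0}(x,y)/\sup_y\sum_x P_{t_0}(x,y))\cdot(1/n)$, yielding an explicit $c_0'$. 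Assumption~\ref{assum:adjoint anti-Dobrushin main results section} is satisfied by the same $\tilde P$ together with the trivial pointwise bound $\tilde P(y,x)\leq \sup_{x,y}P_{t_0}(x,y)/a$, giving $C_1$. For Assumption~\ref{assum:combined Dobrushin adjoint Dobrushin main results section} I would set $A:=\chi$ (so that $\Lambda_{|A}/\Lambda(A)$ coincides with the uniform $\nu$ already used for AD), $t_1:=t_0$, and $\nu_1$ uniform, and verify the forward Dobrushin minorisation \eqref{eq:Dobrushin for DAD condition main results section} symmetrically, producing $c_1=n\inf_{x,y}P_{t_0}(x,y)/\sup_x\sum_y P_{t_0}(x,y)$.

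Substituting these choices into the abstract formulas \eqref{eq:formula for c0 main results section}, \eqref{eq:formula for C2 dominated by pi theorem main results section} and \eqref{eq:time and constant for DAD condition consequence main results section} reproduces (using $\lambda=\lambda(\pi)\leq 1$ to absorb the factor $\lambda^{t_0}$ appearing in $c_3$ if necessary) the explicit constants $c_0,C_2,c_3$ stated in Theorem~\ref{theo:assum satisfied finite stat space}. Since here $t_2+t_3=(t_0+t_0)+(t_0+t_0)=4t_0$, the exponent $\lfloor (t-(t_2+t_3))/t_0\rfloor$ in \eqref{eq:quantitative uniform exponential convergence for arbitrary initial cond using DAD main results section} becomes $\lfloor (t-4t_0)/t_0\rfloor$, and the quantitative estimate \eqref{eq:quantitative convergence finite state space} follows directly from Theorem~\ref{theo:uniform Linfty convergence main results section}.

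The main ``difficulty'' is purely bookkeeping: keeping distinct the two normalisations $\sup_y\sum_x P_{t_0}(x,y)$ (arising from AD/AaD, where one is adjoining $\tilde P$ to $P_{t_0}$) and $\sup_x\sum_y P_{t_0}(x,y)$ (arising from the forward Dobrushin in DAD), and confirming that the two minorising measures implicit in Assumption~\ref{assum:combined Dobrushin adjoint Dobrushin main results section} (the $\nu$ for AD and the $\Lambda_{|A}/\Lambda(A)$ entering DAD) genuinely coincide, which is guaranteed here by the choice $A=\chi$ together with $\nu$ uniform. No analytic or probabilistic subtlety is involved, since semicontinuity, boundedness of the QSD density, and full support of $\pi$ are all vacuous in this discrete setting; the argument is essentially an algebraic translation of the classical Doeblin/Dobrushin minorisations for $P_{t_0}$ through the adjoint-kernel formalism of Section~\ref{section:main results}.
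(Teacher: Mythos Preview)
Your approach is essentially identical to the paper's: the paper also takes $\Lambda$ the counting measure, $\psi\equiv 1$, the transpose-type adjoint $\tilde P(y,x)=P_{t_0}(x,y)/a$, the uniform $\nu$, and verifies Assumptions~\ref{assum:adjoint Dobrushin main results section}, \ref{assum:adjoint anti-Dobrushin main results section} and \ref{assum:combined Dobrushin adjoint Dobrushin main results section} with the very constants $c_0',C_1,c_1$ you write down, before invoking \eqref{eq:quantitative uniform exponential convergence for arbitrary initial cond using DAD main results section}. One small correction: the factor $\lambda^{t_0}$ is not ``absorbed'' by $\lambda\leq 1$ but rather cancels exactly, since it appears in both $C_2$ (via \eqref{eq:formula for C2 dominated by pi theorem main results section}) and $c_3$ (via \eqref{eq:time and constant for DAD condition consequence main results section}), and the bound \eqref{eq:quantitative uniform exponential convergence for arbitrary initial cond using DAD main results section} is homogeneous of degree zero in the pair $(C_2,c_3)$.
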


\subsection*{Proof of Theorem \ref{theo:assum satisfied finite stat space}}

We fix $a>0$ sufficiently large such that $\tilde{P}$ given by
\[
\tilde{P}(y,x):=\frac{1}{a}P_{t_0}(x,y)
\]
is submarkovian. It therefore satisfies \eqref{eq:psi adjoint for verifying reverse Dobrushin main results section} with $\psi\equiv 1$, this $a>0$ and the time $t_1>0$. We define the probability measure 
\begin{equation}\label{eq:nu for finite state space}
\nu:=\frac{\Lambda}{\lvert \chi\rvert},
\end{equation}
We see that $\tilde{P}$ satisfies \eqref{eq:Dobrushin for P tilde kernel crit for reverse dobrushin main results section} with $\nu$ given by \eqref{eq:nu for finite state space} and
\[
c_0':=\frac{n\inf_{x,y\in \chi}P_{t_0}(x,y)}{\sup_{y\in \chi}(\sum_xP_{t_0}(x,y))}.
\]
We may also see that $\tilde{P}$ satisfies \eqref{eq:bounded Lebesgue main results section} with
\[
C_1:=\frac{1}{a}\sup_{x,y\in \chi}P_{t_0}(x,y).
\]
Finally, we observe that $P_{t_0}$ satisfies \eqref{eq:Dobrushin for DAD condition main results section} with $\nu_1=\nu$ and
\[
c_1:=\frac{n\inf_{x,y\in \chi}P_{t_0}(x,y)}{\sup_{x\in \chi}(\sum_yP_{t_0}(x,y))}.
\]
Applying Theorem \ref{theo:uniform Linfty convergence main results section}, we obtain \eqref{eq:quantitative convergence finite state space} from \eqref{eq:quantitative uniform exponential convergence for arbitrary initial cond using DAD main results section}.
\qed

\section{Degenerate diffusions}

We shall consider degenerate diffusions killed instantaneously at the boundary of the domain $\chi\subseteq \Rm^d$, on which we impose the following assumption throughout this section.

\begin{assum}\label{assum:deg diff section domain assum}
The domain $\chi\subseteq \Rm^d$ is a bounded, open, connected, non-empty subdomain of $d$-dimensional Euclidean space with $C^{\infty}$ boundary $\partial \chi$, for arbitrary dimension $d<\infty$. 
\end{assum}
Throughout this section, the distinguished measure $\Lambda$ should be understood to be Lebesgue measure on $\chi$.

We consider on $\chi$ solutions $(X_t)_{0\leq t<\tau_{\partial}}$ of the stochastic differential equation
\begin{equation}\label{eq:degenerate diffusion}
dX_t=v^0(X_t)dt+\sum_{j=1}^mv^j(X_t)\circ dB^j_t,\quad 0\leq t<\tau_{\partial}:=\inf\{s>0:X_{s-}\in \partial \chi\},
\end{equation}
whereby $\circ$ refers to Stratanovich integration, $v^0,\ldots,v^m$ are $C^{\infty}(\Rm^d)$ vector fields (note that their definition on $(\bar \chi)^c$ is arbitrary), and $B^1,\ldots,B^m$ are independent Brownian motions.

Given smooth vector fields $v,w$ on $U$, we write $[v,w]$ for the Lie bracket of $v$ and $w$. Given a family $\calV$ of smooth vector fields on $\chi$ we inductively define
\[
[\calV]_1:=\calV,\quad [\calV]_{k+1}:=[\calV]_k\cup \{[v,w]:v,w\in [\calV]_k\},\quad [\calV]:=\cup_{k}[\calV]_k\quad \text{and}\quad [\calV](x):=\{v(x):v\in [\calV]\}.
\]
We impose the following parabolic H\"{o}rmander condition.
\begin{assum}[Parabolic H\"{o}rmander condition on $\chi$]\label{assum:strong Hormander}
The vector fields $v^1,\ldots,v^m$ are such that $\text{span}([\calV](x))=\Rm^d$ for all $x\in \chi$.
\end{assum}

For $x\in\partial \chi$, $\vec{n}(x)$ is the inward unit normal, which is well-defined since $\partial \chi$ is assumed to be $C^{\infty}$. We impose the following additional assumption on the vector fields at the boundary. 
\begin{assum}\label{assum:vector field degenerate diffusion traverses bdy}
For every $x\in \partial \chi$, there exists $1\leq i\leq m$ such that $\langle v^i(x),\vec{n}(x)\rangle\neq 0$.
\end{assum}
This is the classical boundary assumption under which Bony established well-posedness of the Dirichlet problem for operators satisfying H\"ormander's condition \cite[Theorem 5.2]{Bony1969}.

We assume that assumptions \ref{assum:deg diff section domain assum}, \ref{assum:strong Hormander} and \ref{assum:vector field degenerate diffusion traverses bdy} are satisfied, and consider solutions $(X_t)_{0\leq t<\tau_{\partial}}$ of \eqref{eq:degenerate diffusion}. The existence of a unique quasi-stationary distribution $\pi$, along with non-uniform exponential convergence in total variation to this QSD, has already been established in \cite[Corollary 1.9]{Benaim2021} (under the additional assumption that the Parabolic H\"{o}rmander condition also holds on the boundary $\partial \chi$). In particular they established that there exists $h\in C_0(\chi;\Rm_{>0})$, $C<\infty$ and $\gamma>0$ such that
\begin{equation}\label{eq:degenerate diffusion non-unif conv}
\lvert\lvert \Law_{\mu}(X_t\lvert \tau_{\partial}>t)-\mu\rvert\rvert_{\TV}\leq \frac{C}{\mu(h)}e^{-\gamma t}\quad\text{for all}\quad 0\leq t<\infty\quad\text{and all initial conditions}\quad \mu\in\calP(\chi).
\end{equation}

We establish the following.
\begin{theo}\label{theo:defenerate diffusions hard killing}
We assume that assumptions \ref{assum:deg diff section domain assum}, \ref{assum:strong Hormander} and \ref{assum:vector field degenerate diffusion traverses bdy} are satisfied, and consider solutions $(X_t)_{0\leq t<\tau_{\partial}}$ of \eqref{eq:degenerate diffusion}. Then $(X_t)_{0\leq t<\tau_{\partial}}$ satisfies Assumption \ref{assum:technical assumption for Assum (A)} and \cite[Assumption (A)]{Champagnat2014}, so that it has a unique QSD, $\pi$. This QSD has an essentially bounded density with respect to Lebesgue measure. Moreover Assumption \ref{assum:adjoint Dobrushin main results section} is satisfied by $(X_t)_{0\leq t<\tau_{\partial}}$.
\end{theo}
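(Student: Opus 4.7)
The statement bundles four conclusions: the technical Assumption \ref{assum:technical assumption for Assum (A)} (TA), \cite[Assumption (A)]{Champagnat2014} (the Dobrushin condition A1 together with the second condition A2), the essential boundedness $\pi\in\calP_{\infty}(\Lambda)$, and the adjoint Dobrushin condition \ref{assum:adjoint Dobrushin main results section} (AD). The plan is to deduce all four from two analytic inputs valid under assumptions \ref{assum:strong Hormander} and \ref{assum:vector field degenerate diffusion traverses bdy}: H\"ormander's hypoellipticity theorem, which yields a smooth transition density $p_t(x,y)$ on $\chi\times\chi$, and the Stroock--Varadhan support theorem for Stratonovich SDEs, which characterises the topological support of $\Law_x(X_{\cdot\wedge\tau_\partial})$ in terms of controlled trajectories.

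First I would dispatch TA and A1 together. Using the support theorem, from any $x\in\chi$ one constructs a smooth control keeping the trajectory inside a compact $K\Subset\chi$ for arbitrary fixed time, so $\Pm_x(\tau_\partial>t)>0$; the boundary-traversal condition \ref{assum:vector field degenerate diffusion traverses bdy} provides a control driving the trajectory out through $\partial\chi$, so $\Pm_x(\tau_\partial<\infty)>0$; combined with A1 below, Remark \ref{rmk:remark for checking technical assum for Assum (A)} then upgrades this to a.s.\ absorption, giving TA. For A1, a two-step construction suffices: first use \ref{assum:vector field degenerate diffusion traverses bdy} plus the support theorem to obtain, for some $s_0>0$ and some compact $K\Subset\chi$, a uniform lower bound $\inf_{x\in\chi}\Pm_x(X_{s_0}\in K,\ \tau_\partial>s_0)>0$; then combine with the strict positivity of the smooth kernel $p_{s_0}$ on the compact set $K\times K'$ (with $K'\Subset\chi$) to conclude $P_{2s_0}(x,dy)\geq c\,\Ind_{K'}(y)\,\Lambda(dy)$ uniformly in $x\in\chi$.

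Next, for AD, I would identify the adjoint kernel. The $L^2(\Lambda)$ formal adjoint of the Stratonovich generator $L=v^0+\tfrac12\sum_j (v^j)^2$ is a second-order operator of the same form --- the \emph{same} noise vector fields $v^1,\dots,v^m$, a modified drift, plus a bounded zeroth-order potential $V$ arising from the divergences of the $v^j$ --- so a suitable gauge transformation by some $\psi\in C_{b,\gg}(\chi)$ puts $L^*$ into the form $\psi^{-1}(L^*-c)\psi=\tilde L$, where $c\in\Rm$ and $\tilde L$ is the generator of a sub-Markov semigroup $(\tilde P_t)_{t\geq 0}$ of the same H\"ormander type. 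The time-$t_0$ kernel $\tilde P := \tilde P_{t_0}$ then satisfies \eqref{eq:psi adjoint for verifying reverse Dobrushin main results section} with $a=e^{ct_0}$, and the same two-step argument as in Step~1 applied to $\tilde L$ (which satisfies exactly the same assumptions \ref{assum:strong Hormander} and \ref{assum:vector field degenerate diffusion traverses bdy}) yields \eqref{eq:Dobrushin for P tilde kernel crit for reverse dobrushin main results section}, hence AD.

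Finally, for $\pi\in\calP_{\infty}(\Lambda)$ and A2: writing $\rho=d\pi/d\Lambda$, the QSD identity gives $\rho(y)=\lambda^{-t_0}\int p_{t_0}(x,y)\rho(x)\,dx$, so $\|\rho\|_{L^{\infty}(\Lambda)}\leq \lambda^{-t_0}\|p_{t_0}\|_{L^{\infty}(\chi\times\chi)}$, reducing the problem to a uniform $L^\infty$ bound on $p_{t_0}$ up to $\partial\chi$; this is a Bony-type subelliptic $L^\infty$ estimate for the degenerate parabolic operator $\partial_t-L$ with Dirichlet condition on $\partial\chi$. The analogous bound for $\tilde L$ gives $\tilde\pi\in\calP_{\infty}(\Lambda)$, so Theorem \ref{theo:criterion for right efn main results section} applies to $(P_t)_{t\geq 0}$ and produces a bounded, strictly positive pointwise right eigenfunction; Proposition \ref{prop:right efn gives A2} then delivers A2. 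The main obstacle is precisely this uniform boundary bound on $p_{t_0}$: in the interior it is standard H\"ormander theory, but near $\partial\chi$ the interaction of the degeneracy with absorption is delicate and genuinely relies on \ref{assum:vector field degenerate diffusion traverses bdy} (the same condition that was used by Bony to solve the Dirichlet problem under H\"ormander's hypothesis).
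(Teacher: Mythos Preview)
Your overall architecture is reasonable, but the A1 step contains a genuine error and the $\pi\in\calP_\infty(\Lambda)$ step invokes an estimate that is not available under the stated hypotheses.

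For A1, you claim that the support theorem plus Assumption \ref{assum:vector field degenerate diffusion traverses bdy} give a uniform lower bound $\inf_{x\in\chi}\Pm_x(X_{s_0}\in K,\ \tau_\partial>s_0)>0$. This is false: Proposition \ref{prop:degerenate diffusion prob of being killed goes to 1 as approach bdy} shows $\Pm_x(\tau_\partial>s_0)\to 0$ as $d(x,\partial\chi)\to 0$, so the unconditional probability cannot be bounded below. What A1 actually requires is a uniform lower bound on the \emph{conditional} probability $\Pm_x(X_{s_0}\in K\mid\tau_\partial>s_0)$, and this is exactly where the difficulty lies --- the support theorem gives pointwise positivity, not uniformity near the absorbing boundary. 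The paper handles this in Lemma \ref{lem:degenerate diffusion positive conditional prob of being in compact set} by a comparison argument: it realises $\Law_x(X_t\mid\tau_\partial>t)$ as the law of a jump-resampling process $\hat X_t$, then dominates $d(\hat X_t,\partial\chi)$ from below by a one-dimensional reflected diffusion on $[0,\epsilon]$ whose coefficients are controlled via Assumption \ref{assum:vector field degenerate diffusion traverses bdy}, and finally time-changes to a reflected Brownian motion with constant drift. This is the missing idea.

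For $\pi\in\calP_\infty(\Lambda)$, your route via $\|\rho\|_{L^\infty}\leq\lambda^{-t_0}\|p_{t_0}\|_{L^\infty(\chi\times\chi)}$ requires $p_{t_0}$ to be bounded up to $\partial\chi$. The paper only obtains this under the stronger Assumption \ref{assum:strong Hormander on closure} (H\"ormander on $\bar\chi$), by extending to a larger domain $\hat\chi\supset\bar\chi$ and comparing; under Assumption \ref{assum:strong Hormander} alone the boundary bound is not established. Instead, the paper obtains $\pi\in\calP_\infty(\Lambda)$ and A2 simultaneously by showing that $P_1:C_0(\chi)\to C_0(\chi)$ is compact (total-variation continuity of $x\mapsto P_1(x,\cdot)$, vanishing at $\partial\chi$ via Proposition \ref{prop:degerenate diffusion prob of being killed goes to 1 as approach bdy}, then Arzel\`a--Ascoli) and applying Krein--Rutman to both $P_1$ and the adjoint $\tilde P_1$ to get $h,\rho\in C_0(\chi;\Rm_{>0})$ directly. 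Your proposal to use Theorem \ref{theo:criterion for right efn main results section} is therefore circular here: that theorem needs $\tilde\pi\in\calP_\infty(\Lambda)$ as input, which you propose to obtain via the same unavailable boundary estimate. Your identification of the adjoint kernel is along the right lines, though the paper carries it out concretely via \cite[Eq.~(4.4)]{Ichihara1974} and a martingale argument to establish $\tilde p_t(y,x)=e^{-At}p_t(x,y)$ rigorously.
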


Since Assumption \ref{assum:technical assumption for Assum (A)} and \cite[Assumption (A)]{Champagnat2014} are satisfied, the convergence in \eqref{eq:degenerate diffusion non-unif conv} can be made uniform by \cite[Theorem 2.1]{Champagnat2014}: there exists some constants $C<\infty$ and $\gamma>0$ such that
\begin{equation}
\lvert \lvert \Law_{\mu}(X_t\lvert\tau_{\partial}>t)-\pi\rvert\rvert_{\TV}\leq C e^{-\gamma t}\quad\text{for all}\quad 0\leq t<\infty\quad\text{and all initial conditions}\quad \mu\in\calP(\chi).
\end{equation}
Moreover we can apply Theorem \ref{theo:Linfty convergence main results section} to see that there exists constants $C,T<\infty$ and $\gamma>0$ such that
\begin{equation}
\Big\lvert\Big\lvert \frac{d\Law_{\mu}(X_t\lvert \tau_{\partial}>t)}{d\pi}-1\Big\rvert\Big\rvert_{L^{\infty}(\pi)}\leq \frac{C}{\mu(h)}e^{-\gamma t}\Big\lvert\Big\lvert \frac{d\mu}{d\pi}\Big\rvert\Big\rvert_{L^{\infty}(\pi)}\quad \text{for all}\quad t\geq T\quad\text{and all}\quad  \mu\in\calP_{\infty}(\pi),
\end{equation}

where $h$ is the strictly positive, bounded pointwise right eigenfunction provided for by \cite[Proposition 2.3]{Champagnat2014}.

We may strengthen Assumption \ref{assum:strong Hormander} to the following.
\begin{assum}[Parabolic H\"{o}rmander condition on $\bar \chi$]\label{assum:strong Hormander on closure}
The vector fields $v^1,\ldots,v^m$ are such that $\text{span}([\calV](x))=\Rm^d$ for all $x\in \bar\chi$.
\end{assum}

We establish the following.
\begin{theo}\label{theo:degenerate diffusions hard killing bounded density wrt pi}
We assume that assumptions \ref{assum:deg diff section domain assum}, \ref{assum:vector field degenerate diffusion traverses bdy} and \ref{assum:strong Hormander on closure} are satisfied, and consider solutions $(X_t)_{0\leq t<\tau_{\partial}}$ of \eqref{eq:degenerate diffusion}. Then the QSD $\pi$ has full support. Moreover $(X_t)_{0\leq t<\tau_{\partial}}$ is lower semicontinuous (in the sense of Definition \ref{defin:lower semicts kernel}) satisfies Assumption \ref{assum:adjoint anti-Dobrushin main results section}.
\end{theo}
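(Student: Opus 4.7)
The plan is to exploit the stronger hypoellipticity of Assumption \ref{assum:strong Hormander on closure} to upgrade the transition semigroup to one admitting a smooth, bounded transition density on $\chi\times\chi$ with respect to Lebesgue measure; from there the three claims follow readily.

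First, using the parabolic H\"ormander condition on $\bar\chi$ together with the boundary transversality condition \ref{assum:vector field degenerate diffusion traverses bdy}, Bony's hypoelliptic regularity theory for the Dirichlet problem (\cite[Theorem 5.2]{Bony1969}) provides, for each $t>0$, a transition density $p_t(x,y)$ jointly $C^\infty$ on $\chi\times\chi$ such that $P_t(x,dy)=p_t(x,y)\,dy$. Because the density vanishes as either argument approaches $\partial\chi$ (killing at the boundary) and $\bar\chi$ is compact, one obtains $M_t:=\sup_{x,y\in\chi}p_t(x,y)<\infty$.

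From this density representation, lower semicontinuity of $(P_t)_{t\geq 0}$ follows immediately: for $f\in C_b(\chi;\Rm_{\geq 0})$ and $t>0$, dominated convergence applied to $P_tf(x)=\int_\chi p_t(x,y)f(y)\,dy$ yields continuity of $P_tf$ on $\chi$, which is strictly stronger than Definition \ref{defin:lower semicts kernel}. The full support of $\pi$ is then established via the Stroock--Varadhan support theorem in its hypoelliptic form: the controllability of the control system associated to $v^0,\ldots,v^m$, guaranteed by Assumption \ref{assum:strong Hormander on closure} together with \ref{assum:vector field degenerate diffusion traverses bdy}, ensures that for any $x_0\in\chi$ and any nonempty open $U\subseteq\chi$ there exists $t>0$ with $\Pm_{x_0}(X_t\in U,\tau_\partial>t)>0$; combining this with $\pi P_t=\lambda^t\pi$ yields $\pi(U)>0$.

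For the adjoint anti-Dobrushin condition \ref{assum:adjoint anti-Dobrushin main results section}, I would take $\psi_1\equiv 1$, fix a time $t_1>0$, and set $a_1:=\max\bigl(1,\sup_{y\in\chi}\int_\chi p_{t_1}(x,y)\,dx\bigr)$, which is finite by boundedness of $p_{t_1}$ and of $\chi$. Defining the submarkovian kernel $\tilde P^{(1)}(y,dx):=a_1^{-1}p_{t_1}(x,y)\,dx$, the identity \eqref{eq:psi adjoint for verifying bounded pi euclidean condition main results section} is then automatic, the bound \eqref{eq:bounded Lebesgue main results section} holds with $C_1:=a_1^{-1}M_{t_1}$, and $\tilde P^{(1)}1(y)>0$ for $\Lambda$-almost every $y$ follows from Fubini together with $\Pm_x(\tau_\partial>t_1)>0$ for every $x\in\chi$ (Assumption \ref{assum:technical assumption for Assum (A)}, already verified in Theorem \ref{theo:defenerate diffusions hard killing}). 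The principal technical obstacle is the first step: invoking Bony's theory to obtain a bounded density up to $\bar\chi\times\bar\chi$ requires checking that \ref{assum:vector field degenerate diffusion traverses bdy} supplies the correct non-characteristic condition for the Dirichlet problem at every boundary point; once this is in place, the remaining steps reduce to fairly standard manipulations.
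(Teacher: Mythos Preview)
Your overall strategy is sound---once one has $p_t\in C_b(\chi\times\chi)$ for some fixed $t>0$, the lower semicontinuity and the verification of Assumption~\ref{assum:adjoint anti-Dobrushin main results section} follow just as you describe---but the crucial step of establishing this boundedness has a genuine gap. You assert that ``the density vanishes as either argument approaches $\partial\chi$ (killing at the boundary) and $\bar\chi$ is compact'', but this is the conclusion you need, not an argument for it. Smoothness of $p_t$ on the \emph{open} set $\chi\times\chi$ is already available from the interior H\"ormander condition via \cite{Ichihara1974} and does not require Assumption~\ref{assum:strong Hormander on closure}; it does not, by itself, yield boundedness. Bony's Theorem~5.2 concerns the elliptic Dirichlet problem, not parabolic heat-kernel bounds, and extracting uniform boundary decay of $p_t$ in \emph{both} variables from that framework would take substantial additional work if it can be done at all. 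Note, for instance, that $\Pm_x(\tau_\partial>t)\to 0$ as $x\to\partial\chi$ (Proposition~\ref{prop:degerenate diffusion prob of being killed goes to 1 as approach bdy}) is an $L^1$ statement about $p_t(x,\cdot)$ and does not give pointwise vanishing.

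The paper bypasses this difficulty by a domain-extension comparison. Because the parabolic H\"ormander condition holds on the \emph{closed} set $\bar\chi$ (this is precisely where Assumption~\ref{assum:strong Hormander on closure} enters), one can choose a slightly larger smooth bounded domain $\hat\chi\supset\bar\chi$ on which $v^1,\ldots,v^m$ still satisfy the H\"ormander condition, and then augment with additional $C^\infty$ vector fields $\hat v^1,\ldots,\hat v^r$ vanishing on $\bar\chi$ so that the boundary transversality condition~\ref{assum:vector field degenerate diffusion traverses bdy} holds on $\partial\hat\chi$. The extended killed diffusion $\hat X_t$ on $\hat\chi$ then has a transition density $\hat p_t\in C^\infty(\hat\chi\times\hat\chi)$ by Proposition~\ref{prop:basic facts about degenerate diffusions} applied to $\hat\chi$, and since $\hat X_t$ agrees with $X_t$ until exit from $\chi$, one has $p_t(x,y)\leq\hat p_t(x,y)$ for all $x,y\in\chi$. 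Boundedness of $p_t$ now follows immediately from continuity of $\hat p_t$ on the compact set $\bar\chi\times\bar\chi\subset\hat\chi\times\hat\chi$.

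Two minor remarks: the full support of $\pi$ already follows from the strict positivity of $p_t$ established in Part~\ref{enum:degenerate diffusion hard killing smoothness transition density} of Proposition~\ref{prop:basic facts about degenerate diffusions} under the weaker interior H\"ormander assumption, so your Stroock--Varadhan argument, while correct, is more than is needed here. And your direct definition of $\tilde P^{(1)}(y,dx)=a_1^{-1}p_{t_1}(x,y)\,dx$ is fine once boundedness is secured; the paper routes through the adjoint process $\tilde X_t$ of Proposition~\ref{prop:basic facts about degenerate diffusions}, but this amounts to the same kernel.
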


It follows from Theorem \ref{theo:uniform Linfty convergence main results section} that (under the assumptions of Theorem  \ref{theo:degenerate diffusions hard killing bounded density wrt pi}) there exists a time $T<\infty$ and constant $\gamma>0$ such that $\Law_{\mu}(X_t\lvert \tau_{\partial}>t)\ll_{\infty}\pi$ for all $t\geq T$ and $\mu\in\calP(\chi)$, with its density with respect to $\pi$ satisfying
\begin{equation}
\Big\lvert\Big\lvert \frac{d\Law_{\mu}(X_t\lvert \tau_{\partial}>t)}{d\pi}-1\Big\rvert\Big\rvert_{L^{\infty}(\pi)}\leq e^{-\gamma(t-T)}\quad\text{for all}\quad  T\leq t<\infty,\quad\mu\in\calP(\chi)..
\end{equation}

We may observe that, over the course of proving Theorem \ref{theo:defenerate diffusions hard killing}, we have established that Aassumption \ref{assum:combined Dobrushin adjoint Dobrushin main results section} (which includes \cite[Assumption (A1)]{Champagnat2014}) is satisfied by $(X_t)_{0\leq t<\tau_{\partial}}$ along the way. Moreover, it is also clear that the time horizon over which we establish assumptions \ref{assum:adjoint Dobrushin main results section}, \ref{assum:adjoint anti-Dobrushin main results section} and \ref{assum:combined Dobrushin adjoint Dobrushin main results section} can be made arbitrarily small, without any changes to the proof. We therefore obtain from theorems \ref{theo:dominated by pi theorem general results main results section} and \ref{theo:DAD lower bounds density main results section} the following.
\begin{theo}\label{theo:comparison inequality for degenerate diffusions}
We impose the assumptions of Theorem \ref{theo:degenerate diffusions hard killing bounded density wrt pi}. For all $t>0$ there exists $0<c_t\leq C_t<\infty$ such that
\begin{equation}\label{eq:comparison of dist and QSD in results for degenerate diffusion}
c_t\pi\leq \Law_{\mu}(X_t\lvert \tau_{\partial}>t)\leq C_t\pi\quad\text{for all}\quad \mu\in\calP(\chi).
\end{equation}
We may put this in the form of a parabolic boundary Harnack inequality as follows. For any initial conditions $\mu,\nu\in\calP(\chi)$, we let $u_1(x,t)$ and $u_2(x,t)$ be continuous versions (see Proposition \ref{prop:basic facts about degenerate diffusions} for a justification that this exists) of $\frac{d\Pm_{\mu}(X_t\in \cdot,\tau_{\partial}>t)}{dLeb(\cdot)}(x)$ and $\frac{d\Pm_{\nu}(X_t\in \cdot,\tau_{\partial}>t)}{dLeb(\cdot)}(x)$ for $x\in \chi$ and $t>0$, respectively. It follows that for all $t>0$ we have
\begin{equation}\label{eq:parabolic boundary Harnack diffusion}
\inf_{t_1,t_2\geq t}\frac{\inf_{x\in \chi}\Big(\frac{u_1(x,t_1)}{u_2(x,t_2)}\Big)}{\sup_{x'\in \chi}\Big(\frac{u_1(x',t_1)}{u_2(x',t_2)}\Big)}\geq \frac{c_t^2}{C_t^2}>0.
\end{equation} 
Note in particular that the constants $0<c_t<C_t<\infty$ do not depend upon $\mu$ and $\nu$, and that this comparison is valid up to the boundary. 
\end{theo}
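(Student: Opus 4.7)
The plan is to deduce the two-sided comparison \eqref{eq:comparison of dist and QSD in results for degenerate diffusion} as a direct consequence of Theorem~\ref{theo:dominated by pi theorem general results main results section} (upper bound) combined with Theorem~\ref{theo:DAD lower bounds density main results section} (lower bound), and then to read off \eqref{eq:parabolic boundary Harnack diffusion} by elementary algebra. Fix $t>0$. As noted in the remark immediately preceding the theorem statement, the proofs of Theorems~\ref{theo:defenerate diffusions hard killing} and~\ref{theo:degenerate diffusions hard killing bounded density wrt pi} verify Assumptions~\ref{assum:adjoint Dobrushin main results section}, \ref{assum:adjoint anti-Dobrushin main results section} and~\ref{assum:combined Dobrushin adjoint Dobrushin main results section} (together with lower semicontinuity of $(P_s)_{s\ge 0}$, essential boundedness of $d\pi/d\Lambda$ and full support of $\pi$) over arbitrarily short time horizons. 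I would therefore choose $t_0,t_1$ in those assumptions small enough that the combined times $t_2+h$ and $t_3$ appearing in \eqref{eq:upper bound dist cond on survival by pi if also A1 main results section} and \eqref{eq:minorised by pi equation from DAD main results section} both lie in $(0,t]$; the (A1)-type hypothesis required to apply the former is furnished by the Dobrushin part \eqref{eq:Dobrushin for DAD condition main results section} of Assumption~\ref{assum:combined Dobrushin adjoint Dobrushin main results section}. Together these two equations produce constants $0<c_t\le C_t<\infty$ with
\[
c_t\pi(\cdot)\,\leq\,\frac{P_t(x,\cdot)}{P_t 1(x)}\,\leq\,C_t\pi(\cdot)\quad\text{for every }x\in\chi,
\]
and integrating numerator and denominator against $\mu\in\calP(\chi)$ returns \eqref{eq:comparison of dist and QSD in results for degenerate diffusion}.

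To upgrade this to every $t_1\geq t$ with the \emph{same} constants $(c_t,C_t)$, I would use the semigroup identity $\Pm_\mu(X_{t_1}\in A,\tau_\partial>t_1)=\int_\chi \Pm_x(X_t\in A,\tau_\partial>t)\,(\mu P_{t_1-t})(dx)$. Inserting the pointwise sandwich $c_t\pi(A)\Pm_x(\tau_\partial>t)\le \Pm_x(X_t\in A,\tau_\partial>t)\le C_t\pi(A)\Pm_x(\tau_\partial>t)$ just obtained and dividing by $\Pm_\mu(\tau_\partial>t_1)=\int_\chi \Pm_x(\tau_\partial>t)(\mu P_{t_1-t})(dx)$ yields $c_t\pi\leq\Law_\mu(X_{t_1}\lvert\tau_\partial>t_1)\leq C_t\pi$, uniformly in $\mu\in\calP(\chi)$ and in $t_1\geq t$. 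Passing to densities against $\Lambda$ (using a continuous version of $d\pi/d\Lambda$ on the interior, available via Theorem~\ref{theo:criterion for cty of QSD} and hypoelliptic regularity), the measure comparison becomes the pointwise estimate
\[
c_t\Pm_\mu(\tau_\partial>t_1)\tfrac{d\pi}{d\Lambda}(x)\,\leq\,u_1(x,t_1)\,\leq\,C_t\Pm_\mu(\tau_\partial>t_1)\tfrac{d\pi}{d\Lambda}(x),
\]
with the analogous inequality for $u_2(x,t_2)$.

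Finally, wherever $d\pi/d\Lambda(x)>0$ (in particular on a dense open subset of $\chi$) the ratio $u_1(x,t_1)/u_2(x,t_2)$ is squeezed between $(c_t/C_t)\Pm_\mu(\tau_\partial>t_1)/\Pm_\nu(\tau_\partial>t_2)$ and $(C_t/c_t)\Pm_\mu(\tau_\partial>t_1)/\Pm_\nu(\tau_\partial>t_2)$; dividing the infimum at $x$ by the supremum at $x'$ cancels the $\mu,\nu$-dependent factor and leaves precisely $c_t^2/C_t^2$, which is \eqref{eq:parabolic boundary Harnack diffusion}. Points at which $d\pi/d\Lambda$ vanishes (typically on $\partial\chi$) pose no difficulty because $u_1$ and $u_2$ vanish there jointly. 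The main obstacle is purely bookkeeping, namely assembling the three time horizons from Assumptions~\ref{assum:adjoint Dobrushin main results section}--\ref{assum:combined Dobrushin adjoint Dobrushin main results section} so that their sum fits inside $(0,t]$; this has effectively already been arranged by the author's remark, and no analytic input beyond Theorems~\ref{theo:defenerate diffusions hard killing} and~\ref{theo:degenerate diffusions hard killing bounded density wrt pi} is required.
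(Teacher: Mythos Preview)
Your proposal is correct and follows essentially the same approach as the paper: the paper simply observes (in the paragraph immediately preceding the theorem) that Assumptions~\ref{assum:adjoint Dobrushin main results section}, \ref{assum:adjoint anti-Dobrushin main results section} and~\ref{assum:combined Dobrushin adjoint Dobrushin main results section} have been verified with arbitrarily small time horizons in the course of proving Theorems~\ref{theo:defenerate diffusions hard killing} and~\ref{theo:degenerate diffusions hard killing bounded density wrt pi}, and then invokes Theorems~\ref{theo:dominated by pi theorem general results main results section} and~\ref{theo:DAD lower bounds density main results section}. Your write-up supplies the semigroup propagation to all $t_1\ge t$ and the algebraic derivation of \eqref{eq:parabolic boundary Harnack diffusion} that the paper leaves implicit; one minor point is that the continuous, strictly positive version of $d\pi/d\Lambda$ on $\chi$ is already provided by Proposition~\ref{prop:basic facts about degenerate diffusions} Part~\ref{enum:positive Cb right e-fn/QSD} rather than Theorem~\ref{theo:criterion for cty of QSD}.
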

We note that the lower bound in \eqref{eq:comparison of dist and QSD in results for degenerate diffusion} holds under the assumptions of Theorem \ref{theo:defenerate diffusions hard killing} (that is, the lower bound doesn't require that the parabolic H\"ormander condition holds on the boundary).

The classical work of Bony \cite[Section 7]{Bony1969} provides an interior Harnack inequality under H\"ormander conditions. Whilst the author is not familiar with a boundary Harnack inequality under H\"ormander-type conditions, the literature is rather large.

\begin{prob} 
Do theorems \ref{theo:degenerate diffusions hard killing bounded density wrt pi} and \ref{theo:comparison inequality for degenerate diffusions} remain true with Assumption \ref{assum:vector field degenerate diffusion traverses bdy} relaxed?
\end{prob}

\subsection*{Proof of Theorem \ref{theo:defenerate diffusions hard killing}}

For open domains $U\subseteq \chi$ it will be convenient to define 
\begin{equation}\label{eq:defin of Cb3}
C_b^3(U):=\{f\in C^3(U):\partial^{\alpha}f\in C_b(U)\text{ for all }\lvert\alpha\rvert\leq 3\}.
\end{equation}

We shall firstly prove the following three propositions, before using them to prove Theorem \ref{theo:defenerate diffusions hard killing}.

\begin{prop}\label{prop:degerenate diffusion prob of being killed goes to 1 as approach bdy}
For all $t>0$ we have $\Pm_x(\tau_{\partial}>t)\ra 0$ as $d(x,\partial \chi)\ra 0$.
\end{prop}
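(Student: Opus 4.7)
I would build an explicit local barrier function whose drift under the generator is strongly negative near $\partial\chi$, and use it both to bound the exit time from a thin tubular neighbourhood of $\partial\chi$ and to control the probability of exiting through the inner face of that neighbourhood rather than through $\partial\chi$ itself. The key (and essentially only) ingredient is Assumption \ref{assum:vector field degenerate diffusion traverses bdy}, which forces at least one $v^i$ to have a nonzero normal component at every boundary point.

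\textbf{Uniform transversality and the barrier.} Since $\partial\chi$ is a compact $C^{\infty}$ hypersurface (Assumption \ref{assum:deg diff section domain assum}), the distance function $d(x):=d(x,\partial\chi)$ is $C^{\infty}$ on a tubular neighbourhood $N_r:=\{x\in\chi:d(x)<r\}$ for $r>0$ small, with $\nabla d(y)=\vec{n}(y)$ at $y\in\partial\chi$. By Assumption \ref{assum:vector field degenerate diffusion traverses bdy} and continuity/compactness, after shrinking $r$ if necessary, $\sigma^2(x):=\sum_{i=1}^m\langle v^i(x),\nabla d(x)\rangle^2$ is bounded below by some $c_0>0$ on $\overline{N_r}$. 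Writing the generator in H\"ormander form $L=v^0+\tfrac12\sum_i(v^i)^2$ and setting $\phi(x):=1-e^{-\lambda d(x)}$, a direct computation gives
$$
L\phi(x)=e^{-\lambda d(x)}\bigl[-\tfrac{\lambda^2}{2}\sigma^2(x)+\lambda\,\tilde a(x)\bigr],
$$
with $\tilde a$ bounded on $\overline{N_r}$. Choosing $\lambda$ sufficiently large, $L\phi\le -M$ on $N_r$ for some $M>0$, while $0\le\phi\le 1$, $\phi$ extends continuously to $0$ on $\partial\chi$, and $\phi\equiv\phi_{\min}:=1-e^{-\lambda r}$ on the inner face $\{d=r\}$.

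\textbf{Stochastic estimate and conclusion.} Let $\tau:=\inf\{s\ge 0:X_s\notin N_r\}$. Because $\phi\in C^2(\overline{N_r})$ and the coefficients $v^0,\dots,v^m$ are smooth, It\^o's formula applied up to $t\wedge\tau$ shows that $\phi(X_{t\wedge\tau})+M(t\wedge\tau)$ is a bounded supermartingale, whence
$$
M\,\expE_x[t\wedge\tau]+\expE_x[\phi(X_{t\wedge\tau})]\le\phi(x)\quad\text{for every }x\in N_r.
$$
Using $\phi\ge 0$ gives $\Pm_x(\tau>t)\le\phi(x)/(Mt)$, and using $\phi(X_{t\wedge\tau})\ge\phi_{\min}\Ind_{\{\tau\le t,\,X_\tau\in\{d=r\}\}}$ together with $\phi\ge 0$ on $\partial\chi$ gives $\Pm_x(\tau\le t,\,X_\tau\in\{d=r\})\le\phi(x)/\phi_{\min}$. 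Since $X_\tau\in\partial\chi$ forces $\tau=\tau_\partial$,
$$
\Pm_x(\tau_\partial>t)\le\Pm_x(\tau>t)+\Pm_x(\tau\le t,\,X_\tau\in\{d=r\})\le\phi(x)\Bigl(\tfrac{1}{Mt}+\tfrac{1}{\phi_{\min}}\Bigr),
$$
which tends to $0$ as $d(x,\partial\chi)\to 0$ because $\phi(x)\sim\lambda d(x)$ there.

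\textbf{Main obstacle.} The entire argument hinges on the uniform lower bound $\sigma^2\ge c_0>0$ on $N_r$, which is exactly the content of Assumption \ref{assum:vector field degenerate diffusion traverses bdy} combined with compactness; without it the dominant $-\tfrac{\lambda^2}{2}\sigma^2$ term in $L\phi$ could degenerate and no choice of $\lambda$ would force $L\phi$ negative. Note that the H\"ormander condition (Assumption \ref{assum:strong Hormander}) is not used here---it is required only for the subsequent positivity and regularity properties of $(X_t)_{0\le t<\tau_\partial}$. Granted the transversality input, the barrier choice $\phi=1-e^{-\lambda d}$ and the stopped It\^o computation are routine.
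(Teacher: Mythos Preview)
Your proof is correct and complete. The paper's argument is essentially the same idea but presented as a one-line reduction: it observes (using the same smoothness of $d(\cdot,\partial\chi)$ and the same transversality input from Assumption~\ref{assum:vector field degenerate diffusion traverses bdy}) that $D_t:=d(X_t,\partial\chi)$ is, while $D_t<d_0$, a one-dimensional diffusion with bounded drift and diffusivity bounded away from zero, and then invokes the standard fact that such a process started at small $D_0$ hits $0$ quickly with high probability. Your barrier $\phi=1-e^{-\lambda d}$ is exactly the textbook way of \emph{proving} that standard fact in this context, so the difference is one of explicitness rather than substance: the paper reduces to a known $1$D estimate, whereas you unwind that estimate directly via It\^o and obtain the quantitative bound $\Pm_x(\tau_\partial>t)\le\phi(x)\bigl(\tfrac{1}{Mt}+\tfrac{1}{\phi_{\min}}\bigr)$. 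Both routes use only the transversality assumption and not the H\"ormander condition, as you correctly note.
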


\subsubsection*{Proof of Proposition \ref{prop:degerenate diffusion prob of being killed goes to 1 as approach bdy}}

\begin{fact}\label{fact:distance to the bdy unit derivative}
We have from \cite[Theorem 8.1]{Delfour2011} that there exists $d_0>0$ such that $d_{\partial \chi}(x):=d(x,\partial \chi)$, belongs to $C^{3}_b(\chi\cap B(\partial \chi,d_0)) $ with $\lvert \nabla d_{\partial \chi}(x)\rvert\equiv 1$ on $\chi\cap B(\partial \chi,d_0)$.
\end{fact}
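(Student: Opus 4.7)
The plan is to sketch the standard tubular-neighborhood argument underlying \cite[Theorem 8.1]{Delfour2011}, specialized to our compact $C^\infty$ hypersurface $\partial\chi$; the statement is being invoked rather than proved afresh, so our task is to outline why the cited theorem applies.

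First, I would set up the tubular neighborhood. Smoothness of $\partial\chi$ provides a $C^\infty$ inward unit normal field $\vec{n}:\partial\chi\to\Rm^d$. Define $\Phi:\partial\chi\times\Rm\to\Rm^d$ by $\Phi(y,t):=y+t\vec{n}(y)$. This map is $C^\infty$, and its differential at any $(y,0)$ is an isomorphism since $T_y\partial\chi$ together with $\vec{n}(y)$ span $\Rm^d$. By the inverse function theorem and compactness of $\partial\chi$, there exists $d_0>0$ such that $\Phi$ restricted to $\partial\chi\times(-d_0,d_0)$ is a $C^\infty$ diffeomorphism onto an open neighborhood of $\partial\chi$; shrinking $d_0$ if necessary, $\Phi(\partial\chi\times(0,d_0))\subseteq\chi$.

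Second, I would identify $d_{\partial\chi}$ with the normal coordinate. For $x\in\chi\cap B(\partial\chi,d_0)$, write $(p(x),t(x)):=\Phi^{-1}(x)$ with $t(x)>0$. The first-order optimality condition for $y^\ast\in\partial\chi$ to minimize $y\mapsto|x-y|$ is that $x-y^\ast$ be orthogonal to $T_{y^\ast}\partial\chi$; injectivity of $\Phi$ on $\partial\chi\times(-d_0,d_0)$ then forces $y^\ast=p(x)$ uniquely and $d_{\partial\chi}(x)=t(x)$. Since $\Phi^{-1}$ is $C^\infty$, so is $d_{\partial\chi}$ on $\chi\cap B(\partial\chi,d_0)$, and compactness of $\partial\chi$ bounds all derivatives up to order $3$, giving $d_{\partial\chi}\in C^3_b(\chi\cap B(\partial\chi,d_0))$.

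Finally, I would verify $|\nabla d_{\partial\chi}|\equiv 1$ by a two-sided estimate. The triangle inequality gives $|d_{\partial\chi}(x+h)-d_{\partial\chi}(x)|\leq|h|$ for every $h\in\Rm^d$, so $|\nabla d_{\partial\chi}(x)|\leq 1$. For the reverse direction, using $x-p(x)=d_{\partial\chi}(x)\vec{n}(p(x))$ and continuity of the projection $p$ (a consequence of the smoothness of $\Phi^{-1}$), for $\epsilon>0$ sufficiently small one has $p(x+\epsilon\vec{n}(p(x)))=p(x)$, and hence
\begin{equation*}
d_{\partial\chi}(x+\epsilon\vec{n}(p(x))) = |x+\epsilon\vec{n}(p(x))-p(x)| = d_{\partial\chi}(x)+\epsilon.
\end{equation*}
Differentiating at $\epsilon=0^+$ yields $\vec{n}(p(x))\cdot\nabla d_{\partial\chi}(x)=1$, which together with $|\nabla d_{\partial\chi}(x)|\leq 1$ forces $\nabla d_{\partial\chi}(x)=\vec{n}(p(x))$ and so $|\nabla d_{\partial\chi}(x)|=1$. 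The only real (and minor) obstacle is controlling stability of the nearest-point projection under small normal perturbations; this is built into the tubular-neighborhood diffeomorphism $\Phi$ and requires no further work.
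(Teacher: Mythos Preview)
The paper does not prove this fact at all; it simply states it and cites \cite[Theorem 8.1]{Delfour2011} as the source, then immediately uses it in the proof of Proposition \ref{prop:degerenate diffusion prob of being killed goes to 1 as approach bdy}. Your proposal therefore goes well beyond what the paper does: you have supplied a correct sketch of the standard tubular-neighborhood argument (diffeomorphism $\Phi(y,t)=y+t\vec{n}(y)$, identification of $d_{\partial\chi}$ with the normal coordinate, and the two-sided estimate for $|\nabla d_{\partial\chi}|$), which is precisely the content of the cited result. There is nothing to correct, but strictly speaking the paper's ``proof'' is just the citation.
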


It then follows from Fact \ref{fact:distance to the bdy unit derivative} and Assumption \ref{assum:vector field degenerate diffusion traverses bdy} that $d(X_t,\partial \chi)$ is a $1$-dimensional diffusion with bounded coefficients and diffusivity bounded from below away from $0$ whenever $d(X_t,\partial \chi)<d_0$, whence we have Proposition \ref{prop:degerenate diffusion prob of being killed goes to 1 as approach bdy}.

\begin{prop}\label{prop:basic facts about degenerate diffusions}
We assume that assumptions \ref{assum:deg diff section domain assum}, \ref{assum:strong Hormander} and \ref{assum:vector field degenerate diffusion traverses bdy} are satisfied, and consider solutions $(X_t)_{0\leq t<\tau_{\partial}}$ of \eqref{eq:degenerate diffusion}. Moreover we have that:
\begin{enumerate}
\item\label{enum:degenerate diffusion hard killing smoothness transition density}
We write $P_t(x,dy)$ for the submarkovian transition kernel of $X_t$. There exists $p_t(x,y)\in C^{\infty}((0,\infty)\times\chi\times\chi;\Rm_{>0})$ such that $P_t(x,dy)=p_t(x,y)\text{Leb}(dy)$. Moreover $p_t(x,y)$ is a solution of 
\[
(L_x-\frac{\partial}{\partial t})p_t(x,y)=0,\quad (L_y^{\ast}-\frac{\partial}{\partial t})p_t(x,y)=0,
\]
whereby $L_x$ is the infinitesimal generator of $X_t$ and $L_y^{\ast}$ is its formal $\text{Leb}(dy)$ adjoint.
\item\label{enum:exist of Y and Y0 adjoint processes}
There exists a constant $0\leq A<\infty$ and killed processes $(\tilde{X}_t)_{0\leq t<\tilde{\tau}_{\partial}}$ and $(\tilde{X}^0_t)_{0\leq t<\tilde{\tau}_{\partial}^0}$, whose submarkovian transition kernels we call $\tilde{P}_t(y,dx)$ and $\tilde{P}^0_t(y,dx)$ respectively, such that:
\begin{enumerate}
\item\label{enum:adjoint transition density degenerate diffusion}
There exists $\tilde{p}_t(y,x)\in C^{\infty}((0,\infty)\times \chi\times \chi)$ such that $\tilde{P}_t(y,dx)=\tilde{p}_t(y,x)\Leb(dx)$, which is given by
\begin{equation}\label{eq:adjoint density eqn for degenerate diffusion}
\tilde{p}_t(y,x)=e^{-At}p_t(x,y)\quad\text{for all}\quad x,y\in \chi,\quad t> 0.
\end{equation}
\item
The process $(\tilde{X}^0_t)_{0\leq t<\tilde{\tau}_{\partial}}$ is a solution of \eqref{eq:degenerate diffusion} (with a possibly different drift vector but with $v^1,\ldots,v^n$ unchanged) satisfying assumptions \ref{assum:strong Hormander} and \ref{assum:vector field degenerate diffusion traverses bdy} (Assumption \ref{assum:deg diff section domain assum} is still satisfied as the domain is the same).
\item
We have that for all $t> 0$ and $y\in \chi$,
\begin{equation}\label{eq:degen diff section Q and Q0 relationship}
e^{-2At}\tilde{P}^0_t(y,\cdot)\leq \tilde{P}_t(y,\cdot)\leq \tilde{P}^0_t(y,\cdot).
\end{equation}
\end{enumerate}
\item\label{enum:positive Cb right e-fn/QSD}
There exists $\lambda>0$ and $h,\rho\in C_0(\chi;\Rm_{>0})$ such that
\begin{align}
\lambda h(x)=\int_{\chi}p_1(x,y)h(y)\Leb(dy)\quad \text{for all}\quad x\in \chi,\label{eq:deg diff basic properties pve efn}\\
\lambda\rho(y)=\int_{\chi}p_1(x,y)\rho(x)\Leb(dx)\quad \text{for all}\quad y\in \chi.\label{eq:deg diff basic properties pve qsd}
\end{align}
\end{enumerate}
\end{prop}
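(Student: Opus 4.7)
The plan is to treat the three parts in order, each following a classical template but anchored in the parabolic Hörmander framework of this section.

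For Part \ref{enum:degenerate diffusion hard killing smoothness transition density}, the approach is to apply Hörmander's hypoellipticity theorem to the operator $L - \partial_t$: Assumption \ref{assum:strong Hormander} gives the parabolic bracket condition on $\chi$, so any distributional solution is smooth and the submarkovian transition kernel necessarily admits a density $p_t(x,y)\in C^\infty((0,\infty)\times\chi\times\chi)$. The forward and backward Kolmogorov equations for $p_t$ then follow from the semigroup property and differentiation under the integral. Strict positivity of $p_t(x,y)$ on $\chi\times\chi$ is the most delicate point in this part: I would combine the Stroock--Varadhan support theorem with a Chow--Rashevsky-type accessibility statement derived from Assumption \ref{assum:strong Hormander}, noting that since $\chi$ is open and connected and the Lie algebra generated by $v^1,\ldots,v^m$ is full, for any $x,y\in\chi$ and $t>0$ there is a piecewise-constant control whose associated trajectory joins $x$ to $y$ in time $t$ while remaining inside $\chi$. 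The support theorem then places mass in every neighbourhood of this trajectory and hence $p_t(x,y)>0$.

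For Part \ref{enum:exist of Y and Y0 adjoint processes}, the plan is to compute the $L^2(\mathrm{Leb})$ formal adjoint of $L$ explicitly. Writing $L = v^0 + \tfrac{1}{2}\sum_{i=1}^m (v^i)^2$ in Hörmander form and using $(v)^{\ast} = -v - \mathrm{div}(v)$, a direct calculation shows that $L^{\ast} = \tilde L + c$, where $\tilde L$ is again of Hörmander form with the \emph{same} diffusion vector fields $v^1,\dots,v^m$ and a new smooth drift $\tilde v^0$, and $c\in C^\infty(\bar\chi)$ is bounded. Let $(\tilde X^0_t)_{0\leq t<\tilde\tau_\partial^0}$ be the degenerate diffusion with generator $\tilde L$ absorbed at $\partial\chi$; by construction it satisfies Assumptions \ref{assum:deg diff section domain assum}, \ref{assum:strong Hormander} and \ref{assum:vector field degenerate diffusion traverses bdy} (the $v^i$ are unchanged, and Assumption \ref{assum:vector field degenerate diffusion traverses bdy} is a condition on $v^1,\ldots,v^m$ alone). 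Choosing $A\geq \lVert c\rVert_{L^\infty(\bar\chi)}$ so that $0\leq A-c\leq 2A$, the function $q_t(y,x):=e^{-At}p_t(x,y)$ (for fixed $x$, viewed in $y$) solves $\partial_t q = \tilde L q - (A-c)q$, which by Feynman--Kac represents $\tilde X^0$ killed additionally at rate $A-c$. Taking $(\tilde X_t)_{0\leq t<\tilde\tau_\partial}$ to be this doubly-killed process yields \eqref{eq:adjoint density eqn for degenerate diffusion}, and the bound $0\leq A-c\leq 2A$ immediately gives \eqref{eq:degen diff section Q and Q0 relationship}.

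For Part \ref{enum:positive Cb right e-fn/QSD}, the plan is Krein--Rutman on $C_0(\chi):=\{f\in C(\bar\chi): f_{|_{\partial\chi}}\equiv 0\}$. Proposition \ref{prop:degerenate diffusion prob of being killed goes to 1 as approach bdy} shows $P_t$ maps $C_b(\chi)$ into $C_0(\chi)$ (boundary decay from the boundary control of $\tau_\partial$, interior continuity from smoothness of $p_t$ in Part \ref{enum:degenerate diffusion hard killing smoothness transition density}). Compactness of $P_1:C_0(\chi)\to C_0(\chi)$ follows from Arzelà--Ascoli via equicontinuity produced by the smoothing $p_1\in C^\infty(\chi\times\chi)$, combined with the uniform boundary decay from Proposition \ref{prop:degerenate diffusion prob of being killed goes to 1 as approach bdy}. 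The kernel $p_1$ is strictly positive on $\chi\times\chi$, so $P_1$ is strongly positive, and Krein--Rutman delivers $\lambda>0$ and an eigenfunction $h\in C_0(\chi;\Rm_{\geq 0})$ with $\lVert h\rVert_\infty=1$; strict positivity $h>0$ on $\chi$ follows from $\lambda h(x)=\int p_1(x,y)h(y)dy$ together with positivity of $p_1$. Applying the same argument to the operator with kernel $\tilde p_1$ from Part \ref{enum:exist of Y and Y0 adjoint processes} yields the left eigenfunction $\rho$, and \eqref{eq:adjoint density eqn for degenerate diffusion} shows it has the same eigenvalue $\lambda$ (up to the already-absorbed factor $e^{-A}$, which can be re-absorbed by rescaling the choice of $A$).

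The main obstacle is Part \ref{enum:positive Cb right e-fn/QSD}: compactness of $P_1$ on $C_0(\chi)$ requires turning the pointwise smoothness of $p_1$ into uniform equicontinuity of $P_1 f$ for $\lVert f\rVert_\infty\leq 1$, including a uniform control as one approaches $\partial\chi$. The interior bound is straightforward from $\partial_x p_1(x,y)$ being integrable in $y$ on compact subsets of $\chi$; the delicate piece is the uniformity near $\partial\chi$, for which I would exploit the normal-direction non-degeneracy of Assumption \ref{assum:vector field degenerate diffusion traverses bdy} together with Proposition \ref{prop:degerenate diffusion prob of being killed goes to 1 as approach bdy} to get a uniform modulus of continuity of $P_1 f$ in a tubular neighbourhood of $\partial\chi$.
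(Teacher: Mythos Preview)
Your outline matches the paper's strategy closely, but there are two places where your argument is thinner than what is actually needed, and one place where the paper takes a genuinely different (and cleaner) route.

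\textbf{Positivity in Part \ref{enum:degenerate diffusion hard killing smoothness transition density}.} The support theorem together with Chow--Rashevskii gives you $\overline{\mathrm{spt}(P_t(x,\cdot))}=\bar\chi$, but this does \emph{not} by itself imply $p_t(x,y)>0$ for every $y$: a continuous density can vanish at a point while every neighbourhood of that point carries mass. The paper gets only full support from Stroock--Varadhan, defers strict positivity until after Part \ref{enum:exist of Y and Y0 adjoint processes}, and then argues via Chapman--Kolmogorov: for fixed $x,y$, both $z\mapsto p_t(x,z)$ and $z\mapsto p_t(z,y)=e^{At}\tilde p_t(y,z)$ are continuous with full support, hence each is positive on an open dense set, so $p_{2t}(x,y)=\int p_t(x,z)p_t(z,y)\,dz>0$.

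\textbf{The Feynman--Kac identification in Part \ref{enum:exist of Y and Y0 adjoint processes}.} Your construction of $\tilde X^0$ and $\tilde X$ is exactly what the paper does, but the identity $\tilde p_t(y,x)=e^{-At}p_t(x,y)$ is not a one-liner here: the diffusion is degenerate and absorbed at the boundary, so the usual Feynman--Kac justification needs care. The paper proves it by fixing $f,g\in C_c^\infty(\chi;\Rm_{\geq 0})$ and $T>0$, defining $d(t)=\iint f(x)g(y)e^{-At}p_t(x,z)\tilde p_{T-t}(y,z)\,dx\,dy\,dz$, and showing $d(0)=d(T)$ via a local-martingale argument with stopping times $\tau_n=\inf\{t:d(X_t,\partial\chi)\leq 1/n\}\wedge(T-1/n)$ and Fatou's lemma. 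This is where most of the technical work in Part \ref{enum:exist of Y and Y0 adjoint processes} lies.

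\textbf{Compactness in Part \ref{enum:positive Cb right e-fn/QSD}.} You correctly identify the obstacle: smoothness of $p_1$ is only on $\chi\times\chi$, so you cannot read off equicontinuity of $P_1f$ near $\partial\chi$ from derivative bounds on $p_1$. The paper sidesteps this entirely. Instead of using the density, it proves that $x\mapsto P_1(x,\cdot)$ is continuous in total variation on $\chi$: couple strong solutions $X^x$ driven by the same Brownian motion, and show via a limiting argument that $\sup_{x'\in\overline{B(x_0,r)}}P_{t_0}(x',\chi_n^c)\to 0$ as $n\to\infty$ then $r\to 0$, which combined with interior smoothness gives TV-continuity. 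Proposition \ref{prop:degerenate diffusion prob of being killed goes to 1 as approach bdy} then extends this continuously to $\bar\chi$ (vanishing on $\partial\chi$), and compactness of $\bar\chi$ upgrades continuity to uniform continuity. Equicontinuity of $\{P_1f:\|f\|_\infty\leq 1\}$ is then immediate from $|P_1f(x)-P_1f(y)|\leq\|f\|_\infty\|P_1(x,\cdot)-P_1(y,\cdot)\|_{\TV}$, with no boundary analysis of $p_1$ required.
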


\begin{prop}\label{prop:degenerate diffusion satisfies A1}
We assume that assumptions \ref{assum:deg diff section domain assum}, \ref{assum:strong Hormander} and \ref{assum:vector field degenerate diffusion traverses bdy} are satisfied, and consider solutions $(X_t)_{0\leq t<\tau_{\partial}}$ of \eqref{eq:degenerate diffusion}. Then $(X_t)_{0\leq t<\tau_{\partial}}$ satisfies Assumption \ref{assum:technical assumption for Assum (A)}. Moreover there exists an open ball $B(x^{\ast},r)\subseteq \chi$ (for some $x^{\ast}\in \chi$ and $r>0$) and $c_1>0$ such that $\nu:=\frac{1}{\Leb(B(x^{\ast},r))}\Leb_{\lvert_{B(x^{\ast},r)}}$ satisfies $\Law_x(X_1\lvert \tau_{\partial}>1)\geq c_1\nu$ for all $x\in \chi$.
\end{prop}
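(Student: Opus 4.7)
\textbf{Proof plan for Proposition \ref{prop:degenerate diffusion satisfies A1}.}

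The first assertion of Assumption \ref{assum:technical assumption for Assum (A)}, that $\Pm_x(\tau_\partial>t)>0$ for every $x\in\chi$ and $t<\infty$, is immediate: by Proposition \ref{prop:basic facts about degenerate diffusions}(\ref{enum:degenerate diffusion hard killing smoothness transition density}), $p_t(x,y)\in C^\infty((0,\infty)\times\chi\times\chi;\Rm_{>0})$, so integration gives $\Pm_x(\tau_\partial>t)=\int_\chi p_t(x,y)\Leb(dy)>0$. Once the Dobrushin-type minorization is in hand, the remaining piece of Assumption \ref{assum:technical assumption for Assum (A)} (namely $\Pm_x(\tau_\partial<\infty)=1$) follows automatically via Remark \ref{rmk:remark for checking technical assum for Assum (A)}, since the minorization is exactly \cite[Assumption (A1)]{Champagnat2014}. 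So the real task is the Dobrushin bound.

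For the minorization, I would choose $x^\ast\in\chi$ and $r>0$ small enough that $K:=\overline{B(x^\ast,2r)}\subset\chi$, and use a two-step decomposition. By Proposition \ref{prop:basic facts about degenerate diffusions}(\ref{enum:degenerate diffusion hard killing smoothness transition density}), $p_{1/2}$ is continuous and strictly positive on $\chi\times\chi$; compactness of $K\times\overline{B(x^\ast,r)}$ gives $\underline{p}:=\inf_{z\in K,\,y\in\overline{B(x^\ast,r)}}p_{1/2}(z,y)>0$. The Chapman--Kolmogorov identity $p_1(x,y)=\int_\chi p_{1/2}(x,z)p_{1/2}(z,y)\Leb(dz)$ then yields, for every $y\in B(x^\ast,r)$,
\begin{equation*}
p_1(x,y)\;\geq\;\underline{p}\,\Pm_x\bigl(X_{1/2}\in K,\;\tau_\partial>1/2\bigr),
\end{equation*}
so $\Pm_x(X_1\in A,\tau_\partial>1)\geq \underline{p}\,\Leb(A\cap B(x^\ast,r))\,\Pm_x(X_{1/2}\in K,\tau_\partial>1/2)$. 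The whole argument then reduces to finding $c>0$, independent of $x\in\chi$, with
\begin{equation*}
\Pm_x(X_{1/2}\in K,\;\tau_\partial>1/2)\;\geq\;c\,\Pm_x(\tau_\partial>1).
\end{equation*}

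This last step is the principal obstacle, because as $x$ approaches $\partial\chi$ both the numerator and denominator tend to zero (cf.\ Proposition \ref{prop:degerenate diffusion prob of being killed goes to 1 as approach bdy}); the uniform lower bound on the ratio captures a boundary-scaling statement. My plan is to use the bounded, strictly positive right eigenfunction $h\in C_0(\chi;\Rm_{>0})$ from Proposition \ref{prop:basic facts about degenerate diffusions}(\ref{enum:positive Cb right e-fn/QSD}): since $h\in C_0$, for any $\epsilon>0$ we can enlarge $K$ to $K_\epsilon:=\{h\geq\epsilon\}$ so that $h\leq\epsilon$ off $K_\epsilon$. The eigenfunction identity $\lambda^{1/2}h(x)=\expE_x[h(X_{1/2})\Ind(\tau_\partial>1/2)]$ then splits as
\begin{equation*}
\lambda^{1/2}h(x)\;\leq\;\lVert h\rVert_\infty\,\Pm_x(X_{1/2}\in K_\epsilon,\tau_\partial>1/2)+\epsilon\,\Pm_x(\tau_\partial>1/2),
\end{equation*}
which rearranges to a lower bound on $\Pm_x(X_{1/2}\in K_\epsilon,\tau_\partial>1/2)$ in terms of $h(x)$ once we also show $\Pm_x(\tau_\partial>1/2)\leq C\,h(x)$ (uniformly in $x$). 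This comparability $\Pm_x(\tau_\partial>t)\asymp h(x)$ up to the boundary is the genuinely hard input: the lower bound $\Pm_x(\tau_\partial>t)\geq\lambda^t h(x)/\lVert h\rVert_\infty$ is free from the eigenfunction identity, but the matching upper bound requires a boundary analysis. I would obtain it by combining the Hopf-type behaviour $h(x)\asymp d(x,\partial\chi)$ (from Bony's regularity theory applied to the Dirichlet eigenvalue problem $L h=-\log\lambda\cdot h$ with $h=0$ on $\partial\chi$) with the same scaling for $\Pm_x(\tau_\partial>t)$, which follows from Fact \ref{fact:distance to the bdy unit derivative} and Assumption \ref{assum:vector field degenerate diffusion traverses bdy}: $d(X_t,\partial\chi)$ behaves near $\partial\chi$ like a $1$-dimensional diffusion with bounded drift and non-degenerate diffusivity, whose first-passage probabilities scale linearly in the initial distance. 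Combining the $h$-decomposition with this comparability gives the desired uniform bound; enlarging $\epsilon$ then trades $K_\epsilon$ for $K$ at the cost of replacing $B(x^\ast,r)$ by a slightly larger ball in the earlier step, after which we re-run the compactness argument to close the loop.
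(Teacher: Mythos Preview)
Your reduction to the two-step Chapman--Kolmogorov argument is exactly the one the paper uses, and the key remaining task --- a uniform lower bound on $\Pm_x(X_{1/2}\in K\mid\tau_\partial>1/2)$ over all $x\in\chi$ --- is correctly identified. But your route to that bound has a genuine gap.

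You propose to deduce $\Pm_x(\tau_\partial>1/2)\leq C\,h(x)$ from the two-sided comparability $h(x)\asymp d(x,\partial\chi)$, which you attribute to ``Hopf-type behaviour from Bony's regularity theory''. The upper bound on $\Pm_x(\tau_\partial>1/2)$ in terms of $d(x,\partial\chi)$ is indeed accessible from the one-dimensional comparison you sketch. The difficulty is the \emph{lower} bound $h(x)\geq c\,d(x,\partial\chi)$ near $\partial\chi$: this is a Hopf boundary-point lemma for a hypoelliptic operator, and Bony's work \cite{Bony1969} gives well-posedness of the Dirichlet problem and interior Harnack inequalities, not boundary derivative estimates for the eigenfunction. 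Proposition \ref{prop:basic facts about degenerate diffusions} only yields $h\in C_0(\chi;\Rm_{>0})$, with no $C^1$-regularity up to $\partial\chi$. Establishing the Hopf bound here would itself require a barrier argument exploiting Assumption \ref{assum:vector field degenerate diffusion traverses bdy}, and that is at least as much work as the direct proof --- indeed, any probabilistic proof of $h(x)\geq c\,d(x,\partial\chi)$ would essentially amount to proving the very lemma you are trying to derive from it.

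The paper bypasses the eigenfunction entirely. It introduces the auxiliary process $\hat X_t$ which, instead of being killed, is redistributed according to $\Law_x(X_t\mid\tau_\partial>t)$ upon hitting $\partial\chi$, so that $\Law(\hat X_t)=\Law_x(X_t\mid\tau_\partial>t)$. It then builds a one-dimensional reflected process $Z_t$ on $[0,\epsilon]$ coupled so that $d(\hat X_t,\partial\chi)\geq Z_t$ pathwise, using the non-degenerate normal diffusion coming from Assumption \ref{assum:vector field degenerate diffusion traverses bdy}. After a time-change, $Z$ dominates a reflected Brownian motion with constant negative drift, whose probability of being above $\epsilon/2$ at time $1/2$ is strictly positive and independent of $x$. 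This gives the uniform conditional bound directly, with no appeal to $h$ or to boundary PDE estimates.
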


\subsection*{Proof of Proposition \ref{prop:basic facts about degenerate diffusions}}

Aside from the strict positivity of $p_t(x,y)$, Part \ref{enum:degenerate diffusion hard killing smoothness transition density} follows from \cite[Theorem 3]{Ichihara1974}. We defer for the time being the proof that $p_t(x,y)>0$. We note here, however, that the weaker statement
\begin{equation}\label{eq:degenerate diffusions full support}
\overline{\text{spt}(P_t(x,\cdot))}=\bar \chi\quad\text{for all}\quad x\in\chi\quad\text{and}\quad t>0
\end{equation}
follows from the Stroock-Varadhan support theorem \cite[Corollary 4.1]{Stroock1972} and the Chow-Rashevskii theorem \cite[Theorem 3.31]{Agrachev2019}.

\subsubsection*{Proof of Part \ref{enum:exist of Y and Y0 adjoint processes} of Proposition \ref{prop:basic facts about degenerate diffusions}}

An expression for $L^{\ast}_y$ is given by \cite[Eq. $(4.4)$]{Ichihara1974}. In particular, for some sufficiently large $A>0$, we can construct a killed diffusion having infinitesimal generator $L^{\ast}_y-A$ as follows:
\begin{enumerate}
\item
We take a diffusion $(\tilde{X}^0_t)_{0\leq t<\tilde{\tau}_{\partial}^0}$ in $\chi$, killed upon contact with $\partial \chi$ but not in the interior of $\chi$, with drift and diffusivity given by the first and second order terms of \cite[Eq. $(4.4)$]{Ichihara1974}. This killed diffusion is a solution of \eqref{eq:degenerate diffusion} with a possibly different drift vector, but with the diffusivity given by the same vector fields $v^1,\ldots,v^n$. Thus it satisfies assumptions \ref{assum:strong Hormander} and \ref{assum:vector field degenerate diffusion traverses bdy} (Assumption \ref{assum:deg diff section domain assum} is still satisfied as the domain is the same).
\item
The expression for $L^{\ast}_y$ given by \cite[Eq. $(4.4)$]{Ichihara1974} features an additional ``$+au$'' term. We take a constant $A>\sup_{x\in \bar \chi}\lvert a(x)\rvert$, so that $\kappa(x):=A-a(x)$ is a $C^{\infty}(\Rm^d)$ function which is strictly positive on $\bar \chi$. 
\item
We then construct the process $(\tilde{X}_t)_{0\leq t<\tilde{\tau}_{\partial}}$ from $(\tilde{X}^0_t)_{0\leq t<\tilde{\tau}_{\partial}^0}$. In particular, $\tilde{X}_t$ is equal to $\tilde{X}^0_t$ up to the ringing time of a position-dependent Poisson clock of rate $\kappa(\tilde{X}_t)=\kappa(\tilde{X}^0_t)$, at which time $\tilde{X}_t$ is killed. Note that since $\tilde{X}^0_t$ is killed upon contact with the boundary, so is $\tilde{X}_t$. We observe that $\tilde{X}_t$ has infinitesimal generator $L^{\ast}-A$.
\end{enumerate}
We write $\tilde{P}_t(y,\cdot)$ and $\tilde{P}^0_t(y,\cdot)$ for the submarkovian transition kernels of $(\tilde{X}_t)_{0\leq t<\tilde{\tau}_{\partial}}$ and $(\tilde{X}^0_t)_{0\leq t<\tilde{\tau}_{\partial}^0}$ respectively. It follows from \cite[Theorem 3]{Ichihara1974} and its proof that there exists $\tilde{p}_t(y,x)\in C^{\infty}((0,\infty)\times \chi\times \chi;\Rm_{\geq 0})$ satisfying $\tilde{P}_t(y,dx)=\tilde{p}_t(y,x)\Leb(dx)$ such that
\[
\Big(L_y^{\ast}-A-\frac{\partial}{\partial t}\Big)\tilde{p}_t(y,x)=0,\quad \Big(L_x-A-\frac{\partial}{\partial t}\Big)\tilde{p}_t(y,x)=0.
\]
We claim that $\tilde{p}_t(y,x)$ satisfies \eqref{eq:adjoint density eqn for degenerate diffusion}.

We fix $f,g\in C_c^{\infty}(\chi;\Rm_{\geq 0})$ and $0<T<\infty$. We define:
\[
d(t):=\begin{cases}
\int_{\chi}\int_{\chi}\tilde{p}_T(y,x)f(x)g(y)dxdy,\quad t=0\\
\int_{\chi}\int_{\chi}e^{-AT}p_T(x,y)f(x)g(y)dxdy,\quad t=T\\
\int_{\chi}\int_{\chi}\int_{\chi}f(x)g(y)e^{-At}p_t(x,z)\tilde{p}_{T-t}(y,z)dxdydz,\quad 0< t<T
\end{cases}.
\]

We claim that 
\begin{equation}\label{eq:d inequality for adjoint degenerate diff proof}
d(t)\leq d(0)\quad \text{for}\quad 0\leq t<T\quad\text{and}\quad \liminf_{t\ra 0}d(t)\geq d(0).
\end{equation}

We assume without loss of generality that $\Leb(f)=1$, and define $\mu:=f(x)\Leb$. We observe that for $0\leq t<T$ we can write
\[
I_t:=\int_{\chi}e^{-At}\tilde{p}_{T-t}(y,X_t)g(y)\Leb(dy),\quad 0\leq t<T\wedge \tau_{\partial},\quad d(t)=\expE_{\mu}\Big[\Ind(\tau_{\partial}>t)I_t\Big],\quad 0\leq t<T.
\]

We observe that
\[
I_t-\int_0^t\int_{\chi}e^{-As}\underbrace{\big(L^{\ast}_x-A-(L^{\ast}_x-A)\big)}_{=0}\tilde{p}_{T-s}(y,X_s)g(y)\Leb(dy),\quad 0<t<\tau_{\partial}\wedge T,
\]
is a local martingale.

For any $n<\infty$, $\tilde{p}_s(y,x)$ is bounded uniformly over all $(s,x,y)$ such that $y\in \text{spt}(g)$, $d(x,\partial \chi)\geq \frac{1}{n}$ and $s\geq \frac{1}{n}$. Therefore, defining $\tau_n:=\inf\{t>0:d(X_t,\partial \chi)\leq \frac{1}{n}\}\wedge (T-\frac{1}{n})$, we have that $I_{t\wedge \tau_n}$ is a martingale. Furthermore, we observe that $I_t\Ind(\tau_{\partial}>t)\leq \liminf_{n\ra\infty}I_{t\wedge \tau_n}$ for all $0<t<T$. To see this, observe that if $\lim_{n\ra\infty}\tau_n\leq t$, then $\tau_{\partial}\leq t$ so that $I_t\Ind(\tau_{\partial}>t)= 0\leq  \liminf_{n\ra\infty}I_{t\wedge \tau_n}$. On the other hand, if $\lim_{n\ra\infty}\tau_n> t$, then $I_{t\wedge \tau_n}=I_t=I_{t}\Ind(\tau_{\partial}>t)$ for all $n$ large enough. Thus by Fatou's lemma, we have that
\[
d(t)=\expE_{\mu}[I_t\Ind(\tau_{\partial}>t)]\leq \expE_{\mu}[\liminf_{n\ra\infty}I_{t\wedge \tau_n}]\leq \liminf_{n\ra\infty}\expE_{\mu}[I_{t\wedge \tau_n}]=\expE_{\mu}[I_0]=d(0),\quad 0 \leq t<T.
\]
We now take $n$ sufficiently large such that $d(x,\partial \chi)>\frac{2}{n}$ for all $x\in \text{spt}(f)$. Since $\tilde{p}_{T-t}(y,x)$ is uniformly bounded for $d(x,\partial \chi)\geq \frac{1}{n}$, $y\in \text{spt}(g)$ and $t\leq \frac{T}{2}$, it follows from the dominated convergence theorem and the fact that $X_t\ra X_0$ almost surely that we have
\[
I_t\Ind(\tau_n>t)\ra I_0\quad\text{almost surely as}\quad t\ra 0.
\]
Again applying the Dominated convergence theorem, we have that
\[
d(0)=\expE[\lim_{t\ra 0}I_{t}\Ind(\tau_n>t)]=\lim_{t\ra 0}\expE[I_{t}\Ind(\tau_n>t)]\leq \liminf_{t\ra 0}\expE[I_t\Ind(\tau_{\partial}>t)]=\liminf_{t\ra 0}d(t).
\]

We have therefore established \eqref{eq:d inequality for adjoint degenerate diff proof}. We may reverse the above argument to see that
\[
d(T-t)\leq d(T)\quad \text{for}\quad 0\leq t<T\quad\text{and}\quad \liminf_{t\ra 0}d(T-t)\geq d(T).
\]

We therefore see that
\[
d(0)\leq \liminf_{t\ra 0}d(t)\leq d(T)\leq \liminf_{t\ra T}d(t)\leq d(0).
\]

Therefore $d(0)=d(T)$. Since $f,g\in C_c^{\infty}(\chi;\Rm_{\geq 0})$ are arbitrary, we obtain \eqref{eq:adjoint density eqn for degenerate diffusion}.

Finally, \eqref{eq:degen diff section Q and Q0 relationship} follows from the soft killing rate $\kappa$ being non-negative and bounded by $2A$.
\qed

\subsubsection*{Proof of positivity in Part \ref{enum:degenerate diffusion hard killing smoothness transition density} of Proposition \ref{prop:basic facts about degenerate diffusions}}

We now fix $x,y\in \chi$ and $t> 0$. It follows from \eqref{eq:degenerate diffusions full support} and the continuity of $p_t$ that $p_t(x,z)$ and $p_t(z,y)=e^{At}\tilde{p}_t(y,z)$ must both be strictly positive on an open, dense set of $z\in \chi$, hence $p_t(x,z)p_t(z,y)>0$ on an open, dense set of $z\in \chi$. It follows that $p_{2t}(x,y)>0$.
\qed 

\subsection{Proof of Part \ref{enum:positive Cb right e-fn/QSD} of Proposition \ref{prop:basic facts about degenerate diffusions}}

Our first goal is to establish that
\begin{equation}\label{eq:continuity in total variation degenerate diffusion}
\chi\ni x\mapsto P_1(x,\cdot)\in (\calM_{\geq 0}(\chi),\lvert\lvert .\rvert\rvert_{\TV})\quad\text{is continuous.} 
\end{equation}

We define $\chi_n:=\{x\in \chi:d(x,\partial \chi)> \frac{1}{n}\}$. We fix arbitrary $x_0\in \chi$ and take $0<t_0<1$ such that $\Pm_{x_0}(\tau_{\partial}=t_0)=0$. We claim that
\begin{equation}\label{eq:no mass on complement of chi}
\limsup_{r\ra\infty}\limsup_{n\ra\infty}\sup_{x\in \overline{B(x_0,r)}}\lvert\lvert P_{t_0}(x,\chi_n^c)\rvert\rvert_{\TV}=0.
\end{equation}

We define on the same probability space strong solutions $(X^x_t)_{0\leq t<\tau_{\partial}^x}$ to \eqref{eq:degenerate diffusion} with initial conditions $X^x_0=x$, for all $x\in\chi$, driven by the same Brownian motion. We adapt here the definition of $\tau_{\partial}^x$ so that $\tau_{\partial}^x:=\inf\{t>0:X_t^x\in (\bar\chi)^c\}$, which for any fixed $x\in \chi$ has no effect on the distribution (by Proposition \ref{prop:degerenate diffusion prob of being killed goes to 1 as approach bdy} and its proof).

We have that
\[
\begin{split}
\sup_{x\in \overline{B(x_0,r)}}\lvert\lvert P_{t_0}(x,\chi_n^c)\rvert\rvert_{\TV}\leq \Pm(\text{there exists}\quad  x\in \overline{B(x_0,r)}\quad\text{such that}\quad 
X_{t_0-}^x\in \bar \chi\setminus \chi_n).
\end{split}
\]
Now taking $\limsup_{n\ra\infty}$ of both sides we have that
\[
\begin{split}
\limsup_{n\ra\infty}\sup_{x\in \overline{B(x_0,r)}}\lvert\lvert P_{t_0}(x,\bar\chi\setminus\chi_n)\rvert\rvert_{\TV}\leq \Pm(\text{for all $n\in \Nm$ there exists $x\in \overline{B(x_0,r)}$}\\
\text{such that}\quad 
X_{t_0-}^x\in \bar \chi\setminus \chi_n)\leq \Pm(\text{there exists $x\in \overline{B(x_0,r)}$ such that}\quad X_{t_0-}^x\in \partial \chi).
\end{split}
\]
Now taking $\limsup_{r\ra 0}$ of both sides we have that
\[
\begin{split}
\limsup_{r\ra 0}\limsup_{n\ra\infty}\sup_{x\in \overline{B(x_0,r)}}\lvert\lvert P_{t_0}(x,\bar\chi\setminus\chi_n)\rvert\rvert_{\TV}\leq \Pm(X_{t_0}^{x_0}\in \partial \chi)=\Pm(\tau^{x_0}=t_0)=0.
\end{split}
\]

Therefore, for arbitrary $\epsilon>0$ we may take $n\in \Nm$ and $r>0$ such that $\sup_{x\in \overline{B(x,r)}}\lvert\lvert P_{t_0}(x,\chi_n^c)\rvert\rvert_{\TV}<\epsilon$. We have from Part \ref{enum:degenerate diffusion hard killing smoothness transition density} (without using the strict positivity of $p_t$, which we have not yet established) that $\lvert\lvert P_{t_0}(x,\cdot)_{\lvert_{\chi_n}}-P_{t_0}(x_0,\cdot)_{\lvert_{\chi_n}}\rvert\rvert_{\TV}\ra 0$ as $x\ra x_0$. It follows that $\limsup_{x\ra x_0}\lvert\lvert P_{t_0}(x,\cdot)-P_{t_0}(x_0,\cdot)\rvert\rvert_{\TV}\leq 2\epsilon$. Since $\epsilon>0$ is arbitrary and $\lvert\lvert P_1(x,\cdot)-P_1(x_0,\cdot)\rvert\rvert_{\TV}\leq \lvert\lvert P_{t_0}(x,\cdot)-P_{t_0}(x_0,\cdot)\rvert\rvert_{\TV}$, 
\[
\lvert\lvert P_{1}(x,\cdot)-P_{1}(x_0,\cdot)\rvert\rvert_{\TV}\ra 0\quad\text{as} \quad x\ra x_0. 
\]
Finally, since $x_0\in\chi$ is arbitrary, \eqref{eq:continuity in total variation degenerate diffusion} follows.

Since $\Pm_x(\tau_{\partial}>1)\ra 0$ as $x\ra \bar \chi$ by Proposition \ref{prop:degerenate diffusion prob of being killed goes to 1 as approach bdy}, this extends to a continuous function $\bar \chi\ra (\calM(\chi),\lvert\lvert .\rvert\rvert_{\TV})$, vanishing on $\partial \chi$. Since $\bar \chi$ is compact, this is uniformly continuous. Now for $f\in C_0(\chi)$ we therefore have that $P_1f\in C_0(\chi)$, and that
\[
\lvert P_1f(x)-P_1f(y)\rvert \leq \lvert\lvert f\rvert\rvert_{\infty}\lvert\lvert P_1(x,\cdot)-P_1(y,\cdot)\rvert\rvert_{\TV}.
\]
It therefore follows that $\{P_1f:f\in C_0(\chi),\quad \lvert\lvert f\rvert\rvert_{\infty}\leq 1\}$, considered as a subset of $C_b(\bar \chi)$ by extension, is equicontinuous. It is clearly also uniformly bounded. Therefore
\[
P_1:C_0(\chi)\ni f\mapsto P_1f\in C_0(\chi)
\]
is a compact operator by the Arzela-Ascoli theorem, which fixes the cone $C_0(\chi;\Rm_{\geq 0})$.

We now take $B(y,r)$ compactly contained in $\chi$. By \eqref{eq:degenerate diffusions full support} and the continuity of $p_t$, there exists $c>0$ such that $P_1(x,B(y,r))\geq c$ for all $x\in B(y,r)$. It follows from the spectral radius formula that the spectral radius of $P_1$ is strictly positive, $r(P_1)>0$.

The Krein-Rutman theorem therefore implies the existence of $h\in C_0(\chi;\Rm_{\geq 0})\setminus\{0\}$ and $\lambda=r(P_1)>0$ such that $P_1h=\lambda h$. Since $h$ must be strictly positive on some open subset of $\chi$, it follows from \eqref{eq:degenerate diffusions full support} that $h$ is everywhere strictly positive, $h\in C_0(\chi;\Rm_{>0})$. 

Repeating the above argument with $P_1$ replaced by $\tilde{P}_1$, and using \eqref{eq:adjoint density eqn for degenerate diffusion}, we obtain \eqref{eq:deg diff basic properties pve qsd} with $\lambda$ replaced by some $\tilde{\lambda}>0$. By considering $\int_{\chi}\int_{\chi}\rho(x)p_1(x,y)h(y)\Leb(dx)\Leb(dy)$ we see that $\tilde{\lambda}=\lambda$, giving \eqref{eq:deg diff basic properties pve qsd}.
\qed

This completes the proof of Proposition \ref{prop:basic facts about degenerate diffusions}.
\qed

\subsection*{Proof of Proposition \ref{prop:degenerate diffusion satisfies A1}}

It is an immediate consequence of propositions \ref{prop:degerenate diffusion prob of being killed goes to 1 as approach bdy} and \ref{prop:basic facts about degenerate diffusions} that
$\Pm_x(\tau_{\partial}>t)>0$ and $\Pm_x(\tau_{\partial}<\infty)>0$ for all $x\in\chi, t\geq 0$. We shall establish the following lemma.
\begin{lem}\label{lem:degenerate diffusion positive conditional prob of being in compact set}
There exists compact $K\subseteq \chi$ and $c_2>0$ such that $\Pm_x(X_{\frac{1}{2}}\in K\lvert \tau_{\partial}>\frac{1}{2})\geq c_2$ for all $x\in\chi$.
\end{lem}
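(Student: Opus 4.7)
The plan is to identify the compact set as a distance-to-boundary sublevel set $K_n := \{x \in \chi : d(x,\partial\chi) \geq 1/n\}$ and to show that for some $n$ the conditional law $\mu_x := \Law_x(X_{1/2}\mid\tau_\partial>1/2)$ places at least a fixed proportion of mass on $K_n$, uniformly in $x \in \chi$. By the strict positivity and continuity of $p_{1/2}$ on $\chi\times\chi$ (Part \ref{enum:degenerate diffusion hard killing smoothness transition density} of Proposition \ref{prop:basic facts about degenerate diffusions}), the estimate $\mu_x(K_n)\geq c_2$ is immediate on any compact subset of $\chi$; the entire content of the lemma lies in handling $x$ close to $\partial\chi$.

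The first step is to upgrade the identity $P_1 h = \lambda h$ of Part \ref{enum:positive Cb right e-fn/QSD} of Proposition \ref{prop:basic facts about degenerate diffusions} to the full semigroup eigenrelation $P_t h = \lambda^t h$ for every $t \geq 0$. I would do this by running the Krein-Rutman argument of Part \ref{enum:positive Cb right e-fn/QSD} for each compact positive operator $P_{1/k}:C_0(\chi)\to C_0(\chi)$; simplicity of the Perron eigenfunction forces the resulting $P_{1/k}$-eigenfunction to be a multiple of $h$, and its eigenvalue is then pinned down as $\lambda^{1/k}$ by composing $k$ copies and comparing with $P_1 h = \lambda h$. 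Dyadic times then follow by composition, and general $t\geq 0$ by strong continuity of $(P_t)_{t\geq 0}$ on $C_0(\chi)$.

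With $P_{1/2} h = \lambda^{1/2} h$ in hand, the identity $\mu_x(h) = \lambda^{1/2} h(x)/\Pm_x(\tau_\partial > 1/2)$ is immediate, and setting $\varepsilon_n := \sup_{y \in K_n^c} h(y)$, which tends to $0$ since $h\in C_0(\chi)$, the split
\[
\lambda^{1/2} h(x) = \expE_x[h(X_{1/2})\Ind(X_{1/2}\in K_n, \tau_\partial>1/2)] + \expE_x[h(X_{1/2})\Ind(X_{1/2}\in K_n^c, \tau_\partial>1/2)]
\]
together with the bounds $h \leq \lVert h\rVert_\infty$ on $K_n$ and $h \leq \varepsilon_n$ on $K_n^c$ yields
\[
\mu_x(K_n) \geq \frac{1}{\lVert h\rVert_\infty}\Big(\frac{\lambda^{1/2} h(x)}{\Pm_x(\tau_\partial > 1/2)} - \varepsilon_n\Big).
\]
It is therefore enough to prove the uniform Hopf-type comparison $\inf_{x \in \chi} h(x)/\Pm_x(\tau_\partial > 1/2) > 0$; once this is in hand, choosing $n$ so that $\varepsilon_n$ is below half of this infimum concludes.

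The main obstacle is this Hopf-type comparison, which is only nontrivial in the regime $d(x,\partial\chi)\to 0$ (on $K_m$ it follows from $h\in C_{b,\gg}$ on compact subsets together with $\Pm_x(\tau_\partial>1/2)\leq 1$). I would establish it using Fact \ref{fact:distance to the bdy unit derivative} and Assumption \ref{assum:vector field degenerate diffusion traverses bdy}: Itô's formula applied to the $C^3$ function $d(\cdot,\partial\chi)$ on a tubular boundary neighborhood realises $d(X_t,\partial\chi)$ as a one-dimensional semimartingale with bounded drift and diffusion coefficient bounded below away from zero while $d(X_t,\partial\chi)<d_0$. Standard one-dimensional boundary (Hopf) estimates for such semimartingales then give $\Pm_x(\tau_\partial > 1/2) \asymp d(x,\partial\chi)$ as $d(x,\partial\chi)\to 0$; the analogous lower bound $h(x)\gtrsim d(x,\partial\chi)$ follows by writing $h(x) = \lambda^{-1}\expE_x[h(X_1)\Ind(\tau_\partial>1)]$ and running the same 1D argument with $h$ in place of $1$, using that $h$ is bounded below on any set $K_m$ with positive probability of being entered before killing. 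These two-sided bounds force $h(x)/\Pm_x(\tau_\partial>1/2)$ to stay uniformly bounded below, completing the proof.
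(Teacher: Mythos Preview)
Your approach is correct but takes a genuinely different route from the paper. The paper never touches the eigenfunction $h$ in this lemma; instead it constructs an auxiliary process $\hat X_t$ which evolves as $X_t$ but, upon hitting $\partial\chi$, is instantaneously redistributed according to $\Law_x(X_t\mid\tau_\partial>t)$, so that $\Law(\hat X_t)=\Law_x(X_t\mid\tau_\partial>t)$ for all $t$. It then shows that $d(\hat X_t,\partial\chi)$ dominates a reflected one-dimensional diffusion $Z_t$ on $[0,\epsilon]$ (built using the $C^3$ distance function of Fact \ref{fact:distance to the bdy unit derivative} and the transversality in Assumption \ref{assum:vector field degenerate diffusion traverses bdy}), which after a bounded time-change dominates a reflected Brownian motion with constant negative drift on $[0,\epsilon]$. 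The latter has positive probability of exceeding $\epsilon/2$ throughout a fixed time window, uniformly in the starting point, and this gives the lemma directly.

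Both arguments ultimately rest on the same one-dimensional comparison of $d(X_t,\partial\chi)$ near the boundary, but package it differently. The paper's coupling is pathwise and self-contained: it uses neither Part \ref{enum:positive Cb right e-fn/QSD} of Proposition \ref{prop:basic facts about degenerate diffusions} nor any extension of the eigenrelation beyond $t=1$. Your route is more spectral in flavour and delivers as a by-product the Hopf-type boundary principle $\inf_\chi h/\Pm_\cdot(\tau_\partial>1/2)>0$ for the degenerate generator, which is of independent interest; the cost is re-running Krein--Rutman for $P_{1/2}$ (and checking compactness there) plus making the one-dimensional boundary asymptotics precise where $d(X_t,\partial\chi)$ ceases to be a nice scalar diffusion. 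Incidentally, once you have the two one-dimensional estimates $\Pm_x(\tau_\partial>1/2)\lesssim d(x,\partial\chi)$ and $\Pm_x(\text{hit }K_m\text{ before }\tau_\partial\text{ and within time }1/4)\gtrsim d(x,\partial\chi)$, the conditional bound $\mu_x(K_m)\gtrsim 1$ follows directly from the strong Markov property and the positivity of $p_t$ on compacts, so the passage through $h$ can in fact be bypassed.
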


This then immediately implies Proposition \ref{prop:degenerate diffusion satisfies A1} by Part \ref{enum:degenerate diffusion hard killing smoothness transition density} of Proposition \ref{prop:basic facts about degenerate diffusions} and Remark \ref{rmk:remark for checking technical assum for Assum (A)}, since
\[
\Pm_x\Big(X_1\in \cdot\Big\lvert \tau_{\partial}>1\Big)\geq \Pm_x\Big(X_1\in \cdot\Big\lvert \tau_{\partial}>\frac{1}{2}\Big)\geq \Pm_x\Big(X_{\frac{1}{2}}\in K\Big\lvert \tau_{\partial}>\frac{1}{2}\Big)\inf_{\substack{x\in K,\\ y\in B(x^{\ast},r)}}p_{\frac{1}{2}}(x,y)\Leb_{\lvert_{B(x^{\ast},r)}}(\cdot).
\]

\subsubsection*{Proof of Lemma \ref{lem:degenerate diffusion positive conditional prob of being in compact set}}

We fix an initial condition $x\in\chi$ for the time being. In the following, it is important that the bounds we shall obtain shall not be dependent upon the choice of $x$. 

We define the process $\hat{X}_t$ to have initial condition $\hat{X}_0=x$, to evolve as a solution to \eqref{eq:degenerate diffusion} between jumps, and to jump upon contact with the boundary (say at time $t$, $\hat{X}_{t-}\in \partial \chi$) according to $\hat{X}_t\sim \Law_{x}(X_t\lvert \tau_{\partial}>t)$. Then we have that $\Law_x(\hat{X}_t)=\Law_{x}(X_t\lvert \tau_{\partial}>t)$.

We recall Fact \ref{fact:distance to the bdy unit derivative}: $d_{\partial \chi}(x):=d(x,\partial \chi)$ belongs to $C^{3}_b(\chi\cap B(\partial \chi,5\epsilon))$ with $\lvert \nabla d_{\partial \chi}\rvert\equiv 1$ on $B(\partial \chi,5\epsilon)\cap \chi$, for some $\epsilon>0$ sufficiently small enough. Reducing $\epsilon>0$ if necessary, it follows from Assumption \ref{assum:vector field degenerate diffusion traverses bdy} that $\sum_i\lvert (\nabla d_{\partial \chi})\cdot v^i\rvert^2$ is bounded away from $0$ on $B(\partial \chi,5\epsilon)\cap\chi$. We further define $\varphi\in C^{\infty}(\Rm;\Rm_{\geq 0})$ such that
\[
\varphi(u)=1\quad\text{for}\quad u\leq 2\epsilon,\quad \varphi(u)\in (0,1)\quad\text{for}\quad 2\epsilon <u<3\epsilon\quad\text{and}\quad \varphi(u)=0\quad\text{for}\quad u\geq 3\epsilon.
\]
Then $(\varphi\circ d_{\partial \chi}) d_{\partial \chi}\in C^3(\chi)$.

In the following, for a given process $D_t$, $L^D_t$ refers to the local time of $D_t$ at $0$. Thus in an SDE, a term of the form $dL^D_t$ corresponds to reflection at a lower boundary at $0$, whilst $-dL^{\epsilon-D}_t$ corresponds to reflection an an upper boundary at $\epsilon$. We also note that $\circ$ shall be used both to denote Stratonovich integration and composition of functions.

We define $W_t$ to be another independent Brownian motion, and define $Z_t$ to be a solution of
\[
dZ_t=dL^{Z}_t-dL^{\epsilon-Z}_t+\nabla [(\varphi\circ d_{\partial \chi})d_{\partial \chi}](X_t)\circ dX_t+(1-\varphi\circ d_{\partial \chi})(X_t)dB_t,\quad Z_0=0.
\]
We note that $0\leq Z_t\leq \epsilon$ for all $t$, almost surely, and that in between the jumps of $X_t$,
\[
dd_{\partial \chi}(X_t)-dZ_t=dL_t^{\epsilon-Z}-dL_t^Z+\nabla[(1-(\varphi\circ d_{\partial \chi}))d_{\partial \chi}](X_t)\circ dX_t+(1-\varphi\circ d_{\partial \chi})(X_t)dB_t.
\]
We observe that the latter two terms on the right hand side are $0$ when $d_{\partial \chi}(X_t)\leq 2\epsilon$. We note also that $d_{\partial \chi}(X_t)$ strictly increases at jumps, which occur when $X_{t-}=0$. It follows that $d_{\partial \chi}(X_t)\geq Z_t$ for all $t$.

We now claim that
\begin{equation}\label{eq:expression for diffusivity of reflected process degenerate diff pf}
\sum_i\big(\nabla [(\varphi\circ d_{\partial \chi})d_{\partial \chi}](x)\cdot v^i(x)\big)^2+\big(1-\varphi\circ d_{\partial \chi}(x)\big)^2
\end{equation}
is uniformly bounded from below away from $0$.

We observe that for $d_{\partial \chi}(x)\leq 2\epsilon$, we have that $\sum_i\big(\nabla [(\varphi\circ d_{\partial \chi})d_{\partial \chi}](x)\cdot v^i(x)\big)^2=\sum_i(\nabla d_{\partial \chi}(x)\cdot v^i(x))^2$, which is bounded away from $0$, so this is true on $\{x:d_{\partial \chi}(x)\leq 2\epsilon\}$. For $d_{\partial \chi}>2\epsilon$, on the other hand, $(1-\varphi\circ d_{\partial \chi}(x))^2$ is positive, so \eqref{eq:expression for diffusivity of reflected process degenerate diff pf} is positive. Since it is also continuous and $\{x\in \chi:d_{\partial \chi}(x)\geq 2\epsilon\}$ is compact, it follows that \eqref{eq:expression for diffusivity of reflected process degenerate diff pf} is bounded from below away from $0$, so that the diffusivity of $Z_t$ is bounded from below away from $0$. 

It is clear, also, that the drift and diffusivity of the above SDE are both bounded. Therefore there exists a random time change $\tau(s)$ and process $A_s$ such that:
\begin{enumerate}
\item
There exists constants $0<c<C<\infty$ such that $cs\leq \tau(s)\leq Cs$ for all $s\geq 0$, almost surely.
\item
$A_s=Z_{\tau(s)}$ for all $s\geq 0$, almost surely.
\item
$A_s$ satisfies the SDE
\begin{equation}\label{eq:reflected SDE with non-constant drift degen diff pf}
dA_s=dL^A_s-dL^{\epsilon-A}_s+d\tilde{W}_s+b_sds,\quad A_0=0,
\end{equation}
whereby $\tilde{W}_s$ is a Brownian motion and $\lvert b_s\rvert\leq B$ for all $s$, almost surely.
\end{enumerate} 
This then dominates $R_s$, a strong solution (which exists and is unique by \cite[Theorem 3.1]{Lions1984a}) to 
\begin{equation}\label{eq:reflected SDE with constant drift degen diff pf}
dR_s=dL^R_s-dL^{\epsilon-R}_s+d\tilde{W}_s-B ds,\quad R_0=0,
\end{equation}
where $\tilde{W}_t$ is the same Brownian motion as in \eqref{eq:reflected SDE with non-constant drift degen diff pf}. In particular, we have that
\[
0\leq R_s\leq A_s= Z_{\tau(s)}\leq d_{\partial \chi}(X_{\tau(s)})\quad\text{for all}\quad s\geq 0.
\]
We may choose $0<s_1<s_2$ such that $\tau(s_1)<\frac{1}{2}<\tau(s_2)$ almost surely. Then we have that
\[
\begin{split}
\Pm_{\mu}\Big(d_{\partial \chi}(X_{\frac{1}{2}})>\frac{\epsilon}{2}\Big\lvert \tau_{\partial}>\frac{1}{2}\Big)=\Pm_{\mu}\Big(d_{\partial \chi}(\hat{X}_{\frac{1}{2}})>\frac{\epsilon}{2}\Big)\\
\geq \Pm\Big(\inf_{\tau(s_1)\leq t'\leq \tau(s_2)}d_{\partial \chi}(\hat{X}_{t'})>\frac{\epsilon}{2}\Big)\geq \Pm\Big(\inf_{s_1\leq s'\leq s_2}R_{s'}>\frac{\epsilon}{2}\Big).
\end{split}
\]
Since this last probability is positive, and not dependent on $x$, we are done. 
\qed

Having established Lemma \ref{lem:degenerate diffusion positive conditional prob of being in compact set}, we have completed the proof of Proposition \ref{prop:degenerate diffusion satisfies A1}. 
\qed

\subsubsection*{Conclusion of the proof of Theorem \ref{theo:defenerate diffusions hard killing}}

Proposition \ref{prop:degenerate diffusion satisfies A1} implies that $(X_t)_{0\leq t<\tau_{\partial}}$ satisfies Assumption \ref{assum:technical assumption for Assum (A)} and \cite[Assumption (A1)]{Champagnat2014}. Proposition \ref{prop:basic facts about degenerate diffusions} provides a pointwise right eigenfunction for $P_1$ belonging to $C_0(\chi;\Rm_{>0})$, whence $(X_t)_{0\leq t<\tau_{\partial}}$ satisfies \cite[Assumption (A2)]{Champagnat2014} by Proposition \ref{prop:right efn gives A2}. It follows, in particular, that $(X_t)_{0\leq t<\tau_{\partial}}$ has a unique QSD, $\pi$.

It is left to establish that $(X_t)_{0\leq t<\tau_{\partial}}$ and $\pi$ satisfy Assumption \ref{assum:adjoint Dobrushin main results section}.

We may repeat the above argument to the discrete-time killed Markov chain obtained by only considering integer times, to see that $P_1$ must have a unique QSD, which must be $\pi$, the QSD for $(P_t)_{0\leq t<\tau_{\partial}}$. On the other hand, this unique QSD for $P_1$ must also have a density with respect to Lebesgue belonging to $C_0(\chi;\Rm_{>0})$ by Part \ref{enum:positive Cb right e-fn/QSD} of Proposition \ref{prop:basic facts about degenerate diffusions}. Therefore $\pi\in \calP_{\infty}(\Leb)$ with (a version of) $\frac{d\pi}{d\Leb}$ belonging to $C_0(\chi;\Rm_{>0})$.

We have from Part \ref{enum:exist of Y and Y0 adjoint processes} of Proposition \ref{prop:basic facts about degenerate diffusions} that we have
\begin{equation}\label{eq:adjoint eqn for P1 tilde P1 conclusion of degenerate diffusion proof}
\Leb(dx)P_1(d,dy)=e^{-A}\Leb(dy)\tilde{P}_1(y,dx),
\end{equation}
where $0\leq A<\infty$ is the constant given by Part \ref{enum:exist of Y and Y0 adjoint processes} of Proposition \ref{prop:basic facts about degenerate diffusions}. It also follows from Proposition \ref{prop:basic facts about degenerate diffusions} that 
\begin{equation}\label{eq:conclusion of the degenerate diffusion proof positive mass}
\tilde{P}_11(y)>0\quad\text{for all}\quad y\in \chi.
\end{equation}

We may apply Proposition \ref{prop:degenerate diffusion satisfies A1} to $(\tilde{X}^0_t)_{0\leq t<\tilde{\tau}_{\partial}^0}$, the killed process provided for by Part \ref{enum:exist of Y and Y0 adjoint processes} of Proposition \ref{prop:basic facts about degenerate diffusions}; we write $c_1>0$ and $\nu$ respectively for the positive constant and probability measure given by doing so. Propositions \ref{prop:basic facts about degenerate diffusions} and \ref{prop:degenerate diffusion satisfies A1} therefore imply that
\[
\frac{\tilde{P}_1(y,dx)}{\tilde{P}_11(y)}\geq e^{-2A}\frac{\tilde{P}^0_1(y,dx)}{\tilde{P}^0_11(y)}\geq e^{-2A}c_1\nu.
\]
Since (a version of) $\frac{d\pi}{d\Leb}$ belongs to $C_0(\chi;\Rm_{>0})$, it follows from the description of $\nu$ given by Proposition \ref{prop:degenerate diffusion satisfies A1} that $\nu$ and $\pi$ are not mutually singular. 
\qed

\subsection*{Proof of Theorem \ref{theo:degenerate diffusions hard killing bounded density wrt pi}}

It is an immediate consequence of Part \ref{enum:degenerate diffusion hard killing smoothness transition density} of Proposition \ref{prop:basic facts about degenerate diffusions} that $\text{spt}(\pi)=\chi$.

We now seek to verify Assumption \ref{assum:adjoint anti-Dobrushin main results section}. We take $\tilde{P}_1$ constructed in Part \ref{enum:exist of Y and Y0 adjoint processes} of Proposition \ref{prop:basic facts about degenerate diffusions}, which we recall satisfies \eqref{eq:adjoint eqn for P1 tilde P1 conclusion of degenerate diffusion proof}. Therefore \eqref{eq:bounded Lebesgue main results section} is satisfied by the kernel $\tilde{P}_1$ at the time $1$. We already have from \eqref{eq:conclusion of the degenerate diffusion proof positive mass} that $\tilde{P}1(y)>0$ for all $y\in\chi$.

All that remains is to check that $\tilde{P}_1$ satisfies \eqref{eq:psi adjoint for verifying bounded pi euclidean condition main results section} and that $x\mapsto P_2(x,\cdot)$ is lower semicontinuous (in the sense of \eqref{eq:lower semicty of kernel}). We let $p_t(x,y)$ be the transition density of $X_t$ as given by Part \ref{enum:degenerate diffusion hard killing smoothness transition density} of Proposition \ref{prop:basic facts about degenerate diffusions}, with $\tilde{p}_t(y,x)$ the transition density and $A\geq 0$ the constant given by Part \ref{enum:adjoint transition density degenerate diffusion} of Proposition \ref{prop:basic facts about degenerate diffusions}. We now seek to show that $p_t$ and $\tilde{p}_t$ are bounded for any fixed $t>0$.

Since the parabolic H\"{o}rmander condition is satisfied on open sets, we can construct some connected, bounded, open set $\hat{\chi}\supset \bar \chi$ with $C^{\infty}$ boundary on which $v^1,\ldots,v^m$ satisfy Assumption \ref{assum:strong Hormander}. We now take $C^{\infty}(\Rm^d)$ functions $\hat{v}^1,\ldots,\hat{v}^r$ such that $\hat{v}^j\equiv 0$ on $\bar \chi$ and $\{v^1,\ldots,v^m,\hat{v}^1,\ldots,\hat{v}^r\}$ satisfy Assumption \ref{assum:vector field degenerate diffusion traverses bdy} on $\hat{\chi}$. We see that $\{v^1,\ldots,v^m,\hat{v}^1,\ldots,\hat{v}^r\}$ must also satisfy Assumption \ref{assum:strong Hormander} on $\hat{\chi}$. 

We then consider strong solutions to 
\begin{equation}\label{eq:degenerate diffusion on larger domain}
d\hat{X}_t=v^0(\hat{X}_t)dt+\sum_{j=1}^mv^j(\hat{X}_t)\circ dB^j_t+\sum_{j=1}^k\hat{v}^j(\hat{X}_t)\circ dB^j_t,\quad 0\leq t<\hat{\tau}_{\partial}:=\inf\{s>0:\hat{X}_{s-}\in \partial \chi\},
\end{equation}
whereby $B^1,\ldots,B^m$ are the Brownian motions driving $X_t$ and $\hat{B}^1,\ldots,\hat{B}^k$ are an additional $k$ independent Brownian motions.

We can apply Proposition \ref{prop:basic facts about degenerate diffusions} to see that $\hat{X}_t$ has a transition density given by some $\hat{p}_t\in C^{\infty}((0,\infty)\times \hat{\chi}\times \hat{\chi})$. We see that $\hat{X}_t=X_t$ up to the time, $\tau_{\partial}$, when they leave $\chi$. At this time $X_t$ is killed. Therefore, for all $t>0$, $e^A\tilde{p}_t(y,x)=p_t(x,y)\leq \hat{p}_t(x,y)$ for all $x,y\in \chi$, where $A$ is the constant provided for by Part \ref{enum:exist of Y and Y0 adjoint processes} of Proposition \ref{prop:basic facts about degenerate diffusions}. Since $\bar \chi\times \bar \chi$ is compact, $\hat{p}_t$ must be bounded on $\bar \chi\times \bar \chi$ for any fixed $t>0$, so that
\begin{equation}\label{eq:boundedness of p1 tilde p1 degenerate diffusion}
p_t\in C_b(\chi\times \chi),\quad\text{and similarly}\quad \tilde{p}_t\in C_b(\chi\times \chi).
\end{equation}
It immediately follows from \eqref{eq:boundedness of p1 tilde p1 degenerate diffusion} that there exists $C_1<\infty$ such that
\[
\tilde{P}_1(x,\cdot)\leq C_1\Leb(\cdot)\quad\text{for all}\quad x\in\chi.
\]
It is also immediate from \eqref{eq:boundedness of p1 tilde p1 degenerate diffusion} and the dominated convergence theorem that $P_tf$ is lower semi continuous for all $f\in C_b(\chi;\Rm_{\geq 0})$, so that $(X_t)_{0\leq t<\tau_{\partial}}$ is lower semicontinuous (in the sense of Definition \ref{defin:lower semicts kernel}).
\qed

\section{$1+1$-dimensional Langevin dynamics}

We define $\gamma\in\Rm$ and $\sigma>0$ to be constants; $\calO$ to be a bounded, open subinterval of $\Rm$ and $F\in C^{\infty}(\Rm)$ to be a smooth function on $\Rm$ (whose values on $(\bar \calO)^c$ are arbitrary). We define the state space $\chi:=\calO\times \Rm$. Throughout this section, the distinguished measure $\Lambda$ should be understood to be Lebesgue measure on $\chi=\calO\times \Rm$. 

We consider $\calO\times \Rm$-valued solutions $((q_t,p_t))_{0\leq t<\tau_{\partial}}$ of the absorbed Langevin SDE
\begin{equation}\label{eq:Langevin SDE}
\begin{cases}
dq_t=p_tdt\\
dp_t=F(q_t)dt-\gamma p_tdt+\sigma dB_t
\end{cases}
,\quad 0\leq t<\tau_{\partial}:=\inf\{s>0:q_{s-}\in \partial \calO\},\quad (q_0,p_0)=(q^0,p^0).
\end{equation}

We define $\chi:=\calO\times \Rm$ and $(X_t)_{0\leq t<\tau_{\partial}}:=((q_t,p_t))_{0\leq t<\tau_{\partial}}$. Moreover we write $P_t(x,\cdot)$ for the associated submarkovian transition kernel.

A detailed study of this process and its associated Fokker-Planck equation has been undertaken by Leli\`evre, Ramil and Reygner in \cite{Lelievre2021,Lelievre2022}, in the much more general setting where the dimension is arbitrary. In particular, they established in \cite[Theorem 2.13]{Lelievre2021} that there exists a unique quasi-stationary distribution for $(X_t)_{0\leq t<\tau_{\partial}}$, which we call $\pi$. We define $\lambda:=\lambda(\pi)=\Pm_{\pi}(\tau_{\partial}>1)$.

They established in \cite[theorems 2.12 and 2.13]{Lelievre2021} that there exists $h\in C_b(\chi;\Rm_{>0})$ which is the unique (up to a multiplicative constant) pointwise right eigenfunction of $P_t$ of eigenvalue $\lambda^t$ belonging to $C_b(\chi)$, for any $t>0$. We shall choose the normalisation $\pi(h)=1$. Note, in particular, that $h$ is bounded and everywhere strictly positive.

Furthermore, \cite[Theorem 2.21]{Lelievre2021} gives non-uniform exponential convergence in total variation of the distribution conditioned on survival to the QSD: there exists $C<\infty$ and $\gamma>0$ such that
\begin{equation}\label{eq:Langevin non-unif conv to a QSD}
\lvert\lvert \Law_{\mu}(X_t\lvert\tau_{\partial}>t)-\pi\rvert\rvert_{\TV}\leq \frac{C}{\mu(h)}e^{-\gamma t}.
\end{equation}

We prove the following
\begin{theo}\label{theo:1D Langevin satisfies Reverse Dobrushin}
The process $((q_t,p_t))_{0\leq t<\tau_{\partial}}$ satisfies Assumption \ref{assum:technical assumption for Assum (A)} and \cite[Assumption (A)]{Champagnat2014}. Its unique QSD, $\pi$, belongs to $\calP_{\infty}(\Leb)$ and has full support. Moreover $((q_t,p_t))_{0\leq t<\tau_{\partial}}$ is lower semicontinuous (in the sense of Definition \ref{defin:lower semicts kernel}) and satisfies assumptions \ref{assum:adjoint Dobrushin main results section} and \ref{assum:adjoint anti-Dobrushin main results section}.
\end{theo}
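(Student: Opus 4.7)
The plan is to combine the Lelièvre–Ramil–Reygner analysis of Langevin QSDs with an adjoint–process argument. From \cite[Theorems 2.12, 2.13]{Lelievre2021} we already have that $(P_t)_{t\geq 0}$ admits a smooth, strictly positive transition density $p_t(x,y)\in C^{\infty}((0,\infty)\times\chi\times\chi;\Rm_{>0})$, that the QSD $\pi$ has a continuous, bounded, strictly positive density with respect to $\Leb$ on $\chi$ (yielding $\pi\in\calP_{\infty}(\Leb)$ and $\text{spt}(\pi)=\chi$), and that there exists a bounded, everywhere strictly positive pointwise right eigenfunction $h\in C_b(\chi;\Rm_{>0})$ of $P_t$ with eigenvalue $\lambda^t$ for every $t>0$. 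The lower semi-continuity of $(P_t)_{t\geq 0}$ in the sense of Definition \ref{defin:lower semicts kernel} follows from dominated convergence against the continuous $p_t$, and Assumption \ref{assum:technical assumption for Assum (A)} follows from the pointwise positivity of $p_t$ together with the exit-time finiteness of \cite{Lelievre2021} (using Remark \ref{rmk:remark for checking technical assum for Assum (A)} if necessary).

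To upgrade the non-uniform total-variation convergence \eqref{eq:Langevin non-unif conv to a QSD} to Assumption (A) of \cite{Champagnat2014}, I verify (A1) via a Foster–Lyapunov argument tailored to Langevin: using the energy function $W(q,p)=1+\tfrac{p^2}{2}+V(q)$ (with $V$ a primitive of $-F$, which exists in the $1+1$-dimensional setting), the momentum dissipation/diffusion of the SDE together with the boundedness of $\calO$ gives a uniform estimate showing that from any starting point, the conditional law given survival places mass at least $c_1>0$ on a fixed compact sublevel set $K\Subset\chi$ at time $1/2$. Combined with the local Dobrushin bound $\inf_{(x,y)\in K\times B}p_{1/2}(x,y)>0$ on a small ball $B\subseteq K$ provided by the smoothness and positivity of $p_{1/2}$, this yields the uniform minorization $\Law_x(X_1\lvert\tau_{\partial}>1)\geq c\,\nu$ with $\nu$ the uniform law on $B$, which is (A1). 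Assumption (A2) is then automatic from Proposition \ref{prop:right efn gives A2} applied to the strictly positive bounded right eigenfunction $h$.

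For Assumption \ref{assum:adjoint Dobrushin main results section}, I compute the formal $\Leb$-adjoint of the Langevin generator $L=p\partial_q+(F(q)-\gamma p)\partial_p+\tfrac{\sigma^2}{2}\partial_p^2$, obtaining $L^{\ast}=L'+\gamma$ where $L'$ is the generator of the ``adjoint kinetic SDE''
\begin{equation*}
dq'_t=-p'_t\,dt,\qquad dp'_t=\big({-}F(q'_t)+\gamma p'_t\big)\,dt+\sigma\,d\tilde B_t,
\end{equation*}
killed at $\partial\chi=\partial\calO\times\Rm$. A Kolmogorov forward/backward argument identifies $p'_t(y,x)=e^{-\gamma t}p_t(x,y)$ on $\chi\times\chi$ for all $t>0$, so with $\psi\equiv 1$, $a=e^{\gamma t_0}$, and $\tilde{P}_{t_0}(y,dx):=p'_{t_0}(y,x)\,dx$, relation \eqref{eq:psi adjoint for verifying reverse Dobrushin main results section} holds and $\tilde{P}_{t_0}1(y)>0$ for every $y\in\chi$. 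Since $X'$ is itself a $1+1$-dimensional Langevin-type SDE with the same momentum diffusion and parabolic Hörmander structure, the same Foster–Lyapunov plus local-Dobrushin argument applied to $X'$ furnishes \eqref{eq:Dobrushin for P tilde kernel crit for reverse dobrushin main results section} with $\nu$ the uniform law on a ball in $\chi$; this $\nu$ is not mutually singular with $\pi$ since $\pi$ has a strictly positive density. Assumption \ref{assum:adjoint anti-Dobrushin main results section} is then obtained with $\tilde{P}^{(1)}:=\tilde{P}$: the required upper bound $\tilde{P}(y,\cdot)\leq C_1\Lambda(\cdot)$ reduces to $\sup_{(x,y)\in\chi\times\chi}p_{t_1}(x,y)<\infty$, which follows from standard Gaussian-type upper bounds for the hypoelliptic kinetic density on $\Rm^2$ and domination by the unkilled density.

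The principal obstacle is the uniform Dobrushin minorization on the unbounded state space $\chi=\calO\times\Rm$ for both $X$ and $X'$: unlike the bounded-domain degenerate-diffusion setting of Theorem \ref{theo:defenerate diffusions hard killing}, Assumption \ref{assum:vector field degenerate diffusion traverses bdy} fails here (only the drift $v^0$ has a nonzero $\partial_q$-component, not $v^1=(0,\sigma)$), so that result cannot be invoked directly. The real work lies in the Foster–Lyapunov momentum control combined with the position boundary killing, and in checking that the adjoint process $X'$, despite its reversed-sign position drift $-p'$, retains enough controllability into the reference ball under the conditioning on survival.
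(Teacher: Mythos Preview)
Your overall architecture matches the paper's: cite the Leli\`evre--Ramil--Reygner density/eigenfunction results, identify the adjoint Langevin SDE via $p'_t(y,x)=e^{-\gamma t}p_t(x,y)$, and verify \cite[Assumption (A1)]{Champagnat2014} for both $X$ and $X'$ to get Assumptions \ref{assum:adjoint Dobrushin main results section} and \ref{assum:adjoint anti-Dobrushin main results section}. The lower semicontinuity, $\pi\in\calP_\infty(\Leb)$, and the density boundedness for (AaD) are handled the same way.

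The genuine gap is in your (A1) verification. The energy Lyapunov $W=1+\tfrac{p^2}{2}+V(q)$ gives $LW=-\gamma p^2+\tfrac{\sigma^2}{2}$, which already fails when $\gamma\leq 0$ (the paper only assumes $\gamma\in\Rm$). More seriously, the adjoint process $X'$ has friction coefficient $-\gamma$, so $L'W=+\gamma p^2+\tfrac{\sigma^2}{2}$: if your Foster--Lyapunov argument needs $\gamma>0$ for $X$, it \emph{necessarily} fails for $X'$, and you need (A1) for $X'$ to obtain Assumption \ref{assum:adjoint Dobrushin main results section}. Even setting sign issues aside, a drift condition on $W$ controls the unconditional law, not the law conditioned on survival, and bridging that gap is precisely the hard step you defer to ``the real work lies in\ldots''.

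The paper's argument for (A1) (its Proposition 7.1 and Lemma 7.2) is not Lyapunov-based but purely kinematic and sign-symmetric in $\gamma$: since $\calO$ is bounded, a large $|p|$ forces exit within a short time $\delta$ (because $K\delta>\text{diam}(\calO)$), so survival for time $\delta$ implies the process has visited $\{|p|\leq K\}$. The delicate part is then showing that from the corner regions $V_\pm$ near $\partial\calO$ with small outgoing momentum, one still reaches a compact set away from the boundary with uniformly positive probability \emph{conditional on survival}; this is done via an explicit Girsanov tilt of the Brownian motion. This kinematic mechanism applies identically to $X$ and $X'$, which is why the paper can reuse the same proposition for the adjoint.
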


It follows from Theorem \ref{theo:uniform Linfty convergence main results section} that there exists a time $T<\infty$ and constant $\gamma>0$ such that $\Law_{\mu}(X_t\lvert \tau_{\partial}>t)\ll_{\infty}\pi$ for all $t\geq T$ and $\mu\in\calP(\chi)$, with its density with respect to $\pi$ satisfying
\begin{equation}
\Big\lvert\Big\lvert \frac{d\Law_{\mu}(X_t\lvert \tau_{\partial}>t)}{d\pi}-1\Big\rvert\Big\rvert_{L^{\infty}(\pi)}\leq e^{-\gamma(t-T)}\quad\text{for all}\quad  T\leq t<\infty,\quad\mu\in\calP(\chi).
\end{equation}

We may observe that, over the course of proving Theorem \ref{theo:1D Langevin satisfies Reverse Dobrushin}, we have established that Aassumption \ref{assum:combined Dobrushin adjoint Dobrushin main results section} (which includes \cite[Assumption (A1)]{Champagnat2014}) is satisfied by $((q_t,p_t))_{0\leq t<\tau_{\partial}}$ along the way. Moreover, it is also clear that the time horizon over which we establish assumptions \ref{assum:adjoint Dobrushin main results section}, \ref{assum:adjoint anti-Dobrushin main results section} and \ref{assum:combined Dobrushin adjoint Dobrushin main results section} can be made arbitrarily small, without any changes to the proof. We therefore obtain from theorems \ref{theo:dominated by pi theorem general results main results section} and \ref{theo:DAD lower bounds density main results section} the following.
\begin{theo}\label{theo:comparison inequality for 1D Langevin}
For all $t>0$ there exists $0<c_t\leq C_t<\infty$ such that
\begin{equation}
c_t\pi\leq \Law_{\mu}((q_t,p_t)\lvert \tau_{\partial}>t)\leq C_t\pi\quad\text{for all}\quad \mu\in\calP(\chi).
\end{equation}
\end{theo}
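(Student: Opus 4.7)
The plan is to derive Theorem \ref{theo:comparison inequality for 1D Langevin} as a direct consequence of Theorem \ref{theo:1D Langevin satisfies Reverse Dobrushin} combined with the two abstract comparison results: Theorem \ref{theo:dominated by pi theorem general results main results section} (for the upper bound) and Theorem \ref{theo:DAD lower bounds density main results section} (for the lower bound). Theorem \ref{theo:1D Langevin satisfies Reverse Dobrushin} already supplies $\pi \in \calP_{\infty}(\Leb)$ with full support, lower semicontinuity of $(P_t)_{t \geq 0}$, and assumptions \ref{assum:adjoint Dobrushin main results section} and \ref{assum:adjoint anti-Dobrushin main results section}; moreover, \emph{en route}, the standard Dobrushin condition \cite[Assumption (A1)]{Champagnat2014} on $(X_t)$ itself is verified (it is needed in order to obtain \cite[Assumption (A)]{Champagnat2014}), and combined with Assumption \ref{assum:adjoint Dobrushin main results section} this furnishes Assumption \ref{assum:combined Dobrushin adjoint Dobrushin main results section}.

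With these inputs, Theorem \ref{theo:dominated by pi theorem general results main results section} together with its addendum \eqref{eq:upper bound dist cond on survival by pi if also A1 main results section} produces a time $t_2 > 0$ and a constant $C_2 < \infty$ with
\[
\Law_{\mu}((q_{t_2}, p_{t_2}) \lvert \tau_{\partial} > t_2) \leq C_2 \pi \quad \text{uniformly in } \mu \in \calP(\chi),
\]
while Theorem \ref{theo:DAD lower bounds density main results section} gives a time $t_3 > 0$ and a constant $c_3 > 0$ with $P_{t_3}(x, \cdot) \geq c_3 P_{t_3} 1(x)\, \pi$ for every $x \in \chi$, and hence, by integrating against $\mu$ and dividing by $\mu P_{t_3} 1 > 0$,
\[
\Law_{\mu}((q_{t_3}, p_{t_3}) \lvert \tau_{\partial} > t_3) \geq c_3 \pi \quad \text{uniformly in } \mu \in \calP(\chi).
\]
At the single time $\bar t := t_2 \vee t_3$ this yields the two-sided sandwich $\bar c\, \pi \leq \Law_{\mu}((q_{\bar t}, p_{\bar t}) \lvert \tau_{\partial} > \bar t) \leq \bar C\, \pi$, valid uniformly in $\mu \in \calP(\chi)$.

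To upgrade this to every $t > 0$, I would invoke the remark flagged immediately before the statement: the adjoint Dobrushin, adjoint anti-Dobrushin, and Dobrushin conditions entering Theorem \ref{theo:1D Langevin satisfies Reverse Dobrushin} are each established from the hypoelliptic Langevin transition density, which is smooth and strictly positive on $(0, \infty) \times \chi \times \chi$, and so they can be verified over an arbitrarily small time horizon $\tau \in (0, t)$. Applying the single-time sandwich at such a $\tau$ to the (well-defined) probability measure $\Law_{\mu}((q_{t - \tau}, p_{t - \tau}) \lvert \tau_{\partial} > t - \tau) \in \calP(\chi)$ and using the Markov property yields the required $c_t\, \pi \leq \Law_{\mu}((q_t, p_t) \lvert \tau_{\partial} > t) \leq C_t\, \pi$, with constants independent of $\mu$.

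The genuine work is therefore entirely absorbed into Theorem \ref{theo:1D Langevin satisfies Reverse Dobrushin}; the main obstacle is that the assumptions there must be checkable down to arbitrarily short time horizons, which however is transparent from the smooth and strictly positive transition density of the hypoelliptic Langevin kernel. Once those abstract assumptions are granted, the comparison inequality follows mechanically from the general machinery developed in Section \ref{section:main results}.
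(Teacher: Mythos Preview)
Your proposal is correct and follows essentially the same route as the paper. The paper likewise observes that the proof of Theorem \ref{theo:1D Langevin satisfies Reverse Dobrushin} establishes Assumption \ref{assum:combined Dobrushin adjoint Dobrushin main results section} along the way, that the time horizons in assumptions \ref{assum:adjoint Dobrushin main results section}, \ref{assum:adjoint anti-Dobrushin main results section} and \ref{assum:combined Dobrushin adjoint Dobrushin main results section} can be made arbitrarily small, and then invokes Theorems \ref{theo:dominated by pi theorem general results main results section} and \ref{theo:DAD lower bounds density main results section}; your explicit Markov-property propagation from a small time $\tau$ to an arbitrary $t>\tau$ just makes precise the step the paper leaves implicit.
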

We put this in the form of a parabolic boundary Harnack inequality as follows. For any initial conditions $\mu,\nu\in\calP(\chi)$, we let $u_1((q,p),t)$ and $u_2((q,p),t)$ be continuous versions (see \cite[Theorem 2.20]{Lelievre2022} for a justification that this exists) of $\frac{d\Pm_{\mu}((q_t,p_t)\in \cdot,\tau_{\partial}>t)}{dLeb(\cdot)}$ and $\frac{d\Pm_{\nu}((q_t,p_t)\in \cdot,\tau_{\partial}>t)}{dLeb(\cdot)}$ for $(q,p)\in \chi$ and $t>0$, respectively. It follows that for all $t>0$ we have
\begin{equation}\label{eq:parabolic boundary Harnack Langevin}
\inf_{t_1,t_2\geq t}\frac{\inf_{(q,p)\in \chi}\Big(\frac{u_1((q,p),t_1)}{u_2((q,p),t_2)}\Big)}{\sup_{(q',p')\in \chi}\Big(\frac{u_1((q',p'),t_1)}{u_2((q',p'),t_2)}\Big)}\geq \frac{c_t^2}{C_t^2}>0.
\end{equation} 
Note in particular that the constants $0<c_t<C_t<\infty$ do not depend upon $\mu$ and $\nu$, and that this comparison is valid up to the boundary.

\subsection*{Proof of Theorem \ref{theo:1D Langevin satisfies Reverse Dobrushin}}

Without loss of generality we may assume that $\calO=(0,1)$.

We begin by recalling from \cite{Lelievre2021,Lelievre2022} some properties of $(X_t)_{0\leq t<\tau_{\partial}}$.

\begin{enumerate}
\item There exists by \cite[Theorem 2.20]{Lelievre2022}
\begin{equation}\label{eq:transition density 1D Langevin}
p_t(x,y)\in C((0,\infty)\times \chi\times \chi;\Rm_{> 0}) \quad\text{such that}\quad p_t\in C_b([T,\infty)\times\chi\times \chi;\Rm_{> 0})\quad\text{for all fixed $T>0$,}
\end{equation}
providing for the transition density of the submarkovian kernel $P_t(x,\cdot)$,
\[
P_t(x,dy)=p_t(x,y)\Leb(dy)\quad\text{for all}\quad x\in  \chi.
\]
Note that in the above statement, \eqref{eq:transition density 1D Langevin} includes the statement that the transition densities are everywhere strictly positive.
\item
The adjoint Langevin process $(\tilde{X}_t)_{0\leq t<\tilde{\tau}_{\partial}}=((\tilde{q}_t,\tilde{p}_t))_{0\leq t<\tilde{\tau}_{\partial}}$ corresponds to the solution of the SDE
\begin{equation}\label{eq:adjoint Langevin SDE}
\begin{split}
\begin{cases}
d\tilde{q}_t=-\tilde{p}_tdt\\
d\tilde{p}_t=-F(\tilde{q}_t)dt+\gamma \tilde{p}_tdt+\sigma dB_t
\end{cases}
,\quad 0\leq t<\tau_{\partial}:=\inf\{s>0:\tilde{q}_{s-}\in \partial \calO\},\\ (\tilde{q}_0,\tilde{p}_0)=(\tilde{q}^0,\tilde{p}^0).
\end{split}
\end{equation}
We write $\tilde{P}_t$ for the associated submarkovian kernel and $\tilde{p}_t$ for the transition densities as given by \eqref{eq:transition density 1D Langevin}. Then \cite[Theorem 2.7]{Lelievre2021} gives that
\begin{equation}\label{eq:transition density of Langevin adjoint Langevin relation}
p_t(x,y)=e^{\gamma t}\tilde{p}_t(y,x)\quad\text{for all}\quad x,y\in\chi.
\end{equation}
\end{enumerate}

It is immediate that $\Pm_x(\tau_{\partial}>t)>0$ and $\Pm_x(\tau_{\partial}<\infty)>0$ for all $x\in\chi$ and $0\leq t<\infty$.

The main ingredient in the proof of Theorem \ref{theo:1D Langevin satisfies Reverse Dobrushin} is the following proposition.
\begin{prop}\label{prop:1D Langevin satisfies (A1)}
There exists $t_1,c_1>0$ and $\nu\in\calP(\calO\times \Rm)$ such that
\begin{equation}\label{eq:(A1) for 1D Langevin}
\Law_{(q^0,p^0)}((q_{t_1},p_{t_1})\lvert \tau_{\partial}>t_1)(\cdot)\geq c_1\nu(\cdot)\quad\text{for all}\quad (q^0,p^0)\in \chi,
\end{equation}
whereby $\nu=4\Leb_{\lvert_{[\frac{1}{4},\frac{3}{4}]\times [\frac{1}{4},\frac{3}{4}]}}$.
\end{prop}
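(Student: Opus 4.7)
The plan is to establish the Dobrushin-type minorisation \eqref{eq:(A1) for 1D Langevin} via a two-stage argument. First, I would show a uniform confinement property: there exist a time $s_0 > 0$, a compact set $K \subset \chi$, and a constant $c_0 > 0$ such that
\begin{equation*}
\Pm_{(q^0, p^0)}((q_{s_0}, p_{s_0}) \in K \mid \tau_{\partial} > s_0) \geq c_0 \quad \text{for all } (q^0, p^0) \in \chi.
\end{equation*}
Given this, I would set $t_1 = s_0 + s_1$ for some $s_1 > 0$, and use the continuity and everywhere strict positivity of the transition density $p_{s_1}$ on $\chi \times \chi$ (cf.\ \eqref{eq:transition density 1D Langevin}) to obtain $\inf_{x \in K,\, y \in [1/4, 3/4] \times [1/4, 3/4]} p_{s_1}(x, y) > 0$ by compactness of $K \times [1/4, 3/4]^2$. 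Combining with the Markov property and bounding the survival probability $\Pm_{(q^0, p^0)}(\tau_{\partial} > t_1)$ above by $1$, this delivers \eqref{eq:(A1) for 1D Langevin} with $\nu = 4\, \Leb_{\lvert_{[1/4, 3/4]^2}}$ and a suitable $c_1 > 0$.

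The confinement step splits into regimes according to $(q^0, p^0)$. For initial conditions in a fixed compact subset of $\chi$, the continuity of $(q_t, p_t)$ in $(q^0, p^0)$ and a union bound on the hitting event for small $s_0$ give the confinement trivially. The two genuinely delicate cases are (i) $q^0$ near $\partial \calO = \{0, 1\}$ and (ii) $\lvert p^0\rvert$ large. For (i), I would adapt the reflection-coupling argument of Lemma \ref{lem:degenerate diffusion positive conditional prob of being in compact set}: using that $\dpart{}{t} q_t = p_t$ together with the genuine diffusivity in $p$, I would construct a scalar reflected process dominated by $d(q_t, \partial \calO)$ to force the position away from the boundary under the conditioning $\{\tau_\partial > s_0\}$. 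For (ii), the idea is that the deterministic drive $\dpart{}{t} q_t = p_t$ forces $q_t$ across $\calO$ in time $O(1/\lvert p^0\rvert)$, so survival is unlikely unless the Ornstein--Uhlenbeck-type drift $-\gamma p_t$ and the Brownian noise $\sigma dB_t$ reduce the momentum; a Lyapunov estimate using the energy $p^2$ against the SDE \eqref{eq:Langevin SDE} then yields a uniform tail bound on $\lvert p_{s_0}\rvert$ under the conditioned measure.

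The principal obstacle is the non-compactness of $\chi$ in the momentum direction. In contrast to the bounded state space of Section 6, the collection of ``hard'' initial conditions here forms an unbounded set, so the uniformity in $(q^0, p^0)$ cannot be extracted by a straightforward compactness argument. The heart of the work will thus be quantifying, for $\lvert p^0\rvert \gg 1$, how survival over a fixed time $s_0$ forces the momentum $p_{s_0}$ to lie in a bounded window with probability one under the conditioned measure. Once this momentum Lyapunov estimate is combined with the boundary reflection argument for the position coordinate, the two together furnish the desired uniform compact confinement, and the transition-density step closes the proof.
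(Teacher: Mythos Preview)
Your overall two-stage strategy (uniform confinement into a compact set, then transition-density minorisation) matches the paper, and the second stage is exactly as you describe. The gap is in the confinement step, specifically your case (i).

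The reflection-coupling argument of Lemma \ref{lem:degenerate diffusion positive conditional prob of being in compact set} relies crucially on $d_{\partial\chi}(X_t)$ being an It\^{o} diffusion with diffusivity bounded away from zero near the boundary; this is precisely what Assumption \ref{assum:vector field degenerate diffusion traverses bdy} buys. For the Langevin process on $\chi=(0,1)\times\Rm$ the boundary is $\{0,1\}\times\Rm$, so near $q=0$ the distance to the boundary is simply $q_t$, and $dq_t=p_t\,dt$ has \emph{no} Brownian component. The distance process is $C^1$, not a genuine diffusion, and there is no reflected Brownian-type process one can slide underneath it. The hypoellipticity of the Langevin generator means the degenerate-diffusion reflection argument does not transfer.

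The paper's route is different. First, your case (ii) is dispatched deterministically rather than by a Lyapunov estimate: if $\lvert p_t\rvert>K$ throughout an interval of length $\delta$ with $K\delta>1$, then $q_t$ traverses the whole of $(0,1)$ and the process is killed, so survival forces $\lvert p_t\rvert\le K$ at some time before $\delta$. The genuinely hard case is then the ``corner'' $V_-=\{q<\epsilon,\ p<\epsilon,\ \lvert p\rvert\le K\}$ (and its mirror $V_+$), where the position is near the boundary and the momentum is not pushing it away. Here the paper applies a Girsanov tilt, adding a constant upward drift to the driving Brownian motion; under the tilted law the momentum grows linearly, which drives $q_t$ quadratically away from $0$ and into a compact set. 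The Radon--Nikodym derivative is bounded on the relevant event (because $p_{5\delta}$ and $p_{\tau_\kappa}$ are controlled there), and this transfers the estimate back to the original measure. This Girsanov step is the missing ingredient in your proposal.
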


We defer for later the proof of Proposition \ref{prop:1D Langevin satisfies (A1)}.

There exists by \cite[theorems 2.12 and 2.13]{Lelievre2021} $h\in C_b(\chi;\Rm_{>0})$ which is an everywhere strictly positive, pointwise right eigenfunction for $P_1$. It therefore follows from Proposition \ref{prop:right efn gives A2} and Remark \ref{rmk:remark for checking technical assum for Assum (A)} that $(X_t)_{0\leq t<\tau_{\partial}}$ satisfies \cite[Assumption (A)]{Champagnat2014} and Assumption \ref{assum:technical assumption for Assum (A)}. We also have from \cite[theorems 2.12 and 2.13]{Lelievre2021} that $\pi\in \calP_{\infty}(\Leb)$ with (a version of) $\frac{d\pi}{d\Leb}$ belonging to $C_b(\chi;\Rm_{>0})$ (so everywhere strictly positive, in particular). In particular, $\text{spt}(\pi)=\chi$.

We now establish that $(X_t)_{0\leq t<\tau_{\partial}}$ and $\pi$ satisfies Assumption \ref{assum:adjoint Dobrushin main results section}. Since $(\tilde{X}_t)_{0\leq t<\tilde{\tau}_{\partial}}$ satisfies \eqref{eq:adjoint Langevin SDE}, which is of the same form as \eqref{eq:Langevin SDE}, it must be the case that $\tilde{P}_t1(y)>0$ for all $y\in \chi$ and $t>0$. It must also be the case that, for some constant $\tilde{c}_1>0$ and time $\tilde{t}_1>0$,
\[
\frac{\tilde{P}_{\tilde{t}_1}(y,\cdot)}{\tilde{P}_{\tilde{t}_1}1(y)}\geq \tilde{c}_1\nu(\cdot),
\]
where $\nu$ is the probability measure given in Proposition \ref{prop:1D Langevin satisfies (A1)}. Since $\frac{d\pi}{d\Leb}$ has a version which is everywhere strictly positive, $\pi$ and $\nu$ are not mutually singular.

We have from \eqref{eq:transition density of Langevin adjoint Langevin relation} that
\begin{equation}\label{eq:adjoint equation for P P tilde Langevin}
P_{{\tilde{t}_1}}(x,dy)=e^{\gamma {\tilde{t}_1}}\tilde{P}_{{\tilde{t}_1}}(y,dx).
\end{equation}

Therefore $(X_t)_{0\leq t<\tau_{\partial}}$ satisfies Assumption \ref{assum:adjoint Dobrushin main results section}.

We now seek to verify Assumption \ref{assum:adjoint anti-Dobrushin main results section}. We already have that $\tilde{P}_{\tilde{t}_1}$ satisfies \eqref{eq:adjoint equation for P P tilde Langevin}, and that $\tilde{P}_{\tilde{t}_1}1(y)>0$ for all $y\in\chi$. 
It immediately follows from \eqref{eq:transition density 1D Langevin} and \eqref{eq:transition density of Langevin adjoint Langevin relation} that there exists $C_1<\infty$ such that
\[
\tilde{P}_1(x,\cdot)\leq C_1\Leb(\cdot)\quad\text{for all}\quad x\in\chi.
\]

We have therefore established that $(X_t)_{0\leq t<\tau_{\partial}}$ satisfies assumptions \ref{assum:adjoint Dobrushin main results section} and \ref{assum:adjoint anti-Dobrushin main results section}.

It immediately follows from \eqref{eq:transition density 1D Langevin} that $P_tf$ is lower semicontinuous for all $f\in C_b(\chi;\Rm_{\geq 0})$ and $t>0$, by application of the dominated convergence theorem. Therefore $(X_t)_{0\leq t<\tau_{\partial}}$ is lower semicontinuous (in the sense of Definition \ref{defin:lower semicts kernel}).

We have left only to establish Proposition \ref{prop:1D Langevin satisfies (A1)}.

\subsubsection*{Proof of Proposition \ref{prop:1D Langevin satisfies (A1)}}

We recall that, without loss of generality, we have assumed that $\mathcal{O}=(0,1)$. 

We take $0<\delta<1$ such that $1000\delta (\lvert\lvert F\rvert\rvert_{\infty}+\lvert\gamma\rvert)<\sigma\wedge 1$, then take $K<\infty$ such that $K\delta>1000$ and $K>1000\delta (\lvert\lvert F\rvert\rvert_{\infty}+\lvert \gamma\rvert)$. We define the set $\kappa$ and stopping time $\tau_{\kappa}$ to be given by
\[
\kappa:=\{(q,p)\in\chi:\lvert p\rvert\leq K\},\quad \tau_{\kappa}:=\inf\{t>0:X_t\in\kappa\}=\inf\{t>0:\lvert p_t\rvert \leq K\}.
\]
We observe that 
\begin{equation}\label{eq:survive for time delta then hit kappa in time delta langevin pf}
\text{if} \quad\tau_{\partial}>\delta\quad \text{then}\quad \tau_{\kappa}<\delta.
\end{equation}
For $\epsilon>0$ to be determined we define
\[
V:=V_-\cup V_+\quad\text{whereby}\quad V_-:=\{(q,p)\in \kappa:q,p< \epsilon\}\quad\text{and}\quad V_+:=\{(q,p)\in \kappa:q> 1-\epsilon,p>-\epsilon\}.
\]
We observe by \eqref{eq:transition density 1D Langevin} that there exists $c_1(\epsilon)>0$, dependent upon $\epsilon>0$, such that
\begin{equation}\label{eq:minorisation of dist from kappa not V}
\Pm_{x}(X_t\in\cdot)\geq c_1\nu(\cdot)\quad\text{for all}\quad x\in \kappa\setminus V\quad\text{and}\quad\delta\leq t\leq 10\delta.
\end{equation}

\begin{lem}\label{lem:enter U from V}
For all $\epsilon>0$ small enough, there exists $c_2(\epsilon)>0$ dependent upon $\epsilon$ such that, if $(q,p)\in V_-\cup V_+$, then
\begin{equation}\label{eq:enter U from V}
\Pm_{(q,p)}((q_t,p_t)\in \kappa\setminus V\quad\text{for some}\quad 2\delta\leq t\leq 3\delta\lvert \tau_{\partial}>5\delta)>c_2.
\end{equation}
\end{lem}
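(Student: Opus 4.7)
By the reflection symmetry $(q,p) \mapsto (1-q,-p)$, which swaps $V_+$ and $V_-$ up to replacing $F$ by $-F(1-\cdot)$ (and preserves the form of the SDE and all hypotheses), it suffices to treat the case $(q^0, p^0) \in V_-$, so that $q^0, p^0 < \epsilon$ and $|p^0| \le K$. The plan is to reduce the statement via the strong Markov property and then bound the resulting probability ratio by a combination of support-theorem/Girsanov ideas.

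First I would reduce, by the Markov property at time $2\delta$, to establishing
\[
\Pm_{(q^0, p^0)}\bigl(X_{2\delta} \in \kappa \setminus V,\ \tau_\partial > 2\delta\bigr) \;\ge\; c_2' \, \Pm_{(q^0, p^0)}(\tau_\partial > 5\delta)
\]
for some $c_2' > 0$ uniform over $(q^0, p^0) \in V_-$. Granting this, the positivity and joint continuity of the transition density \eqref{eq:transition density 1D Langevin}, evaluated on the compact set $\kappa \setminus V$ which is bounded away from $\partial\chi$, yields a lower bound, uniform in the state at time $2\delta$, on the probability of remaining in a fixed compact interior subset of $\chi$ throughout $[2\delta, 3\delta]$ while also lying in $\kappa \setminus V$ at some such time. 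Composing with the displayed inequality via the strong Markov property then produces the lemma.

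The heart of the proof is the displayed estimate. The driving intuition is that, on $\{\tau_\partial > 5\delta\}$ from $(q^0, p^0) \in V_-$, the identity $q_t = q^0 + \int_0^t p_s\,ds$ forces $q_t > 0$ throughout $[0, 5\delta]$; since $q^0 < \epsilon$ is small, this compels $p$ to acquire sufficiently positive values well inside $[0, 2\delta]$, which in turn drives the state into $\kappa \setminus V$. To formalize this I would use a Girsanov change of measure: construct, for each starting point $(q^0, p^0) \in V_-$, a deterministic drift $u_t$ on $[0, 2\delta]$ such that under the perturbed measure the process is steered deterministically into a fixed compact subset of $\kappa \setminus V$ by time $2\delta$ while staying in $\chi$, then use the Radon-Nikodym factor $\exp(\int u\,dB - \tfrac12 \int u^2\,dt)$ to transfer a lower bound on the $\tilde\Pm$-probability of a small tube around this deterministic trajectory back to a lower bound on $\Pm$.

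The main obstacle is uniformity in $(q^0, p^0) \in V_-$, especially for extreme data such as $(\epsilon/2, -K)$, where both probabilities in the displayed inequality are exponentially small. There the naive Girsanov drift required to steer $p$ up past $\epsilon$ before $q$ vanishes has size $O(K/\epsilon)$, giving a potentially large Radon-Nikodym factor. My plan is to offset this by exploiting the linear structure of the SDE for $p_t$---an Ornstein--Uhlenbeck process with bounded forcing $F(q_t)$---so that $p$ under $\Pm$ has explicit Gaussian-type tails; this allows a direct comparison of the survival probability (upper bound, via the constraint that $\int_0^t p_s\,ds > -q^0$ for all $t \le 5\delta$) against the probability of a good tube (lower bound, via the same Gaussian tails) without relying on a single large Girsanov factor. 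Controlling this trade-off carefully, and showing the resulting constant is independent of $(q^0, p^0) \in V_-$ once $\epsilon$ is fixed small, is the technical core of the argument.
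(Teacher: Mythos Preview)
Your reduction step contains a factual error: $\kappa\setminus V$ is \emph{not} compact and is \emph{not} bounded away from $\partial\chi$. Indeed $\kappa=(0,1)\times[-K,K]$, and a point such as $(q,K/2)$ with $q$ arbitrarily small lies in $\kappa$ but in neither $V_-$ (since $K/2>\epsilon$) nor $V_+$ (since $q<1-\epsilon$). So from ``$X_{2\delta}\in\kappa\setminus V,\ \tau_\partial>2\delta$'' there is no uniform lower bound on the probability of surviving to $5\delta$, and your passage from the displayed estimate to the lemma breaks down.

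More seriously, your plan for the displayed estimate itself is not a proof but a hope. You correctly identify the obstruction---for initial data like $(\epsilon/2,-K)$ both the survival probability and the ``good-tube'' probability are exponentially small, and a naive Girsanov steering drift of size $O(K/\epsilon)$ gives an unbounded Radon--Nikodym factor---but your proposed fix (compare Gaussian tail bounds directly) is left as ``the technical core of the argument'' without any indication of how the two exponential rates are to be matched uniformly in $(q^0,p^0)$.

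The paper's proof sidesteps this uniformity issue entirely by a two-stage argument. First, a dichotomy on the stopping time $\tau_1=\inf\{t:p_t=2\epsilon\}$ shows that, conditional on $\{\tau_\partial>5\delta\}$, the event $\{p_t\le 3\epsilon\ \text{for all}\ t\le 5\delta\}$ has probability at least some $c''(\epsilon)>0$ uniformly over $V_-$; this is the step that absorbs all the dependence on the initial condition. On that event $q_t$ stays small and $p_t$ must exceed $-K$ somewhere in each subinterval of length $\delta$. Second, one applies Girsanov with a \emph{fixed} constant drift $b=2A/\sigma$ (depending only on $F,\gamma,\sigma$, not on $\epsilon$ or $(q^0,p^0)$), started at $\tau_\kappa$; under the tilt the process is pushed into $\kappa\setminus V$ during $[2\delta,3\delta]$, and the Radon--Nikodym derivative is uniformly bounded on the relevant event because $p_{5\delta}-p_{\tau_\kappa}$ is bounded by $2K$. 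This fixed-drift tilt is the key idea your proposal is missing.
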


Before proving Lemma \ref{lem:enter U from V}, we show how it provides for Proposition \ref{prop:1D Langevin satisfies (A1)}. For $x\in V_-$ we define \[
\tau_{\kappa\setminus V}:=\inf\{t\geq 2\delta:X_t\in \kappa\setminus V\}.
\]
For $5\delta \leq t\leq 7\delta$ we have by \eqref{eq:minorisation of dist from kappa not V} and Lemma \ref{lem:enter U from V} that
\[
\begin{split}
\Pm_{x}(X_t\in \cdot)\geq \expE_x[\Ind(2\delta\leq \tau_{\kappa\setminus V}\leq 3\delta)\Pm_{X_{\tau_{\kappa\setminus V}}}(X_{t-\tau_{\kappa\setminus V}}\in \cdot)]\geq c_1\nu\Pm_x(2\delta \leq \tau_{\kappa\setminus V}\leq 3\delta)\\
\geq c_1c_2\Pm_x(\tau_{\partial}\geq 5\delta)\nu\geq c_1c_2\Pm_x(\tau_{\partial}> t)\nu.
\end{split}
\]
This is also true for $x\in V_+$ by the same argument. Therefore there exists $c_3>0$ such that
\[
\Pm_x(X_t\in\cdot)\geq c_3\Pm_x(\tau_{\partial}>t)\nu(\cdot)\quad\text{for all}\quad x\in \kappa\quad\text{and}\quad 5\delta\leq t\leq 7\delta.
\]
Then, using \eqref{eq:survive for time delta then hit kappa in time delta langevin pf}, we have that
\[
\Pm_x(X_{7\delta}\in \cdot)= \expE_x[\Pm_{X_{\tau_{\kappa}}}(X_{7\delta-\tau_{\kappa}}\in \cdot)]\geq c_3\expE_x[\Pm_{X_{\tau_{\kappa}}}(\tau_{\partial}>7\delta-\tau_{\kappa})]\nu=c_3\Pm_x(\tau_{\partial}>7\delta)\nu.
\]
This gives Proposition \ref{prop:1D Langevin satisfies (A1)}. We have left only to prove Lemma \ref{lem:enter U from V}.

\begin{proof}[Proof of Lemma \ref{lem:enter U from V}]
We assume that $x=(q,p)\in V_-$ (so that $q_0,p_0<\epsilon$), the argument for $x\in V_+$ being identical. We assume that $\epsilon>0$ is sufficiently small such that $3\epsilon<K$. We define
\[
\tau_1:=\inf\{t>0:p_t=2\epsilon\}.
\]
We observe that if $\epsilon+15\delta\epsilon<1$, then $p_0,q_0< \epsilon$, $\tau_1<\tau_{\partial}$ and $\epsilon\leq p_t\leq 3\epsilon$ for all $\tau_1\leq t\leq 5\delta$ guarantees that $\tau_{\partial}>5\delta$. Moreover, once we have $p_t=2\epsilon$, the probability that $p_t$ remains in $(\epsilon,3\epsilon)$ for time $5\delta-\tau_1$ is bounded away from $0$ (with the lower bound dependent upon $\epsilon$). Therefore by taking $\epsilon>0$ sufficiently small, we have $c'(\epsilon)>0$ (dependent upon $\epsilon>0$) such that
\[
\Pm_x(\tau_{\partial}>5\delta,p_t\leq 3\epsilon\quad\text{for all}\quad 0\leq t\leq 5\delta)\geq c'(\epsilon)\Pm_x(\tau_{\partial}>5\delta,\tau_1\leq 5\delta)\quad\text{for all}\quad x\in V_-.
\]
On the other hand, if $\tau_1>5\delta$ and $\tau_{\partial}>5\delta$, then $p_t\leq 2\epsilon\leq 3\epsilon$ for all $0\leq t\leq 5\delta$. Therefore we have
\[
\Pm_x(\tau_{\partial}>5\delta,p_t\leq 3\epsilon\quad\text{for all}\quad 0\leq t\leq 5\delta)\geq \Pm(\tau_{\partial}>5\delta,\tau_1>5\delta)\quad\text{for all}\quad x\in V_-.
\]
Therefore, defining $c''(\epsilon):=\frac{c'(\epsilon)}{1+c'(\epsilon)}>0$, we have that
\[
\Pm_x(\tau_{\partial}>5\delta,p_t\leq 3\epsilon\quad\text{for all}\quad 0\leq t\leq 5\delta)\geq c''(\epsilon)\Pm(\tau_{\partial}>5\delta)\quad\text{for all}\quad x\in V_-.
\]

Thus for all $\epsilon>0$ small enough there exists $c''(\epsilon)>0$ such that
\[
\Pm_x(p_t\leq 3\epsilon\quad\text{for all}\quad 0\leq t\leq 5\delta\lvert \tau_{\partial}>5\delta)\geq c''(\epsilon)>0\quad\text{for all}\quad x\in V_-.
\]
Note that on this event,
\[
0<q_t\leq m(\epsilon):=\epsilon+15\delta \epsilon\quad\text{for all}\quad 0\leq t\leq 5\delta.
\]

Since we can't have $p_t<-K$ for all $t$ in an interval of length $\delta$ without hitting the lower boundary (since $K\delta>1000$), we have that
\begin{equation}\label{eq:prob of pt at most epsilon, qt at most m, pt at least -K sometime cond on survival}
\begin{split}
\Pm_x(p_t\leq 3\epsilon\quad\text{and}\quad q_t\leq m(\epsilon)\quad\text{for all}\quad 0\leq t\leq 5\delta,\quad\text{and}\quad p_t\geq -K\quad\text{both for some}\\
 0\leq t\leq \delta\quad\text{and for some}\quad 2\delta\leq t\leq 3\delta\lvert \tau_{\partial}>5\delta)
\geq c''(\epsilon)>0\quad\text{for all}\quad x\in V_-.
\end{split}
\end{equation}

We now define $A:=2(\lvert\lvert F\rvert\rvert_{\infty}+\lvert \gamma\rvert)$ and take $b=\frac{2A}{\sigma}$. We recall that $\tau_{\kappa}:=\inf\{t>0:\lvert p_t\rvert\leq K\}$. We define $\tilde{B}_t=B_t-b((t-\tau_{\kappa})\vee 0)$ and consider a strong solution $\tilde{X}_t=(\tilde{q}_t,\tilde{p}_t)$ of
\[
\begin{cases}
d\tilde{q}_t=\tilde{p}_tdt\\
d\tilde{p}_t=F(\tilde{q}_t)dt-\gamma \tilde{p}_tdt+\sigma d\tilde{B}_t
\end{cases}
,\quad 0\leq t<\tilde{\tau}_{\partial}:=\inf\{s>0:\tilde{q}_{s-}\in \partial \calO\},\quad \tilde{X}_0=X_0.
\]
By Girsanov's theorem, we obtain a probability measure $\tilde{\Pm}$ under which $\tilde{B}_t$ is a Brownian motion, so that $(\tilde{X}_t)_{0\leq t<\tilde{\tau}_{\partial}}$ under $\tilde{\Pm}$ is equal in law to $(X_t)_{0\leq t<\tau_{\partial}}$ under $\Pm$. Over the time interval $5\delta$, this probability measure is given by

\begin{equation}\label{eq:Girsanov trnasform Langevin lemma}
\frac{d\tilde{\Pm}_{\lvert_{\mathcal{F}_{5\delta}}}}{d\Pm_{\lvert_{\mathcal{F}_{5\delta}}}}=\exp(b(B_{5\delta}-B_{\tau_{\kappa}\wedge 5\delta})-\frac{b^2}{2}(5\delta-\tau_{\kappa}\wedge 5\delta))\leq \exp(\frac{b}{\sigma}(p_{5\delta}-p_{\tau_{\kappa}\wedge 5\delta}+5\delta A)),
\end{equation}
where $\mathcal{F}_{\lvert_{\mathcal{F}_{5\delta}}}$ is the filtration at time $5\delta$.

We observe that
\[
(q_t,p_t)=(\tilde{q}_t,\tilde{p}_t)\quad\text{for all}\quad 0\leq t\leq \tau_{\kappa}\quad \text{and}\quad Adt \leq dp_t-d\tilde{p}_t\leq 3A dt\quad \text{for all}\quad \tau_{\kappa}\leq t\leq 5\delta
\]
so that
\begin{equation}\label{eq:bounds for pt-ptilde t and qt-qtilde t Langevin lemma}
A((t-\tau_{\kappa})\vee 0)\leq p_t-\tilde{p}_t\leq 3At\quad\text{and}\quad \frac{A((t-\tau_{\kappa})\vee 0)^2}{2}\leq q_t-\tilde{q}_t\leq \frac{3At^2}{2}\quad \text{for all}\quad 0\leq t\leq 5\delta.
\end{equation}
We also have that
\begin{equation}\label{tilde tau kappa = tau kappa}
\tilde{\tau}_{\kappa}:=\inf\{t>0:\tilde{X}_t\in \kappa\}=\tau_{\kappa},
\end{equation}
since $p_t=\tilde{p}_t$ for $t\leq \tau_{\kappa}$.

We recall that $0<\delta<1$ was chosen so that $1000\delta (\lvert\lvert F\rvert\rvert_{\infty}+\gamma)<1$, so that in particular $500 A\delta<1$. From this, we conclude that for all $\epsilon>0$ sufficiently small,
\[
\tilde{\tau}_{\partial}>5\delta\quad\text{and}\quad \tilde{q}_t\leq m(\epsilon)\quad\text{for all}\quad 0\leq t\leq 5\delta
\]
implies that
\begin{equation}\label{eq:lower and upper bound for qt after tilting}
0\leq \frac{A((t-\tau_{\kappa})\vee 0)^2}{2}< q_t\leq m(\epsilon)+\frac{3At^2}{2}<\frac{1}{2}\quad\text{for all}\quad 0\leq t\leq 5\delta,
\end{equation}
so that $\tau_{\partial}>5\delta$ in particular. Therefore for all $\epsilon>0$ small enough we have by \eqref{eq:survive for time delta then hit kappa in time delta langevin pf}, \eqref{eq:prob of pt at most epsilon, qt at most m, pt at least -K sometime cond on survival}, \eqref{eq:bounds for pt-ptilde t and qt-qtilde t Langevin lemma}, \eqref{tilde tau kappa = tau kappa} and \eqref{eq:lower and upper bound for qt after tilting} that for all $x\in V_-$,
\[
\begin{split}
c''(\epsilon)\Pm_x(\tau_{\partial}>5\delta)\\
\leq \tilde{\Pm}_x(\tilde{\tau}_{\partial}>5\delta,\quad 0<\tilde{q}_t\leq m(\epsilon)\quad\text{and}\quad \tilde{p}_t\leq 3\epsilon\quad\text{for all}\quad 0\leq t\leq 5\delta,\\
\text{and}\quad \tilde{p}_t\geq -K\quad\text{both for some}\quad 0\leq t\leq \delta\quad\text{and for some}\quad 2\delta\leq t\leq 3\delta)\\
\leq \tilde{\Pm}_x(\tau_{\partial}>5\delta,\quad \tau_{\kappa}\leq \delta,\quad \frac{A(t-\delta)^2}{2}< q_t\leq m(\epsilon)+\frac{3At^2}{2}<\frac{1}{2}\quad\text{for all}\quad \delta\leq t\leq 5\delta,\\ p_t\leq 3At+3\epsilon\quad \text{for all}\quad 0\leq t\leq 5\delta\quad\text{and}\quad p_t\geq A(t-\delta)-K\quad\text{for some}\quad 2\delta \leq t\leq 3\delta).
\end{split}
\]
Since $500 A\delta<K$, for all $\epsilon>0$ small enough we have
\[
c''(\epsilon)\Pm_x(\tau_{\partial}>5\delta)\leq \tilde{\Pm}_x(\tau_{\partial}>5\delta,\quad \tau_{\kappa}\leq \delta,\quad X_t\in \kappa\setminus V\quad\text{for some}\quad 2\delta \leq t\leq 3\delta,\quad p_{5\delta}\leq K).
\]
On the event that $p_{5\delta}\leq K$ and $\tau_{\kappa}\leq \delta$, \eqref{eq:Girsanov trnasform Langevin lemma} implies that $\frac{d\tilde{\Pm}_{\lvert_{\mathcal{F}_{5\delta}}}}{d\Pm_{\lvert_{\mathcal{F}_{5\delta}}}}$ is uniformly bounded from above, say by $C<\infty$. Therefore
\[
\begin{split}
c''(\epsilon)\Pm_x(\tau_{\partial}>5\delta) \leq \expE_x\Big[\frac{d\tilde{\Pm}}{d\Pm}\Ind(\tau_{\partial}>5\delta,\quad\tau_{\kappa}\leq \delta,\quad X_t\in \kappa\setminus V\quad\text{for some}\quad 2\delta 
\leq t\leq 3\delta,\quad p_{5\delta}\leq K)\Big]\\\leq C\Pm_x(\tau_{\partial}>5\delta,\quad X_t\in \kappa\setminus V\quad\text{for some}\quad 2\delta \leq t\leq 3\delta).
\end{split}
\]

This completes the proof of Lemma \ref{lem:enter U from V} and hence of Proposition \ref{prop:1D Langevin satisfies (A1)}.
\end{proof}
Having established Proposition \ref{prop:1D Langevin satisfies (A1)}, we have completed the proof of Theorem \ref{theo:1D Langevin satisfies Reverse Dobrushin}.
\qed

\section{Random diffeomorphisms}

We let $\chi$ be a (non-empty) open subset of $d$-dimensional Euclidean space $\Rm^d$ (or the torus or cylinder), for any given $1\leq d<\infty$. Throughout this section, the distinguished measure $\Lambda$ should be understood to be Lebesgue measure on $\chi$. We consider in discrete or continuous time a killed Markov process $(X_t)_{0\leq t<\tau_{\partial}}$ on $\chi$.

A diffeomorphism is defined to be a continuously differentiable bijection with continuously differentiable inverse between open subsets of $\chi$. We shall also consider the unique function from the empty set to the empty set to be a diffeomorphism, so that a random diffeomorphism from a random domain to a random codomain may have an empty domain and codomain.

We assume that $(X_t)_{0\leq t<\tau_{\partial}}$ has a (not necessarily unique) QSD $\pi$. We then consider the following assumption on $(X_t)_{0\leq t<\tau_{\partial}}$ and $\pi$. 

\begin{assum}\label{assum:random diffeo assum}
We firstly assume that $\pi\in \calP_{\infty}(\Leb)$. We assume that there exists $t_0>0$, a probability space $(\Theta,\vartheta)$ and a measurable function
\begin{equation}\label{eq:function F in Assumption for random diffeo}
F:\chi\times \Theta\ra \chi\sqcup \partial
\end{equation}
such that for every $x\in \chi$, $\Pm_{x}(X_{t_0}\in \cdot)$ is given by the law of the random variable
\[
(\Theta,\vartheta)\ni \theta \mapsto F(x,\theta)\in \chi\sqcup \partial,
\]
that is $\Pm_x(X_{t_0}\in \cdot)=\Pm(F(x,\theta)\in\cdot)$ for every $x\in\chi$. We define
\begin{equation}\label{eq:ftheta Utheta V theta defin}
f_{\theta}:\chi\ni x\mapsto F(x,\theta)\in \chi\sqcup \partial,\quad U_{\theta}=f_{\theta}^{-1}(\chi)\quad\text{and}\quad V_{\theta}=\text{Im}(f_{\theta})\cap \chi\quad\text{for all}\quad \theta\in \Theta.
\end{equation}
We assume that 
\[
{f_{\theta}}_{\lvert_{U_{\theta}}}:U_{\theta}\ra V_{\theta}
\]
is $\vartheta$-almost surely a diffeomorphism. We assume in addition that there exists $1\leq M<\infty$ such that
\begin{equation}\label{divergence of random diffeo bounded and bded away from 0}
M^{-1}\leq \lvert\det(D f_{\theta})(x)\rvert\leq M\quad\text{for all}\quad x\in U_{\theta},\quad\vartheta-\text{almost surely.}
\end{equation}
Thus 
\begin{equation}\label{eq:gtheta defin}
g_{\theta}:\chi\ni y\mapsto \begin{cases}
f_{\theta}^{-1}(y),\quad y\in V_{\theta}\\
\partial,\quad y\notin V_{\theta}
\end{cases},
\end{equation}
is such that ${g_{\theta}}_{\lvert_{V_{\theta}}}$ is a well-defined diffeomorphism $V_{\theta}\ra U_{\theta}$, $\vartheta$-almost surely. We may therefore define the discrete-time absorbed Markov process $(\tilde{X}^0_n)_{0\leq n<\tilde{\tau}^0_{\partial}}$ such that
\begin{equation}\label{eq:discrete time Y process for random diffeo theorem}
\Pm_y(\tilde{X}^0_1\in\cdot)=\Pm(g_{\theta}(y)\in \cdot)\quad\text{for all}\quad y\in \chi.
\end{equation}
We assume that $\Pm_y(\tilde{\tau}_{\partial}^0>1)>0$ for every $y\in\chi$, and that $(\tilde{X}^0_n)_{0\leq t<\tilde{\tau}^0_{\partial}}$ satisfies \cite[Assumption (A1)]{Champagnat2014} (note that the former assumption ensures the latter makes sense), for some probability measure $\nu$ not mutually singular with respect to $\pi$. 
\end{assum}

\begin{rmk}
Note that in the above assumption, we include the possibility that $U_{\theta}$ and $V_{\theta}$ are empty for some $\vartheta$-positive collection of $\theta\in\Theta$, since the unique function from the empty set to the empty set is considered to be a diffeomorphism.
\end{rmk}

We then have the following theorem.

\begin{theo}\label{theo:random diffeo}
If $(X_t)_{0\leq t<\tau_{\partial}}$ has a (not necessarily unique) QSD $\pi$, with which it satisfies Assumption \ref{assum:random diffeo assum}, then $(X_t)_{0\leq t<\tau_{\partial}}$ satisfies Assumption \ref{assum:adjoint Dobrushin main results section}.
\end{theo}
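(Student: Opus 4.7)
The plan is to produce the adjoint kernel $\tilde{P}$ directly from the random diffeomorphisms by changing variables in integrals against Lebesgue measure, and then to transfer the Dobrushin condition assumed on the ``inverse'' process $(\tilde{X}^0_n)_{0\leq n<\tilde{\tau}^0_{\partial}}$ to the iterated kernel $\tilde{P}^{n_1}$. Throughout, the distinguished function will simply be $\psi\equiv 1$, and the Jacobian bounds \eqref{divergence of random diffeo bounded and bded away from 0} will be used both to ensure that $\tilde{P}$ is submarkovian and to compare $\tilde{P}$ with the (a priori more tractable) transition kernel of $(\tilde{X}^0_n)$.

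First I would compute, for non-negative Borel $\phi_1,\phi_2$ on $\chi$,
\begin{equation*}
\int_{\chi}\phi_1(x)P_{t_0}\phi_2(x)\Lambda(dx)=\int_{\Theta}\int_{U_{\theta}}\phi_1(x)\phi_2(f_{\theta}(x))\,dx\,d\vartheta(\theta).
\end{equation*}
Changing variables via $y=f_{\theta}(x)$ on $U_\theta\to V_\theta$, using \eqref{eq:ftheta Utheta V theta defin}--\eqref{eq:gtheta defin} and the fact that $|\det Dg_{\theta}(y)|=|\det Df_{\theta}(g_{\theta}(y))|^{-1}\in[M^{-1},M]$, this equals $\int_{\chi}\phi_2(y)Q\phi_1(y)\Lambda(dy)$ where
\begin{equation*}
Q\phi(y):=\int_{\Theta}\phi(g_{\theta}(y))\Ind(y\in V_{\theta})|\det Dg_{\theta}(y)|\,d\vartheta(\theta).
\end{equation*}
Since $Q1(y)\leq M$, setting $\tilde{P}:=M^{-1}Q$ gives a submarkovian kernel satisfying $\Lambda(dx)P_{t_0}(x,dy)=M\Lambda(dy)\tilde{P}(y,dx)$, which is \eqref{eq:psi adjoint for verifying reverse Dobrushin main results section} with $\psi\equiv 1$ and $a=M$. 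Moreover the submarkovian kernel $\tilde{P}^0$ of $(\tilde{X}^0_n)$ satisfies $\tilde{P}^0\phi(y)=\int_{\Theta}\phi(g_{\theta}(y))\Ind(y\in V_{\theta})\,d\vartheta(\theta)$, so the Jacobian bounds immediately give the sandwich
\begin{equation*}
M^{-2}\tilde{P}^0(y,\cdot)\leq \tilde{P}(y,\cdot)\leq \tilde{P}^0(y,\cdot),\qquad y\in\chi.
\end{equation*}
In particular, since $\tilde{P}^01(y)=\Pm_y(\tilde{\tau}^0_{\partial}>1)>0$ for every $y\in\chi$ by Assumption \ref{assum:random diffeo assum}, we have $\tilde{P}1(y)>0$ on all of $\chi$.

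Next I would iterate. By \cite[Assumption (A1)]{Champagnat2014} for $(\tilde{X}^0_n)$, there exist $n_1\in\Nm$, $c_0>0$ and $\nu\in\calP(\chi)$ (not mutually singular with $\pi$) such that $\Law_y(\tilde{X}^0_{n_1}\lvert \tilde{\tau}^0_{\partial}>n_1)\geq c_0\nu$ for every $y\in\chi$. Iterating the adjoint relation $n_1$ times (using that the adjoint of $P_{t_0}^{n_1}$ with respect to $\Lambda$ is $Q^{n_1}$) yields
\begin{equation*}
\Lambda(dx)P_{n_1t_0}(x,dy)=M^{n_1}\Lambda(dy)\tilde{P}^{n_1}(y,dx),
\end{equation*}
while iterating the sandwich bound on non-negative test functions gives $M^{-2n_1}(\tilde{P}^0)^{n_1}\phi\leq \tilde{P}^{n_1}\phi\leq (\tilde{P}^0)^{n_1}\phi$. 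Combining these two inequalities,
\begin{equation*}
\frac{\tilde{P}^{n_1}(y,\cdot)}{\tilde{P}^{n_1}1(y)}\geq M^{-2n_1}\frac{(\tilde{P}^0)^{n_1}(y,\cdot)}{(\tilde{P}^0)^{n_1}1(y)}=M^{-2n_1}\Law_y(\tilde{X}^0_{n_1}\lvert \tilde{\tau}^0_{\partial}>n_1)\geq M^{-2n_1}c_0\nu(\cdot)
\end{equation*}
for every $y\in\chi$ for which $\tilde{P}^{n_1}1(y)>0$; and $\tilde{P}^{n_1}1(y)>0$ holds on all of $\chi$ since $(\tilde{P}^0)^{n_1}1(y)=\Pm_y(\tilde{\tau}^0_{\partial}>n_1)>0$ by the standing hypothesis that \cite[Assumption (A1)]{Champagnat2014} is satisfied. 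Thus Assumption \ref{assum:adjoint Dobrushin main results section} is satisfied with $\psi\equiv 1$, time $n_1t_0>0$, constant $a=M^{n_1}$, submarkovian kernel $\tilde{P}^{n_1}$, Dobrushin constant $c_0':=M^{-2n_1}c_0$, and probability measure $\nu$.

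The only substantive step is the initial change-of-variables computation producing $Q$; everything else is bookkeeping with the Jacobian bounds. The main subtlety is conceptual rather than technical: recognising that the process $(\tilde{X}^0_n)$ defined in Assumption \ref{assum:random diffeo assum} (which runs the inverse diffeomorphisms $g_{\theta}$ without Jacobian reweighting) is not itself the adjoint, but rather differs from the true adjoint $\tilde{P}$ only by the bounded Jacobian factor $|\det Dg_{\theta}|\in[M^{-1},M]$, so that the assumed $(A1)$ Dobrushin condition on $\tilde{P}^0$ transfers to $\tilde{P}^{n_1}$ with a controlled loss of $M^{-2n_1}$.
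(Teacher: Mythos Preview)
Your proof is correct and follows essentially the same route as the paper's: define the adjoint kernel $\tilde{P}$ via the change of variables $y=f_{\theta}(x)$, compare it to the transition kernel of $(\tilde{X}^0_n)$ through the Jacobian bounds $M^{-1}\leq|\det Dg_{\theta}|\leq M$, and transfer the assumed (A1) Dobrushin condition across this sandwich at the $n_1$-th iterate. The only cosmetic difference is normalisation: the paper defines its $\tilde{P}^0$ with an extra factor $M^{-1}$ (so that its $\tilde{P}^0$ equals $M^{-1}$ times yours), which shifts the sandwich to $M^{-n}(\tilde{P}^0)^n\leq\tilde{P}^n\leq M^n(\tilde{P}^0)^n$ but yields the identical Dobrushin constant $c_0'=M^{-2n_1}c_0$ in the end.
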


We shall firstly prove this theorem, before considering an application of it to $2$-dimensional neutron transport dynamics.

\subsection*{Proof of Theorem \ref{theo:random diffeo}}

We write $(P_t)_{t\geq 0}$ for the submarkovian transition semigroup corresponding to $(X_t)_{0\leq t<\tau_{\partial}}$. We further define the submarkovian kernels $\tilde{P}$ and $\tilde{P}^0$ to be given by (here $1\leq M<\infty$ is the constant in \eqref{divergence of random diffeo bounded and bded away from 0})
\[
\begin{split}
\tilde{P}^0(y,\cdot):=M^{-1}\Pm_y(g_{\theta}(y)\in \cdot\cap \chi),\quad
\tilde{P}(y,dx):=M^{-1}\expE_y[\Ind(g_{\theta}(y)\in \cdot)\det(Dg_{\theta}(y))].
\end{split}
\]
We observe that $M\tilde{P}^0$ is the submarkovian kernel for the time-steps of $(\tilde{X}^0_n)_{0\leq n<\tilde{\tau}^0_{\partial}}$. We now prove that
\begin{equation}\label{eq:adjoint equation for random diffeo}
\Leb(dx)P_{nt_0}(x,dy)=M^n\Leb(dy)\tilde{P}^n(y,dx)\quad\text{for all}\quad n\geq 1.
\end{equation}

\begin{proof}[Proof of \eqref{eq:adjoint equation for random diffeo}]
We fix $A,B\in\mathscr{B}(\chi)$ such that $\Leb(A),\Leb(B)<\infty$. Then using Tonelli's theorem and the change of variables formula we have that
\[
\begin{split}
\int_AP_{t_0}(x,B)\Leb(dx)=\int_{\Theta}\int_{U_{\theta}}\Ind(x\in A)\Ind(f_{\theta}(x)\in B)\Leb(dx)\vartheta(\theta)\\
\underbrace{=}_{\substack{\text{substitute}\\
y=f_{\theta}(x)}}\int_{\theta}\int_{V_{\theta}}\Ind(g_{\theta}(y)\in A)\Ind(y\in B)\det(Dg_{\theta}(y))\Leb(dy)\vartheta(d\theta)=\int_BM\tilde{P}(y,A)\Leb(dy).
\end{split}
\]

Thus we have \eqref{eq:adjoint equation for random diffeo} for $n=1$. We then obtain\eqref{eq:adjoint equation for random diffeo} for all $n\geq 1$ by \eqref{eq:adjoint semigroup eqn from cty of pi criterion by induction}.
\end{proof}

It follows from \eqref{divergence of random diffeo bounded and bded away from 0} that
\begin{equation}\label{eq:proof of random diffeo theorem tilde P tilde P0 comparison}
M^{-n}(\tilde{P}^0)^n(y,\cdot)\leq \tilde{P}^n(y,\cdot)\leq M^n(\tilde{P}^0)^n(y,\cdot)\quad\text{for all}\quad y\in\chi,\;n\in \Nm.
\end{equation}

Since we have that $\Pm_y(\tilde{\tau}_{\partial}^0>1)>0$ for every $y\in\chi$, using \eqref{eq:proof of random diffeo theorem tilde P tilde P0 comparison} we have that $\tilde{P}1(y)>0$ for every $y\in\chi$.

We now let $c_0>0$, $n_1>0$ and $\nu\in\calP(\chi)$ be the constant, discrete time and probability measure respectively for which $(\tilde{X}_t)_{0\leq t<\tilde{\tau}_{\partial}^0}$ satisfies \cite[Assumption (A1)]{Champagnat2014}. Using \eqref{eq:proof of random diffeo theorem tilde P tilde P0 comparison} we have that
\begin{equation}\label{eq:tilde P in random diffeo pf satisfies A1 calculation}
\frac{\tilde{P}^{n_1}(y,\cdot)}{P^{n_1}1(y)}\geq M^{-2n_1}\frac{(P^0)^{n_1}(y,\cdot)}{(P^0)^{n_1}1(y)}= M^{-2n_1}\Law(\tilde{X}_{n_1}\lvert \tilde{\tau}_{\partial}>n_1)(\cdot)\geq c_0M^{-2n_1}\nu(\cdot)\quad\text{for every}\quad y\in\chi.
\end{equation}

Finally, we have from \eqref{eq:adjoint equation for random diffeo} with $n=n_1$ and \eqref{eq:tilde P in random diffeo pf satisfies A1 calculation} that \eqref{eq:psi adjoint for verifying reverse Dobrushin main results section} and \eqref{eq:Dobrushin for P tilde kernel crit for reverse dobrushin main results section} are satisfied, the latter being with the constant $c_0M^{-2n_1}>0$ and probability measure $\nu$. We already have by assumption that $\pi$ is not mutually singular with respect to $\nu$.

We have therefore verified that $(X_t)_{0\leq t<\tau_{\partial}}$ satisfies Assumption \ref{assum:adjoint Dobrushin main results section}.
\qed

\subsection{$2$-dimensional neutron transport dynamics}\label{subsection:neutron transport}

The neutron transport equation models the propagation of neutrons in a fissile medium. It corresponds to the expectation semigroup of a neutron transport process, which mimics the dynamics of a typical neutron. This neutron transport process is an absorbed Markov process, with absorption corresponding to the absorption of neutrons at the physical spatial boundary. We consider the simple $2$-dimensional system considered in \cite[Section 4.2]{Champagnat2014}. A more general, $3$-dimensional system has been extensively studied by Horton and Kyprianou et al. in the sequence of papers \cite{Horton2020,Harris2020,Cox2021}.

This neutron transport process we consider is defined as follows. We take $U$ to be a non-empty open, connected, bounded subdomain of $\Rm^2$ with $C^2$ boundary $\partial U$, corresponding to the physical space. The state space $\chi$ is given by $\chi:=U\times (\Rm/(2\pi\Zm))$. We then consider the $\chi$-valued absorbed Markov process $(X_t)_{0\leq t<\tau_{\partial}}$ whose dynamics are as follows. Prior to absorption at time $\tau_{\partial}$, $X_t$ consists of a spatial position $q_t$ and a direction $\varphi_t$. The particles moves at constant speed $1$ in the direction given by $\varphi_t$, 
\[
\dot{q}_t=v(\varphi_t)\quad\text{whereby}\quad v(\varphi):=\begin{pmatrix}
\cos(\varphi)\\
\sin(\varphi)
\end{pmatrix}.
\]
At constant Poisson rate $\mu>0$, $\varphi_t$ jumps to a new angle chosen uniformly from $[0,2\pi)$, corresponding to the neutron scattering upon collision with an atomic nucleus. The direction is constant in between jump times. The particle is absorbed upon contact of its spatial position with the boundary of $U$, $\tau_{\partial}:=\inf\{t>0:q_{t-}\in \partial U\}$.

It is obvious that $(X_t)_{0\leq t<\tau_{\partial}}$ satisfies Assumption \ref{assum:technical assumption for Assum (A)}. We have from \cite[Theorem 4.3]{Champagnat2014} that \cite[Assumption (A)]{Champagnat2014} is satisfied by $(X_t)_{0\leq t<\tau_{\partial}}$. It therefore has a unique QSD, which we call $\pi$.

We prove the following.
\begin{theo}\label{theo:simple neutron transport reverse dobrushin satisfied}
The simple $2$-dimensional neutron transport process $(X_t)_{0\leq t<\tau_{\partial}}$ satisfies Assumption \ref{assum:Dobrushin reverse time}, with its QSD $\pi$ belonging to $\calP_{\infty}(\Leb)$.
\end{theo}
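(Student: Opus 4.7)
My plan is to verify Assumption \ref{assum:adjoint Dobrushin main results section} together with $\pi\in\calP_\infty(\Leb)$, and then invoke Theorem \ref{theo:theo for reverse Dobrushin in Euclidean space} to conclude Assumption \ref{assum:Dobrushin reverse time}. Throughout I take $\Lambda=\Leb$, $\psi\equiv 1$, $a=1$, and as adjoint kernel the one-step transition $\tilde P:=\tilde P_{t_0}$ of the \emph{reversed-velocity neutron transport} $\tilde X_t=(\tilde q_t,\tilde\varphi_t)$, defined by $\dot{\tilde q}_t=-v(\tilde\varphi_t)$ with the same rate-$\mu$ uniform-scattering jump mechanism and Dirichlet absorption on $\partial U$. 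The adjoint identity $\Leb(dx)P_{t_0}(x,dy)=\Leb(dy)\tilde P_{t_0}(y,dx)$ will follow from $L^2(\Leb)$ duality: the generator of the unabsorbed forward process on $\Rm^2\times(\Rm/2\pi\Zm)$ is $Lf=v(\varphi)\cdot\nabla_q f+\mu(Mf-f)$ with $Mf(q,\varphi)=\frac{1}{2\pi}\int_0^{2\pi}f(q,\varphi')\,d\varphi'$, and since $v(\varphi)$ is divergence-free in $q$ and $M$ is self-adjoint in $L^2(\Leb)$, its formal adjoint is precisely the generator of $\tilde X$; Dirichlet absorption is symmetric under time reversal because absorption at $\partial U$ is blind to the direction of instantaneous velocity, so a standard duality argument extends the identity to the absorbed semigroups.

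For the Dobrushin minorisation of $\tilde P_{t_0}$, observe that the reflection $\sigma:(q,\varphi)\mapsto(q,\varphi+\pi)$ conjugates $\tilde X$ into a forward neutron-transport process of the type studied in \cite[Section 4.2]{Champagnat2014}, since $v(\varphi+\pi)=-v(\varphi)$ and the uniform scattering law is $\sigma$-invariant. Thus \cite[Theorem 4.3]{Champagnat2014} applied to the reflected process yields \cite[Assumption (A1)]{Champagnat2014} for $\tilde X$ at some time $t_0>0$: there exist $c_0'>0$ and $\nu\in\calP(\chi)$ with $\Law_y(\tilde X_{t_0}\lvert\tilde\tau_\partial>t_0)\geq c_0'\nu$ for every $y\in\chi$, and inspection of the proof there lets me take $\nu$ to be normalised Lebesgue on an open ball $B\subset\chi$. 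Positivity of $\tilde P_{t_0}1$ at every $y\in\chi$ follows from the no-jump event alone surviving with positive probability on a nonempty open cone of initial velocities.

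The main obstacle is establishing $\pi\in\calP_\infty(\Leb)$. My approach is to push Lebesgue through the forward semigroup and exploit the exponential convergence to the QSDs of both the forward and reverse processes. Integrating the adjoint identity iteratively gives $\Leb P_{nt_0}(dy)=\tilde P_{nt_0}1(y)\,\Leb(dy)$ for every $n\geq 1$, so the probability measure $\Law_{\Leb/\Leb(\chi)}(X_{nt_0}\lvert\tau_\partial>nt_0)$ has $\Leb$-density $\tilde P_{nt_0}1(y)/\Leb(\tilde P_{nt_0}1)$. The reflected copy of $\tilde X$ satisfies \cite[Assumption (A)]{Champagnat2014}, so by \cite[Theorem 2.1]{Champagnat2014} the function $\lambda^{-nt_0}\tilde P_{nt_0}1(y)$ converges uniformly on $\chi$ to the bounded strictly positive right eigenfunction $\tilde h$ of $\tilde X$ guaranteed by \cite[Proposition 2.3]{Champagnat2014}, while the normalising constant $\lambda^{-nt_0}\Leb(\tilde P_{nt_0}1)$ converges to $\Leb(\tilde h)>0$. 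Consequently $\frac{d\Law_{\Leb/\Leb(\chi)}(X_{nt_0}\lvert\tau_\partial>nt_0)}{d\Leb}$ is uniformly bounded in $L^\infty(\Leb)$ for large $n$. Simultaneously, \cite[Theorem 2.1]{Champagnat2014} applied to the forward process gives total-variation convergence $\Law_{\Leb/\Leb(\chi)}(X_{nt_0}\lvert\tau_\partial>nt_0)\to\pi$, and a uniform $L^\infty(\Leb)$ bound passes to TV limits (for every Borel $A$, $\pi(A)=\lim_n\Law_\mu(X_{nt_0}\lvert\tau_\partial>nt_0)(A)\leq M\Leb(A)$), giving $\pi\in\calP_\infty(\Leb)$. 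Full support of $\pi$ is immediate from irreducibility of the PDMP on $\chi$, so the Lebesgue-on-ball measure $\nu$ is not mutually singular with $\pi$, and Theorem \ref{theo:theo for reverse Dobrushin in Euclidean space} produces Assumption \ref{assum:Dobrushin reverse time}.
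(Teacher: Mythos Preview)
Your approach is essentially the same as the paper's: both exploit the reflection $(q,\varphi)\mapsto(q,\varphi+\pi)$ to conjugate the reversed-velocity process into a forward neutron transport satisfying \cite[Assumption (A)]{Champagnat2014}, verify Assumption~\ref{assum:adjoint Dobrushin main results section} together with $\pi\in\calP_\infty(\Leb)$, and pass to Assumption~\ref{assum:Dobrushin reverse time} via Theorem~\ref{theo:theo for reverse Dobrushin in Euclidean space}.

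Two minor differences in execution are worth noting. First, the paper obtains the adjoint identity $\Leb(dx)P_{t_0}(x,dy)=\Leb(dy)\tilde P_{t_0}(y,dx)$ through the random-diffeomorphism framework of Theorem~\ref{theo:random diffeo} (change of variables with $\det Df_\theta\equiv 1$), which is fully rigorous; your generator-level sketch is correct in spirit but the phrase ``a standard duality argument extends the identity to the absorbed semigroups'' hides real work. Second, for $\pi\in\calP_\infty(\Leb)$ the paper is more economical: since $\tilde h\Leb$ is a QSD for the forward process (by the adjoint identity) and the QSD is unique under \cite[Assumption (A)]{Champagnat2014}, one has directly $\pi=\tilde h\Leb/\Leb(\tilde h)$, making your limit argument unnecessary. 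Your citation for the uniform convergence $\lambda^{-nt_0}\tilde P_{nt_0}1\to\tilde h$ should point to \cite[Theorem 2.1]{Champagnat2017} rather than \cite[Theorem 2.1]{Champagnat2014}. Finally, the paper does not need to assume $\nu$ is Lebesgue on a ball: it shows $\nu\ll\Leb$ directly from the adjoint equation, and since $\frac{d\pi}{d\Leb}=\tilde h>0$ everywhere, non-mutual-singularity follows.
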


It therefore follows from Theorem \ref{theo:Linfty convergence} that there exists $C<\infty$ and $\gamma>0$ such that
\begin{equation}
\Big\lvert\Big\lvert \frac{d\Law_{\mu}(X_t\lvert \tau_{\partial}>t)}{d\pi}-1\Big\rvert\Big\rvert_{L^{\infty}(\pi)}\leq \frac{C}{\mu(h)}e^{-\gamma t}\Big\lvert\Big\lvert \frac{d\mu}{d\pi}\Big\rvert\Big\rvert_{L^{\infty}(\pi)}\quad\ \text{for all}\quad t\geq T\quad\text{and all}\quad  \mu\in\calP_{\infty}(\pi),
\end{equation}
where $h\in \calB_b(\chi;\Rm_{>0})$ is the bounded and strictly positive pointwise right eigenfunction provided for by \cite[Proposition 2.3]{Champagnat2014}.

\subsubsection*{Proof of Theorem \ref{theo:simple neutron transport reverse dobrushin satisfied}}

We shall proceed by applying Theorem \ref{theo:random diffeo}.

Since we have from \cite[Theorem 4.3]{Champagnat2014} that $(X_t)_{0\leq t<\tau_{\partial}}$ satisfies \cite[Assumption (A)]{Champagnat2014}, we may take a time $t_0>0$, constant $c_1>0$ and probability measure $\nu$ for which \cite[Assumption (A)]{Champagnat2014} is satisfied by $(X_t)_{0\leq t<\tau_{\partial}}$. We henceforth fix this to be the definition of $t_0$, $c_1$ and $\nu$. We write $(\hat{X}_n)_{0\leq n<\hat{\tau}_{\partial}}$ for the discrete time process obtained by only considering $X_t$ over time-steps of $t_0$, so that $\hat{X}_n=X_{nt_0}$ for $n\in \Nm$. It follows that $(\hat{X}_n)_{0\leq n<\hat{\tau}_{\partial}}$ also satisfies Assumption \ref{assum:technical assumption for Assum (A)} and \cite[Assumption (A)]{Champagnat2014}, with the discrete time $1$, constant $c_1>0$ and probability measure $\nu$. 

We now define $F$ as in \eqref{eq:function F in Assumption for random diffeo}. We define $\Theta=D([0,t_0];\Rm/(2\pi\Zm))$, which we equip with its Borel $\sigma$-algebra $\mathscr{B}(\Theta)$. We then define on $(\Theta,\mathscr{B}(\Theta))$ the probability measure $\vartheta$ corresponding to the law of a jump process on $\Rm/(2\pi\Zm)$, which jumps at Poisson rate $\mu$ to a new position chosen uniformly from $[0,2\pi)$. The measurable function $F$ in \eqref{eq:function F in Assumption for random diffeo} is then defined to be
\[
F:\chi\times \Theta\ni ((q^0,\varphi^0),(\theta_t)_{0\leq t\leq t_0})\mapsto \begin{cases}
\partial,\quad q^0+\int_0^{t}v(\varphi^0+\theta_s)ds\in \partial U\quad\text{for some}\quad 0\leq t\leq t_0,\\
(q^0+\int_0^{t_0}v(\varphi^0+\theta_s)ds,\varphi^0+\theta_{t_0}),\quad\text{otherwise.}
\end{cases}
\]

This defines as in \eqref{eq:ftheta Utheta V theta defin} the map $f^{\theta}:\chi\mapsto \chi\sqcup \partial$, along with the (possibly empty) random open subsets of $\chi$, $U^{\theta}$ and $V^{\theta}$. We observe that $f^{\theta}_{\lvert_{U_{\theta}}}:U_{\theta}\ra V_{\theta}$ is $\vartheta$-almost surely a diffeomorphism, such that
\[
D f_{\lvert_{U_{\theta}}}\equiv \begin{pmatrix}
\text{Id}_{2\times 2} && \text{something}\\
\underline{0}_{1\times 2} && 1
\end{pmatrix},
\]
whence we conclude that $\det(Df_{\lvert_{U_{\theta}}})\equiv 1$. Therefore \eqref{divergence of random diffeo bounded and bded away from 0} is satisfied.

We now consider $g^{\theta}$, the map given by \eqref{eq:gtheta defin}. We define $r:\chi\sqcup \partial \ra \chi\sqcup \partial$ by $r((q,\varphi)):=(q,\varphi+\pi)$ for $(q,\varphi)\in \chi$ and $r(\partial):=\partial$. We further define $\iota(\theta)=(\iota(\theta)_s)_{0\leq s\leq t_0}\in \Theta$ to be the right-continuous version of $(\theta_{t_0-s}-\theta_{t_0})_{0\leq s\leq t_0}$. We observe that $\iota_{\#}\vartheta=\vartheta$. It is straightforward to check that $g^{\theta}$ satisfies
\[
g^{\theta}=r\circ f^{\iota(\theta)}\circ r\overset{d}{=} r\circ f^{\theta}\circ r.
\]

Therefore, if we take independent $\theta_1,\ldots,\theta_n\sim \vartheta$ (for any $n\in \Nm$), since $r\circ r$ is the identity we have that
\begin{equation}\label{eq:n step g theta in terms of n step f theta 2d neutron transport}
g^{\theta_n}\circ\ldots\circ g^{\theta_1}\overset{d}{=}(r\circ f^{\theta_n}\circ r)\circ (r \circ f^{\theta_{n-1}}\circ r)\circ \ldots \circ (r\circ f^{\theta_1}\circ r) =r\circ (f^{\theta_n}\circ \ldots \circ f^{\theta_1})\circ r.
\end{equation}

We now take the discrete time absorbed Markov process $(\tilde{X}^0_n)_{0\leq n<\tilde{\tau}^0_{\partial}}$ defined in \eqref{eq:discrete time Y process for random diffeo theorem}. We have from \eqref{eq:n step g theta in terms of n step f theta 2d neutron transport} that
\begin{equation}\label{eq:Y process in terms of X process 2d neutron transport}
\Pm_y(\tilde{X}^0_1\in \cdot)=\Pm_{r(y)}(X_{t_0}\in  r(\cdot))\quad\text{for all}\quad y\in \chi\sqcup\partial,\quad n\in \Nm.
\end{equation}

It follows from \eqref{eq:Y process in terms of X process 2d neutron transport} that $(\tilde{X}^0_n)_{0\leq n<\tilde{\tau}^0_{\partial}}$ also satisfies Assumption \ref{assum:technical assumption for Assum (A)} and \cite[Assumption (A)]{Champagnat2014}, with the discrete time $1$, constant $c_1>0$ and probability measure $r_{\#}\nu$. We now establish that $\pi\in\calP_{\infty}(\Leb)$, and that $\pi$ and $r_{\#}\nu$ are not mutually singular. Unfortunately, we cannot apply \cite[Theorem 3.1]{Horton2020}, as there the new velocity is chosen according to a distribution which has a density with respect to Lebesgue measure on the annulus, whereas here the speed is deterministically $1$ with only the direction being random.

Since $\det(Df_{\theta})\equiv 1$ on $U_{\theta}$, we can apply \eqref{eq:adjoint equation for random diffeo} to see that
\begin{equation}\label{eq:adjoint eqn for neutron transport pf}
\Leb(dx)P_{t_0}(x,dy)=\Leb(dy)\tilde{P}^0(y,dx).
\end{equation}

Since $(\tilde{X}^0_n)_{0\leq n<\tilde{\tau}^0_{\partial}}$ satisfies Assumption \ref{assum:technical assumption for Assum (A)} and \cite[Assumption (A)]{Champagnat2014}, $\tilde{P}^0$ must have a pointwise right eigenfunction belonging to $\calB_b(\chi;\Rm_{>0})$, which we call $\tilde{h}$. Rescaling if necessary, we have that $\Leb(\tilde{h})=1$. It then follows from \eqref{eq:adjoint eqn for neutron transport pf} that $\tilde{h}\Leb$ is a QSD for $(\hat{X}_n)_{0\leq n<\hat{\tau}_{\partial}}$. Since $(\hat{X}_n)_{0\leq n<\hat{\tau}_{\partial}}$ satisfies Assumption \ref{assum:technical assumption for Assum (A)} and \cite[Assumption (A)]{Champagnat2014}, this QSD must be unique. Since $\pi$ is a QSD for $(\hat{X}_n)_{0\leq n<\hat{\tau}_{\partial}}$, it follows that $\pi=\tilde{h}\Leb$. Therefore $\pi\in \calP_{\infty}(\Leb)$, with (a version of) $\frac{d\pi}{d\Leb}$ being everywhere positive. 

For any Lebesgue-null Borel set $A\in\mathscr{B}(\chi)$, we may integrate \eqref{eq:adjoint eqn for neutron transport pf} over $x\in A$ to see that $\tilde{P}^0\Ind_A(y)=0$ for Lebesgue-almost every $y\in\chi$, implying that $(r_{\#}\nu)(A)=0$. We may therefore conclude that $r_{\#}\nu\ll \Leb$. Since (a version of) $\frac{d\pi}{d\Leb}$ is everywhere positive, it follows that $r_{\#}\nu$ and $\pi$ cannot be mutually singular.

It follows from Theorem \ref{theo:simple neutron transport reverse dobrushin satisfied} that $(\hat{X}_n)_{0\leq n<\hat{\tau}_{\partial}}$ satisfies Assumption \ref{assum:adjoint Dobrushin main results section}, implying that $(X_t)_{0\leq t<\tau_{\partial}}$ does also. 
\qed

\section{Piecewide-deterministic Markov processes}\label{section:PDMPs}

Piecewise deterministic Markov processes (PDMPs) were introduced by Davis in \cite{Davis1984}, and have since been widely studied. They are characterised by the property that they undergo deterministic motion in between random times, as opposed to diffusions whose motion is always random. For the PDMPs we consider, the deterministic motion shall correspond to the flows generated by a family of vector fields. The long-term behaviour of such PDMPs without absorption is now well understood (see \cite{Benaim2015}), it being possible to apply the classical Dobrushin condition under H\"ormander-type conditions on the vector fields. The long-term behaviour of absorbed PDMPs, however, is not. We shall consider absorbed PDMPs defined as follows.

We take the space $U$, assumed to be a (non-empty) open, bounded, connected subdomain of $d$-dimensional Euclidean space $\Rm^d$, with $C^{\infty}$ boundary $\partial U$. The set $E$ is a finite set, $E=\{1,\ldots,n\}$. We define $(Q_{ij})_{i,j\in E}$ to be a fixed rate matrix, with $Q_{ij}$ being the $i\mapsto j$ jump rate for $i\neq j$. The state space is given by $\chi:=U\times E$ (with a separate one-point cemetery state, $\partial$). Corresponding to each element of $E$, $i\in E$, there is a $C^{\infty}(\Rm^d)$ vector fields $v^i$. We note that the definition of $v^1,\ldots,v^n$ on $(\bar U)^c$ is arbitrary. The following standing assumption shall be imposed throughout this section.
\renewcommand{\theAssumletter}{PDMPS}
\begin{Assumletter}[Standing Assumption for results on PDMPs]\label{assum:standing assum PDMP}
Since $\partial U$ is $C^{\infty}$, we may define the unit inward-normal $\hat{n}(x)$ for all $x\in U$. We assume that for all $x\in \partial U$ there exists $i,j\in E$ such that
\begin{equation}\label{eq:PDMP vi and vj pointing outwards and inwards everywhere on the boundary}
\langle \hat{n}(x),v^i(x)\rangle >0,\quad \langle \hat{n}(x),v^j(x)\rangle <0.
\end{equation}
We further assume the vector fields $v^1,\ldots,v^n$ are nowhere-zero on $\bar U$. Finally we assume that $Q_{ij}>0$ for all $i\neq j$.
\end{Assumletter}

The absorbed PDMP $((X_t,I_t))_{0\leq t<\tau_{\partial}}$ on the state space $\chi=U\times E$ consists of a spatial position $X_t\in U$ and a state $I_t\in E$, prior to absorption at time $\tau_{\partial}$. The component $I_t$ evolves as a continuous-time jump process on $E$ with rate matrix $Q$. The position component $X_t$ then evolves according to the ODE
\begin{equation}
\dot{X}_t=v^{I_t}(X_t).
\end{equation}
The process is absorbed upon contact of the spatial position with the boundary, 
\begin{equation}
\tau_{\partial}:=\inf\{t>0:X_{t-}\in \partial U\}.
\end{equation}

PDMPs can be considered in much greater generality than the above definition. For instance, one can consider deterministic motion which does not correspond to the flow of a vector field, transition rates which depend upon the spatial position, or random jumps in the spatial position at the jump times. One can also consider killing mechanisms other than killing at the boundary. We will not consider these possibilities.

As a result of the piecewise deterministic dynamics, PDMPs are not, in general, strong Feller. In fact, in dimension greater than $1$, absorbed PDMPs will often not even be Feller, due to the effect of the boundary. On the other hand, much of the QSD literature is reliant on spectral arguments, which seem to be rather difficult to apply in this context. Moreover, on a PDE level, the corresponding Fokker-Planck equation is first order, so PDE theory doesn't give us the sort of controls we might obtain for uniformly elliptic or hypoelliptic diffusions. The successful approach pursued in the setting without absorption employs a probabilistic argument (which can be found in \cite{Benaim2015}), avoiding these analytic difficulties. 

In this section, we will obtain convergence to a QSD for three classes of absorbed PDMP: in dimension $1$, in dimension $2$, and in arbitrary dimension with constant drift vectors (all under suitable assumptions on the drift vector fields). We will do this by verifying the various criteria provided by this paper, using probabilistic arguments. Whereas our results shall not require analytic controls, they shall furnish analytic controls. In Theorem \ref{theo:bdy Harnack for PDMPs} we shall obtain a parabolic boundary Harnack-type inequality for PDMPs either in dimension $1$, or in arbitrary dimension with constant drift vectors. This is notable, in particular, since the corresponding Fokker-Planck equation is a system of first-order PDEs, and in the latter case may have discontinuous solutions for smooth initial conditions. We shall also obtain continuity of the density of the QSD with respect to Lebesgue in dimensions $1$ and $2$ (in dimension $2$, we obtain continuity on an open set we describe).

In \cite{Benaim2021} the authors developed criteria to establish convergence to a QSD for degenerate Feller processes. Whilst the main application in that paper concerned degenerate diffusions, in \cite[Section 4]{Benaim2021} they also considered perhaps the simplest possible absorbed PDMP of the above form. This involves switching between two vector fields on the unit interval $(0,1)$ at constant rate, with the two vector fields having constant velocities $+1$ and $-1$. They established non-uniform exponential convergence in total variation. This relied on being able to write down a (simple) expression providing for the principal right eigenfunction of the transition semigroup, allowing one to apply the criteria they developed in that paper. In \cite{Cloez2022} the authors considered the Crump-Young model from biology. This is a one-dimensional model in which the (randomly evolving) number of bacteria affects the (deterministically evolving) nutrient concentration, and which is considered to go extinct when there are no more bacteria. It differs from the PDMPs we consider here in that the nutrient concentration and bacterial number are unbounded, and the deterministic dynamics affects the jump rate of the random dynamics. They obtained non-uniform exponential convergence in total variation to the unique QSD. Existence of a QSD for the same model had been established earlier in \cite{Collet2013}. The analysis in \cite{Cloez2022} proceeded by obtaining sharp estimates on this model, allowing them to apply earlier results on convergence to a QSD. For absorbed PDMPs involving switching between a finite or countable number of vector fields, these two processes are the only ones for which convergence to a QSD was previously known, to the authors' knowledge. Uniform exponential convergence in total variation to a QSD has also been established for the neutron transport process, firstly in \cite{Champagnat2014} (this is the process we consider in Subsection \ref{subsection:neutron transport}), then later in much more generality in \cite{Horton2020}. Both of these involve the process moving at constant velocity in between the jump times. They differ from the PDMPs we consider in this section, however, in that the velocity is not chosen from a finite or countably infinite set. In the former the speed is fixed with the direction chosen from the uniform distribution, whilst for the latter the new velocity (i.e. both the speed and the direction) is chosen from a bounded density. Finally, in \cite{Villemonais2022}, they considered a one-dimensional absorbed PDMP involving only a single deterministic flow. The randomness instead comes from random jumps of the position at the random times. They established convergence to a QSD in weighted total variation norm. To the authors' knowledge, this constitutes the extent of previously known results on absorbed PDMPs. 

The author is only aware of two earlier results on the regularity of the density of a QSD with respect to Lebesgue measure. For the aforementioned Crump-Young model, it was established in \cite[Theorem 5.1]{Collet2013} that the quasi-stationary density is smooth. For the aforementioned simple one-dimensional absorbed PDMP with drifts $\pm 1$ and constant jump rate, \cite[Lemma 4.1]{Benaim2021} gives a formula for the quasi-stationary density (which is analytic). In the case without absorption, more is known. In dimension one, it was established in \cite{Bakhtin2015} that the stationary density of a PDMP is smooth away from the critical points. In arbitrary dimension, in a collaboration of the present author with Michel Bena{\"i}m, we shall establish in a forthcoming paper that the stationary density is $C^k$ whenever the jump rate is sufficiently fast, for any finite $k$.

\subsection*{Results}

Throughout this section, the distinguished measure $\Lambda$ should be understood to be Lebesgue measure on $U$ times the counting measure on $E$. We will often refer to this simply as Lebesgue measure, as it is Lebesgue measure on $n$ copies of Euclidean space.

We firstly consider the case of $d=1$. We note that in one dimension, there must be at least one everywhere positive vector field and one everywhere negative vector field by \eqref{eq:PDMP vi and vj pointing outwards and inwards everywhere on the boundary} and the fact that the vector fields are non-vanishing.

\begin{theo}[Convergence to a quasi-stationary distribution for one-dimensional absorbed PDMPs]\label{theo:1D PDMPs}
In addition to the standing assumption, \ref{assum:standing assum PDMP}, we suppose that $d=1$. Then Assumption \ref{assum:technical assumption for Assum (A)} and \cite[Assumption (A)]{Champagnat2014} are satisfied. In particular, $((X_t,I_t))_{0\leq t<\tau_{\partial}}$ has a unique QSD $\pi$. This QSD has full support, and has a density with respect to Lebesgue measure (a version of) which is continuous and bounded. Moreover, $((X_t,I_t))_{0\leq t<\tau_{\partial}}$ satisfies assumptions \ref{assum:adjoint Dobrushin main results section}, \ref{assum:combined Dobrushin adjoint Dobrushin main results section} and \ref{assum:adjoint anti-Dobrushin main results section}.
\end{theo}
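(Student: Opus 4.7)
The plan is to use the one-dimensional structure combined with a fundamental observation: on the bounded interval $U$, any trajectory that never jumps exits the domain in bounded time, so for sufficiently large times the "no-jump" singular contributions to $P_t$ are killed, leaving only absolutely continuous pieces with bounded density. Without loss of generality take $U=(0,1)$; by \ref{assum:standing assum PDMP} each $v^i$ has constant sign on $[0,1]$, partitioning $E = E_+ \sqcup E_-$, both nonempty by \eqref{eq:PDMP vi and vj pointing outwards and inwards everywhere on the boundary}. Let $T^{\ast} := \max_{i \in E} \sup_{x \in [0,1]} 1/|v^i(x)|$; for any $t > T^{\ast}$ no straight-line trajectory of the PDMP can remain in $U$ up to time $t$.

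\textbf{Preliminaries.} For Assumption \ref{assum:technical assumption for Assum (A)} and \cite[Assumption (A1)]{Champagnat2014} I would fix a compact $K$ with $\overline{K}\subset U$, a reference state $i_0 \in E$, and $t_0>2T^{\ast}$. Conditioning on the event that $(I_s)_{0\le s\le t_0}$ performs at least two jumps during $[t_0/3, 2t_0/3]$ and equals $i_0$ on $[2t_0/3, t_0]$, the position at time $t_0$ has, via a change of variables over the inter-jump times with Jacobian bounded away from zero (because the $v^i$ are nowhere vanishing and continuous on $\bar U$), a density relative to $\Lambda$ bounded below on $K \times \{i_0\}$ uniformly in $(x,i)$. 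This yields the Dobrushin minorisation with $\nu = \frac{1}{\Lambda(B)}\Lambda_{\lvert_B}$, $B := K \times \{i_0\}$; Remark \ref{rmk:remark for checking technical assum for Assum (A)} upgrades this to Assumption \ref{assum:technical assumption for Assum (A)}. Lower semicontinuity of $(P_t)_{t\geq 0}$ follows from the same explicit decomposition of $P_t f$ into a sum over finitely-many-jump-type contributions, each at least lower semicontinuous in $x$ by dominated convergence.

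\textbf{Adjoint construction and the AD/AaD/DAD conditions.} Set $\psi(x,i) := 1/|v^i(x)|$, which belongs to $C_{b,\gg}(\chi)$ by the standing assumption. A direct gauge computation of $\psi^{-1} L^{\dagger}(\psi \,\cdot\,)$, with $L^{\dagger}$ the formal $\Lambda$-adjoint of the PDMP generator $L$, identifies an operator $\tilde L$ with reversed drifts $-v^i$, position-dependent jump rates $\tilde Q_{ij}(x) = Q_{ji}\,|v^i(x)|/|v^j(x)|$, and a residual diagonal term made non-negative by absorbing a large positive constant into the normalising factor $a$ in \eqref{eq:psi adjoint for verifying reverse Dobrushin main results section}. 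The associated submarkovian semigroup $(\tilde P_t)$ is then a PDMP on $U\times E$ of the same form as the original, with position-dependent continuous jump rates bounded away from $0$ and $\infty$, and satisfying \ref{assum:standing assum PDMP} (with $E_+, E_-$ swapped). The Preliminaries argument applied to $\tilde P$ yields \cite[Assumption (A1)]{Champagnat2014} for the adjoint with the same minorising set $B$, which is Assumption \ref{assum:adjoint Dobrushin main results section} (non-singularity of $\nu$ with respect to $\pi$ will follow once full support of $\pi$ is established below). Crucially, for $t_1 > 2T^{\ast}$ every no-jump trajectory of $\tilde P$ exits $U$ before $t_1$, so the no-jump singular part vanishes; summing the contributions from each positive jump-count—each being a bounded continuous density by the same Jacobian computation, weighted by exponentially decaying survival probabilities—gives $\tilde P_{t_1}(y,\cdot) \leq C_1\Lambda(\cdot)$ uniformly in $y$. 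This realises Assumption \ref{assum:adjoint anti-Dobrushin main results section}. Since the two Dobrushin minorisations share the same $B$, Assumption \ref{assum:combined Dobrushin adjoint Dobrushin main results section} holds simultaneously.

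\textbf{Assembly.} To obtain \cite[Assumption (A)]{Champagnat2014}, the unique QSD, and its bounded continuous density, I would apply Theorem \ref{theo:criterion for right efn main results section} to the discrete-time kernel $P_1$: lower semicontinuity and \cite[Assumption (A1)]{Champagnat2014} for $P$ come from the Preliminaries; the adjoint relation is supplied by the gauge $\psi$; and the requirement that $(\tilde P_t)$ possess a QSD $\tilde\pi \in \calP_\infty(\Lambda)$ comes from applying Theorem \ref{theo:criterion for cty of QSD} in the opposite direction to $\tilde P$ (the roles of $P$ and $\tilde P$ exchange cleanly since $\tilde{\tilde P} = P$ up to the gauge, and all hypotheses are symmetric). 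The set $\calO$ is $K \times \{i_0\}$, on which $\tilde\pi$ is positive by the iterated minorisation. Proposition \ref{prop:right efn gives A2} then supplies \cite[Assumption (A2)]{Champagnat2014}, whence \cite[Assumption (A)]{Champagnat2014} and the unique QSD $\pi$. Theorem \ref{theo:criterion for cty of QSD} applied to $(P_t, \tilde P_t)$ gives the bounded continuous density of $\pi$ on $V = U \times E$, since the adjoint preserves $C_b(\chi)$ on each finite-jump-count stratum. Full support of $\pi$ is immediate from irreducibility: concatenating flows of vector fields in $E_+$ and $E_-$ reaches any neighbourhood of any target from any starting point with positive probability.

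\textbf{Main obstacle.} The principal technical point is the explicit identification and control of the adjoint kernel, in particular verifying that the gauge $\psi = 1/|v^i|$ produces an honest submarkovian PDMP satisfying \ref{assum:standing assum PDMP}, and that for $t_1>2T^{\ast}$ the sum of the multi-jump contributions to the transition density of $\tilde P_{t_1}$ is bounded uniformly—a bound potentially threatened by the Jacobian degenerating when two drifts coincide, but which can always be rescued by passing to two-or-more-jump contributions that exploit the availability of both signs of drift.
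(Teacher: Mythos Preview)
Your overall architecture differs from the paper's: you try to verify all the abstract assumptions directly, whereas the paper first proves a general PDMP result (Theorem \ref{theo:general theorem PDMPs}) built on Green-type kernels $G_{T_0,T_1}$, with QSD existence coming from a Schauder fixed-point argument (Proposition \ref{prop:left emeasure for greens kernels}) rather than from Theorems \ref{theo:criterion for cty of QSD}/\ref{theo:criterion for right efn main results section}. You also use the gauge $\psi(x,i)=1/|v^i(x)|$, giving a position-dependent adjoint jump rate, while the paper takes $\psi(x,i)=\omega(i)$ (the stationary weight of the jump chain), which yields the reversed PDMP with \emph{constant} rates $\hat Q$ (Proposition \ref{prop: reverse PDMP provides adjoint}) and keeps all later estimates uniform.

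There are two genuine gaps. First, your (A1) sketch claims a density on $K\times\{i_0\}$ bounded below \emph{uniformly in $(x,i)$}. That cannot hold unconditionally: for $x$ close to $\partial U$ with $v^i$ pointing outward, $\Pm_{(x,i)}(\tau_\partial>t_0)\to 0$, and the probability of reaching $K$ goes to zero with it. What must be controlled is the \emph{conditional} law $\Law_{(x,i)}((X_{t_0},I_{t_0})\mid\tau_\partial>t_0)$, and this near-boundary competition is exactly what Proposition \ref{prop: PDMP satisfies A1} handles via the sets $S_0,S_1,S_2$ and a three-case analysis comparing the probability of a timely jump to an inward-pointing state against the probability of survival. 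Second, your Assembly step is circular: Theorem \ref{theo:criterion for right efn main results section} requires that $(\tilde P_t)$ already possess a QSD $\tilde\pi\in\calP_\infty(\Lambda)$, and you propose to obtain this from Theorem \ref{theo:criterion for cty of QSD} applied to $\tilde P$; but that theorem also \emph{assumes} a QSD in $\calP_\infty(\Lambda)$ and only upgrades its regularity. The paper breaks the circle by constructing left eigenmeasures for the Green kernels $G_{t_2,t_2+s}$ and $\tilde G_{t_2,t_2+s}$ via Schauder on the convex set $\{\mu\le C_s\Leb\}$, using weak continuity from Proposition \ref{prop:PDMP lower semicts and Green kernel cts propn}.

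Your density bound for Assumption \ref{assum:adjoint anti-Dobrushin main results section} is in the right spirit but not as written: the assertion that ``each positive-jump-count contribution has bounded continuous density'' fails for one-jump transitions $i_0\to i_1$ within the same sign class, where $v^{i_0}-v^{i_1}$ can vanish. Your rescue ``pass to two-or-more-jump contributions'' does not remove the one-jump piece. The correct observation is that for $t>T^\ast$ any surviving trajectory must have \emph{visited both sign classes}; the paper exploits this by singling out the holding time $t_0$ in the initial state and the holding time $t_2$ in the \emph{first state of the opposite class}, and showing that $(t_0,t_2)\mapsto(\tau,X_\tau)$ is a diffeomorphism with Jacobian bounded away from zero (since $-v^{i_0}$ and $-v^{i_2}$ have opposite signs), yielding a bounded density for the pair $(\tau,X_\tau)$ and then transferring this to $\hat P_{\hat T_2}$ via the strong Markov property.
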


It therefore follows from Theorem \ref{theo:uniform Linfty convergence main results section} that for one-dimensional absorbed PDMPs $((X_t,I_t))_{0\leq t<\tau_{\partial}}$ satisfying the conditions of Theorem \ref{theo:1D PDMPs}, there exists a time $T<\infty$ and constant $\gamma>0$ such that $\Law_{\mu}(X_t\lvert \tau_{\partial}>t)\ll_{\infty}\pi$ for all $t\geq T$ and $\mu\in\calP(\chi)$, with its density with respect to $\pi$ satisfying
\begin{equation}\label{eq:exp conv to pi for 1D PDMP result}
\Big\lvert\Big\lvert \frac{d\Law_{\mu}(X_t\lvert \tau_{\partial}>t)}{d\pi}-1\Big\rvert\Big\rvert_{L^{\infty}(\pi)}\leq e^{-\gamma(t-T)}\quad\text{for all}\quad  T\leq t<\infty,\quad\mu\in\calP(\chi).
\end{equation}

We now turn our attention to absorbed PDMPs in dimension greater that $1$. We firstly introduce some definitions. Given vectors $v_1,\ldots,v_n\in \Rm^d$ we define the open convex hull and closed convex hull respectively to be
\begin{equation}
\begin{split}
\conv(v_1,\ldots,v_n):=\{a_1v_1+\ldots+a_nv_n:a_1,\ldots,a_n>0:\sum_{k=1}^na_k=1\},\\
\overline{conv}(v_1,\ldots,v_n):=\{a_1v_1+\ldots+a_nv_n:a_1,\ldots,a_n\geq 0:\sum_{k=1}^na_k=1\}.
\end{split}
\end{equation}
We note that the open convex hull isn't necessarily an open set, but its closure is the closed convex hull.

We define for each $i\in E$ the flow map $\varphi^i_t(x)$, corresponding to the flow of solutions to $\dot{x}_s=v^i(x_s)$. The flow of solutions to $\dot{y}_s=-v^i(y_s)$ is then given by $\varphi^i_{-t}(y)$. 

We consider absorbed PDMPs in two-dimensions satisfying the following assumption.
\renewcommand{\theAssumletter}{2DPDMP}
\begin{Assumletter}[Assumption for $2$-dimensional absorbed PDMPs]\label{assum:assum for 2d PDMPs}
We assume that for all $(x,i)\in U\times E$ there exists $t_+,t_-<\infty$ (dependent upon $(x,i)$) such that $\varphi^i_{t_-}(x),\varphi^i_{t_+}(x)\in \inte(U^c)$.  We assume that $v^i(x)$ and $v^j(x)$ are transversal for all $x\in \bar U$ and $i\neq j$. We finally assume that $0\in \overline{\conv}(v_1(x),\ldots,v_n(x))$ for all $x\in U$. 
\end{Assumletter}

Given that Assumption \ref{assum:assum for 2d PDMPs} is satisfied, the following function is necessarily everywhere finite
\begin{equation}\label{eq:time to hit partial U}
T_{\partial}^i(x):=\inf\{t>0:\varphi^i_{-t}(x)\in \partial U\}.
\end{equation}

We may then define the following set.
\begin{equation}
\calC:=\inte(\{(x,i)\in U\times E:x'\mapsto T_{\partial}^i(x')\quad\text{is continuous at $x$}\}).
\end{equation}

\begin{theo}[Convergence to a quasi-stationary distribution for two-dimensional absorbed PDMPs]\label{theo:2D PDMPs}
We assume that $d=2$. In addition to the standing assumption, \ref{assum:standing assum PDMP}, we assume that Assumption \ref{assum:assum for 2d PDMPs} is satisfied. Then $((X_t,I_t))_{0\leq t<\tau_{\partial}}$ satisfies Assumption \ref{assum:technical assumption for Assum (A)} and \cite[Assumption (A)]{Champagnat2014}. In particular, $((X_t,I_t))_{0\leq t<\tau_{\partial}}$ has a unique QSD $\pi$. This QSD must have an (essentially) bounded density with respect to Lebesgue measure. Moreover, $((X_t,I_t))_{0\leq t<\tau_{\partial}}$ satisfies Assumption \ref{assum:adjoint Dobrushin main results section}. If, in addition, $\{x\in \partial U:\hat{n}(x)\cdot v^i(x)=0\}$ has finitely many connected components for all $i\in E$, then $\pi$ has a density with respect to Lebesgue measure (a version of) which is continuous on $\calC$.
\end{theo}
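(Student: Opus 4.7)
The plan is to verify in turn Assumption \ref{assum:technical assumption for Assum (A)}, \cite[Assumption (A1)]{Champagnat2014}, Assumption \ref{assum:adjoint Dobrushin main results section}, and the remaining hypotheses of Theorem \ref{theo:criterion for right efn main results section}; \cite[Assumption (A)]{Champagnat2014} (and hence uniqueness of $\pi$) will then follow from Proposition \ref{prop:right efn gives A2}, essential boundedness of $\pi$ from Theorem \ref{theo:dominated by pi theorem general results main results section}, and the continuity statement from Theorem \ref{theo:criterion for cty of QSD}. Assumption \ref{assum:technical assumption for Assum (A)} is essentially immediate from the positivity of the off-diagonal $Q_{ij}$, the inward-pointing modes supplied by \eqref{eq:PDMP vi and vj pointing outwards and inwards everywhere on the boundary}, the fact that some flow leg eventually exits $\bar U$ (first part of Assumption \ref{assum:assum for 2d PDMPs}), and Remark \ref{rmk:remark for checking technical assum for Assum (A)}.

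The crucial control-theoretic ingredient is a reachability claim, which will supply both \cite[Assumption (A1)]{Champagnat2014} and its analogue for the adjoint kernel introduced below: I will show that there exist $t_0>0$, an open ball $B\Subset U$, an index $i_0\in E$, and $c_0>0$ such that for every $(x,i)\in U\times E$,
\[
\Pm_{(x,i)}((X_{t_0},I_{t_0})\in \cdot,\,\tau_\partial>t_0)\geq c_0\,\Leb_{\lvert_B}\otimes\delta_{i_0}.
\]
The trajectory is built by concatenating a bounded number of flow legs punctuated by $Q$-jumps. Pairwise transversality of $v^i$, $v^j$ at every point implies that for distinct modes the two-parameter map $(s_1,s_2)\mapsto \varphi^j_{s_2}\circ \varphi^i_{s_1}(x)$ has non-degenerate Jacobian on a non-empty open set, so after integrating against the exponential dwell-time laws this concatenation produces a submarkovian kernel with Lebesgue density bounded below on an open ball. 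The hypothesis $0\in\overline{\conv}(v^1(x),\ldots,v^n(x))$ together with boundedness of $\bar U$ guarantees that the concatenation can stay inside a fixed compact $K\Subset U$ by an appropriate choice of dwell times, and positivity of all $Q_{ij}$ yields a uniform lower bound on the probability of executing the required switching pattern. Uniformity in $(x,i)$ near $\partial U$ is obtained by first driving the process into $K$ via the inward-pointing mode supplied by \eqref{eq:PDMP vi and vj pointing outwards and inwards everywhere on the boundary}, in the spirit of the boundary-layer step used in Lemma \ref{lem:enter U from V}.

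To verify Assumption \ref{assum:adjoint Dobrushin main results section}, I introduce an adjoint PDMP on $U\times E$ with vector fields $-v^i$, off-diagonal jump rates $\tilde Q_{ji}:=Q_{ij}$, soft killing at rate $A-\nabla\cdot v^i$ with $A>\sup_{x,i}\lvert\nabla\cdot v^i(x)\rvert$, and hard killing at $\partial U$. A change-of-variables computation along each flow leg (analogous to Part \ref{enum:exist of Y and Y0 adjoint processes} of Proposition \ref{prop:basic facts about degenerate diffusions}) shows that its submarkovian semigroup $(\tilde P_t)_{t\geq 0}$ satisfies \eqref{eq:adjoint semigroup eqn from cty of pi criterion} with $\psi\equiv 1$ and $a=e^{-A}$. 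The adjoint PDMP inherits Assumption \ref{assum:standing assum PDMP} and Assumption \ref{assum:assum for 2d PDMPs}, so the same reachability argument yields the Dobrushin-type minorisation on $\tilde P_{t_0}$ required by Assumption \ref{assum:adjoint Dobrushin main results section}; non-mutual singularity with $\pi$ is automatic because the minorising measure is absolutely continuous with respect to $\Leb$ while the reachability claim shows that $\pi$ charges every open ball. Lower semicontinuity of $(P_t)_{t\geq 0}$ follows from continuous dependence of the flows on the initial condition together with lower semicontinuity of $x\mapsto \Ind(\tau_\partial>t)$ for each fixed realisation of the jump process. Theorem \ref{theo:criterion for right efn main results section} then furnishes a bounded strictly positive pointwise right eigenfunction for $(P_t)_{t\geq 0}$, once one observes that the adjoint PDMP admits its own QSD with essentially bounded density (obtained by running the identical machinery on the adjoint, whose adjoint is the original up to an exponential factor; the adjoint-Dobrushin plus upper-density bound required to extract essential boundedness come from the same concatenation argument, cf.\ Theorem \ref{theo:theorem for bounded by pi in Euclidean space}). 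Proposition \ref{prop:right efn gives A2} then completes the verification of \cite[Assumption (A)]{Champagnat2014}. The final continuity statement follows from Theorem \ref{theo:criterion for cty of QSD}: the additional boundary hypothesis on the connected components of $\{\hat n\cdot v^i=0\}$ makes the backwards exit times $T_\partial^i$ continuous precisely on $\calC$, so the adjoint kernel satisfies hypothesis \eqref{eq:cond for open set to have cts QSD density main results section} with $V=\calC$.

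The main obstacle is the reachability step: it must be fully uniform up to the boundary and must produce a density lower bound on a single open ball irrespective of $(x,i)$. Near $\partial U$ arbitrarily short exit times may be available from some modes, so one must interlock the boundary-layer escape using an inward-pointing mode with the interior two-flow concatenation argument without losing the uniform constant $c_0>0$, and one must also exclude the possibility that the density produced by the concatenation degenerates near the critical parameter values. This is a genuinely two-dimensional complication, since the single concatenation $\varphi^j_{s_2}\circ\varphi^i_{s_1}$ already spans a two-parameter open family; the transversality hypothesis and the finite-component hypothesis on $\{\hat n\cdot v^i=0\}$ are exactly what is needed to bring these estimates under uniform control, including the identification of the continuity locus with $\calC$.
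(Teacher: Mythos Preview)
Your overall architecture matches the paper's: verify a Dobrushin minorisation for $(P_t)$ and its adjoint, obtain a bounded positive right eigenfunction via Theorem \ref{theo:criterion for right efn main results section}, and conclude \cite[Assumption (A)]{Champagnat2014} via Proposition \ref{prop:right efn gives A2}. The construction of the adjoint PDMP and the transversality/concatenation argument for the lower bound are also essentially what the paper does.

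There is, however, a genuine gap in how you obtain $\pi\in\calP_\infty(\Leb)$ and, relatedly, in how you invoke Theorem \ref{theo:criterion for right efn main results section}. That theorem requires the adjoint semigroup to have a QSD $\tilde\pi$ with essentially bounded Lebesgue density. You propose to get this by ``running the identical machinery on the adjoint'', but the adjoint satisfies the same hypotheses as the original, so its right-eigenfunction step would in turn require a bounded-density QSD for the adjoint's adjoint, and the argument never bottoms out. Your appeal to Theorem \ref{theo:dominated by pi theorem general results main results section} does not help: that theorem yields $P_{t_2}(x,\cdot)\leq C_2\pi(\cdot)$, not $\pi\leq C\,\Leb$, and moreover requires Assumption \ref{assum:adjoint anti-Dobrushin main results section}, i.e.\ $\tilde P_{t_1}(y,\cdot)\leq C_1\Leb(\cdot)$, which fails for a PDMP because $P_t(x,\cdot)$ has a singular part coming from realisations with too few jumps.

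The paper breaks this circularity by passing to the time-averaged Green kernels $G_{T_0,T_1}=\int_{T_0}^{T_1}P_s\,ds$ (and $\tilde G_{T_0,T_1}$). The point is that for the two-dimensional PDMP, once two jumps between transversal modes have occurred, the map $(t_0,t_1)\mapsto\Phi^{(i_0,i_1)}_{(t_0,t_1)}(x)$ is a local diffeomorphism with Jacobian bounded away from zero, so the ``two-jump'' part of $P_t$ has bounded Lebesgue density; since the flows eventually leave $\bar U$, for $T_0$ large enough every surviving path has made two jumps, and the time-average kills the residual singular part. This yields $G_{T_0,T_1}((x,i),\cdot)\leq C\,\Leb(\cdot)$ uniformly in $(x,i)$, and then a Schauder fixed-point argument on the convex set $\{\mu:\mu\leq C\,\Leb\}$ produces $\pi\in\calP_\infty(\Leb)$ directly (Proposition \ref{prop:left emeasure for greens kernels}). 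The same is done for the adjoint, and only then is Theorem \ref{theo:criterion for right efn main results section} applied---to the discrete-time chain generated by $G_{t_2,t_2+s}$, not to $P_t$. The continuity statement is likewise proved for the Green kernel (Part \ref{enum:Greens function propn PDMP cty on open set} of Proposition \ref{prop:PDMP lower semicts and Green kernel cts propn}), which then feeds into Theorem \ref{theo:criterion for cty of QSD}. Without this time-averaging device your argument does not close.
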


One may observe that the QSD $\pi$ shouldn't be expected to be continuous on $\calC^c$.

It follows from \cite[Theorem 2.1]{Champagnat2014} that for two-dimensional absorbed PDMPs $((X_t,I_t))_{0\leq t<\tau_{\partial}}$ satisfying the conditions of Theorem \ref{theo:2D PDMPs}, there must exist constants $C<\infty$ and $\gamma>0$ such that
\begin{equation}
\lvert\lvert \Law_{\mu}(X_t\lvert\tau_{\partial}>t)-\pi\rvert\rvert_{\TV}\leq Ce^{-\gamma t}.
\end{equation}

It also follows from Theorem \ref{theo:Linfty convergence main results section} that there exists a time $T<\infty$, and (possibly different) constants $C<\infty$ and $\gamma>0$, such that
\begin{equation}
\Big\lvert\Big\lvert \frac{d\Law_{\mu}(X_t\lvert\tau_{\partial}>t)}{d\pi}-1\Big\rvert\Big\rvert_{L^{\infty}(\pi)}\leq \frac{C}{\mu(h)}e^{-\gamma t}\Big\lvert\Big\lvert \frac{d\mu}{d\pi}\Big\rvert\Big\rvert_{L^{\infty}(\pi)}\quad\ \text{for all}\quad t\geq T\quad\text{and all}\quad  \mu\in\calP_{\infty}(\pi),
\end{equation}
where $h$ is the everywhere strictly positive, bounded pointwise right eigenfunction provided for by \cite[Proposition 2.3]{Champagnat2014}.

We finally turn our attention to the case of arbitrary dimension. For this, we must restrict our attention to vector fields with constant drift vectors. We call these absorbed piecewise constant Markov processes (absorbed PCMPs). In particular, we consider the following assumption.
\renewcommand{\theAssumletter}{PCMP}
\begin{Assumletter}[Assumption for $d$-dimensional absorbed PCMPs]\label{assum:assumption for absorbed PCMPs}
We suppose that the drift vectors are constant, so that $v^i(x)\equiv v_i$ ($1\leq i\leq n$), and there are at least $d+1$ of them ($n>d$). We assume that:
\begin{enumerate}
\item\label{enum:any d of constant vectors PDMP l.i.}
for any $1\leq i_1<\ldots<i_d\leq n$, $v_{i_1},\ldots,v_{i_d}$ are linearly independent;
\item\label{enum:sum of vectors returning to 0 constant vector PDMP}
$0\in \overline{\conv}(v_1,\ldots,v_n)$.
\end{enumerate}
\end{Assumletter}

We note that Part \ref{enum:any d of constant vectors PDMP l.i.} of Assumption \ref{assum:assumption for absorbed PCMPs} is generic, in that Lebesgue-almost every choice of $(v_1,\ldots,v_n)\in (\Rm^d)^n$ will satisfy it. On the other hand, given Part \ref{enum:any d of constant vectors PDMP l.i.} of Assumption \ref{assum:assumption for absorbed PCMPs} is satisfied, one can show that Part \ref{enum:sum of vectors returning to 0 constant vector PDMP} is necessary and sufficient for it to be possible for the corresponding PCMP to be able to survive for arbitrarily long times.

\begin{theo}[Convergence to a quasi-stationary distribution for PCMPs]\label{theo:PCMPs}
In addition to the standing assumption, \ref{assum:standing assum PDMP}, we assume that Assumption \ref{assum:assumption for absorbed PCMPs} is satisfied. Then $((X_t,I_t))_{0\leq t<\tau_{\partial}}$ satisfies Assumption \ref{assum:technical assumption for Assum (A)} and \cite[Assumption (A)]{Champagnat2014}. In particular, there exists a unique QSD $\pi$. This QSD has an (essentially) bounded density with respect to Lebesgue measure, and full support. Moreover, $(X_t)_{0\leq t<\tau_{\partial}}$ is lower semicontinuous (in the sense of Definition \ref{defin:lower semicts kernel}) and satisfies assumptions \ref{assum:adjoint Dobrushin main results section}, \ref{assum:combined Dobrushin adjoint Dobrushin main results section} and \ref{assum:adjoint anti-Dobrushin main results section}.
\end{theo}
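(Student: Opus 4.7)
The plan is to exploit the translation-invariance of constant-drift dynamics to identify the adjoint kernel explicitly, then combine a controllability statement (coming from the convex-hull hypothesis) with a uniform bounded-density estimate on $P_t$ for $t$ sufficiently large. Because each flow $\varphi^i_t(x)=x+tv_i$ preserves Lebesgue measure, a direct calculation shows that
\[
\Lambda(dx)\,P_t(x,dy)=\Lambda(dy)\,\tilde{P}_t(y,dx),
\]
where $\tilde{P}_t$ is the submarkovian kernel of the PCMP driven by the reversed drifts $-v_1,\ldots,-v_n$ and the transposed rate matrix $Q^{\top}$. This realises the adjoint structure in \eqref{eq:psi adjoint for verifying reverse Dobrushin main results section} with $\psi\equiv 1$ and $a=1$, and since the reversed PCMP satisfies both Assumption \ref{assum:standing assum PDMP} and Assumption \ref{assum:assumption for absorbed PCMPs}, every property established for the forward process transfers verbatim to its adjoint.

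The controllability statement is that for every $(x,i)\in\chi$, every non-empty open $V\subseteq U$ and every $j\in E$, there is $t<\infty$ with $\Pm_{(x,i)}(X_t\in V,\,I_t=j,\,\tau_\partial>t)>0$. Its proof rests on Caratheodory: $0\in\overline{\conv}(v_1,\ldots,v_n)$ together with the linear independence of every $d$-subfamily forces $0$ to be a strictly positive convex combination of some $d+1$ vectors $v_{i_1},\ldots,v_{i_{d+1}}$, say with coefficients $a_1,\ldots,a_{d+1}>0$. Running these drifts successively for durations $\epsilon a_k$ returns the spatial component to its start after time $\epsilon$, and perturbing the dwell times while using the linear independence of any $d$ of the $v_{i_k}$'s makes the endpoint map a submersion onto a neighbourhood of the starting point. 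Iterating such loops while slowly drifting along a chosen direction produces trajectories from $(x,i)$ to any open set, finishing in any prescribed state. In particular $\Pm_{(x,i)}(\tau_\partial>t)>0$ for all $t$, and taking $I$ to stay for a sufficient interval in a state whose drift points outward at some point of $\partial U$ gives $\Pm_{(x,i)}(\tau_\partial<\infty)>0$.

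The bounded-density estimate is the crux. A path that uses only drifts from a subset $S\subseteq\{1,\ldots,n\}$ with $0\notin\overline{\conv}\{v_i:i\in S\}$ has a net velocity trapped in a half-space (by separation), hence escapes $U$ by a deterministic time $T_S<\infty$ depending only on $S$ and $\mathrm{diam}(U)$. Caratheodory combined with the linear independence hypothesis implies $0\in\overline{\conv}\{v_i:i\in S\}$ forces $|S|\geq d+1$, so there is $T_0=\max_S T_S<\infty$ such that for $t\geq T_0$ every surviving path uses at least $d+1$ distinct drifts. Conditioning on the jump history $j_0\to\cdots\to j_k$ with dwell times $\sigma_0,\ldots,\sigma_k$, the position is $X_t=x+\sum_\ell\sigma_\ell v_{j_\ell}$; having $d+1$ distinct values among $j_0,\ldots,j_k$ lets us choose indices $\ell_1,\ldots,\ell_d$ with $v_{j_{\ell_1}},\ldots,v_{j_{\ell_d}}$ linearly independent and change variables from $(\sigma_{\ell_1},\ldots,\sigma_{\ell_d})$ to $X_t$, with Jacobian bounded below by the smallest of the finitely many nonzero $d\times d$ minors of $[v_1|\cdots|v_n]$. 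Integrating the remaining dwell times against their bounded exponential densities and summing over jump histories (controlled by an exponential tail in the number of jumps) yields $P_t(x,dy)\leq C_t\,\Lambda(dy)$ for all $x\in\chi$ and $t\geq T_0$, with $C_t$ uniform in $x$. Lower semicontinuity of $P_t$ follows from the same representation by dominated convergence, the identical bound holds for $\tilde{P}_t$, and $\pi=\lambda^{-t}\pi P_t\leq \lambda^{-t}C_t\Lambda$ gives $\pi\in\calP_{\infty}(\Lambda)$; full support of $\pi$ follows from controllability.

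With these two ingredients every listed claim falls into place. \cite[Assumption (A1)]{Champagnat2014} is obtained by concatenating controllability (to reach a fixed ball $B\times\{i^\star\}$ compactly inside $\chi$) with the bounded density produced by the preceding step, which is simultaneously bounded below on a smaller ball. Applying Theorem \ref{theo:criterion for right efn main results section} to the reversed PCMP (whose QSD has essentially bounded density by the symmetric argument) supplies a bounded, strictly positive pointwise right eigenfunction for $(P_t)_{t\geq 0}$, so Proposition \ref{prop:right efn gives A2} yields \cite[Assumption (A2)]{Champagnat2014} and thence \cite[Assumption (A)]{Champagnat2014}; Assumption \ref{assum:technical assumption for Assum (A)} follows via Remark \ref{rmk:remark for checking technical assum for Assum (A)}. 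Assumption \ref{assum:adjoint Dobrushin main results section} is checked with $\psi\equiv 1$ and $\nu$ proportional to $\Lambda_{\lvert_B}$ using (A1) for the reversed PCMP; Assumption \ref{assum:adjoint anti-Dobrushin main results section} is the bounded-density estimate applied to $\tilde{P}_{t_1}$ for any $t_1\geq T_0$; and Assumption \ref{assum:combined Dobrushin adjoint Dobrushin main results section} uses the same ball $B$ to witness both Dobrushin minorisations simultaneously. The main obstacle will be the bounded-density estimate, specifically verifying that paths using at most $d$ distinct drifts are deterministically absorbed by time $T_0$ (which relies on the separating-hyperplane consequence of $0\notin\overline{\conv}$ and on the boundedness of $U$) and then carrying out the change of variables and summation over jump histories in a way that is uniform in the starting point.
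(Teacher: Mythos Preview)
Your overall architecture mirrors the paper's: reduce to a bounded-density estimate for $P_t$ at large times, combine with accessibility, and then bootstrap (A1), a right eigenfunction, and the adjoint conditions. Two concrete points need fixing, however.

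First, the adjoint identification is not correct as written. With $\psi\equiv 1$ and $\Lambda=\Leb\times\#$, the formal $L^2$ adjoint of the generator has jump part $\sum_i Q_{ij}g(y,i)$, and $Q^{\top}$ is \emph{not} a rate matrix (its row sums are the column sums of $Q$, generically nonzero), so ``the PCMP with rate matrix $Q^{\top}$'' is not a Markov process and in particular does not satisfy Assumption \ref{assum:standing assum PDMP}. The correct choice, as in Proposition \ref{prop: reverse PDMP provides adjoint}, is $\psi((x,i))=\omega_i$ (the stationary weights of $I$), which turns the adjoint into the genuine reversed PCMP with drifts $-v_i$ and rate matrix $\hat Q_{ji}=\omega_iQ_{ij}/\omega_j$; since the $v_i$ are constant one has $\bar d=0$ and $a=1$. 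This is a small repair, but without it your claim that the adjoint ``satisfies Assumption \ref{assum:assumption for absorbed PCMPs}'' is unjustified.

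Second, there is a circularity in how you obtain the QSD. You invoke Theorem \ref{theo:criterion for right efn main results section}, whose Assumption \ref{assum:assum for cor right efn main results section} \emph{presupposes} that the adjoint semigroup already has a QSD $\tilde\pi\in\calP_\infty(\Lambda)$. Saying this follows ``by the symmetric argument'' is circular, since the symmetric argument for the adjoint would in turn require a QSD for the forward process. The paper breaks this loop by passing to the Green kernels $G_{T_0,T_1}$ and using a Schauder fixed point on the compact convex set $\{\mu\le C\Leb\}$ (Proposition \ref{prop:left emeasure for greens kernels}), which produces left eigenmeasures for \emph{both} $G$ and $\tilde G$ directly, without needing a right eigenfunction first; only then is Theorem \ref{theo:criterion for right efn main results section} applied. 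The Green kernels are used because weak continuity of $\mu\mapsto\mu G$ is available on $\calP(\Leb)$ (Proposition \ref{prop:PDMP lower semicts and Green kernel cts propn}), whereas $P_t$ is only lower semicontinuous. You could rescue your route by inserting an explicit Schauder step (which for PCMPs can in fact be done at the level of $P_T$, since your bounded-density estimate and the translation structure give enough continuity on $\calP(\Leb)$), but this needs to be spelled out rather than hidden in ``the symmetric argument''.

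Your bounded-density estimate is correct in spirit and essentially matches the paper's: the dichotomy ``$0\notin\overline{\conv}\{v_i:i\in A\}\Rightarrow$ escape in bounded time'' versus ``$0\in\overline{\conv}\{v_i:i\in A\}\Rightarrow|A|\ge d+1\Rightarrow$ bounded density'' is exactly what the paper does. The paper's version is a bit cleaner because it works with the \emph{occupation-time vector} $(T^0,\ldots,T^{\ell})$ and invokes \cite{Sericola2000} for its bounded density, then pushes forward through the surjective linear map $(t_1,\ldots,t_\ell)\mapsto\sum_k t_kv_{i_k}$; this avoids conditioning on and summing over full jump histories.
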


It therefore follows from Theorem \ref{theo:uniform Linfty convergence main results section} that, for PCMPs satisfying that assumptions of Theorem \ref{theo:PCMPs}, there exists a time $T<\infty$ and constant $\gamma>0$ such that $\Law_{\mu}(X_t\lvert \tau_{\partial}>t)\ll_{\infty}\pi$ for all $t\geq T$ and $\mu\in\calP(\chi)$, with its density with respect to $\pi$ satisfying
\begin{equation}\label{eq:exp conv to pi for PCMP result} 
\Big\lvert\Big\lvert \frac{d\Law_{\mu}(X_t\lvert \tau_{\partial}>t)}{d\pi}-1\Big\rvert\Big\rvert_{L^{\infty}(\pi)}\leq e^{-\gamma(t-T)}\quad\text{for all}\quad  T\leq t<\infty,\quad\mu\in\calP(\chi).
\end{equation}

The following parabolic boundary Harnack-type inequality may be obtained either straight from \eqref{eq:exp conv to pi for 1D PDMP result} and \eqref{eq:exp conv to pi for PCMP result} or by applying theorems \ref{theo:dominated by pi theorem general results main results section} and \ref{theo:DAD lower bounds density main results section} with theorems \ref{theo:1D PDMPs} and \ref{theo:PCMPs}.
\begin{theo}\label{theo:bdy Harnack for PDMPs}
In addition to Assumption \ref{assum:standing assum PDMP}, we assume either that $d=1$ or that $d$ is arbitrary and Assumption \ref{assum:assumption for absorbed PCMPs} is satisfied. Then there exists a time $T<\infty$ and constants $0<c\leq C<\infty$ such that
\begin{equation}\label{eq:comparison inequality for PDMPs}
c\pi\leq \Law_{\mu}((X_t,I_t)\lvert \tau_{\partial}>t)\leq C\pi\quad\text{for all}\quad \mu\in\calP(\chi),\quad t\geq T.
\end{equation}
We put this in the form of a parabolic boundary Harnack inequality as follows. For any initial conditions $\mu,\nu\in\calP(\chi)$, we let $u_1((x,i),t)$ and $u_2((x,i),t)$ be versions of $\frac{d\Pm_{\mu}((X_t,I_t)\in \cdot,\tau_{\partial}>t)}{dLeb(\cdot)}((x,i))$ and $\frac{d\Pm_{\nu}((X_t,I_t)\in \cdot,\tau_{\partial}>t)}{dLeb(\cdot)}((x,i))$ for $(x,i)\in U\times E$ and $t\geq T$, respectively. It follows that
\begin{equation}\label{eq:parabolic boundary Harnack PDMP}
\inf_{t_1,t_2\geq T}\frac{\essinf_{(x,i)\in U\times E}\frac{u_1((x,i),t_1)}{u_2((x,i),t_2)}}{\esssup_{(x',i')\in U\times E}\frac{u_1((x',i'),t_1)}{u_2((x',i'),t_2)}}\geq \frac{c^2}{C^2}>0.
\end{equation} 
Note in particular that $c$, $C$ and $T$ do not depend upon $\mu$ and $\nu$, and that this comparison is valid up to the boundary.
\end{theo}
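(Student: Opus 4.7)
The plan is to deduce the comparison inequality \eqref{eq:comparison inequality for PDMPs} directly from the $L^\infty(\pi)$ convergence results already obtained (either \eqref{eq:exp conv to pi for 1D PDMP result} in the one-dimensional case or \eqref{eq:exp conv to pi for PCMP result} in the PCMP case), and then derive the parabolic boundary Harnack inequality \eqref{eq:parabolic boundary Harnack PDMP} by a purely algebraic manipulation. The key observation is that both convergence statements provide, uniformly in the initial condition $\mu \in \calP(\chi)$, an exponential rate in $L^\infty(\pi)$ for the density $d\Law_\mu((X_t,I_t)\lvert \tau_\partial>t)/d\pi$ to $1$. Concretely, I would choose $T<\infty$ large enough that $e^{-\gamma(t-T)}\le \tfrac12$ for all $t\ge T$, so that
\[
\tfrac12 \le \frac{d\Law_\mu((X_t,I_t)\lvert \tau_\partial>t)}{d\pi} \le \tfrac32 \qquad\text{$\pi$-a.e., for all $\mu\in\calP(\chi)$ and $t\ge T$,}
\]
which is precisely \eqref{eq:comparison inequality for PDMPs} with $c=\tfrac12$ and $C=\tfrac32$. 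An alternative route, which the statement of the theorem already suggests, would be to combine Theorem \ref{theo:dominated by pi theorem general results main results section} (upper bound) and Theorem \ref{theo:DAD lower bounds density main results section} (lower bound) with the verifications of Assumptions \ref{assum:adjoint Dobrushin main results section}, \ref{assum:adjoint anti-Dobrushin main results section} and \ref{assum:combined Dobrushin adjoint Dobrushin main results section} carried out in Theorem \ref{theo:1D PDMPs} and Theorem \ref{theo:PCMPs}; this would in principle yield explicit constants, but the first route is cleaner.

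To pass from \eqref{eq:comparison inequality for PDMPs} to the parabolic boundary Harnack inequality \eqref{eq:parabolic boundary Harnack PDMP}, I would write each $u_i((x,i),t_i)$ as the Lebesgue density of the sub-probability measure $\Pm_{\mu_i}((X_{t_i},I_{t_i})\in \cdot,\tau_\partial>t_i)=\Pm_{\mu_i}(\tau_\partial>t_i)\,\Law_{\mu_i}((X_{t_i},I_{t_i})\lvert \tau_\partial>t_i)$. Combining this factorisation with \eqref{eq:comparison inequality for PDMPs} applied to each $\mu_i$ gives
\[
c\,\Pm_{\mu_i}(\tau_\partial>t_i)\frac{d\pi}{d\Lambda} \le u_i(\cdot,t_i) \le C\,\Pm_{\mu_i}(\tau_\partial>t_i)\frac{d\pi}{d\Lambda} \qquad \Lambda\text{-a.e., for all } t_i\ge T.
\]
Forming the ratio $u_1((x,i),t_1)/u_2((x,i),t_2)$ now makes the $d\pi/d\Lambda$ factors cancel, and the upper and lower bounds differ only by the uniform factor $C^2/c^2$ (the killing probabilities $\Pm_{\mu_i}(\tau_\partial>t_i)$ being constant in the spatial variable), which yields \eqref{eq:parabolic boundary Harnack PDMP}.

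The main conceptual point is not a technical obstacle but rather identifying that the comparison inequality is all that is needed for a boundary Harnack-type statement, and that the common Lebesgue density of the QSD cancels out of the ratio. Consequently the only mildly delicate step is ensuring that the versions of $u_1,u_2$ one uses are the obvious ones given by multiplying $\tfrac{d\Law_{\mu_i}((X_{t_i},I_{t_i})\lvert\tau_\partial>t_i)}{d\pi}$ (for which \eqref{eq:exp conv to pi for 1D PDMP result}-\eqref{eq:exp conv to pi for PCMP result} provide $\pi$-almost-everywhere two-sided bounds) by $d\pi/d\Lambda$ and by $\Pm_{\mu_i}(\tau_\partial>t_i)$; since the essential infimum/supremum in \eqref{eq:parabolic boundary Harnack PDMP} is taken with respect to Lebesgue measure, this choice is harmless. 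No appeal to continuity or regularity of the densities is needed at this stage; the statement is purely measure-theoretic.
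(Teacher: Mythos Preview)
Your proposal is correct and mirrors the paper's approach exactly: the paper states that the theorem ``may be obtained either straight from \eqref{eq:exp conv to pi for 1D PDMP result} and \eqref{eq:exp conv to pi for PCMP result} or by applying theorems \ref{theo:dominated by pi theorem general results main results section} and \ref{theo:DAD lower bounds density main results section} with theorems \ref{theo:1D PDMPs} and \ref{theo:PCMPs},'' precisely the two routes you identify, with the first being your primary argument. Your algebraic derivation of \eqref{eq:parabolic boundary Harnack PDMP} from \eqref{eq:comparison inequality for PDMPs} (factoring out $\Pm_{\mu_i}(\tau_\partial>t_i)$ and cancelling $d\pi/d\Lambda$ in the ratio) is correct and in fact spells out details that the paper leaves implicit.
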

Since the comparison in \eqref{eq:parabolic boundary Harnack PDMP} is valid up to the boundary, this provides for a parabolic boundary Harnack-type inequality (with the caveat that it only allows us to compare those $u_1,u_2$ corresponding globally to the absorbed PDMP). On the other hand, the corresponding Fokker-Planck equation is first order. The author is not aware of boundary comparison inequalities of this type having previously been established for first-order PDEs. 

We may observe that, over the course of proving Theorem \ref{theo:2D PDMPs}, we have established that Assumption \ref{assum:combined Dobrushin adjoint Dobrushin main results section} is satisfied, so that the lower bound in \eqref{eq:comparison inequality for PDMPs} is satisfied under the assumptions of that theorem.

In contrast to theorems \ref{theo:comparison inequality for degenerate diffusions} and \ref{theo:comparison inequality for 1D Langevin}, it is straightforward to see that the time horizon $T>0$ in Theorem \ref{theo:bdy Harnack for PDMPs} cannot be made arbitrarily small. Moreover $u_1$ and $u_2$ in Theorem \ref{theo:bdy Harnack for PDMPs} may not have continuous versions, so it doesn't necessarily make sense to talk about pointwise infimums and supremums, justifying the use of the essential infimum and supremum in \eqref{eq:parabolic boundary Harnack PDMP}.

We shall prove these theorems by application of the following theorem. We shall state and prove this theorem in the following subsection, before applying it to obtain theorems \ref{theo:1D PDMPs}, \ref{theo:2D PDMPs} and \ref{theo:PCMPs} in the subsections thereafter.

\subsection*{Theorem \ref{theo:general theorem PDMPs}}

We shall establish here a theorem, Theorem \ref{theo:general theorem PDMPs}, containing theorems \ref{theo:1D PDMPs}, \ref{theo:2D PDMPs} and \ref{theo:PCMPs} as particular cases. We begin with some necessary definitions, before stating Theorem \ref{theo:general theorem PDMPs}. We will then establish theorems \ref{theo:1D PDMPs}, \ref{theo:2D PDMPs} and \ref{theo:PCMPs} by verifying that their assumptions imply the assumptions of Theorem \ref{theo:general theorem PDMPs}.

We write $\omega$ for the stationary distribution of $I_t$. We write $\hat{I}_t$ for its time-reversal at stationarity, and $\hat{Q}$ for the rate matrix of this time-reversal. This is the rate matrix $\hat{Q}$ satisfying
\[
\omega_iQ_{ij}=\omega_j\hat{Q}_{ji},\quad i\neq j.
\]
We note that $\hat{Q}_{ij}>0$ for all $i\neq j$. We may then define the reversed absorbed PDMP $((\hat{X}_t,\hat{I}_t))_{0\leq t<\hat{\tau}_{\partial}}$ as follows.

\begin{defin}[Reversed absorbed PDMP]\label{defin:reversed absorbed PDMP}
Prior to the absorption time $\hat{\tau}_{\partial}$, the component $\hat{I}_t$ evolves as a continuous-time jump process on $E$ with rate matrix $\hat{Q}$. The position component $\hat{X}_t$ then evolves according to the ODE
\[
\dot{\hat{X}}_t=-v^{\hat{I}_t}(\hat{X}_t).
\]
The process is absorbed upon contact of the spatial position with the boundary, 
\[
\hat{\tau}_{\partial}:=\inf\{t>0:\hat{X}_{t-}\in \partial U\}.
\]
\end{defin}

We write $(P_t)_{t\geq 0}$ and $(\hat{P}_t)_{t\geq 0}$ respectively for the submarkovian transition semigroups associated to $((X_t,I_t))_{0\leq t<\tau_{\partial}}$ and $((\hat{X}_t,I_t))_{0\leq t<\hat{\tau}_{\partial}}$. We then define the following Green-type kernels.
\begin{equation}\label{eq:PDMP Green kernels}
G_{T_0,T_1}:=\int_{T_0}^{{T_1}}P_sds,\quad \hat{G}_{{T_0},{T_1}}:=\int_{T_0}^{{T_1}}\hat{P}_sds,\quad 0\leq {T_0}<{T_1} <\infty.
\end{equation}

For $x\in U$, $1\leq m<\infty$, $\bft =(t_1,\ldots,t_m)\in \Rm_{>0}^m$ and $\bfi=(i_1,\ldots,i_m)\in E^m$, we define the composite flows
\begin{equation}\label{eq:composite flows}
\Phi^\bfi_\bft=\varphi^{i_m}_{t_m}\circ\ldots\circ \varphi^{i_1}_{t_1}\quad\text{and}\quad \Phi^\bfi_{-\bft}=\varphi^{i_m}_{-t_m}\circ\ldots\circ \varphi^{i_1}_{-t_1}.
\end{equation}

\begin{defin}[Accessibility and reverse-accessibility]
For $x\in U$, an open set $V\subseteq U$ we say that $V$ is accessible (respectively reverse-accessible) from $x$ if there exists $1\leq m<\infty$, $\bft\in \Rm_{>0}^m$ and $\bfi\in E^m$ such that $\Phi^\bfi_{\bft}(x)\in V$ (respectively $\Phi^{\bfi}_{-\bft}(x)\in V$). For $x,y\in \text{int}(E)$ we say $y$ is accessible (respectively reverse-accessible) from $y$, $x\ra y$ (respectively $x\ra_- y$), if for all open neighbourhoods $V\ni y$ we have $x \ra V$ (respectively $x\ra_- V$). 
\end{defin}

We define the following sets.
\begin{defin}\label{defin:D+ and D- sets PDMP proof}
We define the following functions,
\begin{equation}
T^i_{\partial,+}(x):=\inf\{t>0:\varphi^i_t(x)\in \partial U\},\quad T^i_{\partial,-}(x):=\inf\{t>0:\varphi^i_{-t}(x)\in \partial U\},\quad x\in U,
\end{equation}
the latter of which is the function defined in \eqref{eq:time to hit partial U}. We define $\calD_+$ and $\calD_-$ respectively to be
\begin{equation}
\begin{split}
\calD_+:=\{(x,i)\in U\times E:x'\mapsto T^i_{\partial,+}(x')\text{ is discontinuous at $x$}\},\\
\calD_-:=\{(x,i)\in U\times E:x'\mapsto T^i_{\partial,-}(x')\text{ is discontinuous at $x$}\}.
\end{split}
\end{equation}
\end{defin}

We now consider the following assumption
\begin{assum}\label{assum:assumption for killed PDMP}
We assume that we have the following:
\begin{enumerate}
\item \label{enum:PDMP accessibility condition}
For any two points $x,y\in U$, $y$ is both accessible and reverse-accessible from $x$.
\item\label{enum:assum for minorisation when start from compact set}
There exists $x^{\ast}\in \text{int}(E)$ satisfying:
\begin{enumerate}
\item\label{enum:linear independence of choices of d vectors at x ast}
for any $1\leq i_1<\ldots<i_d\leq n$, $v^{i_1}(x^{\ast}),\ldots,v^{i_d}(x^{\ast})$ are linearly independent;
\item\label{enum:x ast convex hull so accessible from itself condition}
$0\in \overline{\conv}(v^1(x^{\ast}),\ldots,v^n(x^{\ast}))$.

\end{enumerate}
\item\label{enum:assum Gt minorised}
There exists times $0\leq T_0<T_1<\infty$ and $0\leq \hat{T}_0<\hat{T}_1<\infty$, and a constant $C<\infty$, such that
\[
G_{T_0,T_1}((x,i),\cdot),\hat{G}_{\hat{T}_0,\hat{T}_1}((x,i),\cdot)\leq C\text{Leb}(\cdot) \quad\text{for all}\quad (x,i)\in U\times  E.
\]
\end{enumerate}
\end{assum}

\begin{theo}\label{theo:general theorem PDMPs}
In addition to the standing assumption, \ref{assum:standing assum PDMP}, we assume that Assumption \ref{assum:assumption for killed PDMP} is satisfied. Then the absorbed PDMP $((X_t,I_t))_{0\leq t<\tau_{\partial}}$ satisfies Assumption \ref{assum:technical assumption for Assum (A)} and \cite[Assumption (A)]{Champagnat2014}. In particular, there exists a unique QSD $\pi$. This QSD, $\pi$, has an essentially bounded density with respect to Lebesgue measure, and full support. Moreover, $((X_t,I_t))_{0\leq t<\tau_{\partial}}$ satisfies assumptions \ref{assum:adjoint Dobrushin main results section} and \ref{assum:combined Dobrushin adjoint Dobrushin main results section}, and is lower semicontinuous (in the sense of Definition \ref{defin:lower semicts kernel}). If, in addition, there exists $\hat{T}_2>0$ such that $\hat{P}_{\hat{T}_2}((x,i),\cdot)\leq C\Leb(\cdot)$ for all $(x,i)\in U\times E$, then $(X_t)_{0\leq t<\tau_{\partial}}$ satisfies Assumption \ref{assum:adjoint anti-Dobrushin main results section}. Moreover if $V=\calD_-^c\subseteq U\times E$ is an open set such that $\Pm_{(x,i)}((X_t,I_t)\in \calD_-)=0$ for all $(x,i)\in V$ and $t\geq 0$, then (a version of) $\frac{d\pi}{d\Leb}$ is continuous on $V$. 
\end{theo}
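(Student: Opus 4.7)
The plan is to verify in order \cite[Assumption (A1)]{Champagnat2014} and Assumption \ref{assum:technical assumption for Assum (A)}; essential boundedness and full support of $\pi$ together with the lower semicontinuity of $(P_t)_{t\geq 0}$; the adjoint Dobrushin and combined Dobrushin conditions via Theorem \ref{theo:random diffeo}; \cite[Assumption (A)]{Champagnat2014} via Theorem \ref{theo:criterion for right efn main results section}; and finally, under the extra bound on $\hat{P}_{\hat T_2}$, Assumption \ref{assum:adjoint anti-Dobrushin main results section}, together with the continuity of $d\pi/d\Lambda$ on $V$ via Theorem \ref{theo:criterion for cty of QSD}.

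First, to establish \cite[Assumption (A1)]{Champagnat2014}: from any $(x,i)\in U\times E$, accessibility (Part \ref{enum:PDMP accessibility condition} of Assumption \ref{assum:assumption for killed PDMP}) combined with $Q_{jk}>0$ for all $j\neq k$, compactness of $\bar U$, and the non-vanishing of the $v^j$ on $\bar U$ yields a uniformly positive lower bound on the probability of being at a fixed small neighborhood of $(x^*,i^*)$ at time $T/2$ without absorption, for some fixed mode $i^*\in E$. From such a point, Parts \ref{enum:linear independence of choices of d vectors at x ast}--\ref{enum:x ast convex hull so accessible from itself condition} of Assumption \ref{assum:assumption for killed PDMP} permit one to smear mass: cycling through $d$ modes whose velocity vectors at $x^*$ are linearly independent and taking small stochastic sojourn times in succession produces an endpoint map whose Jacobian in the sojourn times is nondegenerate at $x^*$, so by the inverse function theorem the conditional law $\Law_{(x,i)}((X_t,I_t)\lvert\tau_\partial>t)$ at $t=T$ dominates $c_0\nu$ for an absolutely continuous probability measure $\nu$ supported in a small ball around $(x^*,i^*)$. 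The standing Assumption \ref{assum:standing assum PDMP} provides $\Pm_{(x,i)}(\tau_\partial<\infty)>0$ (use accessibility to reach an outward-pointing boundary point), so Remark \ref{rmk:remark for checking technical assum for Assum (A)} upgrades this to Assumption \ref{assum:technical assumption for Assum (A)}. Essential boundedness of $d\pi/d\Lambda$ follows from integrating $\pi P_t=\lambda^t\pi$ over $[T_0,T_1]$: $\pi G_{T_0,T_1}=\big(\int_{T_0}^{T_1}\lambda^s\,ds\big)\pi$, combined with Part \ref{enum:assum Gt minorised} of Assumption \ref{assum:assumption for killed PDMP}, forces $\pi\leq C\big(\int_{T_0}^{T_1}\lambda^s\,ds\big)^{-1}\Lambda$. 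Full support of $\pi$ is immediate from two-sided accessibility. Lower semicontinuity of $(P_t)_{t\geq 0}$ is standard: for each realisation of the jump data consistent with non-absorption up to time $t$, the map $x\mapsto (X_t,I_t)$ is continuous, the event of non-absorption by time $t$ is open in the initial condition, and Fatou's lemma applied on the underlying probability space then gives $P_tf\in LC_b(\chi;\Rm_{\geq 0})$ for every $f\in C_b(\chi;\Rm_{\geq 0})$.

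For Assumption \ref{assum:adjoint Dobrushin main results section}, I would apply Theorem \ref{theo:random diffeo} to $P_{t_0}$. Let $(\Theta,\vartheta)$ encode the jump times and jump sequence of $I_s$ on $[0,t_0]$; define $f_\theta(x)=\Phi^{\bfi}_{\bft}(x)$ when the trajectory stays in $U$, and $f_\theta(x)=\partial$ otherwise. Then $f_\theta$ is almost surely a diffeomorphism of $U_\theta$ onto an open subset $V_\theta\subseteq U$, and the Jacobian determinant $\lvert\det(D\Phi^{\bfi}_{\bft})\rvert$ is bounded above and below on $U\times\Theta$ by $C^\infty$ regularity of the vector fields and boundedness of $t_0$, so \eqref{divergence of random diffeo bounded and bded away from 0} holds. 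The inverse process $\tilde{X}^0$ from \eqref{eq:discrete time Y process for random diffeo theorem} is driven by reverse-flow compositions $\Phi^{\bfi}_{-\bft}$, and reverse accessibility together with the same $x^*$-smearing argument used above yields \cite[Assumption (A1)]{Champagnat2014} for $\tilde{X}^0$; the non-mutual-singularity of the minorising measure and $\pi$ is provided by the full support of $\pi$. This yields Assumption \ref{assum:adjoint Dobrushin main results section}. Assumption \ref{assum:combined Dobrushin adjoint Dobrushin main results section} follows by arranging both the forward and the reverse $x^*$-smearing to take place in the \emph{same} small open set $A\subseteq U\times E$ containing $(x^*,i^*)$, and taking $\nu_1=\Lambda_{\lvert_A}/\Lambda(A)$. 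To upgrade (A1) to \cite[Assumption (A)]{Champagnat2014}, I would apply Theorem \ref{theo:criterion for right efn main results section} with $(\tilde{P}_t)_{t\geq 0}=(\hat{P}_t)_{t\geq 0}$: the QSD $\tilde\pi$ of the reverse PDMP has essentially bounded density with respect to $\Lambda$ by the same Green-bound argument now applied to $\hat{G}$, (A1) for $(X_t,I_t)$ has already been proven, and the open-set positivity hypothesis holds on a neighborhood of $x^*$ by the reverse smearing argument; Proposition \ref{prop:right efn gives A2} then converts the resulting bounded, everywhere strictly positive pointwise right eigenfunction for $(P_t)_{t\geq 0}$ into \cite[Assumption (A2)]{Champagnat2014}, hence \cite[Assumption (A)]{Champagnat2014} and uniqueness of $\pi$.

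Under the additional bound $\hat{P}_{\hat T_2}((x,i),\cdot)\leq C\Lambda$, Assumption \ref{assum:adjoint anti-Dobrushin main results section} holds by taking $\tilde{P}^{(1)}=\hat{P}_{\hat T_2}$ in \eqref{eq:psi adjoint for verifying bounded pi euclidean condition main results section} with the same adjoint function $\psi$ used for Assumption \ref{assum:adjoint Dobrushin main results section}. For continuity of $d\pi/d\Lambda$ on $V=\calD_-^c$ under the stated invariance hypothesis, I would apply Part \ref{enum:QSD thm cty on V main results section} of Theorem \ref{theo:criterion for cty of QSD} with $\tilde{P}_t=\hat{P}_t$: on $V$ the reverse-flow exit time $T^i_{\partial,-}$ is continuous, the hypothesis $\Pm_{(x,i)}((X_t,I_t)\in\calD_-)=0$ for $(x,i)\in V$ and $t\geq 0$ ensures the reverse trajectories do not enter $\calD_-$, and accordingly $\hat{P}_tf$ is continuous on $V$ for every $f\in C_b(\chi;\Rm_{\geq 0})$, so \eqref{eq:cond for open set to have cts QSD density main results section} is satisfied. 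The main obstacle throughout is to set up the adjoint identity \eqref{eq:psi adjoint for verifying reverse Dobrushin main results section} cleanly for PDMPs: one must take $\psi(x,i)$ proportional to $\omega_i^{-1}$, corrected by a smooth factor that is bounded and bounded away from zero accounting for the nonzero divergences of the $v^j$, and then carry out the pathwise time-reversal computation by writing $P_{t_0}$ as an integral over the jump data $\theta$, reversing each composite flow via change-of-variables on $\Phi^{\bfi}_{\bft}$, and converting the jump weights using detailed balance $\omega_jQ_{jk}=\omega_k\hat{Q}_{kj}$. Once this bookkeeping is in place, all the pieces slot together through theorems \ref{theo:random diffeo}, \ref{theo:criterion for right efn main results section} and \ref{theo:criterion for cty of QSD}.
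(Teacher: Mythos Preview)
There is a genuine circularity in your plan which the paper has to work rather hard to break. Theorem~\ref{theo:random diffeo}, Theorem~\ref{theo:criterion for right efn main results section}, and Theorem~\ref{theo:criterion for cty of QSD} all presuppose the existence of a QSD (either $\pi$ for the process itself, or $\tilde\pi$ for the adjoint) with essentially bounded Lebesgue density. You invoke these theorems to \emph{produce} the right eigenfunction and hence establish \cite[Assumption (A)]{Champagnat2014}, but you never independently construct such a QSD; your sentence ``the QSD $\tilde\pi$ of the reverse PDMP has essentially bounded density'' simply assumes it exists. Likewise your bound $\pi\leq C\big(\int_{T_0}^{T_1}\lambda^s\,ds\big)^{-1}\Lambda$ is correct once $\pi$ exists, but existence is part of the conclusion. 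The paper resolves this by working not with $(P_t)$ but with the Green kernels $G_{t_2,t_2+s}$ and $\tilde G_{t_2,t_2+s}$: Proposition~\ref{prop:left emeasure for greens kernels} uses Schauder's fixed-point theorem on the convex, weakly compact set $\{\mu\in\calP(\chi):\mu\leq C_s\Leb\}$ to build left eigenmeasures $\pi_s,\tilde\pi_s\in\calP_\infty(\Leb)$ for these kernels. Only then can Theorem~\ref{theo:criterion for right efn main results section} be applied---and it is applied to the \emph{one-step discrete kernel} $\frac{1}{s}G_{t_2,t_2+s}$, not to $(P_t)$---to obtain a bounded positive right eigenfunction $h$; commutativity of $G_{t_2,t_2+s}$ with each $P_t$ then transfers $\pi$ and $h$ to the full semigroup.

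This passage through Green kernels is not cosmetic. Part~\ref{enum:assum Gt minorised} of Assumption~\ref{assum:assumption for killed PDMP} gives $G_{T_0,T_1}((x,i),\cdot)\leq C\Leb$, but no such bound is assumed (or generally true) for $P_t$ at a fixed time; similarly, the continuity needed for \eqref{eq:cond for open set to have cts QSD density main results section} holds for $\tilde G_{0,1}$ on $V=\calD_-^c$ (Proposition~\ref{prop:PDMP lower semicts and Green kernel cts propn}, Part~\ref{enum:Greens function propn PDMP cty on open set}) but \emph{not} for $\tilde P_t$ at a fixed $t$, because absorption at time exactly $t$ creates discontinuities. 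So your proposed direct application of Theorems~\ref{theo:criterion for right efn main results section} and~\ref{theo:criterion for cty of QSD} to $(P_t)$ and $(\hat P_t)$ does not go through. A smaller point: the divergence correction $e^{-\int_0^t \text{tr}(Dv^{\hat I_s}(\hat X_s))\,ds}$ is path-dependent and cannot be absorbed into a function $\psi(x,i)$; the paper instead builds it into the kernel $\tilde P_t$ of \eqref{eq:adjoint kernel for PDMP} and takes $\psi(x,i)=\omega(i)$ (not $\omega_i^{-1}$). Finally, your argument for \cite[Assumption (A1)]{Champagnat2014} is too quick near the boundary: the unconditional probability of reaching a neighbourhood of $x^*$ without absorption is \emph{not} uniformly positive, and controlling the conditional probability requires the detailed boundary analysis of Proposition~\ref{prop: PDMP satisfies A1}, which exploits the inward-pointing vector fields guaranteed by \eqref{eq:PDMP vi and vj pointing outwards and inwards everywhere on the boundary}.
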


\subsubsection*{Overview of the Proof of Theorem \ref{theo:general theorem PDMPs}}

We henceforth assume that assumptions \ref{assum:standing assum PDMP} and \ref{assum:assumption for killed PDMP} are satisfied. We firstly note that it is trivial to see that $(X_t)_{0\leq t<\tau_{\partial}}$ satisfies Assumption \ref{assum:technical assumption for Assum (A)}. It is also immediate from the accessibility, Part \ref{enum:PDMP accessibility condition} of Assumption \ref{assum:assumption for killed PDMP}, that any QSD of $(X_t)_{0\leq t<\tau_{\partial}}$ (if it exists) must have full support.

We define the constant
\[
\bar d:=\sup_{\substack{x\in U\\ i\in E}}\lvert\text{tr}(Dv^i(x))\rvert. 
\]
We then define the submarkovian transition semigroup
\begin{equation}\label{eq:adjoint kernel for PDMP}
\tilde{P}_t((y,j),\cdot):=\expE_{(y,j)}[\Ind((\hat{X}_t,\hat{I}_t)\in \cdot)\Ind(\hat{\tau}_{\partial}>t)e^{-\int_0^t(Dv^{\hat{I}_s}(\hat{X}_s)+\bar d)ds}],\quad t\geq 0.
\end{equation}
We shall firstly prove the following proposition.
\begin{prop}\label{prop: reverse PDMP provides adjoint}
The submarkovian transition semigroup $(\tilde{P}_t)_{t\geq 0}$ satisfies
\begin{equation}
(\Leb(dx)\times \delta_i)\omega(i)P_t((x,i),dy\times \{j\})=e^{\bar d t}(\Leb(dy)\times \delta_j)\omega(j)\tilde{P}_t((y,j),dx\times \{i\}).
\end{equation}
\end{prop}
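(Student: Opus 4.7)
My plan is to prove the identity by conditioning on the jump skeleton of the $E$-valued component and computing both sides explicitly as sums over finite sequences of jump states and jump times. Decomposing in this way reduces the assertion to an identity for fixed jump sequences, which can be verified using (i) the detailed balance relation between $Q$ and $\hat{Q}$, (ii) the change-of-variables formula under the deterministic flows, and (iii) time-reversal of the flows $\varphi^i_t \leftrightarrow \varphi^i_{-t}$. The role of $\bar d$ in \eqref{eq:adjoint kernel for PDMP} is purely normalising, ensuring the exponential weight is bounded by $1$ so that $\tilde P_t$ is submarkovian; the factor $e^{\bar d t}$ on the right of the claimed identity cancels against the $e^{-\bar d t}$ inside the definition of $\tilde P_t$, so the genuine content is that the $\text{tr}(Dv^{\hat I_s}(\hat X_s))$ weight captures exactly the Jacobian mismatch between forward and reversed flows.

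Concretely, for $m\ge 0$, a sequence $\bfi=(i_0,\ldots,i_m)\in E^{m+1}$ with $i_0=i$, $i_m=j$, consecutive entries distinct, and jump times $0<s_1<\cdots<s_m<t$, the standard PDMP construction gives
\[
P_t((x,i), dy\times\{j\}) = \sum_{m,\bfi}\int_{0<s_1<\cdots<s_m<t} \Big(\prod_{k=0}^{m-1} Q_{i_k i_{k+1}}\Big)\, e^{\sum_k Q_{i_k i_k}(s_{k+1}-s_k)}\, \Ind_{\text{staying in }U}\, \delta_{\Phi^{\bfi}_{\bft}(x)}(dy)\, d\bft,
\]
where $\bft$ records the interjump durations and $s_{m+1}:=t$. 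Pushing $\delta_{\Phi^{\bfi}_{\bft}(x)}(dy)$ through the change of variables $y=\Phi^{\bfi}_{\bft}(x)$ and integrating against $\Leb(dx)$ produces a Jacobian equal to $\exp(\int_0^t \text{tr}(Dv^{I_s}(X_s))\,ds)$ along the corresponding path. An analogous expression holds for $\tilde P_t((y,j),dx\times\{i\})$ using the reversed jump sequence $\bfi'=(i_m,i_{m-1},\ldots,i_0)$, reversed times $s'_k:=t-s_{m+1-k}$, and flows $\Phi^{\bfi'}_{-\bft'}$, with the exponential weight contributing the factor $\exp(-\int_0^t \text{tr}(Dv^{\hat I_s}(\hat X_s))\,ds)$.

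I would then compare the two expressions term-by-term over matching jump skeletons. The jump rate product matches by telescoping the detailed balance relation $\omega(i_k)Q_{i_k i_{k+1}}=\omega(i_{k+1})\hat Q_{i_{k+1} i_k}$, giving $\omega(i)\prod_k Q_{i_k i_{k+1}} = \omega(j)\prod_k \hat Q_{i_{k+1} i_k}$. The holding-time exponentials match because $Q_{i_k i_k}=\hat Q_{i_k i_k}$ (diagonal entries agree under time reversal). The deterministic components match because $\Phi^{\bfi'}_{-\bft'}$ is by construction the inverse of $\Phi^{\bfi}_{\bft}$; the change-of-variables Jacobian $\exp(\int_0^t \text{tr}(Dv^{I_s}(X_s))\,ds)$ relating $\Leb(dx)$ and $\Leb(dy)$ is exactly cancelled by the weight $\exp(-\int_0^t \text{tr}(Dv^{\hat I_s}(\hat X_s))\,ds)$ inside $\tilde P_t$ once one observes that the reversed path $(\hat X_s,\hat I_s)_{0\le s\le t}$ starting at $(y,j)$ traces out the same orbit (in reverse) as the forward path $(X_s,I_s)$ starting at $(x,i)$.

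The main obstacle, and the one requiring care, is the boundary condition: one must check that $\{\tau_\partial>t\}$ on the forward side translates precisely into $\{\hat\tau_\partial>t\}$ on the reversed side for almost every jump skeleton, since the two processes are absorbed on $\partial U$ rather than run for all time. This is handled by noting that the forward trajectory stays in $U$ for $0\le s\le t$ iff every intermediate point of the reversed trajectory also stays in $U$, and iff the reversed trajectory terminating at $(x,i)$ avoids $\partial U$ over $[0,t]$; the $\Ind_{\text{staying in }U}$ indicators on either side therefore coincide. Once this is verified, summing over $m$ and integrating over $\bft$ (equivalently $\bft'$) yields the claimed identity. The fact that $v^1,\ldots,v^n$ are transverse to the boundary only at finitely many places and that trajectories hit $\partial U$ transversally almost surely means the contribution from paths touching $\partial U$ at interior jump times is null, so no additional analytic subtleties arise.
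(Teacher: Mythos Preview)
Your approach is correct and reaches the same conclusion, but the paper packages the argument more concisely. Rather than decomposing by the number of jumps and matching term-by-term, the paper fixes a realization of a \emph{stationary} copy $(I^\theta_s)_{0\le s\le t}$ of the jump process on an auxiliary probability space $(\Theta,\vartheta)$, observes that for each $\theta$ the map $x\mapsto f^\theta(x)$ solving the ODE $\dot x_s=v^{I^\theta_s}(x_s)$ is a diffeomorphism from $U^\theta:=\{f^\theta\in U\}$ onto its image with Jacobian $\det Df^\theta(x)=\exp\!\big(\int_0^t\text{tr}(Dv^{I^\theta_s}(x_s))\,ds\big)$, and that its inverse is exactly the flow of the reversed process $(\hat I^\theta_s)_{0\le s\le t}=(I^\theta_{t-s})_{0\le s\le t}$. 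A single application of Tonelli plus the change-of-variables formula then gives the identity, with the factors $\omega(i),\omega(j)$ arising because $\Ind(I^\theta_0=i)$ and $\Ind(\hat I^\theta_0=j)$ have $\vartheta$-mass $\omega(i)$ and $\omega(j)$ under stationarity. Your explicit skeleton decomposition unpacks what this random-diffeomorphism argument does in one stroke: your telescoping detailed-balance computation is subsumed by the stationarity of $(I^\theta_s)$, and your term-by-term Jacobian matching is the change of variables for fixed $\theta$. Your final paragraph about transversality is unnecessary: the survival indicators $\Ind(\tau_\partial>t)$ and $\Ind(\hat\tau_\partial>t)$ match because the forward and reversed trajectories trace out the same point set in $U$, so one stays in $U$ iff the other does, with no boundary subtleties.
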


Corresponding to \eqref{eq:adjoint kernel for PDMP} we define
\begin{equation}
\tilde{G}_{T_0,T_1}:=\int_{T_0}^{T_1}e^{\bar d s}\tilde{P}_sds,\quad t\geq 0,
\end{equation}
which we observe by Proposition \ref{prop: reverse PDMP provides adjoint} satisfies
\begin{equation}\label{eq:adjoint equation for Green kernel}
\begin{split}
(\Leb(dx)\times \delta_i)\omega(i)G_{T_0,T_1}((x,i),dy\times \{j\})\\
=(\Leb(dy)\times \delta_j)\omega(j)\tilde{G}_{T_0,T_1}((y,j),dx\times \{i\}),\quad 0\leq T_0<T_1<\infty.
\end{split}
\end{equation}
\begin{rmk}
The proof of Proposition \ref{prop: reverse PDMP provides adjoint} does not use Assumption \ref{assum:assumption for killed PDMP}, hence it can (and will) be used when verifying that assumption.
\end{rmk}

We observe that we have the relationship
\begin{equation}\label{eq:PDMP proof relationship between hat and tilde kernels}
\begin{split}
e^{-2\bar{d}t}\hat{P}_t((y,j),\cdot)\leq \tilde{P}_t((y,j),\cdot)\leq \hat{P}_t((y,j),\cdot)\quad \text{for all}\quad (y,j)\in U\times E,\quad t\geq 0,\\
e^{-2\bar{d}T_1}\hat{G}_{T_0,T_1}((y,j),\cdot)\leq \tilde{G}_{T_0,T_1}((y,j),\cdot)\\
\leq e^{\bar{d}T_1}\hat{G}_{T_0,T_1}((y,j),\cdot)\quad\text{for all}\quad (y,j)\in U\times E,\quad 0\leq T_0\leq T_1<\infty.
\end{split}
\end{equation}

We have the following lemma.
\begin{lem}\label{lem:Lebesgue measure of set from flow is 0}
Suppose that $\cal{M}$ is a $d-1$-dimensional $C^{\infty}$ submanifold of $\Rm^d$. We define $v$ to be a $C^{\infty}$, bounded vector field in $\Rm^d$, corresponding to which is the flow $\varphi_t$. We define $\calS:=\{\varphi_t(x):x\in \calM,\; v(x)\in T_x\calM,\; t\in \Rm\}$. Then $\Leb(\calS)=0$.
\end{lem}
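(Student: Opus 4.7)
The plan is to realise $\calS$ as the image under a smooth map of a set on which that map fails to be submersive, and then invoke Sard's theorem. Set
\[
\calM_0 := \{x \in \calM : v(x) \in T_x\calM\}
\]
and define
\[
F : \Rm \times \calM \to \Rm^d, \qquad F(t,x) := \varphi_t(x),
\]
which is $C^{\infty}$ and globally defined since $v$ is smooth and bounded (hence complete). Then $\calS = F(\Rm \times \calM_0)$, and both $\Rm \times \calM$ and $\Rm^d$ are smooth $d$-dimensional manifolds. The target of the argument is to show that every point of $\Rm \times \calM_0$ is a critical point of $F$, after which Sard's theorem immediately yields $\Leb(\calS)=0$.

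The critical-point claim is the heart of the argument. Fix $(t_0,x_0)\in \Rm \times \calM_0$. The spatial partial $d_x F(t_0,x_0)$ is the tangent map $d\varphi_{t_0}(x_0) : T_{x_0}\calM \to \Rm^d$; because $\varphi_{t_0}$ is a diffeomorphism of $\Rm^d$, its image is the $(d-1)$-dimensional subspace
\[
W := d\varphi_{t_0}(x_0)\bigl(T_{x_0}\calM\bigr) \subseteq \Rm^d.
\]
The time partial is $\partial_t F(t_0,x_0) = v(\varphi_{t_0}(x_0))$. Applying the standard flow identity $v\circ \varphi_{t_0} = d\varphi_{t_0}\cdot v$ together with the defining condition $v(x_0)\in T_{x_0}\calM$ of $\calM_0$ gives
\[
v(\varphi_{t_0}(x_0)) = d\varphi_{t_0}(x_0)\cdot v(x_0) \in W.
\]
Hence every column of $dF(t_0,x_0)$ lies in the $(d-1)$-dimensional subspace $W$, so $\mathrm{rank}(dF(t_0,x_0)) \leq d-1$ and $(t_0,x_0)$ is a critical point of $F$.

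The only remaining step, and the one that needs the least work, is to apply Sard's theorem. Since $F$ is $C^{\infty}$ between $d$-dimensional smooth manifolds (and $\calM$ is second countable as a submanifold of $\Rm^d$), Sard's theorem gives that the set of critical values of $F$ has Lebesgue measure zero in $\Rm^d$. The image $\calS = F(\Rm \times \calM_0)$ is contained in this set of critical values, so $\Leb(\calS)=0$; there are no regularity obstacles, and even measurability of $\calS$ is automatic because $\calM_0$ is closed in $\calM$ (the tangency condition is closed by continuity), making $\calS$ a countable union of continuous images of compacta.
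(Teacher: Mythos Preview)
Your proof is correct and follows essentially the same approach as the paper: both define the map $F:\Rm\times\calM\to\Rm^d$, $(t,x)\mapsto\varphi_t(x)$, identify the points with $v(x)\in T_x\calM$ as critical points, and apply Sard's theorem. You supply more detail than the paper does (in particular the flow identity $v(\varphi_{t_0}(x_0))=d\varphi_{t_0}(x_0)\cdot v(x_0)$ justifying the critical-point claim), but the argument is the same.
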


\begin{proof}[Proof of Lemma \ref{lem:Lebesgue measure of set from flow is 0}]

We define
\[
f:\calM\times \Rm\ni (x,t)\mapsto \varphi_t(x)\in \Rm^d.
\]
We see that $(x,t)$ is a critical point of $f$ if and only if $v(x)\in T_x\calM$, which is equivalent to $f((x,t))\in \calS$. The conclusion of Lemma \ref{lem:Lebesgue measure of set from flow is 0} then follows by Sard's theorem.
\end{proof}
\begin{observ}\label{observ:traversal of bdy then cts}
We observe that if $v^i$ traverses the boundary at $\varphi^i_{T^i_{\partial,+}}(x)$, that is $v^i(\varphi^i_{T^i_{\partial,+}}(x))\notin T_{\varphi^i_{T^i_{\partial,+}}(x)}\partial U$, then $x'\mapsto T^i_{\partial,+}(x')$ must be continuous at $x$. 
\end{observ}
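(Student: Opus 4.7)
The approach will be a standard transversality plus implicit function theorem argument, combined with a compactness estimate to identify the hitting time produced by the IFT with the \emph{first} hitting time. Write $T := T^i_{\partial,+}(x)$ and $y := \varphi^i_T(x) \in \partial U$, and choose a $C^\infty$ defining function $\rho$ for $\partial U$ on a neighbourhood $W$ of $y$ with $U \cap W = \{\rho > 0\}$. The hypothesis $v^i(y) \notin T_y \partial U$ translates to $\partial_t|_{t=T}\, \rho(\varphi^i_t(x)) = \nabla\rho(y) \cdot v^i(y) \neq 0$.

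First I would apply the implicit function theorem to $F(x',t) := \rho(\varphi^i_t(x'))$ at $(x,T)$, producing a continuous function $\tau$ defined on a neighbourhood $N \ni x$ with $\tau(x) = T$ and $\varphi^i_{\tau(x')}(x') \in \partial U$ for all $x' \in N$. This immediately yields the upper bound $T^i_{\partial,+}(x') \leq \tau(x')$ on $N$, and $\tau(x') \to T$ as $x' \to x$.

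For the matching lower bound, I would fix $\epsilon \in (0,T)$ and use that the set $K := \{\varphi^i_t(x) : t \in [0, T-\epsilon]\}$ is compact and contained in the open set $U$, hence sits at positive distance from $\partial U$. Joint continuity of the flow on the compact time interval $[0, T-\epsilon]$ then ensures that, for $x'$ sufficiently close to $x$, $\varphi^i_t(x') \in U$ for every $t \in [0, T-\epsilon]$, so $T^i_{\partial,+}(x') \geq T - \epsilon$. Combined with the IFT upper bound, one has $T - \epsilon \leq T^i_{\partial,+}(x') \leq \tau(x')$ on a smaller neighbourhood of $x$, and letting $x' \to x$ and then $\epsilon \downarrow 0$ gives $T^i_{\partial,+}(x') \to T$.

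The only real subtlety is that the function $\tau$ produced by the IFT a priori records just \emph{some} time at which the flow touches $\{\rho = 0\}$, not necessarily the first; one could imagine $x'$ trajectories that hit $\partial U$ earlier, away from $y$. The compactness step ruling out such an early excursion is the main point to execute carefully, and it relies crucially on $x \in U$ being an interior point together with openness of $U$. Everything else — smoothness of $v^i$, continuity of $\varphi^i_t$ in the initial condition, and the IFT itself — is standard.
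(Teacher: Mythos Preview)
Your argument is correct and is precisely the standard transversality/implicit function theorem route one would take here. The paper itself supplies no proof for this observation --- it is simply asserted as a well-known fact --- so there is nothing to compare against; your write-up fills in exactly the details a careful reader would supply.
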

It follows from Lemma \ref{lem:Lebesgue measure of set from flow is 0} and Observation \ref{observ:traversal of bdy then cts} that 
\begin{equation}\label{eq:lebesgue measure of D+ and D- = 0}
\Leb(\calD_+)=0,\quad\text{and similarly}\quad\Leb(\calD_-)=0.
\end{equation}

\begin{prop}\label{prop:PDMP lower semicts and Green kernel cts propn}
The semigroups $(P_t)_{t\geq 0}$ and $(\tilde{P}_t)_{t\geq 0}$ are lower semicontinuous (in the sense of Definition \ref{defin:lower semicts kernel}). Moreover, for all $0\leq T_0< T_1<\infty$, $G_{T_0,T_1}$ and $\tilde{G}_{T_0,T_1}$ satisfy the following:
\begin{enumerate}
\item
$G_{T_0,T_1}$ and $\tilde{G}_{T_0,T_1}$ are lower semicontinuous (in the sense of Definition \ref{defin:lower semicts kernel}).
\item
If $\mu_n$ are a sequence of probability measures converging weakly to $\mu\in\calP(\Leb)$, then $\mu_n G_{T_0,T_1}\ra \mu G_{T_0,T_1}$ and $\mu_n\tilde{G}_{T_0,T_1}\ra \mu \tilde{G}_{T_0,T_1}$ weakly. 
\item\label{enum:Greens function propn PDMP cty on open set}
If $\Pm_{x}((X_t,I_t)\in \calD_+)=0$ for all $t\geq 0$ (respectively $\Pm_{x}((\hat{X}_t,\hat{I}_t)\in \calD_-)=0$ for all $t\geq 0$) and $\calD_+$ is closed (respectively $\calD_-$ is closed, then $G_{T_0,T_1}f$ is continuous at $x$ (respectively $\tilde{G}_{T_0,T_1}f$ is continuous at $x$) for all $f\in C_b(\calD_+^c)\cap \calB_b(\chi)$ (respectively $f\in C_b(\calD_-^c)\cap \calB_b(\chi)$).
\end{enumerate}
\end{prop}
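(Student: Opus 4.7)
The plan is to prove every assertion via a single pathwise coupling of absorbed PDMPs. On a common probability space, for each $(x,i)\in U\times E$ I would realise $((X_t^{(x,i)},I_t^{(x,i)}))_{0\leq t<\tau_{\partial}^{(x,i)}}$ so that any two processes started with the same $E$-component $i$ share the same Poisson clock of $I_t$ and the same sequence of jump targets, with only the spatial coordinate differing through the flows $\varphi^j_s$. Continuous dependence of ODE flows on initial data then gives, on $\{\tau_{\partial}^{(x,i)}>t\}$, that $(X_s^{(x_n,i)})_{s\leq t}\to(X_s^{(x,i)})_{s\leq t}$ uniformly as $x_n\to x$, and by Observation \ref{observ:traversal of bdy then cts} one also has $\tau_{\partial}^{(x_n,i)}\to\tau_{\partial}^{(x,i)}$ a.s.\ whenever the trajectory from $(x,i)$ exits $U$ transversally. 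The analogous construction applies to the reversed PDMP.

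For the lower semicontinuity of $P_t$, take $f\in C_b(\chi;\Rm_{\geq 0})$ and $(x_n,i)\to(x,i)$. On $\{\tau_{\partial}^{(x,i)}>t\}$ the trajectory stays in a compact subset of $U$, so for $n$ large $\tau_{\partial}^{(x_n,i)}>t$ and $(X_t^{(x_n,i)},I_t)\to(X_t^{(x,i)},I_t)$; on the complementary event the integrand $f(X_t^{(x_n,i)},I_t)\Ind(\tau_{\partial}^{(x_n,i)}>t)$ is non-negative. Fatou yields $\liminf_n P_tf(x_n,i)\geq P_tf(x,i)$. The same reasoning applies to $\tilde P_t$, since the exponential weight in \eqref{eq:adjoint kernel for PDMP} is a bounded continuous functional of $((\hat X_s,\hat I_s))_{s\leq t}$ under the coupling. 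Lower semicontinuity of $G_{T_0,T_1}$ and $\tilde G_{T_0,T_1}$ then follows by Fubini--Fatou applied to the uniformly bounded integrands $P_sf$ and $e^{\bar d s}\tilde P_sf$ on $[T_0,T_1]$.

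For the continuity statement in item 3, assume $\Pm_{(x,i)}((X_t,I_t)\in\calD_+)=0$ for all $t\geq 0$ with $\calD_+$ closed, and take $f\in C_b(\calD_+^c)\cap\calB_b(\chi)$. A leg-by-leg application of the hypothesis (conditioning on the positive-probability event that no further jump has occurred since the most recent jump time) shows that almost surely $(X_{\tau_k}^{(x,i)},I_{\tau_k}^{(x,i)})\notin\calD_+$ at every jump time $\tau_k$, so in particular the final leg before absorption exits $\partial U$ transversally. Combined with the closedness of $\calD_+$, the coupling then yields $\tau_{\partial}^{(x_n,i)}\to\tau_{\partial}^{(x,i)}$ a.s.\ and, for every $s\in[T_0,T_1]\setminus\{\tau_{\partial}^{(x,i)}\}$, $(X_s^{(x_n,i)},I_s)\to(X_s^{(x,i)},I_s)\in\calD_+^c$ a.s.; continuity of $f$ on the open set $\calD_+^c$ gives
\[
f(X_s^{(x_n,i)},I_s)\Ind(\tau_{\partial}^{(x_n,i)}>s)\to f(X_s^{(x,i)},I_s)\Ind(\tau_{\partial}^{(x,i)}>s)\text{ a.s.,}
\]
and two applications of dominated convergence yield $G_{T_0,T_1}f(x_n,i)\to G_{T_0,T_1}f(x,i)$. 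The argument for $\tilde G_{T_0,T_1}$ is symmetric via the reversed process.

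For the weak-convergence item, I would use Skorohod representation to couple $Y_n\sim\mu_n$ and $Y\sim\mu$ with $Y_n\to Y$ a.s., and reduce to showing that $G_{T_0,T_1}f$ is continuous at $\mu$-a.e.\ point for every $f\in C_b(\chi)$; since $\mu\ll\Leb$, continuity Lebesgue-a.e.\ suffices. Summing and integrating the adjoint identity of Proposition \ref{prop: reverse PDMP provides adjoint} against the stationary weights $\omega_i$, together with $\Leb(\calD_+)=0$ from \eqref{eq:lebesgue measure of D+ and D- = 0}, yields $\Leb P_s(\calD_+)=0$ for every $s\geq 0$; combined with the fact that the inter-jump times admit densities, this gives, for Lebesgue-a.e.\ $(x,i)$, that the trajectory from $(x,i)$ exits $U$ transversally almost surely. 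The coupling argument of the previous paragraph (now for $f\in C_b(\chi)$ continuous everywhere, so no closedness of $\calD_+$ is required) then delivers continuity of $G_{T_0,T_1}f$ at Lebesgue-a.e.\ point, and the extended Portmanteau theorem finishes. The main obstacle is precisely this last step: promoting the measure-theoretic fact $\Leb(\calD_+)=0$ to a pointwise transversality statement along individual trajectories for Lebesgue-a.e.\ initial point, where the adjoint identity of Proposition \ref{prop: reverse PDMP provides adjoint} is the crucial input.
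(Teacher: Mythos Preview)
Your proof is correct and follows the same strategy as the paper's: a common-noise pathwise coupling of the jump process, Fatou for lower semicontinuity, and the adjoint identity of Proposition~\ref{prop: reverse PDMP provides adjoint} together with $\Leb(\calD_+)=0$ to control the discontinuity set for Parts 2 and 3. The only notable organisational difference is in Part 2: the paper Skorokhod-couples the initial laws $\mu_n,\mu$ \emph{inside} the same pathwise construction and applies dominated convergence directly on the product space $(\Theta\times\Omega,\vartheta\otimes\Pm^{\Omega})$, having first shown $\Pm_{\mu}((X_t,I_t)\in\calD_+)=0$ for each fixed $t$ (hence for all $t$ via rationals and the piecewise-deterministic structure); this sidesteps the Fubini disintegration to Lebesgue-a.e.\ starting points that you correctly identify as the delicate step in your route via pointwise continuity plus Portmanteau.
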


We note that it follows from parts \ref{enum:linear independence of choices of d vectors at x ast} and \ref{enum:x ast convex hull so accessible from itself condition} of Assumption \ref{assum:assumption for killed PDMP} that
\begin{equation}\label{eq:x ast strong differences of full rank assumption}
\text{span}(\{v^i(x^{\ast})-v^j(x^{\ast}):i\neq j\})=v^1(x^{\ast})-\overline{\conv}(v^1(x^{\ast}),\ldots,v^n(x^{\ast}))=\Rm^d.
\end{equation}

We now state the following linear algebra lemma, which shall be proven in the appendix.
\begin{lem}\label{lem:0 contained in open convex hull lemma}
We suppose that the vectors $v_1,\ldots,v_n\in \Rm^d$ have the following two properties:
\begin{enumerate}
\item any choice of $d$ vectors, $v_{i_1},\ldots,v_{i_d}$, are linearly independent;
\item $0\in \overline{\conv}(v_1,\ldots,v_n)$.
\end{enumerate}
Then $\conv(v_1,\ldots,v_n)$ is an open set containing $0$.
\end{lem}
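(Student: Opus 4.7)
The plan is to reduce everything to two standard facts about polytopes: that the set of strict convex combinations of $v_1,\ldots,v_n$ coincides with the relative interior of $\overline{\conv}(v_1,\ldots,v_n)$, and that this relative interior equals the $\Rm^d$-interior precisely when the polytope is full-dimensional. The linear-independence hypothesis will then rule out the two pathologies (low-dimensional affine hull, or $0$ lying on the boundary of $\overline{\conv}$) by a simple dimension count.

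First I fix a representation $0=\sum_{i=1}^n a_iv_i$ with $a_i\geq 0$ and $\sum a_i=1$, and set $I:=\{i:a_i>0\}$. If $|I|\leq d$ then $(v_i)_{i\in I}$ is contained in a $d$-element subfamily of $\{v_1,\ldots,v_n\}$, hence linearly independent by hypothesis, and $\sum_{i\in I}a_iv_i=0$ with strictly positive coefficients forces a contradiction. Thus $|I|\geq d+1$, and in particular $n\geq d+1$. Since $0$ is an affine combination of the $v_i$ and the linear span of $v_1,\ldots,v_d$ already equals $\Rm^d$, the affine hull of $\{v_1,\ldots,v_n\}$ is all of $\Rm^d$; hence $\overline{\conv}(v_1,\ldots,v_n)$ is a full-dimensional polytope, its relative interior coincides with its interior in $\Rm^d$, and the standard description of the relative interior of a polytope as the set of strict convex combinations of its vertices identifies this interior with $\conv(v_1,\ldots,v_n)$. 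In particular $\conv(v_1,\ldots,v_n)$ is open in $\Rm^d$.

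It only remains to place $0$ in this interior. Suppose not: by the supporting hyperplane theorem there is a nonzero linear functional $\ell$ on $\Rm^d$ with $\ell(v_i)\leq 0$ for every $i$ and $\ell(0)=0$. Applying $\ell$ to $0=\sum a_iv_i$ gives $\sum a_i\ell(v_i)=0$, and since each summand is $\leq 0$ we conclude $\ell(v_i)=0$ for every $i\in I$. But $|I|\geq d+1$ then forces at least $d+1$ of the $v_i$ into the $(d-1)$-dimensional hyperplane $\{\ell=0\}$, contradicting the hypothesis that any $d$ of them are linearly independent. The whole argument is short; the step I anticipate needing the most care is the identification of the set of strict convex combinations with the $\Rm^d$-interior of $\overline{\conv}$, which is why establishing full-dimensionality of the affine hull is carried out first.
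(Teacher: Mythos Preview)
Your proof is correct and takes a genuinely different route from the paper's. Both arguments start identically, writing $0=\sum a_i v_i$ and observing that the support $I=\{i:a_i>0\}$ must have $|I|\geq d+1$ (since otherwise the $v_i$ with $i\in I$ would be linearly independent yet satisfy a nontrivial positive relation). From there the paths diverge.

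You invoke two standard facts from convex analysis: that for a full-dimensional polytope the relative interior coincides with the $\Rm^d$-interior, and that this interior equals the set of strict convex combinations of the generating points (a minor slip: you say ``vertices,'' but the result you need, and use, is for the generating set $\{v_1,\ldots,v_n\}$, which need not consist only of vertices). Openness of $\conv(v_1,\ldots,v_n)$ is then immediate. To place $0$ inside, you argue by contradiction via a supporting hyperplane $\{\ell=0\}$, and the relation $\sum_{i\in I} a_i\ell(v_i)=0$ with $a_i>0$ and $\ell(v_i)\leq 0$ forces $d+1$ of the $v_i$ into a $(d-1)$-dimensional subspace.

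The paper instead argues constructively. Using that $v_{k_1},\ldots,v_{k_d}$ (any $d$ of the indices in $I$) form a basis, it perturbs the coefficients $a_{k_\ell}$ to obtain, for each coordinate direction, some $c^i_\pm(\pm e_i)\in\overline{\conv}(v_1,\ldots,v_n)$; this yields a ball $B(0,r)\subseteq\overline{\conv}$, whence $0\in\conv$. Openness is then proved by hand: given $x\in\conv$, one first shows $(1+\epsilon)x\in\conv$ (writing it as an affine combination of $x$ and $0$, both in $\conv$), and then $x$ lies in the open set $(1-\frac{\epsilon}{1+\epsilon})[(1+\epsilon)x]+\frac{\epsilon}{1+\epsilon}B(0,r)\subseteq\conv$.

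Your approach is cleaner and more conceptual, at the cost of importing the supporting-hyperplane theorem and the identification of the interior with the strict convex combinations (which you rightly flag as the step needing care). The paper's approach is more self-contained and explicit, essentially re-deriving these facts in the particular situation at hand.
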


\begin{prop}\label{prop: PDMP satisfies A1}
There exists a (non-empty) open set $\calO\subset\subset U\times E$, a time $t_1>0$ and a constant $c_1>0$ such that the probability measure $\nu:=\frac{\Leb_{\lvert_{\calO}}}{\Leb(\calO)}$ satisfies
\begin{equation}
\frac{P_{t_1}((x,i),\cdot)}{P_{t_1}1((x,i))}\geq c_1\nu(\cdot)\quad\text{for all}\quad (x,i)\in U\times E.
\end{equation}
\end{prop}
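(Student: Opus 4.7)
The plan is to decompose the proof into three uniform steps. First, I show that for some time $s_1 > 0$, constant $c_\ast > 0$ and compact set $K \subset\subset U$, the PDMP started from any $(x,i) \in U \times E$ lies in $K \times E$ at time $s_1$ with conditional probability at least $c_\ast$ given $\tau_{\partial} > s_1$. Second, using accessibility I transport the pair $(K, E)$ into a small neighborhood of $(x^\ast, i^\ast)$ for some fixed mode $i^\ast \in E$, with uniform positive probability. Third, from near $(x^\ast, i^\ast)$ I spread the mass over an open set $\calO$ via the composite flow structure, producing the required Lebesgue lower bound. The total time is $t_1 = s_1 + s_2 + s_3$, and at each step the events are nested so the bounds are multiplicative.

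For the first step, the standing assumption \ref{assum:standing assum PDMP} together with compactness of $\partial U$ furnishes a finite open cover $\{V_\alpha\}$ of $\partial U$ in $\Rm^d$ and associated modes $j_\alpha \in E$ with $\inf_{V_\alpha \cap \bar U} \langle \hat n, v^{j_\alpha} \rangle > 0$. For $(x, i)$ close to $\partial U$ with $x \in V_\alpha$, the probability that $I$ jumps to $j_\alpha$ (at strictly positive rate $Q_{ij_\alpha}$) before $X$ exits $U$ and then remains at $j_\alpha$ for a fixed short interval is, after conditioning on $\tau_{\partial} > s_1$, uniformly bounded below; on this event the flow $\varphi^{j_\alpha}$ pushes $X$ uniformly away from $\partial U$ at a positive rate. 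Interior initial conditions are treated analogously but more easily since no serious conditioning on survival is needed. For the second step, Part \ref{enum:PDMP accessibility condition} of Assumption \ref{assum:assumption for killed PDMP} gives, for each $(x, i) \in K \times E$, a mode sequence $\bfi(x, i)$ and strictly positive times $\bft(x, i)$ such that $\Phi^{\bfi(x,i)}_{\bft(x, i)}(x)$ lies in a prescribed neighborhood of $x^\ast$; continuity of the flows and compactness of $K \times E$ allow the extraction of finitely many such sequences whose associated tube-events have a uniform positive probability.

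The crux is the third step. Parts \ref{enum:linear independence of choices of d vectors at x ast} and \ref{enum:x ast convex hull so accessible from itself condition} of Assumption \ref{assum:assumption for killed PDMP} are exactly the hypotheses of Lemma \ref{lem:0 contained in open convex hull lemma}, which yields $a_1, \ldots, a_n > 0$ with $\sum_{k=1}^n a_k v^k(x^\ast) = 0$. Set $\bft^\ast = (\eta a_1, \ldots, \eta a_n)$ for small $\eta > 0$ and consider the composite flow map $F(\bft, x) := \varphi^n_{t_n} \circ \cdots \circ \varphi^1_{t_1}(x)$. A first-order Taylor expansion in $\eta$ gives $\partial_{t_k} F(\bft^\ast, x^\ast) = v^k(x^\ast) + O(\eta)$, so by Part \ref{enum:linear independence of choices of d vectors at x ast} the differential $D_{\bft} F(\bft^\ast, x^\ast) : \Rm^n \to \Rm^d$ has rank $d$ once $\eta$ is small. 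After freezing $n - d$ of the times, the submersion theorem converts the remaining $d$ time variables into a $C^\infty$-diffeomorphism onto an open set $\calO \subset U$. Conditioning the PDMP started from $(y, 1)$ with $y$ near $x^\ast$ on the event that $I$ performs the jump sequence $(1, 2, \ldots, n)$ over $[0, \eta \sum_k a_k]$ with holding times in a small box around $\bft^\ast$, the joint density of the holding times is bounded below (it is a product of exponential densities whose rates are the strictly positive off-diagonal entries of $-Q$); pushing this forward through the above diffeomorphism gives a positive Lebesgue lower bound on the law of $(X_{s_3}, I_{s_3})$ restricted to $\calO \times \{i^{\ast\ast}\}$, with $i^{\ast\ast} = n$ the terminal mode. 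The hardest point is this density-transfer step; once it is in place, the rest amounts to splicing the three uniform bounds via the Markov property.
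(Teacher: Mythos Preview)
Your three-step architecture matches the paper's proof exactly: reach a compact set with uniform conditional probability on survival, use accessibility to get near $x^\ast$, then spread mass over an open set via the composite flow. For the third step the paper invokes \cite[Theorem 4.4]{Benaim2015} as a black box, whereas you supply the underlying submersion argument directly; that is a legitimate and instructive alternative.

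Two points in your sketch need repair. In Step~2 you extract finitely many mode/time sequences by compactness of $K\times E$, but these have \emph{different} total durations, so they do not assemble into a single time $s_2$. The missing ingredient is the ability to \emph{wait} near $x^\ast$ for an arbitrary additional time with positive probability; this is precisely where the paper uses Lemma~\ref{lem:0 contained in open convex hull lemma}. Since $0\in\conv(v^1(x),\ldots,v^n(x))$ on an open neighbourhood $U_5\ni x^\ast$, the PDMP can remain in $U_5$ for all $0\le s\le t$ with positive probability, and the accessibility times can then be padded to a common $t_5$. The paper further upgrades pointwise positivity of $\Pm_{(x,i)}((X_{t_5},I_{t_5})\in U_5\times E)$ on $K_3$ to a uniform lower bound by invoking the lower semicontinuity of $P_{t_5}$ (Proposition~\ref{prop:PDMP lower semicts and Green kernel cts propn}) together with lower semicontinuity of $\Ind_{U_5\times E}$; your ``continuity of the flows'' does not quite give this, since $P_{t_5}f$ need not be continuous.

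In Step~3 your derivative computation $\partial_{t_k}F(\bft^\ast,x^\ast)=v^k(x^\ast)+O(\eta)$ is for the \emph{unconstrained} composite flow. But you evaluate the law of $(X_{s_3},I_{s_3})$ at a fixed time $s_3$, so the $n$ holding times are constrained by $\sum_k t_k=s_3$; only $n-1$ are free, and the effective partial derivatives of $X_{s_3}$ with respect to $t_k$ are $v^k(x^\ast)-v^n(x^\ast)+O(\eta)$. Your rank-$d$ conclusion still holds, because these differences span $\Rm^d$ (this is \eqref{eq:x ast strong differences of full rank assumption} in the paper, itself a consequence of parts~\ref{enum:linear independence of choices of d vectors at x ast} and~\ref{enum:x ast convex hull so accessible from itself condition} of Assumption~\ref{assum:assumption for killed PDMP}), but the argument as written applies the submersion theorem to the wrong map.
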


Note that, by applying Proposition \ref{prop: PDMP satisfies A1} to $((\hat{X}_t,\hat{I}_t))_{0\leq t<\tau_{\partial}}$ and using \eqref{eq:PDMP proof relationship between hat and tilde kernels}, we see that there exists a (non-empty) open set $\tilde{\calO}\subset\subset U$, a time $\tilde{t}_1>0$, and a constant $\tilde{c}_1>0$, such that the probability measure $\tilde{\nu}:=\frac{\Leb_{\lvert_{\tilde{\calO}}}}{\Leb(\tilde{\calO})}$ satisfies
\begin{equation}
\frac{\tilde{P}_{t_1}((x,i),\cdot)}{\tilde{P}_{t_1}1((x,i))}\geq \tilde{c}_1\tilde{\nu}(\cdot)\quad\text{for all}\quad (x,i)\in U\times E.
\end{equation}

We now have the following proposition.
\begin{prop}\label{prop:left emeasure for greens kernels}
We define $0\leq T_0<T_1<\infty$ and $0\leq \hat{T}_0<\hat{T}_1<\infty$ to be the times assumed to exist in Part \ref{enum:assum Gt minorised} of Assumption \ref{assum:assumption for killed PDMP}, and $t_1,\tilde{t}_1$ to be the times provided for in Proposition \ref{prop: PDMP satisfies A1}. We define $t_2:=(T_1\vee\hat{T}_1)+(t_1\vee \tilde{t}_1)>0$. Then for all $s>0$, $G_{t_2,t_2+s}$ and $\tilde{G}_{t_2,t_2+s}$ both have a left eigenmeasure (with strictly positive eigenvalue), $\pi_s$ and $\tilde{\pi}_s$ respectively, belonging to $\calP_{\infty}(\Leb)$. Moreover there exists $c_{2,s},\tilde{c}_{2,s}>0$ for all $s>0$ such that $\pi_s\geq c_{2,s}\Leb_{\calO}$ and $\tilde{\pi}_s\geq \tilde{c}_{2,s}\Leb_{\tilde{\calO}}$.
\end{prop}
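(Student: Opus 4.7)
I construct $\pi_s$; the construction of $\tilde\pi_s$ proceeds identically using the reversed PDMP $((\hat X_t, \hat I_t))_{0 \leq t < \hat\tau_\partial}$, which by the standing assumption and the symmetry of Part~\ref{enum:assum Gt minorised} of Assumption~\ref{assum:assumption for killed PDMP} satisfies the analog of Proposition~\ref{prop: PDMP satisfies A1}. I assume for clarity of exposition that $T_1 \vee \hat T_1 = T_1$ and $t_1 \vee \tilde t_1 = t_1$, so $t_2 = T_1 + t_1$; the remaining cases reduce to this by inserting a sub-Markov factor $P_{t_2 - T_1 - t_1}$ throughout, which affects nothing. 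Set $G := G_{t_2, t_2+s}$. The first step is to show that $G$ has a uniformly bounded $\Lambda$-density: from the identities
\[
G = P_{t_1}\,G_{T_1, T_1+s} = G_{T_1, T_1+s}\,P_{t_1}, \qquad G_{T_1, T_1+s} = P_{T_1 - T_0}\,G_{T_0, T_0+s},
\]
partitioning $[T_0, T_0+s]$ into $N := \lceil s/(T_1 - T_0) \rceil$ subintervals of length at most $T_1 - T_0$ and bounding each contribution by $P_{k(T_1 - T_0)}\,G_{T_0, T_1}$, the sub-Markov property of $P_t$ together with Part~\ref{enum:assum Gt minorised} of Assumption~\ref{assum:assumption for killed PDMP} yield $G((x,i), \cdot) \leq C_s\,\Lambda(\cdot)$ uniformly in $(x,i) \in \chi$, with $C_s := NC$.

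By Radon-Nikodym disintegration of the finite product measure $\Lambda \otimes G$ on $\chi \times \chi$, I obtain a jointly Borel measurable density $g : \chi \times \chi \to [0, C_s]$ with $G((x,i), A) = \int_A g((x,i), (y,j))\,d\Lambda$. Define the bounded operator $T : L^2(\Lambda) \to L^2(\Lambda)$ by
\[
(T\rho)(y,j) := \int g((x,i), (y,j))\,\rho(x,i)\,d\Lambda(x,i),
\]
so that $(\rho\,\Lambda)\,G = (T\rho)\,\Lambda$ for $\rho \in L^2(\Lambda)$. Since $g \in L^\infty \subseteq L^2(\Lambda \otimes \Lambda)$ and $\Lambda(\chi) < \infty$, $T$ is Hilbert-Schmidt, hence compact, and evidently preserves $L^2_{\geq 0}(\Lambda)$. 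Iterating the minorization of Proposition~\ref{prop: PDMP satisfies A1} yields $P_{nt_1}1(x,i) \geq c_1^n\,P_{t_1}1(x,i)\,\nu(P_{t_1}1)^{n-1}$ for all $n \geq 1$; combining this with the bound $G^n 1(x,i) \geq s^n\,\Pm_{(x,i)}(\tau_\partial > n(t_2 + s))$ and the estimate $\Lambda(G^n 1) = \int T^n 1\,d\Lambda \leq \Lambda(\chi)^{1/2}\,\|T^n 1\|_{L^2(\Lambda)}$ establishes $r(T) > 0$. The Krein-Rutman theorem then provides $\rho_s \in L^2_{\geq 0}(\Lambda) \setminus \{0\}$ with $T\rho_s = \lambda_s\,\rho_s$ for $\lambda_s := r(T) > 0$.

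The eigenvalue equation $\rho_s = \lambda_s^{-1}\,T\rho_s$ gives $\rho_s \leq \lambda_s^{-1}\,C_s\,\|\rho_s\|_{L^1(\Lambda)}$ $\Lambda$-almost everywhere, so $\rho_s \in L^\infty(\Lambda)$; normalising $\int \rho_s\,d\Lambda = 1$, the measure $\pi_s := \rho_s\,\Lambda$ belongs to $\calP_\infty(\Lambda)$ and satisfies $\pi_s\,G = \lambda_s\,\pi_s$. For the lower bound I use the alternative decomposition $G = G_{T_1, T_1+s}\,P_{t_1}$: setting $\sigma := \pi_s\,G_{T_1, T_1+s}$, Proposition~\ref{prop: PDMP satisfies A1} yields
\[
\lambda_s\,\pi_s = \sigma\,P_{t_1} \geq c_1\,\sigma(P_{t_1}1)\,\nu,
\]
with $\sigma(P_{t_1}1) > 0$ because $\sigma\,P_{t_1} = \lambda_s\,\pi_s \neq 0$ and $P_{t_1}1 > 0$ everywhere. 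As $\nu = \Lambda_{\lvert_\calO}/\Lambda(\calO)$, this delivers $\pi_s \geq c_{2,s}\,\Lambda_{\lvert_\calO}$ with $c_{2,s} := c_1\,\sigma(P_{t_1}1)/(\lambda_s\,\Lambda(\calO)) > 0$. The principal technical obstacle is the verification $r(T) > 0$, which I resolve through the iterated minorization of Proposition~\ref{prop: PDMP satisfies A1} combined with the lower bound on $G^n 1$ derived above.
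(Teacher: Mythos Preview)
Your argument is correct but takes a genuinely different route from the paper. The paper establishes the same uniform bound $G_{t_2,t_2+s}((x,i),\cdot)\leq C_s\Leb(\cdot)$, then works directly with measures: it shows the normalised map $F:\mu\mapsto \mu G_{t_2,t_2+s}/\mu G_{t_2,t_2+s}1$ sends the weakly compact convex set $K_s=\{\mu\in\calP(\chi):\mu\leq C_s\Leb\}$ into itself, invokes Part~2 of Proposition~\ref{prop:PDMP lower semicts and Green kernel cts propn} (weak continuity of $\mu\mapsto\mu G$ for $\mu\in\calP(\Leb)$) to see that $F$ is continuous, and applies Schauder's fixed point theorem. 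You instead pass to the density level, realise $G$ as a Hilbert--Schmidt operator $T$ on $L^2(\Lambda)$ (using $\Lambda(\chi)<\infty$ and the bounded kernel), verify $r(T)>0$ via iterated minorisation, and apply Krein--Rutman. Your route sidesteps the weak-continuity proposition entirely, at the cost of the extra spectral-radius computation; the paper's route avoids that computation but leans on the continuity result already proved. The final lower-bound step, factoring through $P_{t_1}$ and using Proposition~\ref{prop: PDMP satisfies A1}, is essentially identical in both. One small remark: the version of Krein--Rutman you need is the one for compact positive operators on a Banach space with a total (not solid) cone, since $L^2_{\geq 0}(\Lambda)$ has empty interior; this is standard but worth naming.
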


We now use the above propositions to conclude Theorem \ref{theo:general theorem PDMPs}. It follows from propositions \ref{prop: reverse PDMP provides adjoint},  \ref{prop:PDMP lower semicts and Green kernel cts propn}, \ref{prop: PDMP satisfies A1} and \ref{prop:left emeasure for greens kernels}, and the accessibility assumption, that $\frac{1}{s}G_{T_2,T_2+s}$ satisfies Assumption \ref{assum:assum for cor right efn main results section} for some $s>0$ sufficiently large in the sense of Definition \ref{defin:discrete time semigroup satisfying right efn assumption}, so that the corresponding discrete-time absorbed Markov process with one-step transition kernel $\frac{1}{s}G_{T_2,T_2+s}$ satisfies Assumption \ref{assum:assum for cor right efn main results section}. We henceforth fix this $s>0$ and denote the corresponding discrete-time absorbed Markov process as $(Y_n)_{0\leq n<\tau^Y_{\partial}}$. It is trivial to see that it must satisfy Assumption \ref{assum:technical assumption for Assum (A)}. It follows from Theorem \ref{theo:criterion for right efn main results section} that there exists a strictly positive, bounded, pointwise right eigenfunction $h$ for $G_{T_2,T_2+s}$. We have, by Proposition \ref{prop: PDMP satisfies A1}, that $(Y_n)_{0\leq n<\tau^Y_{\partial}}$ satisfies \cite[Assumption (A1)]{Champagnat2014}, so that it must also satisfy \cite[Assumption A]{Champagnat2014}, by Proposition \ref{prop:right efn gives A2}. It therefore follows from \cite[Theorem 2.1 and Corollary 2.4]{Champagnat2014} that $\pi$ is the unique left eigenmeasure and $h$ the unique non-negative right eigenfunction for $G_{T_2,T_2+s}$. 

Since $G_{T_2,T_2+s}$ and $P_t$ commute for any $t>0$, it follows that $\pi$ is a QSD and $h$ is a bounded, strictly positive pointwise right eigenfunction for $(P_t)_{t\geq 0}$. By precisely the same argument, we obtain the unique QSD $\tilde{\pi}$ and strictly positive, bounded, pointwise right eigenfunction $\tilde{h}$ for $(\tilde{P}_t)_{t\geq 0}$. We have $\pi,\tilde{\pi}\in\calP_{\infty}(\Leb)$ by Proposition \ref{prop:left emeasure for greens kernels}. Using Proposition \ref{prop: PDMP satisfies A1}, the existence of $h$ and Proposition \ref{prop:right efn gives A2}, it follows that $((X_t,I_t))_{0\leq t<\tau_{\partial}}$ satisfies \cite[Assumption (A)]{Champagnat2014}. 

We have from \eqref{eq:PDMP proof relationship between hat and tilde kernels} and propositions \ref{prop: reverse PDMP provides adjoint} and \ref{prop: PDMP satisfies A1} that $((X_t,I_t))_{0\leq t<\tau_{\partial}}$ satisfies assumptions \ref{assum:adjoint Dobrushin main results section} and \ref{assum:combined Dobrushin adjoint Dobrushin main results section}. Proposition \ref{prop:PDMP lower semicts and Green kernel cts propn} provides for $((X_t,I_t))_{0\leq t<\tau_{\partial}}$ being lower semicontinuous. 

It is immediate from \eqref{eq:PDMP proof relationship between hat and tilde kernels} and Proposition \ref{prop: reverse PDMP provides adjoint} that if there exists $\hat{T}_2>0$ such that $\hat{P}_{\hat{T}_2}((x,i),\cdot)\leq C\Leb(\cdot)$ for all $(x,i)\in U\times E$, then $((X_t,I_t))_{0\leq t<\tau_{\partial}}$ satisfies Assumption \ref{assum:adjoint anti-Dobrushin main results section}.

Finally, we have established that $G_{0,1}$ satisfies Assumption \ref{assum:cty assum for cty cor} (in the sense of Remark \ref{rmk:discrete time semigroup satisfying cty assumption}). We have that $\pi$ is a QSD for $G_{0,1}$ belonging to $\calP_{\infty}(\Leb)$. It therefore follows from Part \ref{enum:QSD thm cty on V main results section} of Theorem \ref{theo:criterion for cty of QSD} that for any open set $V$, if
\begin{equation}\label{eq:PDMP proof cond for V for cty of pi summary of proof}
\tilde{G}_{0,1}(\calB_b(\chi)\cap C_b(V))\subseteq \calB_b(\chi)\cap C_b(V),
\end{equation}
then (a version of) $\frac{d\pi}{d\Leb}_{\lvert_V}$ belongs to $C_b(V)$. We now suppose that $V=\calD_-^c\subseteq U\times E$ is an open set such that $\Pm_{(x,i)}((\hat{X}_t,\hat{I}_t)\in \calD_-)=0$ for all $(x,i)\in V$ and $t\geq 0$. It follows that $\Pm_{(x,i)}((\hat{X}_t,\hat{I}_t)\in \calD_-\text{ for some }t\in \Qm_{\geq 0})=0$. If $(\hat{X}_t,\hat{I}_t)$ is in $\calD_-$ at any time, it has to be in $\calD_-$ at some rational time. Thus $\Pm_{(x,i)}((\hat{X}_t,\hat{I}_t)\in \calD_-\text{ for some }t\in \Rm_{\geq 0})=0$. It follows from Part \ref{enum:Greens function propn PDMP cty on open set} of Proposition \ref{prop:PDMP lower semicts and Green kernel cts propn} that $\tilde{G}_{0,1}f$ is continuous on $V$ for all $f\in \calB_b(\chi)\cap C_b(V)$, so that $V$ satisfies \eqref{eq:PDMP proof cond for V for cty of pi summary of proof}.

It is left to establish propositions \ref{prop: reverse PDMP provides adjoint} - \ref{prop: PDMP satisfies A1}.

\subsubsection*{Proof of Proposition \ref{prop: reverse PDMP provides adjoint}}

We fix $t>0$. We define $(\Theta,\vartheta)$ to be a probability space on which is supported a stationary copy of $(I_s)_{0\leq s\leq t}$, which we label $(I^{\theta}_s)_{0\leq s\leq t}$. Then $(\hat{I}^{\theta}_s)_{0\leq s\leq t}:=(\hat{I}^{\theta}_{t-s})_{0\leq s\leq t}$ is a stationary copy of $(\hat{I}_s)_{0\leq s\leq t}$. This defines a random diffeomorphism as follows.

We define for each $\theta\in\Theta$ and $x^0\in U$ the ODE
\begin{equation}\label{eq:ODE corresponding to fwd time jump process PDMP}
\dot{x}_s=v^{I^{\theta}_s}(x_s)ds,\quad x_0=x^0.
\end{equation}
Given a solution to this ODE, we set
\[
f^{\theta}(x^0):=\begin{cases}
x_t,\quad x_s\notin \partial U\quad\text{for all}\quad 0\leq s\leq t\\
\partial,\quad x_s\in \partial U\quad\text{for some}\quad 0\leq s\leq t
\end{cases}.
\]

We define $U^{\theta}:=\{x\in U:f^{\theta}(x)\in U\}$ and $V^{\theta}=f^{\theta}(U^{\theta})$, so that $f^{\theta}:U^{\theta}\ra V^{\theta}$ is a random diffeomorphism with
\[
\det(Df^{\theta}(x))= e^{\int_0^t\text{tr}(Dv^{I^{\theta}_s}(x_s))ds},
\]
which we may see by differentiating the ODE \eqref{eq:ODE corresponding to fwd time jump process PDMP} at time $t$ with respect to its initial condition.

We define $g^{\theta}:\hat{V}^{\theta}\ra \hat{U}^{\theta}$ similarly for $(\hat{I}^{\theta}_s)_{0\leq s\leq t}$ and the vector fields $-v^1,\ldots,-v^n$, with $(y_s)_{0\leq s\leq t}$ the corresponding ODE with initial condition $y_0=y^0$ as in \eqref{eq:ODE corresponding to fwd time jump process PDMP}. We observe that $V^{\theta}=\hat{V}^{\theta}$ and $U^{\theta}=\hat{U}^{\theta}$, so that $g^{\theta}$ gives the inverse diffeomorphism for $f^{\theta}$. It follows that
\[
\text{det}(Dg^{\theta}(y^0))=e^{-\int_0^t\text{tr}(Dv^{\hat{I}^{\theta}_s}(y_s))ds}.
\]

We now fix $A,B\in\mathscr{B}(U)$ and $i,j\in E$. Then using Tonelli's theorem and the change of variables formula we have that
\[
\begin{split}
\omega(i)\int_AP_{t}((x,i),B\times \{j\})\Leb(dx)\\
=\int_{\Theta}\int_{U_{\theta}}\Ind(I^{\theta}_0=i)\Ind(I^{\theta}_t=j)\Ind(x\in A)\Ind(f_{\theta}(x)\in B)\Leb(dx)\vartheta(\theta)\\
\underbrace{=}_{\substack{\text{substitute}\\
y=f_{\theta}(x)}}\int_{\theta}\int_{V_{\theta}}\Ind(\hat{I}^{\theta}_0=j)\Ind(\hat{I}^{\theta}_t=i)\Ind(g_{\theta}(y)\in A)\Ind(y\in B)\det(Dg_{\theta}(y))\Leb(dy)\vartheta(d\theta)\\
=\omega(j)e^{\bar d t}\int_B\tilde{P}_t((y,j),A\times \{i\})\Leb(dy).
\end{split}
\]
\qed

\subsubsection*{Proof of Proposition \ref{prop:PDMP lower semicts and Green kernel cts propn}}

We define $(\Theta,\vartheta)$ to be a probability space on which is defined a family of Poisson jump processes $(I^i_s)_{0\leq s<\infty}$, with rate matrix given by $Q_{ij}$, and initial conditions $I^i_0=i$. This defines, for all $(x,i)\in U\times E$ and all $\theta\in\Theta$,
\[
X^{(x,i)}_t(\theta):=\begin{cases}
x+\int_0^tv^{I^i_s(\theta)}(X_s^{(x,i)})ds,\quad t<\tau_{\partial}^{(x,i)}(\theta):=\inf\{t'>0:x+\int_0^{t'}v^{I^i_s(\theta)}(X_s^{(x,i)})ds\\
\partial,\quad \text{otherwise}
\end{cases}.
\]

We now take a sequence of probability measures $\mu_n\in \calP(U\times E)$ converging weakly to $\mu\in \calP(U\times E)$. By the Skorokhod representation theorem, we may define on the separate probability space $(\Omega,\Pm^{\Omega})$ the $U\times E$-valued random variables $\{(X^0_n,I^0_n):1\leq n<\infty\}$ and $(X^0,I^0)$ such that
\[
(X^0_n,I^0_n)\ra (X^0,I^0) \quad\Pm^{\Omega}\text{-almost surely,}
\]
with distributions $(X^0_n,I^0_n)\sim \mu_n$ ($1\leq n<\infty$) and $(X^0,I^0)\sim \mu$. We adjust the definitions of $\{(X^0_n,I^0_n):1\leq n<\infty\}$ and $(X^0,I^0)$ on a $\Pm^{\Omega}$-null set of $\omega\in \Omega$ to ensure that 
\[
(X^0_n,I^0_n)(\omega)\ra (X^0,I^0)(\omega) \quad\text{for every $\omega\in \Omega$.}
\]

We now fix $(\theta,\omega)\in \Theta\times \Omega$ and $t\geq 0$. We observe that if $\tau^{(X^0,I^0)(\omega)}_{\partial}>t$ then 
\[
\begin{split}
(X^{(X^0_n,I^0_n)(\omega)}_t,I^{(X^0_n,I^0_n)(\omega)}_t)(\theta)\ra (X^{(X^0,I^0)(\omega)}_t,I^{(X^0,I^0)(\omega)_t)}(\theta))\\\text{and}\quad \tau^{(X^0_n,I^0_n)(\omega)}_{\partial}>t\quad\text{for all $n$ sufficiently large}.
\end{split}
\]
It follows that for any $f\in C_b(U\times E;\Rm_{\geq 0})$ we have
\[
\begin{split}
\liminf_{n\ra \infty}f((X^{(X^0_n,I^0_n)(\omega)}_t,I^{(X^0_n,I^0_n)(\omega)}_t)(\theta))\Ind(\tau^{(X^0_n,I^0_n)(\omega)}_{\partial}>t)\\
\geq f((X^{(X^0,I^0)(\omega)}_t,I^{(X^0,I^0)(\omega)}_t)(\theta))\Ind(\tau^{(X^0,I^0)(\omega)}_{\partial}>t).
\end{split}
\]
Taking the expectation with respect to $\vartheta\otimes \Pm^{\Omega}$ and applying Fubini's theorem, we see that
\[
\liminf_{n\ra\infty}\mu_nP_tf\geq \mu P_tf.
\]
By considering the case where $\mu_n=\delta_{(x_n,i_n)}$, $\mu=\delta_{(x,i)}$ and $(x_n,i_n)\ra (x,i)$, we conclude that $P_tf$ must be lower semicontinuous. To conclude that $G_{T_0,T_1}$ is lower semicontinuous, we fix $f\in C_b(U\times E;\Rm_{\geq 0})$, take a sequence $x_n\ra x$, and use Fatou's lemma to see that
\[
\liminf_{n\ra\infty}G_{T_0,T_1}f(x_n)=\liminf_{n\ra\infty}\int_{T_0}^{T_1}P_{s}f(x_n)ds\geq \int_{T_0}^{T_1}\liminf_{n\ra\infty}P_sf(x_n)ds\geq \int_{T_0}^{T_1}P_sf(x)ds=G_{T_0,T_1}f(x).
\]
The proof that $\tilde{P}_t$ and $\tilde{G}_{T_0,T_1}$ must be lower semicontinuous is identical.

We now define the event
\[
A:=\{(\theta,\omega):(X^{(X^0,I^0)(\omega)}_t,I^{(X^0,I^0)(\omega)}_t)(\theta)\notin \calD_+\quad\text{for all}\quad t\geq 0\}.
\]
We fix $0\leq T_0\leq T_1<\infty$. We observe that on the event $A$,
\[
\tau^{(X^0_n,I^0_n)(\omega)}_{\partial}(\theta)\wedge T_1\ra\tau^{(X^0,I^0)(\omega)}_{\partial}(\theta)\wedge T_1.
\]
It follows that on the event $A$, for all $f\in C_b(\chi)$ we have
\begin{equation}\label{eq:convergence of integral of f cty of green kernel propn PDMP}
\begin{split}
\int_{T_0}^{T_1}f((X^{(X^0_n,I^0_n)(\omega)}_t,I^{(X^0_n,I^0_n)(\omega)}_t)(\theta))\Ind(\tau^{(X^0_n,I^0_n)(\omega)}_{\partial}>t)dt\\
\ra \int_{T_0}^{T_1}f((X^{(X^0,I^0)(\omega)}_t,I^{(X^0,I^0)(\omega)}_t)(\theta))\Ind(\tau^{(X^0,I^0)(\omega)}_{\partial}>t)dt.
\end{split}
\end{equation}
It follows by the dominated convergence theorem that
\begin{equation}
\text{if}\quad \Pm_{\mu}((X_t,I_t)\in \calD_+\text{ for some }t\geq 0)=0\quad\text{then}\quad\mu_nG_{T_0,T_1}\ra \mu G_{T_0,T_1}\quad\text{weakly as}\quad n\ra\infty.
\end{equation}
We observe that if $(X_t,I_t)\in \calD_+$ for some time $t\in \Rm_{\geq 0}$, then $(X_t,I_t)\in \calD_+$ for some rational time $t\in \Qm_{\geq 0}$. We therefore obtain, using the countability of $\Qm$, that
\begin{equation}\label{eq:condition for weak convergence of green function pdmp propn proof}
\text{if}\quad \Pm_{\mu}((X_t,I_t)\in \calD_+)=0\quad\text{for all}\quad t\geq 0\quad\text{then}\quad\mu_nG_{T_0,T_1}\ra \mu G_{T_0,T_1}\quad\text{weakly as}\quad n\ra\infty.
\end{equation}

We fix arbitrary $t\geq 0$. If $\mu\in \calP_{\infty}(\Leb)$, then Proposition \ref{prop: reverse PDMP provides adjoint} implies that $\mu P_t\in \calP_{\infty}(\Leb)$. It follows from the monotone convergence theorem that if $\mu\in \calP(\Leb)$, then $\mu P_t\in \calP(\Leb)$ so that $(\mu P_t)(\calD_+)=0$ by \eqref{eq:lebesgue measure of D+ and D- = 0}. Thus by \eqref{eq:condition for weak convergence of green function pdmp propn proof}, if $\mu\in \calP(\Leb)$ and $\mu_n\ra \mu$ weakly, it follows that $\mu_n G_t\ra \mu G_t$ weakly. The proof that $\mu_n\tilde{G}_{T_0,T_1}\ra \mu \tilde{G}_{T_0,T_1}$ is identical.

Part \ref{enum:Greens function propn PDMP cty on open set} of Proposition \ref{prop:PDMP lower semicts and Green kernel cts propn} follows by observing that if $\calD_+^c$ is open, \eqref{eq:convergence of integral of f cty of green kernel propn PDMP} remains true with $f\in C_b(\calD_+^c)\cap\calB_b(\chi)$, and following the above argument with $\mu=\delta_x$.
\qed 

\subsubsection*{Proof of Proposition \ref{prop: PDMP satisfies A1}}

We recall that for all $x\in \partial U$, $\hat{n}(x)$ is the inward unit normal vector. We define for all $x\in \partial U$ the set
\begin{equation}
\begin{split}
E_x^+:=\{i\in E:\langle v^i(x),\hat{n}(x)\rangle>0\}\neq \emptyset.
\end{split}
\end{equation}
This corresponds to the indices of those vector fields pointing inwards at $x$. 

Throughout, we shall define $\inf(\emptyset):=+\infty$, so that a stopping time defined as the infimum of those $t>0$ such that an event occurs is defined to be $+\infty$ if the event doesn't occur. For any compact set $K\subset\subset U\times E$, we define the stopping time
\[
\tau_K:=\inf\{t>0:(X_t,I_t)\in K\}.
\]

Since $0<Q_{ij}<\infty$ for all $i\neq j$, there exists $0<\bar q<\bar Q<\infty$ such that for all $0<t\leq 1$ and $i\neq j$ we have:
\begin{enumerate}
\item
given that $I_0=i$, the probability of $I_s$ jumping from $i\ra j$ in time $t$, and this being the first jump, is at least $\bar q t$;
\item
the probability of $I_s$ jumping at all in time $t$ is at most $\bar Q t$, for any initial condition $I_0=i$.
\end{enumerate}
It is clear that for every $x\in \partial U$ and $i\in E_x^+$ there exists a radius $\delta_x>0$, time $t_x>0$ and probability $p_x>0$ such that:
\begin{enumerate}
\item
If $(X_0,I_0)\in (B(x,\delta_x)\cap U)\times \{i\}$ and $I_s$ doesn't jump in time $t_x$, then $\tau_{\partial}>t_x$. The probability of this is as least $p_x$.
\item
For all $0<t\leq t_x$ there exists a compact set $K_{x,t}\subset\subset U\times \{i\}$ such that if $(X_0,I_0)\in (B(x,\delta_x)\cap U)\times \{i\}$ and $I_s$ doesn't jump in time $t$, then $(X_t,I_t)\in K_{x,t}$.
\end{enumerate}
We take $(x_1,i_1),\ldots,(x_m,i_m)$ such that $i_k\in I^+_{x_k}$ for all $1\leq k\leq m$ and $\{B(x_k,\frac{\delta_{x_k}}{10})\}$ cover $\partial U$. We then define
\[
\begin{split}
\bar \delta:=\min_{1\leq k\leq m}\Big(\frac{\delta_{x_k}}{10}\Big)>0,\quad t_1:=\frac{\bar \delta}{\sup_{i\in E,x\in \bar U}\lvert v^i(x)\rvert+1}\wedge 1\wedge \frac{1}{2\bar Q+1}>0,\\
t_2:=\min_{1\leq k\leq m}t_{x_k}>0,\quad p_2:=\min_{1\leq k\leq m}p_k>0,\quad \text{and}\quad K_2:=\cup_{1\leq k\leq m}K_{x_k,t_2}\subset\subset U\times E.
\end{split}
\]
We define the sets
\[
\begin{split}
S_0:=(B(\partial U,\bar \delta)\cap U)\times E,\quad S_1:=\cup_{1\leq k\leq m}((B(x_k,5\bar \delta)\cap U)\times \{i_k\},\\ S_2:=\cup_{1\leq k\leq m}((B(x_k,8\bar \delta)\cap U)\times \{i_k\}).
\end{split}
\]
We further define the stopping time
\[
\tau_1:=\inf\{t\geq 0:(X_t,I_t)\in S_1\}.
\]

It follows that we have:
\begin{enumerate}
\item
If $(X_0,I_0)\in S_1$ and $I_s$ doesn't jump in time $t_1$, then $\tau_{\partial}>t_1$ and $I_s\in S_2$, for all $0\leq s\leq t_1$. The probability of this is at least $1-\bar Q t_1\geq \frac{1}{2}$.
\item
If $(X_0,I_0)\in S_2$ and $I_s$ doesn't jump in time $t_2$, then $\tau_{\partial}>t_2$ and $X_{t_2}\in K_2$. The probability of this is as least $p_2$. That is we have
\begin{equation}\label{eq:prob of entering K2 from S2 bounded below PDMP propn}
\Pm_{(x,i)}((X_{t_2},I_{t_2})\in K_2,\tau_{\partial}>t_2)\geq p_2\quad\text{for all}\quad (x,i)\in S_2.
\end{equation}
\item
For every $x\in B(\partial U,\bar \delta)\cap U$ there exists $k\in \{1,\ldots,m\}$ such that $B(x,\bar \delta)\cap U\subseteq B(x_k,3\bar\delta)\cap U$. 
\end{enumerate}

We consider $(X_0,I_0)=(x,i)\in  S_0$. For every such initial condition $(x,i)\in S_0$ there are three possibilities:
\begin{enumerate}
\item
There exists $k\in \{1,\ldots,m\}$ such that $i=i_k$ and $x\in B(x_k,3\bar\delta)\cap U$, so that $\tau_1=0$ almost surely. The probability that it then doesn't jump in time $t_1$, hence remains in $S_2$, is at least $\frac{1}{2}$. Thus
\begin{equation}
\Pm_{(x,i)}((X_{t_1},I_{t_1})\in S_2,\tau_{\partial}>t_1)\geq \frac{1}{2}.
\end{equation} 
\item
If $\varphi_{s}^{i}(x)\in U$ for all $0\leq s\leq t_1$, then it follows that there exists some $k\in \{1,\ldots,m\}$ such that $\varphi_s^{i}(x)\in B(x_k,4\bar\delta)\cap U$ for all $0\leq s\leq t_1$. If $I_s$ jumps from $i$ to $i_k$ in time $t_1$, and this is the first jump, then $\tau_1< \tau_{\partial}\wedge t_1$ (the probability of jumping exactly at time $t_1$ being $0$). The probability of this is at least $\bar q t_1$. Thus 
\[
\Pm_{(x,i)}(\tau_1<\tau_{\partial}\wedge t_1)\geq \bar q t_1.
\]
If this occurs, the probability of not jumping again, so remaining in $S_2$, is at least $\frac{1}{2}$. It follows that
\begin{equation}
\Pm_{(x,i)}((X_{t_1},I_{t_1})\in S_2,\tau_{\partial}>t_1)\geq \frac{1}{2}\bar qt_1.
\end{equation}
\item
There exists some minimal $0<t(x,i)\leq t_1$ such that $\varphi^{i}_{t(x,i)}(x)\in \partial U$. In this case, there exists some $k\in \{1,\ldots,m\}$ such that $\varphi_s^{i}(x)\in B(x_k,4\bar \delta)\cap U$ for all $0\leq s<  t(x,i)$. The probability of $I_s$ jumping to $i_k$ in time $t(x,i)$, and this being the first jump, is at least $\bar q t(x,i)$. Thus $\Pm_{(x,i)}(\tau_1<t(x,i))\geq \bar qt(x,i)$. If this occurs, the probability of there being no more jumps of $I_s$ in the following time $t_1-\tau_1$ is at least $\frac{1}{2}$. On this event, $\tau_{\partial}>t_1$ and $(X_{t_1},I_{t_1})\in S_2$. Therefore 
\[
\Pm_{(x,i)}((X_{t_1},I_{t_1})\in S_2, \tau_{\partial}>t_1)\geq \frac{1}{2}\bar qt(x,i).
\]

On the other hand, if there is no jump in time $t(x,i)$, then $\tau_{\partial}=t(x,i)\leq t_1$. The probability of this is at least $1-\bar Q t(x,i)$. Thus $\Pm_{(x,i)}(\tau_{\partial}>t_1)\leq \bar Qt(x,i)$. It follows that 
\begin{equation}
\Pm_{(x,i)}((X_{t_1},I_{t_1})\in S_2\lvert \tau_{\partial}>t_1)\geq \frac{\bar q}{2\bar Q}.
\end{equation}
\end{enumerate}

We therefore obtain $c_2>0$ such that
\begin{equation}\label{eq:prob of entering S2 cond on survival from S0}
\begin{split}
\Pm_{(x,i)}((X_{t_1},I_{t_1})\in S_2\lvert \tau_{\partial}>t_1)\geq c_2\quad \text{for all}\quad (x,i)\in S_0.
\end{split}
\end{equation}

Therefore, by combining \eqref{eq:prob of entering K2 from S2 bounded below PDMP propn} with \eqref{eq:prob of entering S2 cond on survival from S0} and defining $c_3:=p_2c_2>0$, we see that
\begin{equation}
\begin{split}
\Pm_{(x,i)}((X_{t_1+t_2},I_{t_1+t_2})\in K_2\lvert \tau_{\partial}>t_1+t_2)\\
\geq \Pm_{(x,i)}((X_{t_1+t_2},I_{t_1+t_2})\in K_2\lvert \tau_{\partial}>t_1)\geq c_3\quad \text{for all}\quad (x,i)\in  S_0.
\end{split}
\end{equation}

On the other hand, since $S_0^c$ is compact, there exists $c_3'>0$ and a compact set $K_3'\subset\subset U\times E$ such that
\begin{equation}
\Pm_{(x,i)}((X_{t_1+t_2},I_{t_1+t_2})\in K_3'\lvert \tau_{\partial}>t_1+t_2)\geq c_3'\quad \text{for all}\quad (x,i)\in S_0^c.
\end{equation}

We define $K_3:=K_2\cup K_3'$, $t_3:=t_1+t_2$ and $p_3=c_3\wedge c_3'>0$. It follows that
\begin{equation}\label{eq:PDMP proof conditional prob of hitting compact set}
\Pm_{(x,i)}((X_{t_3},I_{t_3})\in K_3\lvert \tau_{\partial}>{t_3})\geq p_3\quad\text{for all}\quad (x,i)\in U\times E.
\end{equation}  

From \cite[Theorem 4.4]{Benaim2015}, we see that there exists a time $t_4>0$, a constant $c_4>0$, a (non-empty) open subset $U_4$ of $U$ such that $x^{\ast}\in U_4\subseteq U$, and a (non-empty) open subset $\calO$ of $U\times E$ such that 
\begin{equation}\label{eq:PDMP proof minorised by O from open neighbourhood of O} 
\Pm_{(x,i)}((X_{t_4},I_{t_4})\in \cdot)\geq c_4\Leb_{\calO}\quad\text{for all}\quad (x,i)\in U_4\times E. 
\end{equation}

It follows from Part \ref{enum:x ast convex hull so accessible from itself condition} of Assumption \ref{assum:assumption for killed PDMP} and Lemma \ref{lem:0 contained in open convex hull lemma} that there exists an open subset $U_5$ of $U_4$ containing $x^{\ast}$, $x^{\ast}\in U_5\subseteq U_4$, such that $0\in \conv(v_1(x),\ldots,v_n(x))$ for all $x\in U_5$. It follows that
\[
\Pm_{(x,i)}((X_s,I_s)\in U_5\times E\quad\text{for all}\quad 0\leq s\leq t)>0\quad \text{for all}\quad (x,i)\in U_5\times E,\quad t>0.
\]

It then follows by the accessibility of $x^{\ast}$, Part \ref{enum:PDMP accessibility condition} of Assumption  \ref{assum:assumption for killed PDMP}, that for all $(x,i)\in K_3$ there exists an open neighbourhood $V_{(x,i)}\ni (x,i)$ and a time $t_{(x,i)}>0$ such that 
\[
\Pm_{(x',i')}((X_t,I_t)\in U_5\times E)>0\quad \text{for all}\quad (x',i')\in V_{(x,i)},\quad t\geq {t_{(x,i)}}.
\]
Taking a finite subcover of $K_3$, we obtain $t_5>0$ such that
\[
\Pm_{(x,i)}((X_{t_5},I_{t_5})\in U_5\times E)>0 \quad\text{for all}\quad (x,i)\in K_3.
\]

Since $P_{t_5}$ is lower semicontinuous (in the sense of Definition \ref{defin:lower semicts kernel}), and $\Ind_{U_5\times E}$ is a lower semicontinuous function, it follows that $P_{t_5}\Ind_{U_5\times E}$ is a lower semicontinuous function which is everywhere positive on $K_3$, hence bounded away from $0$ on $K_3$. Thus there exists $c_5>0$ such that
\begin{equation}\label{eq:PDMP proof prob of hitting open nhood of x ast from compact set}
\Pm_{(x,i)}((X_{t_5},I_{t_5})\in U_5\times E)\geq c_5\quad\text{for all}\quad (x,i)\in K_3.
\end{equation}

We finally combine \eqref{eq:PDMP proof conditional prob of hitting compact set}, \eqref{eq:PDMP proof minorised by O from open neighbourhood of O} and \eqref{eq:PDMP proof prob of hitting open nhood of x ast from compact set} to see that
\begin{equation}
\Pm_{(x,i)}((X_{t_3+t_4+t_5},I_{t_3+t_4+t_5})\in \cdot\lvert \tau_{\partial}>t_3+t_4+t_5)\geq p_3c_4c_5\Leb_{\lvert_{\calO}}(\cdot).
\end{equation}
\qed

\subsubsection*{Proof of Proposition \ref{prop:left emeasure for greens kernels}}
We take $T_0,T_1$ as given by Part \ref{enum:assum Gt minorised} of Assumption \ref{assum:assumption for killed PDMP}. It follows from Proposition \ref{prop: reverse PDMP provides adjoint} that for all $s>0$ there exists a constant $\bar C_s<\infty$ such that $\lvert\lvert \frac{d\mu P_s}{d\Leb}\rvert\rvert_{L^{\infty}(\Leb)}\leq \bar C_s \lvert\lvert \frac{d\mu}{d\Leb}\rvert\rvert_{L^{\infty}(\Leb)}$ for all $\mu\in \calP_{\infty}(\Leb)$. For all $s>0$ we can choose $n$ sufficiently large such that $G_{t_2,t_2+s}\leq G_{T_0,T_1}(P_{T_1-T_0}+\ldots+P_{n(T_1-T_0)})$. It then follows from Part \ref{enum:assum Gt minorised} of Assumption \ref{assum:assumption for killed PDMP} that for all $s>0$ there exists $C'_s<\infty$ such that $G_{t_0,t_0+s}((x,i),\cdot)\leq C_s'\Leb(\cdot)$ for all $(x,i)\in U\times E$. Using Proposition \ref{prop: PDMP satisfies A1}, we therefore have for all $s>0$ that
\[
\begin{split}
G_{t_2,t_2+s}((x,i),\cdot)=\delta_{(x,i)}P_{t_1}G_{t_0}(\cdot)\leq (P_{t_1}1)((x,i))C'_s\text{Leb}(\cdot)\quad \text{and}\\
G_{t_2,t_2+s}1((x,i))\geq  c_1(P_{t_1}1)((x,i))(\nu G_{t_0,t_0+s} 1)\quad \text{for all $(x,i)\in U\times E$}.
\end{split}
\]
Therefore for all $s>0$ there exists $C_s<\infty$ such that
\[
\frac{\mu G_{t_2,t_2+s}}{\mu G_{t_2,t_2+s}1}\leq C_s\Leb(\cdot)\quad \text{for all}\quad \mu\in \calP(U\times E).
\]

We now define for all $s>0$ the convex set
\[
K_s=\{\mu\in \calP(\chi):\mu\leq C_s\Leb\},
\]
equipped with the topology of weak convergence of measures, which is compact by Prohorov's theorem. We may therefore define the map
\[
F:K_s\ni \mu\mapsto \frac{\mu G_{t_2,t_2+s}}{\mu G_{t_2,t_2+s}1}\in K_s,
\]
which by Proposition \ref{prop:PDMP lower semicts and Green kernel cts propn} is continuous. It follows by Schauder's fixed point theorem that $G_{t_2,t_2+s}$ has a fixed point belonging to $\calP_{\infty}(\Leb)$, $\pi_s$, for all $s>0$. 

Finally we use Proposition \ref{prop: PDMP satisfies A1} to see that
\[
\pi_sG_{t_2,t_2+s}=\pi_s G_{t_2-t_1,t_2-t_1+s}P_{t_1}\geq \pi_s G_{t_2-t_1,t_2-t_1+s}P_{t_1}1 c_1\nu.
\]
We may therefore conclude that there exists $c_{2,s}>0$ such that $\pi_s\geq c_{2,s}\Leb_{\calO}$, for all $s>0$. 

The proof for $\tilde{\pi}_s$ is identical.
\qed

This concludes the proof of Theorem \ref{theo:general theorem PDMPs}. 
\qed

\subsection*{Proof of Theorem \ref{theo:1D PDMPs}}
We proceed by applying Theorem \ref{theo:general theorem PDMPs}. The only non-trivial thing to check here is that there exists $\hat{T}_2>0$ and $C<\infty$ such that $\hat{P}_{\hat{T}_2}((x,i),\cdot)\leq C\Leb(\cdot)$ for all $(x,i)\in U\times E$.

We may assume without loss of generality that the drift vectors are everywhere non-zero on all of $\Rm$. Since the drift vectors are non-vanishing, we can partition $E$ into the non-empty sets 
\[
E_+:=\{i:-v^i\quad\text{is everywhere positive}\},\quad E_-:=\{i:-v^i\quad\text{is everywhere negative}\}.
\]
We label these the two ``classes''.

We claim that it suffices to take $\hat{T}_2$ such that, in time $\frac{\hat{T}_2}{2}$, $\hat{I}_s$ has to switch to the opposite class it started in if $(\hat{X}_t,\hat{I}_t)_{0\leq t<\tau_{\partial}}$ is to survive.

We recall the definition of the composite flow maps $\Phi^{\bfi}_{\bft}$ given in \eqref{eq:composite flows}. We take $(x,i_0)\in U\times E$, and define $i_2$ to be the index of the first state $I_s$ jumps to in the opposite class. The (possibly empty) set of states $I_s$ visits in between are given by $\bfi^1$. Corresponding to $i_0$, $\bfi^1$ and $i_2$ are the occupation times $t_0$, $\bft^1$ and $t_2$ respectively. We may therefore define for all $x\in U$, the sequence of states $\bfi_1$ with corresponding occupation times $\bft_1$, and $i_0,i_2\in E$, the map
\[
F^{x,i_0,\bfi,\bft,i_2}:\Big[0,\frac{\hat{T}_2}{2}\Big]^2\ni(t_0,t_2)\mapsto (t_0+\sum_k\bft^1_k+t_2,\varphi^{i_2}_{-t_2}\circ\Phi^{\bfi}_{\bft}\circ \varphi^{i_0}_{-t_0}(x))\in [0,\hat{T}_2]\times \Rm.
\]
We observe that $F^{x,i_0,\bfi_1,\bft_1,i_2}$ is a diffeomorphism onto its image for all $x,i_0,\bfi_1,\bft_1,i_2$, with the divergence $\lvert\det(DF^{x,i_0,\bfi_1,\bft_1,i_2})\rvert$ bounded away from $0$ uniformly in $x,i_0,\bfi_1,\bft_1,i_2$. It follows that there exists $C'<\infty$, not dependent upon $x,i_0,\bfi_1,\bft_1,i_2$, such that
\begin{equation}\label{eq:pushforward of starting time and first time in opposite direction}
F^{x,i_0,\bfi_1,\bft_1,i_2}_{\#}\Leb_{\lvert_{[0,\frac{\hat{T}_2}{2}]^2}}\leq C'\Leb_{[0,\hat{T}_2]\times U}.
\end{equation}
Conditional upon $x,i_0,\bfi_1,\bft_1,i_2$, $t_0$ and $t_2$ are exponentially distributed with parameters given by the rate matrix $\hat{Q}$. Using \eqref{eq:pushforward of starting time and first time in opposite direction}, we may therefore conclude that
\[
\Pm(F^{x,i_0,\bfi_1,\bft_1,i_t}(t_0,t_2)\in \cdot,t_0,t_2\leq \frac{\hat{T}_2}{2}\lvert \bfi_1,\bft_1,i_2)\leq C\Leb_{[0,\hat{T}_2]\times U}(\cdot),
\]
for some uniform constant $C<\infty$. We now write $\tau$ for the stopping time when $I_s$ jumps away from $i_2$, which must be at most $\hat{T}_2$ if $\tau_{\partial}>\hat{T}_2$. Taking the expectation, we therefore have that
\begin{equation}\label{eq:joint distribution of stopping time and location 1D proof}
\begin{split}
\Pm_{(x,i_0)}((\tau,X_{\tau})\in \cdot,\tau_{\partial}>\tau)\leq \Pm(F^{x,i_0,\bfi,\bft,i_t}(t_0,t_2)\in \cdot,t_0,t_2\leq \frac{\hat{T}_2}{2})\\
\leq C\Leb_{[0,\hat{T}_2]\times U}(\cdot)\quad\text{for all}\quad (x,i_0)\in U\times E.
\end{split}
\end{equation}
It follows from Proposition \ref{prop: reverse PDMP provides adjoint} that there exists a constant $\bar C<\infty$ such that $\lvert\lvert \frac{d\mu P_s}{d\Leb}\rvert\rvert_{L^{\infty}(\Leb)}\leq \bar C \lvert\lvert \frac{d\mu}{d\Leb}\rvert\rvert_{L^{\infty}(\Leb)}$ for all $\mu\in \calP_{\infty}(\Leb)$ and $0\leq s\leq \hat{T}_2$. Combining this with \eqref{eq:joint distribution of stopping time and location 1D proof} we are done.
\qed

\subsection*{Proof of Theorem \ref{theo:2D PDMPs}}

We proceed by applying Theorem \ref{theo:general theorem PDMPs}. Our goal is therefore to verify that Assumption \ref{assum:assumption for killed PDMP} is satisfied.

It follows from the assumptions that $0\in\overline{\conv}(v^1(x),\ldots,v^n(x))$, and that any two vector fields are transversal, and Lemma \ref{lem:0 contained in open convex hull lemma}, that every point in $U$ is both accessible and reverse-accessible from every other point, so that we have Part \ref{enum:PDMP accessibility condition} of Assumption \ref{assum:assumption for killed PDMP}. Part \ref{enum:assum for minorisation when start from compact set} of Assumption \ref{assum:assumption for killed PDMP} is immediate. 

We now turn to verifying Part \ref{enum:assum Gt minorised} of Assumption \ref{assum:assumption for killed PDMP}. We write $\tau_1$ and $\tau_2$ for the time of the first (respectively second) jump of $I_t$. We have by assumption that there exists $T<\infty$ such that if $I_s$ does not switch in time $T$, then $X_s$ must hit the boundary in that time. We now fix $i_0\neq i_1$ and set $\bfi:=(i_0,i_1)$. We define 
\[
\begin{split}
A:=\{(x,t_0,t_1)\in U\times [0,T]\times [0,T]:\varphi^{i_0}_s(x)\in U\quad\text{for all}\quad 0\leq s\leq t_0,\\ \varphi^{i_1}_{t_1}\circ\varphi^{i_0}_{t_0}(x)\in U\quad\text{for all}\quad 0\leq s\leq t_1\}.
\end{split}
\]
Since $v^{i_1}$ and $v^{i_2}$ are everywhere transversal on $\bar U$, $\lvert \det(D((t_0,t_1)\mapsto\Phi^{\bfi}_{(t_0,t_1)}(x))\rvert$ is bounded away from $0$ on $A$. Since the jump rates of $I_s$ are bounded, it follows that $\Pm(\Phi^{\bfi}_{(t_0,t_1)}(x)\in \cdot,\tau_{\partial}>\tau_2)$ corresponds to the pushforward of a measure with a bounded density (the joint distribution of the first two switching times of $I_t$) under a local diffeomorphism with divergence bounded uniformly away from $0$ (the map $(t_0,t_1)\mapsto\Phi^{\bfi}_{(t_0,t_1)}(x)$), hence has a bounded density. It follows that there exists a constant $C<\infty$ such that the kernel defined by
\[
K((x,i),\cdot):=\Pm_{(x,i)}((X_{\tau_2},I_{\tau_2})\in \cdot, \tau_{\partial}>\tau_2)
\]
satisfies $K((x,i),\cdot)\leq C\Leb(\cdot)$ for all $(x,i)\in U\times E$. We now observe that
\[
G_{2T,4T}\leq KG_{0,4T}.
\]
It follows from \eqref{eq:adjoint equation for Green kernel} that if $\mu\in\calP_{\infty}(\Leb)$ then $\mu G_{[0,4T]}\in \calP_{\infty}(\Leb)$, whence we conclude that Part \ref{enum:assum Gt minorised} of Assumption \ref{assum:assumption for killed PDMP} is satisfied. This concludes the proof that Assumption \ref{assum:assumption for killed PDMP} is satisfied.

We now assume that $\{x\in \partial U:\hat{n}(x)\cdot v^i(x)=0\}$ has finitely many connected components. It follows that the set $\calS$ defined in the statement of Lemma \ref{lem:Lebesgue measure of set from flow is 0} is the union of finitely many $C^{\infty}$ curves of the form $\{\varphi^i_t(x):t\in \Rm\}$, for some $x\in \partial U$, $i\in E$. It then follows from Observation \ref{observ:traversal of bdy then cts} that $U\setminus \calC$ is the union of finitely many curves of the form
\[
\{(\varphi^i_{-s}(x),i):0\leq s\leq t_-(x,i)\},
\]
for some $(x,i)\in \partial U\times E$, where $t_-(x,i):=\inf\{s>0:\varphi^i_{-s}(x)\notin U\}$. The probability of hitting such a set, given that we start outside of it, must be $0$ since all pairs of vector fields are traversal. Therefore 
\[
\Pm_{(x,i)}((X_t,I_t)\in \calD_-\quad\text{for some}\quad t\geq 0)=0\quad\text{for all}\quad (x,i)\in \calC.
\]
\qed

\subsection*{Proof of Theorem \ref{theo:PCMPs}}

We proceed by applying Theorem \ref{theo:general theorem PDMPs}. Our goal is therefore to verify that Assumption \ref{assum:assumption for killed PDMP} is satisfied.

It follows from Assumption \ref{assum:assumption for absorbed PCMPs} and Lemma \ref{lem:0 contained in open convex hull lemma} that every point in $U$ is both accessible and reverse-accessible from every other point, so that we have Part \ref{enum:PDMP accessibility condition} of Assumption \ref{assum:assumption for killed PDMP}. Part \ref{enum:assum for minorisation when start from compact set} of Assumption \ref{assum:assumption for killed PDMP} is immediate. 

We now seek to show that there exists a constant $C<\infty$ and time $T<\infty$ such that 
\begin{equation}\label{eq:transition density PCMP has bounded density}
P_T((x,i),\cdot)\leq C\Leb(\cdot)\quad \text{for all}\quad(x,i)\in U\times E.
\end{equation}

We write $\calA_t$ for the set of states visited by $I_s$ up to time $t$. For all $A\subseteq E$ and $t\geq 0$ we define the submarkovian kernel
\[
P^A_t((x,i),\cdot)=\Pm_{(x,i)}((X_t,I_t)\in \cdot,\tau_{\partial}>t,\calA_t=A).
\]
We have that
\[
X_t\in X_0+\conv(\{v_i:i\in \calA_t\}).
\]
We fix $A=\{i_0,\ldots,i_{\ell}\}\subseteq E$ and suppose that $\calA_t=A$. We write $(T^0,\ldots,T^{\ell})$ for the corresponding occupation times of $I_s$ prior to time $t$. There are therefore two possibilities:
\begin{enumerate}
\item
If $0\notin \overline{\conv}(\{v_i:i\in A\})$, then $d(X_0,X_t)\geq td(0,\overline{\conv}(\{v_i:i\in A\})\ra \infty$ as $t\ra\infty$. This implies that $P^A_t\equiv 0$ for all $t$ sufficiently large.
\item
If $0\in \overline{\conv}(\{v_i:i\in A\})$, then $\conv(\{v_i:i\in A\})$ is open by Lemma \ref{lem:0 contained in open convex hull lemma}. We have that $\text{span}(v_i-v_j:i,j\in A)\supseteq v_{i_0}-\conv(\{v_i:i\in A\})$, which is open. Therefore $\text{span}(v_i-v_j:i,j\in A)=\Rm^d$ so that
\[
\Rm^{\ell}\ni (t_1,\ldots,t_{\ell})\mapsto (t-\sum_{k=1}^{\ell}t_k)v_0+\sum_{k=1}^{\ell}t_kv_k\in \Rm^d
\]
is a surjective linear map. We may calculate from \cite[Theorem 4.3]{Sericola2000} that $\Pm_i((T^1_t,\ldots,T^{\ell}_t)\in \cdot,\calA_t=A)$ has a bounded density on $\Rm^{\ell}$ for all $i\in E$ (note that $T^0_t=t-\sum_{k=1}^{\ell}T^k_t$ automatically). It therefore follows that for all $t>0$ there exists $C^A_t<\infty$ such that 
\[
P^{\calA}_t((x,i),\cdot)\leq C^A_t\Leb(\cdot).
\]
\end{enumerate}
Since there are only finitely many subsets of $E$, we obtain \eqref{eq:transition density PCMP has bounded density}. It follows that $G_{T,T+1}((x,i),\cdot)=\delta_{(x,i)}G_{0,1}P_T\leq C\Leb(\cdot)$ for all $(x,i)\in U\times E$. We may repeat the above proof with $(\hat{X}_t)_{0\leq t<\tau_{\partial}}$, to obtain that Part \ref{enum:assum Gt minorised} of Assumption \ref{assum:assumption for killed PDMP} is satisfied, and that there exists a time $\hat{T}_2>0$ and constant $C<\infty$ such that $\hat{P}_{\hat{T}_2}((x,i),\cdot)\leq C\Leb(\cdot)$ for all $(x,i)\in U\times E$.
\qed

\section{Appendix}

We collect here the proofs of various technical propositions and lemmas, whose proof we have deferred to this appendix.

\subsection*{Proof of Proposition \ref{prop:density wrt pi-> density}}

We fix $0\leq t<\infty$ and $\mu\in\calM_{\infty}(\pi)$. We may take $C<\infty$ such that $-C\pi\leq \mu\leq C\pi$, so that $C\lambda^t\pi \leq \mu P_t\leq C\lambda^t\pi$. It follows that if $\mu\in \calM_{\infty}(\pi)$ then $\mu P_t\in \calM_{\infty}(\pi)$.

We now fix $\mu\in \calM(\pi)$, and define $\mu_n:=((-n)\vee\frac{d\mu}{d\pi}\wedge n)\pi\in\calM_{\infty}(\pi)$ for $n\in\Nm$. We therefore have that $\mu_nP_t\in \calM_{\infty}(\pi)$ for $n\in\Nm$. It follows from the dominated convergence theorem that $\mu_n\ra \mu$ in total variation, hence $\mu_nP_t\ra \mu P_t$ in total variation. Therefore $\mu P_t\in\calM(\pi)$.

\subsection*{Proof of Proposition \ref{prop:Pt well-defined on L1}}

We fix $0\leq t<\infty$ for the time being. Since $L^{1}(\pi)$ corresponds to equivalence classes of functions which agree $\pi$-almost everywhere, in order to establish that $P_tf$ is well-defined and contained in $L^1(\pi)$ for $f\in L^{\infty}(\pi)$, we must show that for $f\in\calB(\pi)$, $P_tf(x)$ is well-defined for $\pi$-almost every $x$, with $P_tf$ measurable and $\pi$-integrable (where it is defined), and that $P_tf=P_tg$ $\pi$-almost everywhere for any other $g=f$ $\pi$-almost everywhere.

For $f\in \calB(\pi)$, we define $N_f:=\{x\in\chi: P_t\lvert f\rvert(x)<\infty\}$. We firstly establish that $\pi(N_f)=0$ for all $f\in\calB(\pi)$. Given $f\in\calB_{\geq 0}(\pi)$, the monotone convergence theorem implies that
\begin{equation}\label{eq:Ptf non-neg f monotone}
\begin{split}
P_tf(x)=\lim_{n\ra\infty}P_t(f\wedge n)(x)\quad\text{for all}\quad x\\\text{and}\quad \pi(P_tf)=\lim_{N\ra\infty}\pi(P_t(f\wedge n))=\lambda^t\lim_{n\ra\infty}\pi(f\wedge n)=\lambda^t\pi(f).
\end{split}
\end{equation}
Thus $\pi(P_tf)<\infty$, so that $\pi(N_f)=0$ for $f\in\calB_{\geq 0}$. Since $N_f=N_{\lvert f\rvert}$, we have 
\begin{equation}
\pi(N_f)=0\quad\text{for all}\quad f\in\calB(\pi).
\end{equation}

We now establish that 
\begin{equation}\label{eq:indNfPtf in B(pi)}
\Ind_{N_f^c}P_tf\in \calB(\pi)\quad\text{with}\quad \pi(\lvert \Ind_{N_f^c}P_tf\rvert)\leq \lambda^t\pi(\lvert f\rvert)\quad\text{for all}\quad f\in \calB(\pi).
\end{equation}

For $f\in\calB_{\geq 0}(\pi)$, we have that $P_tf$ is a $[0,\infty]$-valued measurable function by \eqref{eq:Ptf non-neg f monotone}, so that we have \eqref{eq:indNfPtf in B(pi)} for $f\in \calB_{\geq 0}(\pi)$. We can write $\Ind_{N_f^c}P_tf=\Ind_{N_f^c}P_t(f\vee 0)-\Ind_{N_f^c}P_t((-f)\vee 0)$, hence we have \eqref{eq:indNfPtf in B(pi)} for all $f\in \calB(\pi)$.

We now observe for $f,g\in\calB(\chi)$ that 
\begin{equation}\label{eq:difference of Ptf and Ptg}
\lvert \Ind_{N_f^c\cap N_g^c}P_tf-\Ind_{N_f^c\cap N_g^c}P_tg\rvert\leq \Ind_{N_f^c\cap N_g^c}P_t\lvert f-g\rvert,
\end{equation}
so that $f=g$ $\pi$-almost everywhere implies that $P_tf=P_tg$ $\pi$-almost everywhere by \eqref{eq:indNfPtf in B(pi)}.

We have therefore established that $P_tf$ is well-defined and contained in $L^1(\pi)$ for $f\in L^1(\pi)$, with 
\begin{equation}\label{eq:cty of Pt fixed t}
\lvert\lvert P_tf-P_tg\rvert\rvert_{L^1(\pi)}\leq \lambda^t\lvert\lvert f-g\rvert\rvert_{L^1(\pi)}\quad\text{for}\quad f,g\in L^1(\pi)
\end{equation}
by \eqref{eq:indNfPtf in B(pi)} and \eqref{eq:difference of Ptf and Ptg}. 

We have that $\mu(P_tf)=(\mu P_t)(f)$ for $\mu\in\calP_{\infty}(\pi)$ and $f\in L_{\geq 0}^{\infty}(\pi)$. It follows from the monotone convergence theorem that this can be extended to all $\mu\in\calP_{\infty}(\pi)$ and $f\in L^1_{\geq 0}(\pi)$, hence by linearity it remains true for all $\mu\in\calP_{\infty}(\pi)$ and $f\in L^1(\pi)$.

We no longer consider fixed $t$. Since $(P_t)_{0\leq t}$ is a semigroup of linear operators on $\calB_b$, it defines a semigroup of linear operators on $(L^{\infty}(\pi),\lvert\lvert \cdot\rvert\rvert_{L^1(\pi)})$, hence by the density of $(L^{\infty}(\pi),\lvert\lvert \cdot\rvert\rvert_{L^1(\pi)})$ in $(L^{1}(\pi),\lvert\lvert \cdot\rvert\rvert_{L^1(\pi)})$ and \eqref{eq:cty of Pt fixed t} it defines a semigroup of linear operators on $L^1(\pi)$. Moreover these linear operators must be bounded (with $P_t$ having operator norm at most $\lambda^t$) by \eqref{eq:indNfPtf in B(pi)}

The fact that $P_t(L^1_{\geq 0}(\pi))\subseteq L^1_{\geq 0}(\pi)$ is an immediate consequence of \eqref{eq:Ptf non-neg f monotone}.
\qed

\subsection*{Proof of Proposition \ref{prop:right efn gives A2}}

We assume that $(X_t)_{0\leq t<\tau_{\partial}}$ satisfies \cite[Assumption (A1)]{Champagnat2014}, and that for some $t_1>0$ there exists a pointwise right eigenfunction $h$ for $P_{t_1}:\calB_b(\chi)\ra\calB_b(\chi)$ belonging to $\calB_b(\chi;\Rm_{>0})$. Then $h$ must have strictly positive eigenvalue, $\hat{\lambda}$ say. 

For $\mu\in\calP(\chi)$ and $n\in \Nm$ we therefore have
\[
\Pm_{\mu}(\tau_{\partial}>nt_1)=\frac{\mu(h)\hat{\lambda}^n}{\expE_{\mu}[h(X_{nt_1})\lvert \tau_{\partial}>nt_1]}.
\]
We now let $\nu\in\calP(\chi)$, $t_0>0$ and $c_0>0$ respectively be the probability measure, time and positive constant for which \cite[Assumption (A1)]{Champagnat2014} is satisfied. We fix $x\in \chi$, and apply the above with $\mu=\delta_x$ and $\mu=\nu$ to obtain
\[
\frac{\Pm_{x}(\tau_{\partial}>nt_1)}{\Pm_{\nu}(\tau_{\partial}>nt_1)}\leq \frac{h(x)\expE_{\nu}[h(X_{nt_1})\lvert \tau_{\partial}>nt_1]}{\nu(h)\expE_{x}[h(X_{nt_1})\lvert \tau_{\partial}>nt_1]}\leq \frac{\lvert\lvert h\rvert\rvert_{\infty}^2}{c_0(\nu(h))^2}\quad \text{for all}\quad nt_1\geq t_0,
\]
whence we obtain \cite[Assumption (A2)]{Champagnat2014}.
\qed

\subsection*{Proof of Lemma \ref{lem:eigenfunctions of eigenvalue lambda t difference of non-negative eigenfunctions}}

We fix $0\leq t<\infty$ and define $A:=\lambda^{-t}P_t$. We write $\phi=\phi_1-\phi_2$ whereby $\phi_1=\phi\vee 0$ and $\phi_2=(-\phi)\vee 0$. We firstly observe that
\[
(A\phi)\wedge 0=A\phi_1-(A\phi_1)\wedge (A\phi_2),\quad (-A\phi)\vee 0=A\phi_2-(A\phi_1)\wedge (A\phi_2).
\]

We note that $\lv\lv Af\rv\rv_{L^1(\pi)}=\lv\lv f\rv\rv_{L^1(\pi)}$ for all $f\geq 0$. Since $\phi$ is an $L^1(\pi)$-right eigengunction of $P_t$ of eigenvalue $1$, we have
\[
\begin{split}
\lv\lv \phi_1\rv\rv_{L^1(\pi)}+\lv\lv\phi_2\rv\rv_{L^1(\pi)}=\lv\lv \phi\rv\rv_{L^1(\pi)}=\lv\lv A\phi\rv\rv_{L^1(\pi)}\\=\lv\lv [A\phi_1-(A\phi_1)\wedge (A\phi_2)]-[A\phi_2-(A\phi_1)\wedge (A\phi_2)]\rv\rv_{L^1(\pi)}\\=\lv\lv A\phi_1-(A\phi_1)\wedge (A\phi_2)\rv\rv_{L^1(\pi)}+\lv\lv A\phi_2-(A\phi_1)\wedge (A\phi_2)\rv\rv_{L^1(\pi)}
=\lv\lv A\phi_1-A\phi_2\rv\rv_{L^1(\pi)}\\
\leq \lvert\lvert A\phi_1\rv\rv_{L^1(\pi)}+\lv\lv A\phi_2\rv\rv_{L^1(\pi)}=\lv\lv \phi_1\rv\rv_{L^1(\pi)}+\lv\lv\phi_2\rv\rv_{L^1(\pi)}.
\end{split}
\]
This implies that $(A\phi_1)\wedge (A\phi_2)=0$, so that
\[
\phi_1=\phi\vee 0=(A\phi)\vee 0=(A\phi_1-A\phi_2)\vee 0=A\phi_1\quad\text{and}\quad \phi_2=(-\phi)\vee 0=(A\phi_2-A\phi_1)\vee 0=A\phi_2,
\]
so that  $\phi_1$ and $\phi_2$ are non-negative $L^1(\pi)$-right eigenfunctions of $A$ of eigenvalue $1$, hence non-negative $L^1(\pi)$-right eigenfunctions of $P_t$ of eigenvalue $\lambda^t$, such that $\phi_1\wedge \phi_2=0$ and $\phi=\phi_1-\phi_2$.
\qed

\subsubsection*{Proof of Lemma \ref{lem:l infty limit of lower semi cts functions}}

We define the essential limit infimum of a non-negative Borel function, $g\in \calB(\chi;\Rm_{\geq 0})$, to be given by 
\begin{equation}
\essliminf_{x'\ra x} g(x'):=\lim_{r\ra 0}\essinf_{x'\in B(x,r)\setminus \{x\}}g(x'),\quad x\in \chi,
\end{equation}
where the essential infimum should be understood to mean essentially with respect to $\Lambda$.

We may take some Borel set $A$ such that $\Lambda(A^c)=0$, on which $u_n$ converges uniformly. We then define the following version of $f$,
\[
\hat{u}(x):=\begin{cases}
\lim_{n\ra \infty}u_n(x),\quad x\in A\\
0,\quad x\notin A
\end{cases}.
\]
We now define
\begin{equation}
u(x):=\essliminf_{x'\ra x}\hat{u}(x'),\quad x\in \chi.
\end{equation}
We claim that $u$ is our desired function.

We firstly observe that
\begin{equation}\label{eq:u>c is open for all c pf of lower semi cts lemma}
\{x\in \chi:u(x)>c\}\quad\text{is open for all $c\in\Rm$.}
\end{equation}
If follows, in particular, that $u$ is Borel-measurable and lower semi-continuous, the latter being equivalent to \eqref{eq:u>c is open for all c pf of lower semi cts lemma}. It is immediate by construction that $u$ is non-negative. Moreover, since $f_n\in L^{\infty}(\Lambda)$ for all $n$ and $f_n$ converges to $f$ in $L^{\infty}(\Lambda)$, $\hat{u}$ must be essentially bounded so that $u$ must be bounded.

We now check that $u=\hat{u}$ $\Lambda$-almost everywhere, so that $u$ must be a version of $f$. Since $\Lambda(A^c)=0$ and $\hat{u}_{\lvert_A}$ is lower semicontinuous, we have for all $x\in A$ that
\begin{equation}
\hat{u}(x)\leq \liminf_{\substack{x'\ra x\\ x'\in A}}\hat{u}(x')\leq \essliminf_{x'\ra x} \hat{u}(x)=u(x).
\end{equation}

Therefore $\hat{u}\leq u$ $\Lambda$-almost everywhere. We shall now establish that $\hat{u}\geq u$ $\Lambda$-almost everywhere. We fix arbitrary $0\leq a<b$, and seek to show that
\begin{equation}\label{eq: u greater than b, hat u less than a of Lebesgue measure 0}
\Lambda(S_{ab})=0\quad\text{whereby we define}\quad S_{ab}:=\{x:u(x)>b,\hat{u}(x)<a\}.
\end{equation}

We assume that $S_{ab}$ is non-empty, otherwise we are done. If $x\in S_{ab}$, then $u(x)>b$, so that there exists $r_x>0$ such that $\hat{u}(x')>a$ for $\Lambda$-almost every $x'\in B(x,r_x)$. Thus $\Lambda(S_{ab}\cap B(x,r_x))=0$ for all $x\in S_{ab}$. We see that $\{B(x,r_x)\cap S_{ab}:x\in S_{ab}\}$ forms a cover of $S_{ab}$. Since $\chi$ is a seperable metric space, so too must be $S_{ab}$, so that $S_{ab}$ must also be Lindel\"{o}f. Therefore we can take a countable subcover of  $\{B(x,r_x)\cap S_{ab}:x\in S_{ab}\}$, $\{B(x_n,r_{x_n}):n\in \Nm\}$, from which we conclude that $\Lambda(S_{ab})\leq \sum_n\Lambda(B(x_n,r_{x_n})\cap S_{ab})=0$. We have therefore established \eqref{eq: u greater than b, hat u less than a of Lebesgue measure 0}. 

Since $0\leq a<b$ is arbitrary, we have that $\hat{u}\geq u$ $\Lambda$-almost everywhere. Therefore $u=\hat{u}$ $\Lambda$-almost everywhere, so that $u$ is a version of $f$.

All that remains is to establish the maximality of $u$. We take some other bounded, non-negative, lower semicontinuous version of $f$, $\tilde{u}\in LC_b(\chi;\Rm_{\geq 0})$. Since both $\tilde{u}$ and $\hat{u}$ are versions of $f$, for all $x\in\chi$ we have that
\[
\tilde{u}(x)\leq \liminf_{x'\ra x}\tilde{u}(x')\leq \essliminf_{x'\ra x}\tilde{u}(x')=\essliminf_{x'\ra x}\hat{u}(x')=u(x).
\]
\qed

\subsection*{Proof of Lemma \ref{lem:0 contained in open convex hull lemma}}

We let $e_1,\ldots,e_d$ be unit basis vectors. We claim that  
\begin{equation}\label{eq:unit vector contained in closed convex hull lemma pf}
\text{there exists} \quad c_1>0\quad \text{such that}\quad c_1e_1\in \overline{\conv}(v_1,\ldots,v_n).
\end{equation}
By assumption, we may take $a_1,\ldots,a_n\geq 0$ such that
\[
\sum_{k=1}^na_kv_k=0,\quad \sum_{k=1}^na_k=1.
\]
We let $k_1,\ldots,k_m$ be the indices of those $a_k$ such that $a_k>0$. Since $v_{k_1},\ldots,v_{k_m}$ are linearly dependent, $m>d$, so that $v_{k_1},\ldots,v_{k_d}$ forms a basis for $\Rm^d$. Therefore there exists $b_1,\ldots,b_m\in \Rm$ such that
\[
\sum_{\ell=1}^mb_mv_{k_{\ell}}=e_1.
\]
We take $A>0$ such that $a_{k_{\ell}}+b_{\ell}>0$ for all $1\leq \ell\leq m$, and define $S:=\sum_{\ell=1}^m(a_{k_{\ell}}+b_{\ell})$. It follows that
\[
\frac{1}{S}e_1=\sum_{\ell=1}^m\frac{a_{k_{\ell}}+b_{\ell}}{S}v_{k_{\ell}}\in \overline{\conv}(v_1,\ldots,v_n).
\]
We have therefore established \eqref{eq:unit vector contained in closed convex hull lemma pf}. It follows that there exists $c^i_{\pm}$ for $1\leq i\leq d$ such that $c^i_{\pm}(\pm e_i)\in \overline{\conv}(v_1,\ldots,v_n)$. Therefore there exists $r>0$ such that
\[
B(0,r)\subseteq \overline{\conv}(\{e^i_{\pm}v_i\})\subseteq \overline{\conv}(v_1,\ldots,v_n),
\]
from which we conclude $0\in \conv(v_1,\ldots,v_n)$.

We now observe, using that $0\in \conv(v_1,\ldots,v_n)$, that for all $x\in \text{conv}(v_1,\ldots,v_n)$ there exists $\epsilon>0$ such that $(1+\epsilon)x=(1+\epsilon)x-\epsilon 0\in \conv(v_1,\ldots,v_n)$. It follows that $x\in (1-\frac{\epsilon}{1+\epsilon})[(1+\epsilon)x]+\frac{\epsilon}{1+\epsilon}B(0,r)\subseteq \conv(v_1,\ldots,v_n)$.
\qed

{\textbf{Acknowledgement:}}  This work was funded by grant 200020 196999 from the Swiss National Foundation. The author would like to thank Michel Bena{\"i}m for suggesting the author consider random diffeomorphisms, and for useful discussions with regard to these and PDMPs.

\bibliography{Library}
\bibliographystyle{plain}
\end{document}